\documentclass[a4ppaper,14pt]{amsart}

\usepackage{enumerate}
\usepackage[latin1]{inputenc}
\usepackage{amsmath}
\usepackage{amsthm}
\usepackage{latexsym}
\usepackage{amssymb}
\usepackage{amsmath}
\usepackage{comment}
\usepackage[all]{xy}
\usepackage{calc}
\usepackage{multicol}
\usepackage{slashed}
\usepackage{color}

\newtheorem{thm}{Theorem}[section]
\newtheorem{prop}{Proposition}[section]
\newtheorem{defi}{Definition}[section]
\newtheorem{lem}{Lemma}[section]

\newtheorem{rem}{Remark}[section]
\theoremstyle{notation}

\newcommand{\R}{\mathbb{R}}

\numberwithin{equation}{section}
\newcommand{\N}{\mathbb{N}}

\newcommand{\eps}{\epsilon}

\newcommand{\wto}{\rightharpoonup}

\newcommand{\vertiii}[1]{{\left\vert\kern-0.25ex\left\vert\kern-0.25ex\left\vert #1
\right\vert\kern-0.25ex\right\vert\kern-0.25ex\right\vert}}

\usepackage[textwidth=138mm,
textheight=215mm,
left=30mm,
right=30mm,
top=25.4mm,
bottom=25.4mm,
headheight=2.17cm,
headsep=4mm,
footskip=12mm,
heightrounded,]{geometry}

\usepackage[colorlinks=true,urlcolor=blue,
citecolor=black,linkcolor=black,linktocpage,pdfpagelabels,
bookmarksnumbered,bookmarksopen]{hyperref}

\pagestyle{plain}

\makeatletter
\newcommand{\leqnomode}{\tagsleft@true}
\newcommand{\reqnomode}{\tagsleft@false}
\makeatother

\definecolor{green}{rgb}{0.25,0.75,0}

\begin{document}

\reqnomode

\title{Normalized solutions to nonlinear Schr\"odinger equations with competing Hartree-type nonlinearities}

\author[D. Bhimani, T. Gou, H. Hajaiej]{Divyang Bhimani, Tianxiang Gou, Hichem Hajaiej}

\address{Divyang Bhimani,
\newline \indent Department of Mathematics, Indian Institute of Science Education and Research,
\newline \indent Dr. Homi Bhabha Road, Pune 411008, India.}
\email{divyang.bhimani@iiserpune.ac.in}

\address{Tianxiang Gou
\newline \indent School of Mathematics and Statistics, Xi'an Jiaotong University,
\newline \indent Xi'an, Shaanxi 710049, People's Republic of China.}
\email{tianxiang.gou@xjtu.edu.cn}

\address{Hajaiej Hichem, 
\newline \indent Department of Mathematics, College of Natural Science State University, 
\newline \indent 5151 State Drive, 90032 Los Angeles, California, USA.}
\email{hhajaie@calstatela.edu}

\begin{abstract} 
In this paper, we consider solutions to the following nonlinear Schr\"odinger equation with competing Hartree-type nonlinearities,
$$ 
-\Delta u + \lambda u=\left(|x|^{-\gamma_1} \ast |u|^2\right) u - \left(|x|^{-\gamma_2} \ast |u|^2\right) u\quad \mbox{in} \,\, \R^N,
$$
under the $L^2$-norm constraint
$$
\int_{\R^N} |u|^2 \, dx=c>0,
$$
where $N \geq 1$, $0<\gamma_2 < \gamma_1 <\min\{N, 4\}$ and $\lambda \in \R$ appearing as Lagrange multiplier is unknown. First we establish the existence of ground states in the mass subcritical, critical and supercritical cases. Then we consider the well-posedness and dynamical behaviors of solutions to the Cauchy problem for the associated time-dependent equations.

\medskip
{\noindent \textsc{Keywords}: Normalized solutions, Hartree nonlinearities, Ground states, Variational methods}.

\medskip
{\noindent \textsc{AMS subject classifications:} 35J20; 35J60}.

\end{abstract}

\maketitle

\section{Introduction}

In this paper, we are concerned with solutions to the following nonlinear Schr\"odinger equation with competing Hartree-type nonlinearities,
\begin{align} \label{equ}
-\Delta u + \lambda u=\left(|x|^{-\gamma_1} \ast |u|^2\right) u - \left(|x|^{-\gamma_2} \ast |u|^2\right) u\quad \mbox{in} \,\, \R^N,
\end{align}
under the $L^2$-norm constraint
\begin{align} \label{mass}
\int_{\R^N} |u|^2 \, dx=c>0,
\end{align}
where $N \geq 1$, $0<\gamma_2 < \gamma_1 <\min\{N, 4\}$ and $\lambda \in \R$ appearing as Lagrange multiplier is unknown. The problem under consideration comes from the study of standing waves to the following time-dependent equation,
\begin{align}\label{nlscn}
\begin{cases}
\textnormal{i} \psi_t =\Delta \psi +\left(|x|^{-\gamma_1} \ast |\psi|^2\right) \psi - \left(|x|^{-\gamma_2} \ast |\psi|^2\right) \psi,\\
\psi(x,0)=\psi_0.
\end{cases}
\end{align}
Here a standing wave to \eqref{nlscn} is a solution having the form
$$
\psi(t,x)=e^{-\textnormal{i}\lambda t} u(x), \quad \lambda \in \R.
$$
Clearly, a standing wave $\psi$ is a solution to \eqref{nlscn} if and only if $u$ is a solution to \eqref{equ}. Physically, the equation admits many applications in mean field models for binary mixtures of Bose-Einstein condensates or for binary gases of fermion atoms in degenerate quantum states, see \cite{BFKMM, EGBB, Ma}. Let $W(x)=C_{\gamma_1}|x|^{-\gamma_1} +C_{\gamma_2}|x|^{-\gamma_2}$. In physics, when $\gamma_1, \gamma_2 >1$, then $W$ is the van der Waals type potential, that is, two-body potentials of short range, see \cite{DLP, YLT, ZN}. The van der Waals coefficient $C_6, C_8$ and $C_{10}$ of alkaline-earth interactions calculated by Porsev and Derevianko using relativistic many-body perturbation theory are believed to be accurate to $1/100$, see \cite{PD}. In the present paper, by scaling argument, we shall take $C_{\gamma_1} =1 $ and $C_{\gamma_2}=-1$.

Note that $L^2$-norm of any solution to \eqref{nlscn} is conserved along time, i.e. for any $t \in [0, T)$,
$$
\|\psi(t)\|_2=\|\psi_0\|_2.
$$
In this sense, it is interesting to consider standing waves to \eqref{nlscn} having prescribed $L^2$-norm. This leads to the study of solutions to \eqref{equ}-\eqref{mass}. Such solutions are often referred to as normalized solutions, which correspond to critical points of the underlying energy functional $E$ restricted to $S(c)$, where
$$
E(u):=\frac 12 \int_{\R^N}|\nabla u|^2 \,dx + \frac 14 \int_{\R^N} \int_{\R^N}\frac{|u(x)|^2|u(y)|^2}{|x-y|^{\gamma_2}} \, dxdy-\frac 14 \int_{\R^N} \int_{\R^N}\frac{|u(x)|^2|u(y)|^2}{|x-y|^{\gamma_1}} \, dxdy.
$$
and
$$
S(c):=\left\{u \in H^1(\R^N) : \int_{\R^N} |u|^2 \, dx=c\right\}.
$$
In this situation, the parameter $\lambda$ is unknown and will be decided as Lagrange multiplier related to the constraint.

In the mass subcritical case, the study of normalized solutions benefits from the well-known Lions concentration compactness principle in \cite{Li1, Li2}. In this case, the associated energy functional restricted on constraint is bounded from below. Then normalized solutions can be achieved as global minimizers related to the minimization problems, whose existence is a direct consequence of the use of Lions concentration compactness principle. It was first adapted in \cite{CP} to prove orbital stability of standing waves to nonlinear Schr\"odinger equations. Following these initial works, the authors in \cite{Sh} and \cite{BS, CDSS} studied the existence of standing waves to nonlinear Schr\"odinger equations and nonlinear Schr\"odinger-Poission systems, respectively. Furthermore, the authors in \cite{AB, NW1, NW2, NW3} considered orbital stability and existence of standing waves to nonlinear Schr\"odinger systems in the one dimensional case. The results was extended to the higher dimensional case in \cite{GJ2}, see also \cite{G, GG}. We also refer the readers to \cite{Gou}, where orbital stability and existence of standing waves to nonlinear Schr\"odinger systems with a partial confinement were investigated. And the existence of ground states for the spinor Bose-Einstein condensates in the one dimensional was established in \cite{CCW}. However, in the mass critical and supercritical cases, the associated energy functional restricted on constraint becomes unbounded from below. In this situation, normalized solutions often correspond to saddle type critical points or local minimizers, whose existence is based on minimax arguments. Following the early work due to Jeanjean \cite{Je}, there exist many researchers who are devoted to the study in this direction. Regarding the consideration of normalized solutions to nonlinear Schr\"odinger equations, we first refer to the readers to \cite{BFG, S1} and \cite{JL, S2}, where the existence of normalized solutions to nonlinear Schr\"odinger equations with combined local nonlinearities in the Sobolev subcritical and critical cases was detected, respectively. The existence of normalized solutions to nonlinear Schr\"odinger equations with general nonlinearities was studied in \cite{BV, JS}. Moreover, the authors in \cite{BJT,JL1} established the existence of normalized solutions to nonlinear Schr\"odinger-Poission systems in the Sobolev subcritical and critical cases in $\R^3$, respectively. For the study of normalized solutions to nonlinear Schr\"odinger-Poission systems in $\R^2$, we refer the readers to \cite{CJ}. We also allude the readers to \cite{BJ, BCGJ, CGH, GZ, LY}, where normalized solutions to other types of nonlinear Schr\"odinger equations (for example mixed dispersive nonlinear Schr\"odinger equations and Chern-Simons-Schr\"odinger equations) were considered. Regarding the study of normalized solutions to nonlinear Schr\"odinger systems, we refer the readers to \cite{BS1, BS2, BJS, BZZ}, where the authors proved the existence of normalized solutions to nonlinear Schr\"odinger system in $\R^3$. For the existence of normalized solutions to nonlinear Schr\"odinger system in $\R^N$ for $N \geq 1$, see \cite{GJ1, LWYZ}. 

The study carried out in the present paper is mainly inspired by \cite{CJL}, where the authors considered normalized solutions to the following equation,
\begin{align} \label{equ1111}
-\Delta u + \lambda u=\left(|x|^{-\gamma_1} \ast |u|^2\right) u + \left(|x|^{-\gamma_2} \ast |u|^2\right) u\quad \mbox{in} \,\, \R^N.
\end{align}
As an extension, the aim of the paper is to completely investigate normalized solutions to \eqref{equ} with competing Hartree nonlinearities in the mass subcritical, critical and supercritical cases. Very recently, the authors in \cite{JLuo} studied the existence and asymptotic behaviors of normalized solutions to \eqref{equ1111} for $\gamma_1=4$ and also established nonexistence of normalized solutions to \eqref{equ} for $\gamma_1=4$. It is worth mentioning \cite{YY}, where the existence, multiplicity and nonexistence of solutions to \eqref{equ} with $\lambda>0$ fixed (without $L^2$-norm constraint) were established in $\R^3$. Here we are concerned with normalized solutions to nonlinear Schr\"odinger equations with completing Hartree-type (nonlocal) nonlinearities in the Sobolev subcritical case,  we refer the readers to \cite{BFG, S1} for the study of the problems with competing local nonlinearities in the Sobolev subcritical case.

In order to present our main results, we introduce a useful scaling of $u$ as
$$
u_t(x):=t^{\frac{N}{2}}u(tx), \quad t>0, \ x \in \R^N.
$$
Direct computations imply that $\|u_t\|_2=\|u\|_2$ and
\begin{align} \label{scaling}
\hspace{-1.55cm}E(u_t):=\frac {t^2}{2} \int_{\R^N}|\nabla u|^2 \,dx + \frac {t^{\gamma_2}}{4}\int_{\R^N} \int_{\R^N}\frac{|u(x)|^2|u(y)|^2}{|x-y|^{\gamma_2}} \, dxdy-\frac {t^{\gamma_1}}{4} \int_{\R^N} \int_{\R^N}\frac{|u(x)|^2|u(y)|^2}{|x-y|^{\gamma_1}} \, dxdy.
\end{align}
Such a scaling will be frequently employed and play an important role in the discussion of the paper. In the sense of Gagliardo-Nirenberg' inequality \eqref{inequ}, we shall refer $\gamma =2$ to the mass critical case, $\gamma <2$ and $\gamma>2$ to the mass subcritical and supercritical cases, respectively. To ensure that the nonlocal integral terms appearing in the functional $E$ with respect to the Hartree-type nonlinearities are well-defined in $H^1(\R^N)$, we also need to assume that $0<\gamma_1, \gamma_2<N$.

First of all, we investigate solutions to \eqref{equ}-\eqref{mass} in the mass subcritical case. In this case, we have that $0<\gamma_2<\gamma_1<\min\{N, 2\}$. Then, by Gagliardo-Nirenberg's inequality \eqref{inequ}, we easily find that $E$ restricted on $S(c)$ is bounded from below. Then we are able to introduce the following minimization problem
\begin{align} \label{gmin}
m(c):=\inf_{u \in S(c)} E(u).
\end{align}
Obviously, any minimizer to \eqref{gmin} is a solution to \eqref{equ}-\eqref{mass}. In this case, our first result reads as follows.

\begin{prop} \label{prop}
Let $0<\gamma_2<\gamma_1<\min\{N, 2\}$. Then the following assertions hold.
\begin{itemize}
\item [$(\textnormal{i})$] $-\infty<m(c) \leq 0$ for any $c \geq 0$.
\item [$(\textnormal{ii})$]  The function $c \mapsto m(c)$ is continuous for any $c >0$.
\item [$(\textnormal{iii})$] $m(c) \leq m(c_1) +m(c_2)$ for any $c_1, c_2 \geq 0$ and $c=c_1+c_2>0$. In addition, the function $c \mapsto m(c)$ is nonincreasing on $(0, \infty)$.
\item[$(\textnormal{iv})$] $\lim_{c \to 0^+} m(c)=0$ and $\lim_{c \to \infty}m(c)=-\infty$.
\end{itemize}
\end{prop}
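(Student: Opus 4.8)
The plan is to regard $m(c)$ as an infimum (using near-optimal functions rather than assuming attainment) and to derive all four assertions from the Gagliardo--Nirenberg inequality \eqref{inequ} together with the scaling \eqref{scaling}. Throughout I abbreviate $D_\gamma(u):=\int_{\R^N}\int_{\R^N}|u(x)|^2|u(y)|^2|x-y|^{-\gamma}\,dxdy$, so that $E(u)=\tfrac12\|\nabla u\|_2^2+\tfrac14 D_{\gamma_2}(u)-\tfrac14 D_{\gamma_1}(u)$. For assertion (i), boundedness from below comes from discarding the nonnegative term $\tfrac14 D_{\gamma_2}(u)$ and estimating $D_{\gamma_1}(u)\le C\,c^{(4-\gamma_1)/2}\|\nabla u\|_2^{\gamma_1}$ via \eqref{inequ}; since $\gamma_1<2$, the resulting lower bound $E(u)\ge \tfrac12\|\nabla u\|_2^2-\tfrac{C}{4}c^{(4-\gamma_1)/2}\|\nabla u\|_2^{\gamma_1}$ is a coercive function of $\|\nabla u\|_2$ bounded below, whence $m(c)>-\infty$. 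For $m(c)\le 0$ I would fix any $u\in S(c)$ and feed $u_t$ into \eqref{scaling}: as $t\to 0^+$ every term tends to $0$, so $m(c)\le E(u_t)\to 0$; the case $c=0$ is $m(0)=E(0)=0$.

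For assertion (ii) I would use the mass rescaling $u\mapsto \theta u$ with $\theta=\sqrt{c'/c}$, under which $\|\nabla(\theta u)\|_2^2=\theta^2\|\nabla u\|_2^2$ and $D_\gamma(\theta u)=\theta^4 D_\gamma(u)$. Given $c_n\to c$, choosing a near-optimal $u\in S(c)$ and rescaling it into $S(c_n)$ yields $\limsup_n m(c_n)\le m(c)$; conversely, near-optimal $v_n\in S(c_n)$ have uniformly bounded gradient (by the coercive bound from (i), valid uniformly for $c_n$ near $c$), so rescaling them into $S(c)$ gives $m(c)\le m(c_n)+o(1)$, hence $\liminf_n m(c_n)\ge m(c)$. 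Together these yield continuity.

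Assertion (iii) is the technical heart. I would take near-minimizers $u_1\in S(c_1)$ and $u_2\in S(c_2)$ (with $c_1,c_2>0$; the boundary cases are trivial since $m(0)=0$), approximate them by compactly supported functions, and set $w_R(x):=u_1(x)+u_2(x-Re)$ for a fixed unit vector $e$ and $R$ large enough that the supports are disjoint. Then $w_R\in S(c)$ with $\|\nabla w_R\|_2^2=\|\nabla u_1\|_2^2+\|\nabla u_2\|_2^2$ and $|w_R|^2=|u_1|^2+|u_2(\cdot-Re)|^2$ pointwise, so $D_\gamma(w_R)=D_\gamma(u_1)+D_\gamma(u_2)+2\!\int\!\!\int |u_1(x)|^2|u_2(y-Re)|^2|x-y|^{-\gamma}\,dxdy$, and the cross term vanishes as $R\to\infty$ for both $\gamma=\gamma_1$ and $\gamma=\gamma_2$ since the supports separate and $|x-y|^{-\gamma}\to 0$ there. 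Thus $E(w_R)\to E(u_1)+E(u_2)$, giving $m(c)\le m(c_1)+m(c_2)$. Monotonicity then follows at once: for $0<c_1<c_2$, subadditivity and $m\le 0$ from (i) give $m(c_2)\le m(c_1)+m(c_2-c_1)\le m(c_1)$.

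For assertion (iv), the limit as $c\to 0^+$ follows by squeezing: minimizing the right-hand side of the coercive bound in (i) over $\|\nabla u\|_2\ge 0$ gives $m(c)\ge -K c^{(4-\gamma_1)/(2-\gamma_1)}$, which together with $m(c)\le 0$ forces $m(c)\to 0$. For $c\to\infty$, the key observation is that, since $\gamma_1>\gamma_2$, the ratio $D_{\gamma_1}(u_t)/D_{\gamma_2}(u_t)=t^{\gamma_1-\gamma_2}D_{\gamma_1}(u)/D_{\gamma_2}(u)$ blows up as $t\to\infty$; so I would fix a sufficiently concentrated $v\in S(1)$ with $D_{\gamma_1}(v)>D_{\gamma_2}(v)$ and test with $\sqrt{c}\,v\in S(c)$, obtaining $E(\sqrt{c}\,v)=\tfrac c2\|\nabla v\|_2^2-\tfrac{c^2}{4}\bigl(D_{\gamma_1}(v)-D_{\gamma_2}(v)\bigr)\to-\infty$. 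I expect the main obstacle to be the subadditivity step (iii), precisely the rigorous vanishing of the Hartree cross terms under translation, which the compactly supported approximation is designed to handle cleanly.
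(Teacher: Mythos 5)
Your proposal is correct and follows essentially the same route as the paper: the Gagliardo--Nirenberg bound \eqref{inequ} for coercivity in (i), the limit $E(u_t)\to 0$ as $t\to 0^+$ for $m(c)\le 0$, mass rescaling for the continuity in (ii), and compactly supported translates with decaying Hartree cross terms for the subadditivity in (iii). The only (harmless) deviations are in (iv), where you obtain $c\to 0^+$ from the explicit lower bound $m(c)\ge -Kc^{(4-\gamma_1)/(2-\gamma_1)}$ rather than the paper's argument that near-minimizers with $E\le 0$ have small gradient, and $c\to\infty$ from a fixed dilation (to make $D_{\gamma_1}(v)>D_{\gamma_2}(v)$) followed by pure mass scaling instead of the paper's combined scaling $u_\theta=\theta^{(1+\beta N)/2}u(\theta^{\beta}\cdot)$ with $\beta=1/(2-\gamma_2)$ --- both are equivalent.
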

 
In the mass subcritical case, there do exist global minimizers to the minimization problem as the underlying energy functional related to \eqref{equ1111} restricted on $S(c)$ for any $c>0$. However, this is no longer a case for the problem under our consideration.

\begin{thm} \label{thmsub1}
Let $0<\gamma_2<\gamma_1<\min\{N, 2\}$ and $N \geq 3$, there exists a constant $\hat{c}_0>0$ depending only on $\gamma_1, \gamma_2$ and $N$ such that, for any $0<c<\hat{c}_0$, $m(c)=0$. In addition, for any $0<c<\hat{c}_0$, there exists no minimizers to \eqref{gmin}.
\end{thm}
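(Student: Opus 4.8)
The plan is to prove that $E$ is nonnegative on $S(c)$ for all small $c$; combined with the bound $m(c)\le 0$ from Proposition~\ref{prop}(i), this forces $m(c)=0$. Let me abbreviate $D_\gamma(u):=\int_{\R^N}\int_{\R^N}|u(x)|^2|u(y)|^2|x-y|^{-\gamma}\,dxdy$, so that $E(u)=\frac12\|\nabla u\|_2^2+\frac14\bigl(D_{\gamma_2}(u)-D_{\gamma_1}(u)\bigr)$. The naive route — estimating $D_{\gamma_1}(u)\le C\|\nabla u\|_2^{\gamma_1}\|u\|_2^{4-\gamma_1}$ by the Gagliardo--Nirenberg-type inequality \eqref{inequ} and discarding the nonnegative term $D_{\gamma_2}(u)$ — will not work: since $\gamma_1<2$, the resulting lower bound $\frac12 s^2-Cc^{(4-\gamma_1)/2}s^{\gamma_1}$ (with $s=\|\nabla u\|_2$) is negative near its minimizer, so the competing term $D_{\gamma_2}(u)$ must genuinely be exploited.

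The key step is to use $\gamma_2<\gamma_1$ at the level of the kernels. Since $|x-y|^{-\gamma_2}\ge|x-y|^{-\gamma_1}$ exactly when $|x-y|\ge 1$, I would split the difference $D_{\gamma_2}(u)-D_{\gamma_1}(u)$ at the radius $|x-y|=1$: the far part is nonnegative, while the near part is bounded below by $-\int_{|x-y|\le1}|x-y|^{-\gamma_1}|u(x)|^2|u(y)|^2\,dxdy=:-L(u)$. This yields $E(u)\ge\frac12\|\nabla u\|_2^2-\frac14 L(u)$, and it remains to absorb $L(u)$. Writing $L(u)=\int_{\R^N}(K*|u|^2)|u|^2\,dx$ with the truncated Riesz kernel $K(z):=|z|^{-\gamma_1}\mathbf 1_{\{|z|\le1\}}$, a combination of Young's and Hölder's inequalities (with exponents $N/2$, $N$ and $N/(N-1)$) gives $L(u)\le\|K\|_{N/2}\,\|u\|_{2N/(N-1)}^4$. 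The crucial points are that $K\in L^{N/2}(\R^N)$ precisely because $\gamma_1<2$ (the truncation removes the non-integrable tail, while near the origin $|z|^{-\gamma_1 N/2}$ is integrable iff $\gamma_1<2$), and that the exponent $2N/(N-1)$ lies in $[2,2^*]$ for $N\ge3$, so the Gagliardo--Nirenberg inequality $\|u\|_{2N/(N-1)}\le C\|\nabla u\|_2^{1/2}\|u\|_2^{1/2}$ applies. Hence $L(u)\le C_2\,c\,\|\nabla u\|_2^2$ for a constant $C_2=C_2(N,\gamma_1)$.

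Plugging this in gives $E(u)\ge\left(\frac12-\frac{C_2 c}{4}\right)\|\nabla u\|_2^2$ for every $u\in S(c)$, so setting $\hat c_0:=2/C_2$ ensures $E(u)\ge0$ whenever $0<c<\hat c_0$; with Proposition~\ref{prop}(i) this proves $m(c)=0$. For the nonexistence of minimizers, suppose $u\in S(c)$ satisfied $E(u)=m(c)=0$ for some $0<c<\hat c_0$. Then the displayed lower bound forces $\left(\frac12-\frac{C_2 c}{4}\right)\|\nabla u\|_2^2\le0$ with a strictly positive prefactor, whence $\|\nabla u\|_2=0$; thus $u$ is constant, and the only constant in $H^1(\R^N)$ is $0$, contradicting $\|u\|_2^2=c>0$.

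I would expect the main obstacle to be the middle step: recognizing that the naive Gagliardo--Nirenberg estimate fails in the small-gradient regime and that the remedy is the kernel splitting together with the observation that the truncated kernel is exactly $L^{N/2}$-integrable when $\gamma_1<2$, while $N\ge3$ is what makes the interpolation exponent $2N/(N-1)$ admissible. The remaining computations (verifying the Young--Hölder exponents and the Gagliardo--Nirenberg interpolation weights) are routine once this structure is in place.
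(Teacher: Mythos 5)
Your proof is correct, but it takes a genuinely different route from the paper's. The paper proves the key coercivity estimate via the interpolation inequality of Lemma \ref{estimate} with $\gamma=2$ (valid for $N\ge3$), namely $D_{\gamma_1}(u)\le C\,D_{\gamma_2}(u)^{\theta}\bigl(\|\nabla u\|_2^2\,c\bigr)^{1-\theta}$ where $\gamma_1=\theta\gamma_2+2(1-\theta)$, and then applies Young's inequality so that the two resulting pieces are absorbed, for small $c$, by the positive terms $\tfrac12\|\nabla u\|_2^2$ and $\tfrac14 D_{\gamma_2}(u)$; this yields $E(u)\ge\tfrac38\|\nabla u\|_2^2+\tfrac18 D_{\gamma_2}(u)>0$ on $S(c)$. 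You instead exploit the sign of the kernel difference directly: splitting at $|x-y|=1$, the far part of $D_{\gamma_2}(u)-D_{\gamma_1}(u)$ is pointwise nonnegative, and the near part is controlled by the truncated Riesz potential, for which the Young--H\"older--Gagliardo--Nirenberg chain gives the quadratic bound $L(u)\le C\,c\,\|\nabla u\|_2^2$ precisely because $K=|z|^{-\gamma_1}\mathbf 1_{\{|z|\le1\}}\in L^{N/2}$ when $\gamma_1<2$. Both arguments use $N\ge3$ in an essential way (the paper to reach the endpoint $\gamma=2$ in Lemma \ref{estimate}, you to place $2N/(N-1)$ in the admissible Gagliardo--Nirenberg range) and both correctly identify that the naive estimate $D_{\gamma_1}(u)\lesssim\|\nabla u\|_2^{\gamma_1}$ fails in the small-gradient regime. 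Your version is arguably more elementary in that it bypasses the interpolation between the two Hartree terms entirely, at the cost of losing the extra coercive term $\tfrac18 D_{\gamma_2}(u)$ in the final lower bound; since $\|\nabla u\|_2>0$ for every nonzero $u\in H^1(\R^N)$, your bound $E(u)\ge\bigl(\tfrac12-\tfrac{C_2c}{4}\bigr)\|\nabla u\|_2^2$ still gives strict positivity on $S(c)$ and hence the nonexistence of minimizers, so nothing is lost for the purposes of this theorem.
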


In the spirit of the Lions concentration compactness principle in \cite{Li1, Li2}, we also have the following result.

\begin{thm}  \label{thmsub2}
Let $0<\gamma_2<\gamma_1<\min\{N, 2\}$. Then there exists a constant $\tilde{c}_0>0$ depending only on $\gamma_1, \gamma_2$ and $N$ such that, for any $c>\tilde{c}_0$, $m(c)<0$ and the following strict subadditivity inequality holds,
\begin{align} \label{ssub}
m(c)<m(c_1) + m(c_2),
\end{align}
where $c=c_1+c_2$ and $0<c_1, c_2<c$. In addition, for any $c>\tilde{c}_0$, any minimizing sequence to \eqref{gmin} is compact in $H^1(\R^N)$ up to translations and there exists a minimizer to \eqref{gmin}.
\end{thm}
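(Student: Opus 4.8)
The plan is to run the classical Lions concentration--compactness scheme, with the strict subadditivity \eqref{ssub} playing the role of the device that rules out the splitting of minimizing sequences. Throughout write $D_\gamma(u):=\int_{\R^N}\int_{\R^N}|u(x)|^2|u(y)|^2|x-y|^{-\gamma}\,dx\,dy$, so that $E(u)=\tfrac12\|\nabla u\|_2^2+\tfrac14 D_{\gamma_2}(u)-\tfrac14 D_{\gamma_1}(u)$. First I would construct the threshold and prove $m(c)<0$. Fix $v\in S(1)$ and set $w:=v_\sigma$ with $v_\sigma(x)=\sigma^{N/2}v(\sigma x)\in S(1)$; since $D_{\gamma_i}(v_\sigma)=\sigma^{\gamma_i}D_{\gamma_i}(v)$ and $\gamma_1>\gamma_2$, for $\sigma$ large one has $D_{\gamma_1}(w)>D_{\gamma_2}(w)$. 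Testing with $\sqrt{c}\,w\in S(c)$ gives $E(\sqrt c\,w)=\tfrac c2\|\nabla w\|_2^2+\tfrac{c^2}4\big(D_{\gamma_2}(w)-D_{\gamma_1}(w)\big)$, which is negative once $c$ is large since the coefficient of $c^2$ is negative. Because $m$ is nonincreasing and $m\le 0$ (Proposition \ref{prop}), the set $\{c:m(c)<0\}$ is a half-line $(c_*,\infty)$ with $c_*\ge0$ finite and depending only on $\gamma_1,\gamma_2,N$; I would take $\tilde{c}_0:=\max\{2c_*,1\}$, so that in particular $m(c)<0$ for $c>\tilde{c}_0$.

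Next comes the key scaling inequality: if $m(t)<0$ and $\theta>1$, then $m(\theta t)\le\theta^2 m(t)<\theta m(t)$. Indeed, for a minimizing sequence $(u_n)\subset S(t)$ with $E(u_n)<0$ one reads off $D_{\gamma_1}(u_n)-D_{\gamma_2}(u_n)\ge -4E(u_n)>0$ directly from $E(u_n)<0$, and since $\sqrt\theta\,u_n\in S(\theta t)$,
\[
E(\sqrt\theta\,u_n)=\theta E(u_n)-\tfrac{\theta(\theta-1)}4\big(D_{\gamma_1}(u_n)-D_{\gamma_2}(u_n)\big)\le\theta^2 E(u_n),
\]
so that $m(\theta t)\le\theta^2 m(t)$. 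Equivalently, $t\mapsto m(t)/t$ is strictly decreasing on $(c_*,\infty)$. For $c=c_1+c_2>\tilde{c}_0$ with $c_1\ge c_2>0$ this yields $c_1\ge c/2>c_*$, hence $m(c_1)<0$ and $m(c)<\tfrac{c}{c_1}m(c_1)=m(c_1)+\tfrac{c_2}{c_1}m(c_1)$; comparing $\tfrac{c_2}{c_1}m(c_1)$ with $m(c_2)$ --- via the same monotonicity if $c_2>c_*$, and via $m(c_2)=0>\tfrac{c_2}{c_1}m(c_1)$ if $c_2\le c_*$ --- gives the strict subadditivity \eqref{ssub}.

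For the compactness assertion, let $(u_n)\subset S(c)$ with $c>\tilde{c}_0$ be minimizing. The Hartree--Gagliardo--Nirenberg inequality \eqref{inequ}, which bounds $D_{\gamma_1}(u)$ by $C\|u\|_2^{4-\gamma_1}\|\nabla u\|_2^{\gamma_1}$ with $\gamma_1<2$, makes $E$ coercive on $S(c)$, so $(u_n)$ is bounded in $H^1(\R^N)$. Applying Lions' lemma to $|u_n|^2$, vanishing is impossible because it forces $u_n\to0$ in $L^p$ for all $2<p<2^*$, hence $D_{\gamma_1}(u_n),D_{\gamma_2}(u_n)\to0$ and $\liminf E(u_n)\ge0>m(c)$. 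Dichotomy is impossible by \eqref{ssub}: a mass split into sizes $\alpha$ and $c-\alpha$, $\alpha\in(0,c)$, would give $m(c)\ge m(\alpha)+m(c-\alpha)$ once the nonlocal cross terms are shown to vanish as the two pieces separate. Thus compactness holds: there are $y_n\in\R^N$ with $u_n(\cdot-y_n)$ tight, so along a subsequence $u_n(\cdot-y_n)\rightharpoonup u$ in $H^1$ and strongly in $L^2$; interpolation upgrades this to strong convergence in $L^{4N/(2N-\gamma_i)}$, whence $D_{\gamma_i}(u_n(\cdot-y_n))\to D_{\gamma_i}(u)$. Then $u\in S(c)$ and $E(u)\le\liminf E(u_n)=m(c)$ force $E(u)=m(c)$, so $u$ is the desired minimizer, and $\|\nabla u_n(\cdot-y_n)\|_2\to\|\nabla u\|_2$ promotes the convergence to strong convergence in $H^1(\R^N)$.

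The main obstacle is the exclusion of dichotomy, and specifically the estimate that the interaction $\int_{\R^N}\int_{\R^N}|u_n^1(x)|^2|u_n^2(y)|^2|x-y|^{-\gamma_i}\,dx\,dy$ between two separating bumps tends to $0$: because the Riesz kernel has infinite range, this is not automatic and requires combining its decay at infinity with the tightness supplied by $H^1$-boundedness and a suitable truncation of the far field. The scaling inequality underlying \eqref{ssub} is the other delicate point; once both are secured, the remaining steps are routine.
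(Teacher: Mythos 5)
Your proposal is correct and follows the same overall framework as the paper (global minimization, negativity of $m(c)$ for large $c$, strict subadditivity via a scaling inequality, then Lions concentration--compactness), but the two key technical steps are carried out by genuinely different and in both cases simpler arguments. For the negativity of $m$, the paper tests with the optimizer $Q_{\gamma_1}$ of \eqref{inequ}, minimizes the auxiliary function $F_u(t)$ explicitly in $t$, and extracts a concrete threshold $\tilde c_0$ from the competition of powers of $c$; you instead fix any dilated bump $w$ with $D_{\gamma_1}(w)>D_{\gamma_2}(w)$ (in your notation), let $c\to\infty$ along $\sqrt c\,w$, and define $\tilde c_0$ abstractly from the set $\{c: m(c)<0\}$ using the monotonicity and continuity of $m$ from Proposition \ref{prop}. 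Both are valid; the paper's version yields an explicit constant, yours does not, but the theorem only asserts existence of some $\tilde c_0>0$. For the strict subadditivity, the paper uses the mass-changing dilation $u_\theta(x)=\theta^{(1+\beta N)/2}u(\theta^\beta x)$ with $\beta=1/(2-\gamma_1)$ and a sign analysis of $\frac{d}{d\theta}\bigl(E(u_\theta)-\theta E(u)\bigr)$, whereas your pure amplitude scaling $u\mapsto\sqrt\theta\,u$ gives $E(\sqrt\theta\,u)-\theta^2E(u)=\frac{\theta-\theta^2}{2}\|\nabla u\|_2^2\le0$, hence $m(\theta t)\le\theta^2 m(t)<\theta m(t)$ whenever $m(t)<0$; this is shorter and equally rigorous, and your case split $c_2\le c_*$ versus $c_2>c_*$ correctly handles decompositions in which the smaller piece has zero energy. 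The concentration--compactness step is left at the same level of detail as in the paper (which simply cites Lions); the cross-term estimate you flag as the delicate point in excluding dichotomy is exactly the computation already performed in the proof of Proposition \ref{prop}(iii) (disjoint supports at mutual distance $n$ give cross terms of order $n^{-\gamma_i}$), so no new difficulty arises there.
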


Second we study solutions to \eqref{equ}-\eqref{mass} in the mass critical case. In this case, we first have the following result.

\begin{thm}\label{nonexistence1}
Let $0<\gamma_2<\gamma_1=2$ and $N \geq 3$. Then there exists a constant $\tilde{c}_1>0$ depending only on $N$ such that $m(c)=0$ for any $0<c\leq \tilde{c}_1$ and $m(c)=-\infty$ for any $c>\tilde{c}_1$. In addition, there exists no solutions to \eqref{equ}-\eqref{mass} for any $0<c \leq \tilde{c}_1$.
\end{thm}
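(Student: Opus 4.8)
The plan is to pin down the threshold explicitly as $\tilde c_1 = 2/\mathcal C_N$, where $\mathcal C_N$ is the sharp constant in the mass critical Gagliardo--Nirenberg inequality \eqref{inequ} specialized to $\gamma_1=2$, i.e. the best constant in
$$
D_2(u) \le \mathcal C_N \,\|u\|_2^2\, \|\nabla u\|_2^2, \qquad u\in H^1(\R^N),
$$
where I abbreviate $D_\gamma(u):=\int_{\R^N}\int_{\R^N}|x-y|^{-\gamma}|u(x)|^2|u(y)|^2\,dxdy$. First I would establish $m(c)=0$ on $(0,\tilde c_1]$. For the lower bound, insert this inequality into $E$ and use $\|u\|_2^2=c$ to obtain
$$
E(u)=\tfrac12\|\nabla u\|_2^2+\tfrac14 D_{\gamma_2}(u)-\tfrac14 D_2(u)\ \ge\ \tfrac14\big(2-\mathcal C_N c\big)\|\nabla u\|_2^2+\tfrac14 D_{\gamma_2}(u),
$$
so that $E\ge 0$ on $S(c)$ whenever $c\le \tilde c_1$, giving $m(c)\ge0$. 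For the matching upper bound, fix any $u\in S(c)$ and test with the mass preserving scaling $u_t$; by \eqref{scaling} with $\gamma_1=2$ one has $E(u_t)=\tfrac{t^2}{4}\big(2\|\nabla u\|_2^2-D_2(u)\big)+\tfrac{t^{\gamma_2}}{4}D_{\gamma_2}(u)\to 0$ as $t\to0^+$, whence $m(c)\le0$ and therefore $m(c)=0$.

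Next, for $c>\tilde c_1$ I would show $m(c)=-\infty$ again through the scaling $u_t$. Since $\mathcal C_N c>2$, the definition of $\mathcal C_N$ as a supremum, together with the scale invariance of the quotient $D_2(u)/(\|u\|_2^2\|\nabla u\|_2^2)$, furnishes some $u\in S(c)$ with $D_2(u)>2\|\nabla u\|_2^2$. For such $u$ the coefficient of $t^2$ in $E(u_t)$ is strictly negative while the remaining term carries the lower power $t^{\gamma_2}$ with $\gamma_2<2$; hence $E(u_t)\to-\infty$ as $t\to+\infty$, and since $u_t\in S(c)$ this forces $m(c)=-\infty$.

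Finally, for the nonexistence of solutions on $(0,\tilde c_1]$ I would exploit the Pohozaev identity generated by the same scaling. Any solution of \eqref{equ}--\eqref{mass} is a critical point of $E$ constrained to $S(c)$; since $t\mapsto u_t$ is a curve in $S(c)$ passing through $u$ at $t=1$, the condition $\frac{d}{dt}E(u_t)\big|_{t=1}=0$ must hold, which by \eqref{scaling} with $\gamma_1=2$ reads
$$
\|\nabla u\|_2^2+\tfrac{\gamma_2}{4}D_{\gamma_2}(u)=\tfrac12 D_2(u).
$$
Inserting the bound $D_2(u)\le\mathcal C_N c\,\|\nabla u\|_2^2\le 2\|\nabla u\|_2^2$ (valid since $c\le\tilde c_1$) yields $\tfrac{\gamma_2}{4}D_{\gamma_2}(u)\le0$, hence $D_{\gamma_2}(u)=0$ and so $u\equiv0$, contradicting $u\in S(c)$ with $c>0$. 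This rules out any solution for $0<c\le\tilde c_1$, and handles the endpoint $c=\tilde c_1$ uniformly.

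The routine parts are differentiating the explicit one-variable function $t\mapsto E(u_t)$ and the positivity $D_\gamma(u)>0$ for $u\not\equiv0$. The point requiring the most care is the identification of $\tilde c_1$ with the \emph{sharp} Gagliardo--Nirenberg constant so that the threshold is exact: the lower bound for $m$, the construction of the unbounded-below direction, and the endpoint $c=\tilde c_1$ in the nonexistence argument all hinge on the optimal constant and on the strict positivity of $D_{\gamma_2}$. I expect the main conceptual step to be justifying that \emph{every} constrained critical point—not merely a minimizer—obeys the Pohozaev relation above, which is exactly what the admissible scaling $u_t\in S(c)$ delivers.
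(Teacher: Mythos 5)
Your proposal is correct and follows essentially the same route as the paper: the same sharp Gagliardo--Nirenberg lower bound giving $\tilde c_1=2/C_{N,2}$, the same $t\to0^+$ scaling for $m(c)\le 0$, an unbounded-below direction for $c>\tilde c_1$ (you use a near-optimizer of the sharp constant where the paper tests with the explicit ground state $Q_2$, an immaterial difference), and the same Pohozaev-based nonexistence argument. The only point to tighten is the justification of $Q(u)=0$ for an arbitrary constrained critical point: formally differentiating $t\mapsto E(u_t)$ at $t=1$ is not rigorous for general $u\in H^1(\R^N)$ (the curve need not be differentiable there), and the paper instead invokes Lemma \ref{ph}, whose proof uses the standard cut-off multiplier $\psi(\sigma x)\,x\cdot\nabla u$.
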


Third we consider solutions to \eqref{equ}-\eqref{mass} in the mass supercritical case.  In this case, we have that $\gamma_1>2$. It then follows from \eqref{scaling} that $E(u_t) \to -\infty$ as $t \to \infty$. Hence there holds that $m(c)=-\infty$ for any $c>0$. For this reason, it is impossible to obtain solutions to \eqref{equ}-\eqref{mass} with the help of the global minimization problem \eqref{gmin}. In this situation, we introduce the following minimization problem,
\begin{align} \label{minmin}
\Gamma(c):=\inf_{u \in P(c)} E(u),
\end{align}
where $P(c)$ is the related Pohozaev manifold defined by
$$
P(c):=\{u \in S(c) : Q(u)=0\},
$$
and
\begin{align*}
Q(u):&=\frac{\partial}{\partial t} E(u_t) \mid_{t=1}\\
&=\int_{\R^N}|\nabla u|^2 \,dx + \frac {\gamma_2}{4}\int_{\R^N} \int_{\R^N}\frac{|u(x)|^2|u(y)|^2}{|x-y|^{\gamma_2}} \, dxdy-\frac{\gamma_1}{4} \int_{\R^N} \int_{\R^N}\frac{|u(x)|^2|u(y)|^2}{|x-y|^{\gamma_1}} \, dxdy.
\end{align*}
Here $Q(u)=0$ is the Pohozaev identity related to \eqref{equ}-\eqref{mass}, see Lemma \ref{ph}. The existence of solutions to \eqref{equ}-\eqref{mass} is derived by investigating the minimization problem \eqref{minmin}.

\begin{thm}\label{thm1}
Let $0<\gamma_2<\gamma_1<\min\{N, 4\}$, $\gamma_1>2$ and $N \geq 3$. Then there exists a constant $\hat{c}_1>0$ such that, for any $0<c<\hat{c}_1$, \eqref{equ}-\eqref{mass} has a ground state $u_c \in S(c)$ satisfying $E(u_c)=\Gamma(c)$.
\end{thm}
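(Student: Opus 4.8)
The plan is to realize $\Gamma(c)$ as a mountain-pass level for $E$ on $S(c)$ and to recover a minimizer by a concentration-compactness analysis of a suitable Palais--Smale sequence, in the spirit of Jeanjean's approach adapted to the two competing Hartree terms. Throughout write $D_\gamma(u):=\int_{\R^N}\int_{\R^N}\frac{|u(x)|^2|u(y)|^2}{|x-y|^{\gamma}}\,dx\,dy$, so that \eqref{scaling} becomes $E(u_t)=\tfrac{t^2}{2}\|\nabla u\|_2^2+\tfrac{t^{\gamma_2}}{4}D_{\gamma_2}(u)-\tfrac{t^{\gamma_1}}{4}D_{\gamma_1}(u)$. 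First I would carry out the fibering analysis: for fixed $u\in S(c)$ set $g_u(t):=E(u_t)$, so that $Q(u_t)=t\,g_u'(t)$ and critical points of $g_u$ correspond to the points of $P(c)$ on the ray $\{u_t\}$. Since $\gamma_1>2$ and $\gamma_2<\gamma_1$, the map $t\mapsto t^{1-\gamma_1}g_u'(t)=\|\nabla u\|_2^2\,t^{2-\gamma_1}+\tfrac{\gamma_2}{4}D_{\gamma_2}(u)\,t^{\gamma_2-\gamma_1}-\tfrac{\gamma_1}{4}D_{\gamma_1}(u)$ is strictly decreasing on $(0,\infty)$; hence $g_u$ has exactly one critical point $t_u$, which is its global maximum, with $u_{t_u}\in P(c)$. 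This yields the minimax identity $\Gamma(c)=\inf_{u\in S(c)}\max_{t>0}E(u_t)$ and shows that \eqref{minmin} is a genuine minimax level.

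Next I would record the structural facts that make $\Gamma(c)$ strictly positive and the relevant sequences bounded. Eliminating $D_{\gamma_1}(u)$ through $Q(u)=0$ gives, on $P(c)$,
$$E(u)=\Big(\tfrac12-\tfrac1{\gamma_1}\Big)\|\nabla u\|_2^2+\tfrac14\Big(1-\tfrac{\gamma_2}{\gamma_1}\Big)D_{\gamma_2}(u),$$
both coefficients being positive. Combining this with the Gagliardo--Nirenberg inequality \eqref{inequ}, in the form $D_{\gamma_1}(u)\le C\|\nabla u\|_2^{\gamma_1}c^{(4-\gamma_1)/2}$, and with $Q(u)=0$, forces $\|\nabla u\|_2\ge K\,c^{(\gamma_1-4)/(2(\gamma_1-2))}>0$ on $P(c)$; since this exponent is negative, the bound blows up as $c\to0^+$, so $\Gamma(c)>0$ and $\Gamma(c)\to\infty$ as $c\to0$. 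The same computation shows $E-\tfrac1{\gamma_1}Q$ controls $\|\nabla u\|_2^2$, giving boundedness of minimizing/Palais--Smale sequences. I would also establish that $c\mapsto\Gamma(c)$ is continuous and, for small $c$, non-increasing: this is where the small-mass threshold $\hat c_1$ enters, because for small $c$ the level is governed by the dominant focusing $\gamma_1$-term, whose pure mountain-pass level scales like $c^{-(4-\gamma_1)/(\gamma_1-2)}$ and is decreasing.

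Then I would produce a bounded Palais--Smale sequence $(u_n)\subset S(c)$ for $E|_{S(c)}$ at level $\Gamma(c)$ with the extra property $Q(u_n)\to0$, via Jeanjean's device of applying Ekeland's principle to the scaled functional $(s,u)\mapsto E(u_{e^s})$ on $\R\times S(c)$; boundedness follows from the coercivity of $E-\tfrac1{\gamma_1}Q$. Passing to a weak limit $u_c$, I would prove compactness up to translations. \emph{Vanishing} is excluded since it would force $D_{\gamma_1}(u_n),D_{\gamma_2}(u_n)\to0$ and hence $E(u_n)\to0\neq\Gamma(c)$. \emph{Dichotomy} is excluded by the strict subadditivity $\Gamma(c)<\Gamma(c_1)+\Gamma(c_2)$ for $c_1+c_2=c$, $c_1,c_2>0$, which follows from the positivity and small-$c$ monotonicity of $\Gamma$ (so $\Gamma(c_1)+\Gamma(c_2)\ge 2\Gamma(c)>\Gamma(c)$), together with the fact that the cross Hartree interactions between far-apart bumps vanish as the separation tends to infinity. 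This gives $u_n(\cdot-y_n)\to u_c$ strongly in $H^1(\R^N)$, so $u_c\in S(c)\cap P(c)$ and $E(u_c)=\Gamma(c)$. Finally $u_c$ is a constrained critical point and solves \eqref{equ} with a Lagrange multiplier $\lambda$; combining $Q(u_c)=0$ (Lemma~\ref{ph}) with the Nehari identity $\|\nabla u_c\|_2^2+\lambda\|u_c\|_2^2=D_{\gamma_1}(u_c)-D_{\gamma_2}(u_c)$ yields $\lambda\|u_c\|_2^2=\tfrac{4-\gamma_1}{4}D_{\gamma_1}(u_c)-\tfrac{4-\gamma_2}{4}D_{\gamma_2}(u_c)$, which is positive for small $c$ since the concentrated profile makes the faster-growing $\gamma_1$-term dominate. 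As every solution of \eqref{equ}--\eqref{mass} satisfies $Q=0$ and hence lies in $P(c)$, minimality of $E(u_c)=\Gamma(c)$ on $P(c)$ makes $u_c$ a ground state.

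I expect the main obstacle to be the compactness step, and within it precisely the strict subadditivity that rules out dichotomy: the two Hartree terms have different homogeneities, so $c\mapsto\Gamma(c)$ has no exact scaling, and its monotonicity—hence the subadditivity—must be extracted by a careful comparison with the dominant focusing term, an argument valid only in the small-mass regime $0<c<\hat c_1$. Controlling the sign of $\lambda$ in that same regime is the secondary difficulty.
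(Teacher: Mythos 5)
Your strategy coincides in all essentials with the paper's: the same fibering analysis (Lemma \ref{unique}), the minimax identity $\Gamma(c)=\inf_{u\in S(c)}\max_{t>0}E(u_t)$, positivity of $\Gamma(c)$ and coercivity on $P(c)$ via $E-\tfrac{1}{\gamma_1}Q$ (Lemma \ref{below}), a Palais--Smale sequence asymptotically on $P(c)$, and recovery of compactness from the monotonicity of $c\mapsto\Gamma(c)$ together with the sign of the Lagrange multiplier. The one genuinely different device is the construction of the Palais--Smale sequence: you propose Jeanjean's auxiliary functional $(s,u)\mapsto E(u_{e^s})$ with Ekeland's principle, whereas the paper (Lemmas \ref{ps}--\ref{pss}, following \cite{BS1,Gh}) uses Ghoussoub's minimax principle for homotopy-stable families and projects the minimax sets onto $P(c)$. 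Both are standard and deliver the same object, so this is a matter of taste.

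Two of your steps are thinner than the argument requires. First, writing $D_\gamma(u)$ for the $\gamma$-Hartree double integral as in your proposal, your exclusion of dichotomy via ``$\Gamma(c_1)+\Gamma(c_2)\ge 2\Gamma(c)>\Gamma(c)$'' presupposes that the escaping piece $w_n=u_n-u(\cdot-y_n)$ carries energy at least $\Gamma(c_2)+o_n(1)$; this is not automatic, since $w_n$ satisfies only $Q(w_n)=o_n(1)$ and need not lie on the Pohozaev manifold of its own mass. The paper sidesteps this by using only the weaker bound $E(w_n)\ge E(w_n)-\tfrac{1}{\gamma_1}Q(w_n)+o_n(1)\ge o_n(1)$, which combined with $\Gamma(c)\ge E(w_n)+\Gamma(\|u\|_2^2)+o_n(1)$ and the nonincreasing property of $\Gamma$ gives $\Gamma(\|u\|_2^2)=\Gamma(c)$; the \emph{strict} monotonicity on $(0,\hat c_1)$ (Lemma \ref{ladecreasing}, itself a consequence of $\lambda_c>0$) then forces $\|u\|_2^2=c$. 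You should argue this way rather than through subadditivity. Second, the positivity of $\lambda$ for small $c$ cannot be read off from $\lambda c=\tfrac{4-\gamma_1}{4}D_{\gamma_1}(u_c)-\tfrac{4-\gamma_2}{4}D_{\gamma_2}(u_c)$ by a dominance heuristic: the paper eliminates the $\gamma_2$-term through the Pohozaev identity to obtain $\lambda_c c=\tfrac{4-\gamma_2}{\gamma_2}\|\nabla u_c\|_2^2-\tfrac{\gamma_1-\gamma_2}{\gamma_2}D_{\gamma_1}(u_c)$ and controls the negative term by \eqref{inequ} together with the blow-up of $\|\nabla u_c\|_2$ as $c\to0^+$ (Lemma \ref{sign}); this quantitative step is what actually produces the threshold $\hat c_1$, and it must be carried out, not merely flagged.
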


To prove this theorem, the essential argument is to show that there exists a Palais-Smale sequence for $E$ restricted on $S(c)$ at the level $\Gamma(c)$. Later, by mainly using the coerciveness of $E$ restricted on $P(c)$, the nonincreasing property of $\Gamma(c)$ on $(0, +\infty)$ as well as positiveness of the associated Lagrange multiplier, we can complete the proof.

\begin{rem}
In Theorem \ref{thm1}, the condition that $c>0$ is small is applied to guarantee that the associated Lagrange multiplier is positive, see Lemma \ref{sign}. It is hard to identify the sign of the Lagrange multiplier for any $c>0$. Then it is unknown to us that the existence of solutions to \eqref{equ}-\eqref{mass} remains true for any $c>0$. This is different from the problem with one Hartree-type nonlinearity treated in \cite{Luo}, where the associated Lagrange multiplier is positive for any $c>0$, see \cite[Lemma 2.7]{Luo}.
\end{rem}

Let us now come back to the mass critical case. In Theorem \ref{nonexistence1}, we note that there exists no solutions to \eqref{equ}-\eqref{mass} for any $0<c<\tilde{c}$. It would be interesting to ask if there exist solutions to \eqref{equ}-\eqref{mass} for $c>\tilde{c}_1$. Indeed, there do exist solutions to \eqref{equ}-\eqref{mass} for some $c>\tilde{c}_1$.

\begin{thm} \label{thmcritical}
Let $0<\gamma_2<\gamma_1=2$ and $N \geq 3$. Then there exists a constant $c_1^*>\tilde{c}_1$ such that, for any $\tilde{c}_1<c<c_1^*$, there exists a ground state $u_c \in S(c)$ to \eqref{equ}-\eqref{mass} satisfying $E(u_c)=\Gamma(c)$.
\end{thm}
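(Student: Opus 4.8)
The plan is to adapt the Pohozaev--manifold scheme used for Theorem \ref{thm1} to the borderline exponent $\ga_1=2$, where the new phenomenon is that the energy restricted to $P(c)$ no longer controls the kinetic term. Writing $D_\ga(u):=\int_{\R^N}\int_{\R^N}\frac{|u(x)|^2|u(y)|^2}{|x-y|^\ga}\,dxdy$ and specialising \eqref{scaling} to $\ga_1=2$, the fibering map becomes $E(u_t)=\frac{t^2}{2}\big(\|\nabla u\|_2^2-\tfrac12 D_2(u)\big)+\frac{t^{\ga_2}}{4}D_{\ga_2}(u)$. Setting $A(u):=\|\nabla u\|_2^2-\tfrac12 D_2(u)$, this map has a unique global maximiser $t^*(u)$ precisely when $A(u)<0$, and then $u_{t^*}\in P(c)$. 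By the sharp Hartree Gagliardo--Nirenberg inequality the set $\{u\in S(c):A(u)<0\}$ is nonempty exactly when $c>\tilde c_1$, so $P(c)\neq\emptyset$ for such $c$; moreover a direct computation gives $E(u)=\frac{2-\ga_2}{8}D_{\ga_2}(u)$ on $P(c)$, whence $\Ga(c)=\frac{2-\ga_2}{8}\inf_{P(c)}D_{\ga_2}\ge0$. I would first record the qualitative properties of $\Ga$ (positivity, continuity, and strict monotone decrease in $c$, together with $\Ga(c)\to+\infty$ as $c\downarrow\tilde c_1$), since the strict monotonicity is what will later exclude dichotomy.

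Next I would establish that minimising sequences for $\Ga(c)$, and more generally Palais--Smale sequences with $Q(u_n)\to0$, are bounded in $H^1(\R^N)$. The identity $E(u_n)-\tfrac12 Q(u_n)=\frac{2-\ga_2}{8}D_{\ga_2}(u_n)$ immediately bounds $D_{\ga_2}(u_n)$. To bound $\|\nabla u_n\|_2$ I argue by contradiction: if $\|\nabla u_n\|_2\to\infty$, rescale through $w_n:=(u_n)_{1/\|\nabla u_n\|_2}$ so that $\|\nabla w_n\|_2=1$ and $\|w_n\|_2^2=c$; then $D_{\ga_2}(w_n)\to0$ while $Q(u_n)\to0$ forces $D_2(w_n)\to2$. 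Since $D_{\ga_2}(w_n)\to0$ but $D_2(w_n)\not\to0$, Lions' lemma excludes vanishing, so a translate of $w_n$ converges weakly to some $w\neq0$; weak lower semicontinuity of $u\mapsto D_{\ga_2}(u)$ then yields $D_{\ga_2}(w)>0$, contradicting $D_{\ga_2}(w_n)\to0$. Hence $\|\nabla u_n\|_2$ stays bounded.

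With boundedness in hand, I would produce a Palais--Smale sequence for $E$ restricted to $S(c)$ at the level $\Ga(c)$ with $Q(u_n)\to0$. The fibering analysis of the first step shows that $u\mapsto u_{t^*(u)}$ is a homeomorphism from $\{A<0\}\cap S(c)$ onto $P(c)$ and that $\Ga(c)$ admits the mountain--pass characterisation $\inf_{A(u)<0}\max_{t>0}E(u_t)$; applying the minimax principle on $\R\times S(c)$ in the spirit of Jeanjean then yields such a sequence. Passing to a weak limit $u_n\wto u_c$ (bounded by the previous step) and recording the Lagrange multipliers $\la_n\to\la$, the Nehari identity $\|\nabla u_c\|_2^2+\la c=D_2(u_c)-D_{\ga_2}(u_c)$ together with $Q(u_c)=0$ gives the key relation
\[
\la c=\|\nabla u_c\|_2^2-4\,\Ga(c).
\]

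The heart of the proof --- and the reason for the upper threshold $c_1^*$ --- is to show $\la>0$. Because on $P(c)$ the energy controls only $D_{\ga_2}$ and not $\|\nabla u\|_2^2$, positivity of $\la$ is not automatic as it was for $\ga_1>2$; it is equivalent to $\|\nabla u_c\|_2^2>4\,\Ga(c)$, which I would verify for $c$ slightly above $\tilde c_1$ and then define $c_1^*$ as the supremum of masses for which it persists. Granting $\la>0$, the operator $-\De+\la$ is positive and coercive, so the weak limit is nontrivial; using the strict subadditivity of $\Ga$ (from its strict monotonicity) to exclude dichotomy and Lions' lemma to exclude vanishing, I obtain $u_n\to u_c$ strongly in $H^1(\R^N)$ up to translation. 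Then $u_c\in S(c)\cap P(c)$ with $E(u_c)=\Ga(c)$ solves \eqref{equ}--\eqref{mass} and is a ground state. I expect the verification of $\la>0$ on $(\tilde c_1,c_1^*)$, equivalently the delicate comparison $\|\nabla u_c\|_2^2>4\,\Ga(c)$, to be the main obstacle, precisely because the mass--critical scaling removes the kinetic coercivity available in the supercritical case.
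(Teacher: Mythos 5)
Your overall architecture coincides with the paper's: restrict to the open set $\mathcal{S}(c)=\{u\in S(c):\|\nabla u\|_2^2<\tfrac12 D_2(u)\}$ (your $\{A<0\}$), minimize $E$ over the Pohozaev manifold intersected with this set, produce a Palais--Smale sequence via the fibering/minimax machinery of Lemmas \ref{unique}--\ref{pss}, use $E=\frac{2-\gamma_2}{8}D_{\gamma_2}$ on $P(c)$ together with a boundedness argument (your rescaling contradiction is a legitimate alternative to the interpolation-plus-Young route of Lemma \ref{below1}), and recover compactness from monotonicity of $\Gamma$ and positivity of the multiplier. The identity $\lambda c=\|\nabla u_c\|_2^2-4\Gamma(c)$ you derive is correct.

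The genuine gap is exactly the step you flag as ``the main obstacle'': you never prove $\lambda>0$, and you propose to \emph{define} $c_1^*$ as the supremum of masses for which the inequality $\|\nabla u_c\|_2^2>4\Gamma(c)$ ``persists''. This is circular --- the inequality is a property of the minimizer $u_c$, whose existence is what the theorem asserts, and a supremum over an unverified nonempty set does not yield an interval $(\tilde c_1,c_1^*)$ with $c_1^*>\tilde c_1$. The paper closes this in Lemma \ref{sign1} by eliminating $D_{\gamma_2}$ (rather than $D_2$) from the combination of the Nehari and Pohozaev identities, which gives the equivalent form
\begin{align*}
\lambda_c\, c=\frac{4-\gamma_2}{\gamma_2}\int_{\R^N}|\nabla u_c|^2\,dx-\frac{2-\gamma_2}{\gamma_2}\int_{\R^N}\int_{\R^N}\frac{|u_c(x)|^2|u_c(y)|^2}{|x-y|^{2}}\,dxdy,
\end{align*}
and then applies \eqref{inequ} with $\gamma=2$, namely $D_2(u_c)\le C_{N,2}\,c\,\|\nabla u_c\|_2^2$, to obtain
$\lambda_c\, c\ge \gamma_2^{-1}\bigl[(4-\gamma_2)-(2-\gamma_2)C_{N,2}\,c\bigr]\|\nabla u_c\|_2^2$. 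This is strictly positive for every nontrivial solution precisely when $c<c_1^*:=\frac{4-\gamma_2}{(2-\gamma_2)C_{N,2}}$, and $c_1^*>\tilde c_1=2/C_{N,2}$ because $\frac{4-\gamma_2}{2(2-\gamma_2)}>1$ for $0<\gamma_2<2$. In your formulation the same conclusion is not reachable directly, because bounding $\Gamma(c)=\frac{2-\gamma_2}{8}D_{\gamma_2}(u_c)$ against $\|\nabla u_c\|_2^2$ has no off-the-shelf inequality in the right direction; the key trick is to express the multiplier through $D_2$, which \emph{is} controlled by the kinetic term via the sharp Hartree Gagliardo--Nirenberg constant that already defines $\tilde c_1$. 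Without this (or an equivalent) argument your proof does not produce the threshold $c_1^*$ and the compactness step cannot be completed.
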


To prove Theorem \ref{thmcritical}, we only need to replace the roles of $S(c)$ in the proof of Theorem \ref{thm1} by $\mathcal{S}(c)$, where
$$
\mathcal{S}(c):=\left\{ u \in S(c) : \int_{\R^N} |\nabla u|^2 \, dx <\frac 12\int_{\R^N} \int_{\R^N}\frac{|u(x)|^2|u(y)|^2}{|x-y|^{2}} \, dxdy \right\}.
$$
This completes the proof.

Next we reveal some asymptotical properties of the function $c \mapsto \Gamma(c)$ as $c \to 0^+$ and $c \to \infty$, respectively.

\begin{prop} \label{asym}
Let $0<\gamma_2<\gamma_1<\min\{N, 4\}$, $\gamma_1>2$ and $N \geq 3$. Then $\lim_{c \to 0^+}\Gamma(c)=+ \infty$ and $\lim_{c \to \infty} \Gamma(c)>0$. If in addition $N \geq 5$, then there exists a constant $c_{\infty}>0$ such that, for any $c \geq c_{\infty}$, $\Gamma(c)=m$, where $m>0$ is the least energy level of solutions to \eqref{equz}.
\end{prop}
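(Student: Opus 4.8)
The plan is to work through the fibering map associated with the scaling \eqref{scaling}. For $u \in S(c)$, since $\gamma_2 < 2 < \gamma_1$, the function $t \mapsto E(u_t)$ is positive and increasing near $t=0$ and tends to $-\infty$ as $t \to \infty$; differentiating $E(u_t)$ in $t$ and dividing by $t^{\gamma_1 - 1}$ shows $E(u_t)$ has a unique critical point $t_u>0$, a strict global maximum, which is the only $t$ with $u_{t_u} \in P(c)$. Hence $\Gamma(c) = \inf_{u \in S(c)} \max_{t>0} E(u_t)$, and I will use freely the monotonicity of $c \mapsto \Gamma(c)$ used earlier. The engine of the whole argument is the pair of identities obtained on $P(c)$ by using $Q(u)=0$ to eliminate, respectively, the $\gamma_1$-term and then the gradient term:
\begin{align*}
E(u) &= \frac{\gamma_1 - 2}{2\gamma_1}\int_{\R^N}|\nabla u|^2\,dx + \frac{\gamma_1 - \gamma_2}{4\gamma_1}\int_{\R^N}\int_{\R^N}\frac{|u(x)|^2|u(y)|^2}{|x-y|^{\gamma_2}}\,dxdy\\
&= \frac{\gamma_1 - 2}{8}\int_{\R^N}\int_{\R^N}\frac{|u(x)|^2|u(y)|^2}{|x-y|^{\gamma_1}}\,dxdy + \frac{2-\gamma_2}{8}\int_{\R^N}\int_{\R^N}\frac{|u(x)|^2|u(y)|^2}{|x-y|^{\gamma_2}}\,dxdy,
\end{align*}
in which every coefficient is strictly positive because $\gamma_2 < 2 < \gamma_1$; in particular $E>0$ pointwise on $P(c)$.

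For the behaviour as $c \to 0^+$ I would use the first identity to get $E(u) \ge \frac{\gamma_1-2}{2\gamma_1}\|\nabla u\|_2^2$ on $P(c)$, and then bound $\|\nabla u\|_2^2$ from below. From $Q(u)=0$ one has $\|\nabla u\|_2^2 \le \frac{\gamma_1}{4}\int_{\R^N}\int_{\R^N}\frac{|u(x)|^2|u(y)|^2}{|x-y|^{\gamma_1}}\,dxdy$, and the Gagliardo--Nirenberg inequality \eqref{inequ} bounds the right-hand side by $C\|\nabla u\|_2^{\gamma_1} c^{(4-\gamma_1)/2}$. Since $\gamma_1>2$ this rearranges to $\|\nabla u\|_2^2 \ge K\, c^{-(4-\gamma_1)/(\gamma_1-2)}$ for every $u \in P(c)$, with $K=K(N,\gamma_1,\gamma_2)$. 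As $2<\gamma_1<4$ the exponent is negative, so the lower bound blows up and $\Gamma(c)\to+\infty$ as $c \to 0^+$.

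Monotonicity guarantees that $m:=\lim_{c\to\infty}\Gamma(c)=\inf_{c>0}\Gamma(c)\ge 0$ exists and coincides with the free level $\inf\{E(u): u\in H^1(\R^N)\setminus\{0\},\, Q(u)=0\}$, where the mass is no longer prescribed. To prove $m>0$ I would argue by contradiction: if $u_n$ with $Q(u_n)=0$ satisfies $E(u_n)\to 0$, the positive-coefficient identity forces $\|\nabla u_n\|_2 \to 0$ together with both Hartree integrals. Rescaling each $u_n$ by \eqref{scaling} to normalize its gradient and running a concentration--compactness argument, the only way $E$ can decay along $\{Q=0\}$ is by vanishing; but vanishing is incompatible with $Q(u_n)=0$ and the lower bound $\|\nabla u\|_2^{\gamma_1-2}\|u\|_2^{4-\gamma_1}\ge c_0>0$, which comes from the same use of $Q(u)=0$ and \eqref{inequ} as in the first limit. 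This contradiction yields $m>0$, i.e. $\lim_{c\to\infty}\Gamma(c)>0$. The defocusing $\gamma_2$-term is essential here: discarding it leaves only the pure focusing level, which decays to $0$.

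Finally, for $N\ge 5$ I would realize $m$ as a minimum attained at finite mass. Running the direct method on a minimizing sequence for the free problem $\{Q=0\}$, one uses strict subadditivity of the level together with \eqref{inequ} to exclude vanishing and dichotomy; the restriction $N\ge 5$ is what secures the compactness needed to keep the limiting profile's mass finite, producing a minimizer $u^\ast$ with $Q(u^\ast)=0$, $E(u^\ast)=m$ and $c_\infty:=\|u^\ast\|_2^2<\infty$. Then $u^\ast\in P(c_\infty)$ gives $\Gamma(c_\infty)=m$. For $c>c_\infty$ I would glue to $u^\ast$ a mass-$(c-c_\infty)$ bump pushed to spatial infinity and spread out: its gradient, its Hartree integrals, and all cross terms vanish in the limit, so after a negligible rescaling back onto $P(c)$ one obtains test functions in $P(c)$ with energy tending to $m$; hence $\Gamma(c)\le m$, and combined with $\Gamma(c)\ge m$ this forces $\Gamma(c)=m$ for every $c\ge c_\infty$. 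I expect the decisive difficulty to be precisely this compactness step---constructing the free minimizer and pinning down why finite-mass attainment needs $N\ge 5$---whereas the two limits follow rather directly from the Pohozaev identities and \eqref{inequ}.
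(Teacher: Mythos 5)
Your treatment of $\lim_{c\to 0^+}\Gamma(c)=+\infty$ is correct and is essentially the paper's argument (identities \eqref{b1}, \eqref{largenable1}, \eqref{largenable}); applying the bound to every $u\in P(c)$ rather than to a minimizer is if anything cleaner. The overall scheme for the remaining claims --- reduce to a mass-free level $m$ attached to the constraint $Q=0$, show $m>0$, and for $N\geq 5$ produce a finite-mass minimizer $u^*$ so that $c_\infty=\|u^*\|_2^2$ works via monotonicity of $\Gamma$ --- is also the paper's scheme (Section \ref{section6}).

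The genuine gap is in how you control the focusing term once the mass is no longer prescribed, and it occurs twice. First, your proof that $m>0$ argues that $\|\nabla u_n\|_2\to 0$ contradicts $Q(u_n)=0$ via the bound $\|\nabla u\|_2^{\gamma_1-2}\|u\|_2^{4-\gamma_1}\geq c_0$ coming from \eqref{inequ}. But on the free set $\{Q=0\}$ the mass is unconstrained, so this inequality is perfectly compatible with $\|\nabla u_n\|_2\to 0$ provided $\|u_n\|_2\to\infty$; no contradiction results. Every Hardy--Littlewood--Sobolev bound on the $\gamma_1$-term that routes through an $L^q$ norm with $q<2^*$ necessarily carries a factor of the mass, so \eqref{inequ} cannot close this argument. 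The paper's fix is Lemma \ref{x}: interpolate the $\gamma_1$-Hartree integral between the $\gamma_2$-Hartree integral and the $|x-y|^{-4}$ integral, the latter controlled by $\|u\|_{2^*}^4\lesssim\|\nabla u\|_2^4$ --- a genuinely mass-free estimate, and the first place $N\geq 5$ is forced (only $\gamma=4$ pairs with the $L^{2^*}$ norm under HLS). With that, $Q(u)=0$ plus Young's inequality gives $\|\nabla u\|_2\geq 1/C$ with no mass dependence, and $m>0$ follows from \eqref{belowm}. Second, your ``direct method with concentration--compactness'' for attaining $m$ at finite mass does not explain why the limiting profile should be in $L^2$: neither $E$ nor $Q$ sees the mass, so compactness arguments in $H^1$ give you nothing about $\|u^*\|_2$. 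The paper instead minimizes in the Coulomb--Sobolev space $X\subset D^{1,2}$ (Lemma \ref{existencez}), where no mass control is needed, and then proves in Proposition \ref{l2} that the resulting solution of the zero-mass equation \eqref{equz} decays like $|x|^{2-N}$; this is square-integrable at infinity precisely when $N\geq 5$. So the role of $N\geq 5$ is a decay/integrability statement for the zero-mass ground state, not a compactness statement, and your outline is missing both this and the mass-free interpolation inequality. (A small additional slip: your second Pohozaev identity has coefficient $\frac{2-\gamma_2}{8}$, which is not positive when $\gamma_2\geq 2$, a case allowed by the hypotheses; only the first identity is uniformly positive-coefficient here.)
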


To discuss asymptotical behavior of the function $c \mapsto \Gamma(c)$ as $c \to \infty$, we need to study the following zero mass equation
\begin{align} \label{equz}
-\Delta u =\left(|x|^{-\gamma_1} \ast |u|^{2}\right) u - \left(|x|^{-\gamma_2} \ast |u|^2\right) u\quad \mbox{in} \,\, \R^N,
\end{align}
where $0<\gamma_2<\gamma_1<\min\{N, 4\}$. 
To look for solutions to \eqref{equz}, we shall work in the underlying Sobolev space $X$ defined by
$$
X:=\left\{ u\in D^{1,2}(\R^N) : \int_{\R^N}\frac{|u(x)|^2|u(y)|^2}{|x-y|^{\gamma_2}} \, dxdy<\infty \right\}
$$
equipped with the norm 
$$
\|u\|_X:=\left(\int_{\R^N} |\nabla u|^2 \, dx + \left(\int_{\R^N}\frac{|u(x)|^2|u(y)|^2}{|x-y|^{\gamma_2}} \, dxdy \right)^{\frac 12} \right)^{\frac 12}.
$$

\begin{prop} \label{l2}
Let $0<\gamma_2<\gamma_1<4$, $\gamma_1>2$ and $N \geq 5$ and let $u \in X$ be a solution to \eqref{equz}. Then $u \in C^2(\R^N)$ and 
$$
u(x) \sim |x|^{-\frac{N-1}{2}+\frac{\gamma_2}{4}} e^{-\frac{4}{2-\gamma_2} |x|^{1-\frac{\gamma_2}{2}}} \quad \mbox{if} \,\,\, 0<\gamma_2<2, \quad u(x) \sim |x|^{2-N} \quad \mbox{if} \,\,\, 2 \leq \gamma_2<4 \quad \mbox{as} \,\,\, |x| \to \infty.
$$ 
In particular, there holds that $u \in L^2(\R^N)$.
\end{prop}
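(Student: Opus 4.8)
The plan is to combine a standard elliptic bootstrap for the regularity with a comparison/barrier analysis for the decay, the latter being driven entirely by the asymptotic behaviour of the two nonlocal potentials $V_i := |x|^{-\gamma_i}\ast|u|^2$ ($i=1,2$). For the \emph{regularity}, note that since $u\in X\subset D^{1,2}(\R^N)\hookrightarrow L^{2^*}(\R^N)$ with $2^*=2N/(N-2)$, we have $|u|^2\in L^{N/(N-2)}(\R^N)$, so by the Hardy--Littlewood--Sobolev inequality each $V_i$ lies in a suitable Lebesgue space and the right-hand side $(V_1-V_2)u$ belongs to $L^q_{loc}$ for a range of $q$. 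Calder\'on--Zygmund estimates then give $u\in W^{2,q}_{loc}$, and iterating raises the integrability until $u\in W^{2,p}_{loc}$ for every $p<\infty$, hence $u\in C^{1,\alpha}_{loc}$; the $V_i$ are then locally H\"older, and Schauder theory yields $u\in C^{2,\alpha}_{loc}\subset C^2(\R^N)$. Along the way one also obtains $u(x)\to0$ and $\nabla u(x)\to0$ as $|x|\to\infty$.

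The decay is governed by the effective equation $\Delta u=(V_2-V_1)u=:W(x)\,u$, so everything hinges on showing $W(x)=\|u\|_2^2\,|x|^{-\gamma_2}\,(1+o(1))$ as $|x|\to\infty$. The lower bound $V_2(x)\ge c_0(1+|x|)^{-\gamma_2}$ is unconditional, since it uses only the fixed mass in a ball, but the matching upper bound and the fact that $V_1=o(V_2)$ require controlling $\int_{B_{|x|/2}}|u|^2$, and hence a preliminary decay of $u$. This is the crux: there is a genuine circularity, since the sharp potential asymptotics need $u\in L^2$ while the decay of $u$ needs the potential asymptotics. I would break it by a bootstrap, using only the unconditional lower bound on $V_2$ together with the fact that $W\in L^{N/2}(\R^N)$ with $W\to0$ to run a Brezis--Kato/Moser argument giving a first algebraic decay rate, then improving that rate by comparison until it exceeds $N/2$, so that $u\in L^2$, the local masses are bounded, and the sharp two-sided estimate on $W$ follows.

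Once $W(x)\sim\|u\|_2^2|x|^{-\gamma_2}$ is available I split according to the size of $\gamma_2$. If $0<\gamma_2<2$, then $W$ decays more slowly than $|x|^{-2}$ and $u$ should decay like a stretched exponential; I would construct radial barriers $u_\pm(x)=|x|^{-\frac{N-1}{2}+\frac{\gamma_2}{4}}e^{-(1\mp\varepsilon)S(|x|)}$, where $S$ solves the eikonal equation $|S'|^2=W$, namely $S(r)=\frac{2}{2-\gamma_2}\|u\|_2\,r^{1-\gamma_2/2}(1+o(1))$, and the algebraic prefactor is forced by the transport relation $2S'\frac{P'}{P}+S''+\frac{N-1}{r}S'=0$, which produces exactly the exponent $-\frac{N-1}{2}+\frac{\gamma_2}{4}$. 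A maximum-principle comparison on $\{|x|>R\}$ then traps $u$ between $u_-$ and $u_+$ and yields the stated rate. If instead $2\le\gamma_2<4$, the potential is \emph{short range}: one checks $g:=(V_1-V_2)u\in L^1(\R^N)$, and since $u$ is the Newtonian potential of $g$ in $D^{1,2}(\R^N)$, the expansion $u(x)=c_N|x|^{2-N}\int_{\R^N}g+o(|x|^{2-N})$ holds, the positivity of $u$ forcing the leading coefficient to be positive, whence $u(x)\sim c|x|^{2-N}$; the borderline $\gamma_2=2$ and the strict positivity of this coefficient need a little extra care.

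Finally, in the exponential regime square-integrability at infinity is immediate, while in the polynomial regime $|u(x)|^2\sim c^2|x|^{2(2-N)}$ and $\int_{|x|>1}|x|^{2(2-N)}\,dx<\infty$ exactly when $2(N-2)>N$, i.e. $N>4$; this is precisely why the hypothesis $N\ge5$ appears. Together with local boundedness from the regularity step, this gives $u\in L^2(\R^N)$. The two points I expect to be delicate are the bootstrap resolving the circular dependence between the decay of $u$ and the asymptotics of the $V_i$, and turning the WKB heuristic into rigorous super- and sub-solutions in the stretched-exponential case.
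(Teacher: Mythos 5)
Your overall architecture (elliptic bootstrap for regularity, a preliminary polynomial decay estimate to break the circularity between the decay of $u$ and the asymptotics of $V_i:=|x|^{-\gamma_i}\ast|u|^2$, then Agmon-type barriers for $0<\gamma_2<2$ and a comparison argument for $2\le\gamma_2<4$) matches the paper's, and you correctly locate where $N\ge 5$ enters. But the two steps you yourself flag as delicate are exactly where the proposal has genuine gaps. First, your mechanism for breaking the circularity rests on the premise that $W=V_2-V_1\in L^{N/2}(\R^N)$; this is false in the main case $0<\gamma_2<2$, since the dominant term $V_2$ behaves like $|x|^{-\gamma_2}$ at infinity and $|x|^{-\gamma_2}\in L^{N/2}(\{|x|>1\})$ only when $\gamma_2>2$. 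Beyond that, ``run Brezis--Kato/Moser to get a first algebraic rate, then improve by comparison until it exceeds $N/2$'' is not an argument: Moser iteration gives local boundedness, not a pointwise algebraic rate, and no scheme is given that provably terminates at the rate $N-2$ actually needed. The paper avoids all of this by iterating the integral representation $u(x)=\int|x-y|^{2-N}h(y)u(y)\,dy$ with $h=V_1-V_2$ \`a la Evequoz--Weth: splitting into $B_R(0)$ and its complement and using only $u(x)\to 0$ at infinity (no $L^2$ input), the tail contribution contracts geometrically and one obtains $|u(x)|\le C|x|^{2-N}$ in one stroke, whence $u\in L^2$ for $N\ge 5$ and then $V_i\sim|x|^{-\gamma_i}$.

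Second, in the regime $2\le\gamma_2<4$ your lower bound rests on the claim that positivity of $u$ forces $\int_{\R^N}g\,dx>0$ for $g=(V_1-V_2)u$. This does not follow: $V_1-V_2$ changes sign (positive near the origin, negative at infinity), so $\int g$ has no definite sign, and if $\int g=0$ the Newtonian expansion only yields $u=o(|x|^{2-N})$, which is perfectly compatible with $u>0$. A genuine comparison argument is needed here; the paper invokes a Liouville-type result for $-\Delta u+C|x|^{-2}u\ge 0$ in an exterior domain when $\gamma_2=2$, and for $2<\gamma_2<4$ exploits that $C|x|^{-\gamma_2}\le C(1+|x|)^{-2-\delta}$ is short range and compares with the explicit subsolution $C|x|^{2-N}(1+|x|^{-\delta/2})$. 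Your treatment of the stretched-exponential case via eikonal/transport barriers is essentially the Agmon estimate the paper cites and is fine in principle, but as written the proposal does not close either of the two steps on which the proposition actually hinges.
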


Next we investigate well-posedness of solutions to \eqref{nlscn}. In this direction, we first have the following result.

\begin{thm}\label{LW} 
Let $N \geq 1$, $0<\gamma_2<\gamma_1<N$ and $\psi_0 \in H^{s}(\R^N)$ with $s\geq \frac{\gamma_1}{2}$ and $s \geq 1$. Then there exists a positive time $T$ such that  \eqref{nlscn} has a unique solution with  $\psi \in C([0, T], H^s(\R^N))$ with $\|u\|_{L^{\infty}_T H^s} \lesssim \|\psi_0\|_{H^s}$. In addition, there holds conservation of mass and energy, i.e for any $ t \in [0, T)$,
$$
\|\psi(t)\|_{L^2}= \|\psi_0\|_{L^2}, \quad E(\psi(t))=E(\psi_0).
$$
\end{thm}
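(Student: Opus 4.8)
The plan is to solve \eqref{nlscn} by a standard contraction-mapping argument applied to the Duhamel formulation
\[
\Phi(\psi)(t) := e^{-\mathrm{i}t\Delta}\psi_0 - \mathrm{i}\int_0^t e^{-\mathrm{i}(t-\tau)\Delta}F(\psi(\tau))\,d\tau, \qquad F(u):=\left(|x|^{-\gamma_1}\ast|u|^2\right)u - \left(|x|^{-\gamma_2}\ast|u|^2\right)u,
\]
in the space $X_T:=C([0,T],H^s(\R^N))$. The decisive structural point is that the Schr\"odinger group $\{e^{-\mathrm{i}t\Delta}\}$ is a one-parameter group of isometries on $H^s(\R^N)$, being a Fourier multiplier of modulus one; consequently no Strichartz machinery is needed and the entire estimate is driven by the time factor from the Duhamel integral together with a purely fixed-time bound on the nonlinearity. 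Thus everything reduces to the trilinear estimate
\[
\|F(u)\|_{H^s} \lesssim \|u\|_{H^s}^3, \qquad \|F(u)-F(v)\|_{H^s}\lesssim \left(\|u\|_{H^s}^2+\|v\|_{H^s}^2\right)\|u-v\|_{H^s}.
\]
Granting these, I would run the contraction on the ball $B_R:=\{\psi\in X_T:\|\psi\|_{L^\infty_TH^s}\le 2\|\psi_0\|_{H^s}\}$: the isometry property gives $\|\Phi(\psi)\|_{L^\infty_TH^s}\le \|\psi_0\|_{H^s}+CT(2\|\psi_0\|_{H^s})^3$, so $\Phi$ maps $B_R$ into itself and is a contraction once $T=T(\|\psi_0\|_{H^s})$ is chosen small; the unique fixed point is the desired solution, and the resulting bound is exactly $\|\psi\|_{L^\infty_TH^s}\lesssim\|\psi_0\|_{H^s}$. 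Uniqueness in $X_T$ follows from the same Lipschitz estimate via Gr\"onwall.

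The core of the argument, and the step I expect to be the main obstacle, is the trilinear bound. I would prove it by first using the Hardy--Littlewood--Sobolev inequality to control the Hartree potentials $|x|^{-\gamma_i}\ast|u|^2$, and then distributing $\langle\nabla\rangle^s$ over the product by the fractional Leibniz (Kato--Ponce) rule. The terms in which all $s$ derivatives fall on the outer factor reduce to bounding $\||x|^{-\gamma_i}\ast|u|^2\|_{L^\infty}\lesssim\|u\|_{H^s}^2$; combining Hardy--Littlewood--Sobolev with the Sobolev embedding $H^s(\R^N)\hookrightarrow L^{2N/(N-\gamma_i)}(\R^N)$, which is licit precisely because $s\ge\gamma_1/2\ge\gamma_i/2$ and $\gamma_1<N$, this is exactly the constraint that fixes the threshold $s\ge\gamma_1/2$ (since $\gamma_2<\gamma_1$, the $\gamma_2$ term is strictly easier, and the $L^2$ part of the inhomogeneous norm is harmless because $|u|^2\in L^1$). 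The remaining terms, where derivatives fall inside the convolution, are handled analogously by Hardy--Littlewood--Sobolev followed by H\"older and Sobolev embedding. The genuinely delicate case is the endpoint $s=\gamma_1/2$, where the naive $L^\infty$ bound on the potential just fails at the Sobolev endpoint; here I would localize in frequency (Littlewood--Paley) and use a paraproduct decomposition, exploiting that in the diagonal term the potential enters only at comparatively low frequency, so a Bernstein estimate summed against the $H^s$ norm of $u$ recovers the bound.

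Finally, for the conservation laws, for a smooth solution the computations are immediate: writing the equation as $\mathrm{i}\psi_t=\Delta\psi+F(\psi)$, pairing with $\psi$ and taking real parts gives $\tfrac{d}{dt}\|\psi\|_2^2=0$, since $\int\Delta\psi\,\bar\psi$ and $\int F(\psi)\,\bar\psi$ are both real so that $\mathrm{Re}(-\mathrm{i}\times\text{real})=0$; and since $E'(\psi)=-\mathrm{i}\psi_t$, pairing $E'(\psi)$ with $\psi_t$ gives $\tfrac{d}{dt}E(\psi)=\mathrm{Re}\langle-\mathrm{i}\psi_t,\psi_t\rangle=0$. To justify these identities for merely $H^s$ solutions I would regularize the data (e.g. by frequency truncation), apply the identities to the smooth approximations, and pass to the limit using the continuous dependence furnished by the contraction; here the energy is finite and conserved once the solution lies in $H^1(\R^N)$, i.e. when $s\ge 1$, whereas mass conservation holds throughout the stated range.
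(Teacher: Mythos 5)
Your proposal follows essentially the same route as the paper: Duhamel formulation, contraction mapping on a ball of $C([0,T],H^s)$ using the $H^s$-isometry of $U(t)$, and a trilinear bound on $F$ obtained from the fractional Leibniz rule, Hardy--Littlewood--Sobolev, and an $L^\infty$ bound on the Hartree potentials. Two points of genuine divergence are worth recording. First, the paper closes the contraction in the \emph{weaker} metric $d(u,v)=\|u-v\|_{L^\infty_T L^2}$ on the ball $\{\|u\|_{L^\infty_T H^s}\le\rho\}$ (Kato's trick), so it only needs a Lipschitz estimate for $F$ at the $L^2$ level; you instead contract directly in $L^\infty_T H^s$, which requires the full difference estimate $\|F(u)-F(v)\|_{H^s}\lesssim(\|u\|_{H^s}^2+\|v\|_{H^s}^2)\|u-v\|_{H^s}$. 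That estimate does hold here (the nonlinearity is trilinear, so $F(u)-F(v)$ is a sum of trilinear terms each containing a factor $u-v$, and the same Leibniz/HLS machinery applies), and your route buys continuity in $H^s$ of the fixed point without the extra closedness argument, at the cost of a heavier multilinear estimate. Second, your concern about the endpoint $s=\gamma_1/2$ is resolved in the paper by quoting the sharp bound $\|I_{N-\gamma}(|u|^2)\|_{L^\infty}\lesssim\|u\|_{\dot H^{\gamma/2}}^2$ (Lemma \ref{mE}), which holds exactly at this regularity — it is proved on the Fourier side by a weighted bilinear convolution estimate, not by the Sobolev-embedding-plus-HLS chain that, as you correctly observe, fails at the endpoint; your proposed Littlewood--Paley repair would work but is unnecessary given this known lemma. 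On the conservation laws, your regularization sketch is more explicit than the paper, which simply cites Lenzmann and Ozawa; your remark that energy conservation is only meaningful once the solution lies in $H^1$ is a fair caveat that applies equally to the theorem as stated.
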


As a consequence of Theorem \ref{LW}, we can further derive the global existence of solutions to \eqref{nlscn}.

\begin{thm} \label{GLW}
Let $N \geq 3$, $0<\gamma_1<\gamma_2<2$ and $s\geq 1$. Let $T^*$ be the maximum existence time of the solution $\psi$ in Theorem \ref{LW}. Then $T^*= \infty.$ In addition, there holds that $\|\psi(t)\|_{H^1} \leq C_0$ for any $t \geq 0$,
where $C_0>0$ depends on $E(u_0)$ and $\|u_0\|_2$.
\end{thm}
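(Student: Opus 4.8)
The plan is to promote the local $H^1$ solution supplied by Theorem \ref{LW} (applied with $s=1$, which is admissible because $\gamma_1<2$ gives $\gamma_1/2<1$) to a global one in two stages: first I extract a time-uniform a priori bound on $\|\nabla\psi(t)\|_2$ from the conservation laws, which excludes finite-time blow-up, and then I establish the quantitative growth estimate through a Gronwall argument. Throughout, write
$$
D_\gamma(u):=\int_{\R^N}\int_{\R^N}\frac{|u(x)|^2|u(y)|^2}{|x-y|^{\gamma}}\,dx\,dy\ge 0,
$$
so that $E(u)=\tfrac12\|\nabla u\|_2^2+\tfrac14 D_{\gamma_2}(u)-\tfrac14 D_{\gamma_1}(u)$; the only term threatening coercivity is the focusing piece $-\tfrac14 D_{\gamma_1}$.

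For the a priori bound I would couple the conservation of mass and energy from Theorem \ref{LW} with the Hardy-Littlewood-Sobolev inequality together with the Gagliardo-Nirenberg inequality \eqref{inequ}, which yield, for $0<\gamma<2$,
$$
D_\gamma(u)\le C_\gamma\,\|\nabla u\|_2^{\gamma}\,\|u\|_2^{4-\gamma},
$$
the exponent $\gamma<2$ on $\|\nabla u\|_2$ being exactly the mass-subcritical feature that makes this sublinear in the kinetic energy. Discarding the nonnegative defocusing term $D_{\gamma_2}(\psi(t))$ and inserting $\|\psi(t)\|_2=\|\psi_0\|_2$ gives
$$
\tfrac12\|\nabla\psi(t)\|_2^2=E(\psi_0)-\tfrac14 D_{\gamma_2}(\psi(t))+\tfrac14 D_{\gamma_1}(\psi(t))\le E(\psi_0)+\tfrac{C_{\gamma_1}}{4}\|\psi_0\|_2^{4-\gamma_1}\|\nabla\psi(t)\|_2^{\gamma_1}.
$$
Since $\gamma_1/2<1$, Young's inequality absorbs the gradient term, $\tfrac{C_{\gamma_1}}{4}\|\psi_0\|_2^{4-\gamma_1}\|\nabla\psi(t)\|_2^{\gamma_1}\le\tfrac14\|\nabla\psi(t)\|_2^2+C''$, so that $\|\nabla\psi(t)\|_2^2\le 4E(\psi_0)+4C''=:M$, a bound uniform in $t$ that depends only on $E(\psi_0)$, $\|\psi_0\|_2$, $\gamma_1$ and $N$. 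With mass conservation this gives $\sup_{t\in[0,T^*)}\|\psi(t)\|_{H^1}^2\le\|\psi_0\|_2^2+M<\infty$, and the blow-up alternative inherent in the local theory of Theorem \ref{LW} (if $T^*<\infty$ then $\|\psi(t)\|_{H^1}\to\infty$ as $t\to T^*$) forces $T^*=\infty$.

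For the quantitative growth bound I would run a Gronwall estimate on the Duhamel representation. Unitarity of the free Schr\"odinger propagator on $H^1$ gives $\|\psi(t)\|_{H^1}\le\|\psi_0\|_{H^1}+\int_0^t\|N(\psi(\tau))\|_{H^1}\,d\tau$, where $N(\psi)=(|x|^{-\gamma_1}\ast|\psi|^2)\psi-(|x|^{-\gamma_2}\ast|\psi|^2)\psi$. A Leibniz expansion followed by Hardy-Littlewood-Sobolev and Sobolev embeddings bounds each convolution factor by a subcritical norm of $\psi$, yielding $\|N(\psi(\tau))\|_{H^1}\le C\,\|\psi(\tau)\|_{H^1}^3$; invoking the uniform bound $\|\psi(\tau)\|_{H^1}\le(\|\psi_0\|_2^2+M)^{1/2}$ to freeze two of the three powers produces $\|N(\psi(\tau))\|_{H^1}\le C_0\|\psi(\tau)\|_{H^1}$ with $C_0$ depending on $E(\psi_0)$ and $\|\psi_0\|_2$. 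Gronwall's lemma then delivers $\|\psi(t)\|_{H^1}\lesssim\|\psi_0\|_{H^1}e^{C_0 t}$.

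I expect the main technical point, rather than a genuine obstacle, to be the nonlinear estimate $\|N(\psi)\|_{H^1}\lesssim\|\psi\|_{H^1}^3$: one must select the H\"older and Hardy-Littlewood-Sobolev exponents so that, after applying the Leibniz rule, each convolution factor lands in a Lebesgue space into which $H^1$ embeds in the subcritical range $\gamma_1,\gamma_2<2$. Everything else is automatic, since the subcriticality $\gamma_1<2$ makes the a priori estimate close with room to spare and the only conservation laws required are precisely those asserted in Theorem \ref{LW}, so that no further regularity or approximation argument should be needed.
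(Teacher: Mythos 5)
Your proposal is correct and follows essentially the same route as the paper: a uniform $H^1$ a priori bound obtained from mass and energy conservation by estimating the focusing Hartree term sublinearly in the kinetic energy (exponent $\gamma_1<2$) and absorbing it via Young's inequality, followed by a Gronwall argument on the Duhamel formula in which two of the three powers of the cubic nonlinearity are frozen by that uniform bound. The only cosmetic difference is that you bound the $\gamma_1$-interaction directly by \eqref{inequ} and simply discard the nonnegative defocusing term, whereas the paper splits into two cases and uses Lemma \ref{mE} together with interpolation; both yield the same estimate $D_{\gamma_1}(u)\lesssim \|\nabla u\|_2^{\gamma_1}\|u\|_2^{4-\gamma_1}$ and close identically.
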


For relatively rough data in $\psi_0 \in H^{s}(\R^N)$ with $s<\frac{\gamma_1}{2}$, we have the following well-posedness result.

\begin{thm} \label{LW1}
Let  $0<\gamma_2< \gamma_1<N$ and $\psi_0 \in H^s(\R^N)$ with $ \max \{0, \frac{\gamma_1}{2}-1\}\leq s < \frac{\gamma_2}{2}.$ Then there exists a  positive time $T$ such that \eqref{nlscn} has a unique solution $$\psi\in C([0,T], H^s(\R^N)) \cap L^{q_1}_{T}(H^s_{r_1}(\R^N))\cap L^{q_2}_{T}(H^s_{r_2}(\R^N)),$$
where $(q_i, r_i)$ are defined by \eqref{yu} and $H^{s}_r(\R^N)=(I-\Delta)^{-s/2}L^r(\R^N).$
\end{thm}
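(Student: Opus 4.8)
The plan is to run a contraction-mapping argument on the Duhamel formulation of \eqref{nlscn},
$$
\psi(t) = e^{-\textnormal{i}t\Delta}\psi_0 - \textnormal{i}\int_0^t e^{-\textnormal{i}(t-\tau)\Delta}\,\mathcal{N}(\psi(\tau))\,d\tau, \qquad \mathcal{N}(\psi) := \left(|x|^{-\gamma_1}\ast|\psi|^2\right)\psi - \left(|x|^{-\gamma_2}\ast|\psi|^2\right)\psi,
$$
inside the complete metric space
$$
X_T := \left\{\psi \in C([0,T],H^s) \cap L^{q_1}_T H^s_{r_1} \cap L^{q_2}_T H^s_{r_2} : \|\psi\|_{X_T} \leq M\right\},
$$
endowed with the weaker distance $d(\psi,\phi) := \|\psi-\phi\|_{L^{q_1}_T L^{r_1}} + \|\psi-\phi\|_{L^{q_2}_T L^{r_2}}$, which measures the difference of two elements without the $s$ fractional derivatives, so that the contraction step does not cost regularity. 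Here $T$ is small and $M \sim \|\psi_0\|_{H^s}$, both to be fixed at the end. First I would check that $(X_T, d)$ is complete, and then that the map $\Phi$ given by the right-hand side above sends $X_T$ into itself and is a contraction for $T$ sufficiently small.

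The linear input consists of the homogeneous and inhomogeneous Strichartz estimates for the free propagator $e^{-\textnormal{i}t\Delta}$, applied with the two admissible pairs $(q_i,r_i)$ from \eqref{yu}. Since $\langle\nabla\rangle^s$ commutes with the propagator, these estimates reduce the self-mapping and contraction properties of $\Phi$ to a single nonlinear bound of the schematic form
$$
\sum_{i=1,2}\left\|\mathcal{N}(\psi)\right\|_{L^{q_i'}_T H^s_{r_i'}} \lesssim T^{\theta}\,\|\psi\|_{X_T}^3
$$
for some $\theta>0$, together with its trilinear difference analogue estimated in the weaker metric $d$. The positive power $T^\theta$, which furnishes both the self-mapping for small $T$ and the contraction, is produced by H\"older in time and reflects that the pairs $(q_i,r_i)$ are taken strictly subcritical in the temporal exponent.

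To prove the nonlinear estimate I would treat the two Hartree terms separately, using that convolution with $|x|^{-\gamma_j}$ acts as a Fourier multiplier and hence commutes with $\langle\nabla\rangle^s$. The fractional Leibniz (Kato-Ponce) rule then distributes the $s$ derivatives either onto the potential factor $|x|^{-\gamma_j}\ast|\psi|^2$ or onto the remaining copy of $\psi$; the convolution factor is controlled by the Hardy-Littlewood-Sobolev inequality, and the residual $L^p$ norms are closed through the Sobolev embeddings $H^s_{r}\hookrightarrow L^p$ and H\"older in space. The pairs $(q_i,r_i)$ are engineered precisely so that, for the $\gamma_i$-nonlinearity, the derivative count and the integrability exponents balance after the Hardy-Littlewood-Sobolev step; a single common pair does not close the estimate, which is why two pairs are used.

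\textbf{Main obstacle.} The crux, and the origin of the hypotheses on $s$, is exactly this low-regularity nonlinear estimate. Because $\gamma_1>\gamma_2$, the kernel $|x|^{-\gamma_1}$ is the more singular one, so the Hardy-Littlewood-Sobolev step for the $\gamma_1$-term demands the strongest integrability and the largest derivative budget; recovering the lost regularity through the smoothing in the inhomogeneous Strichartz estimate is possible only down to the scaling-critical Sobolev threshold of that term, which is exactly $s = \gamma_1/2 - 1$. This explains the lower bound $s \geq \max\{0, \gamma_1/2-1\}$, the requirement $s\geq 0$ being needed merely to make sense of $H^s$. The upper restriction $s < \gamma_2/2$ keeps the Sobolev embeddings invoked for the milder $\gamma_2$-term within the admissible range and away from endpoints. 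Matching the two distinct pairs so that both nonlinearities close simultaneously at the same regularity $s$ is the technical heart of the proof. Once this estimate and its difference version are established, the contraction principle produces a unique fixed point $\psi\in X_T$; uniqueness in the stated class follows from the difference estimate, and membership $\psi\in C([0,T],H^s)$ from the continuity of the homogeneous and Duhamel terms in the Strichartz norms.
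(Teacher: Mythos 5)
Your proposal is correct and follows essentially the same route as the paper: a contraction-mapping argument on the Duhamel formula in the space $C([0,T],H^s)\cap L^{q_1}_T H^s_{r_1}\cap L^{q_2}_T H^s_{r_2}$, with the nonlinear estimate closed via Strichartz, the fractional Leibniz rule, Hardy--Littlewood--Sobolev and Sobolev embedding, producing the factor $T^{\theta}$ with $\theta=1+s-\gamma_i/2$ that yields both the self-mapping and the contraction. The only (harmless, and arguably cleaner) deviation is that you run the contraction in the weaker derivative-free metric $L^{q_i}_T L^{r_i}$, whereas the paper measures differences in the full $H^s_{r_i}$-based norm.
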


We now show global well-posedness and scattering of solutions to \eqref{nlscn} for small initial data.  The method of proof is inspired by \cite{Miao}.  Note that energy scattering for the classical Hartree equation goes back to Ginibre and Velo in  \cite{GV}.

\begin{thm} \label{sc}
Let $N \geq 3$, $2<\gamma_2< \gamma_1 <N$ and $s\geq s_{\gamma_1}=\frac{\gamma_1}{2}-1.$ Then there exists $\rho>0$ such that for any $\psi_0\in H^{s}(\R^N)$ with $\|\psi_0\|_{\dot{H}^{s_{\gamma_1}}} \leq \rho,$ \eqref{nlscn} has a unique solution $\psi \in (C \cap L^{\infty})(\mathbb R, H^s(\R^N)) \cap L^4 (\mathbb R, H^s_{\frac{2N}{N-1}}(\R^N)).$ In addition, there exists $\psi^+ \in H^s(\R^N)$ such that
\[ \|\psi(t)-U(t)\psi^+\|_{H^s} \to 0 \quad \text{as} \ t\to \infty,\]
where $U(t)\psi^+:=e^{-\textnormal{i} \Delta t} \psi^+.$
\end{thm}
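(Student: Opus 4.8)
The plan is to run a standard small-data contraction in a scaling-critical Strichartz space, following the scheme of \cite{Miao}, and then upgrade the resulting global control to scattering. Write the nonlinearity as $\cF(\psi):=(|x|^{-\gamma_1}\ast|\psi|^2)\psi-(|x|^{-\gamma_2}\ast|\psi|^2)\psi$ and let $U(t)$ denote the free Schr\"odinger group, so that \eqref{nlscn} is equivalent to the Duhamel equation
\[
\psi(t)=U(t)\psi_0-\textnormal{i}\int_0^t U(t-\tau)\,\cF(\psi(\tau))\,d\tau .
\]
I would seek a fixed point of the associated map $\Phi$ in the complete metric space
\[
X:=\{\psi\in (C\cap L^\infty)(\R,H^s)\cap L^4(\R,H^s_{\frac{2N}{N-1}}):\ \|\psi\|_X\le M\},
\]
with $\|\psi\|_X:=\|\psi\|_{L^\infty_t H^s}+\|\psi\|_{L^4_t H^s_{2N/(N-1)}}$ and $M\sim\|\psi_0\|_{H^s}$. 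The pair $(4,\tfrac{2N}{N-1})$ is Schr\"odinger-admissible, since $\frac24+\frac{N}{2N/(N-1)}=\frac12+\frac{N-1}{2}=\frac N2$, so the homogeneous, inhomogeneous and dual Strichartz estimates all apply and give $\|U(t)\psi_0\|_X\lesssim\|\psi_0\|_{H^s}$ together with the smallness of the scaling-critical piece $\|U(t)\psi_0\|_{L^4_t\dot H^{s_{\gamma_1}}_{2N/(N-1)}}\lesssim\|\psi_0\|_{\dot H^{s_{\gamma_1}}}\le\rho$, which is exactly the quantity assumed small.

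The heart of the matter is the multilinear estimate controlling $\cF(\psi)$ in a dual Strichartz norm. For each $\gamma\in\{\gamma_1,\gamma_2\}$ I would bound $\langle D\rangle^s\big[(|x|^{-\gamma}\ast|\psi|^2)\psi\big]$ in a dual-admissible space by combining the Hardy--Littlewood--Sobolev inequality for the Riesz-type potential $|x|^{-\gamma}\ast|\psi|^2$, the fractional Leibniz (Kato--Ponce) rule to distribute $\langle D\rangle^s$ across the product and the convolution, and Sobolev embeddings to return to the norms defining $X$. The bookkeeping is dictated by the scaling identity $s_{\gamma_1}=\frac{\gamma_1}{2}-1$: the $\gamma_1$-term is exactly scaling-critical, so its Duhamel contribution closes as
\[
\Big\|\int_0^t U(t-\tau)\,(|x|^{-\gamma_1}\ast|\psi|^2)\psi\,d\tau\Big\|_X\lesssim \|\psi\|_{L^4_t\dot H^{s_{\gamma_1}}_{2N/(N-1)}}^2\,\|\psi\|_X,
\]
the two scaling-critical factors being $\lesssim\rho$. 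For the $\gamma_2$-term, whose critical regularity $s_{\gamma_2}=\frac{\gamma_2}{2}-1$ lies strictly below the working regularity, the surplus $s\ge s_{\gamma_1}>s_{\gamma_2}$ is spent to place its two convolution factors in the same globally small $\dot H^{s_{\gamma_1}}$-admissible Strichartz norms, so that its contribution is likewise $\lesssim\rho^2\|\psi\|_X$.

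Granting these estimates, $\|\Phi(\psi)\|_X\le C\|\psi_0\|_{H^s}+C\rho^2\|\psi\|_X$, and an analogous bound holds for $\Phi(\psi)-\Phi(\tilde\psi)$; hence for $\rho$ small $\Phi$ maps the ball of radius $M\sim\|\psi_0\|_{H^s}$ into itself and contracts, yielding a unique global solution $\psi\in X$ (uniqueness being inherited from the fixed-point inequalities). The full $H^s$ bound persists even though smallness is imposed only on the critical piece, since the $H^s$-valued Strichartz norm is bootstrapped from the same multilinear estimate. Scattering then follows the usual recipe: finiteness of the global Strichartz norm of $\psi$ makes the tails $\int_{t_1}^{t_2}U(-\tau)\cF(\psi)\,d\tau$ Cauchy in $H^s$ as $t_1,t_2\to\infty$, so $\psi^+:=\psi_0-\textnormal{i}\int_0^\infty U(-\tau)\cF(\psi)\,d\tau\in H^s$ is well defined and, using unitarity of $U(t)$ on $H^s$, $\|\psi(t)-U(t)\psi^+\|_{H^s}=\|U(-t)\psi(t)-\psi^+\|_{H^s}\to0$.

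The main obstacle I anticipate is precisely this multilinear estimate: one must arrange the H\"older exponents, in both space and time, so that the two Hartree terms---whose HLS exponents differ because $\gamma_1\ne\gamma_2$---are controlled simultaneously by the single space $X$ and the single critical norm $\dot H^{s_{\gamma_1}}$. In particular, one must check that placing the subcritical $\gamma_2$-term's factors in the $\dot H^{s_{\gamma_1}}$-admissible family produces a globally (in time) finite bound compatible with the admissibility constraint $\frac2q+\frac Nr=\frac N2$, throughout the range $2<\gamma_2<\gamma_1<N$, $N\ge3$, $s\ge s_{\gamma_1}$; verifying this compatibility is the technical crux, while the contraction and the passage to scattering are routine.
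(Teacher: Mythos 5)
Your proposal follows essentially the same route as the paper: a small-data fixed-point argument in the admissible Strichartz space $L^4(\R, H^s_{\frac{2N}{N-1}})$ with smallness imposed only on the scaling-critical $\dot H^{s_{\gamma_1}}$ component, the two Hartree terms estimated via Hardy--Littlewood--Sobolev and Sobolev embedding, and scattering deduced from the Cauchy property of the Duhamel tail. The only cosmetic difference is that the paper contracts in the weaker metric $d(u,v)=\|u-v\|_{L^4(\R, L^{2N/(N-1)})}$ rather than in the full $H^s$-valued norm, and the difficulty you rightly flag for the subcritical $\gamma_2$-term (its natural bookkeeping lands on $\dot H^{s_{\gamma_2}}$, which is not the norm assumed small) is present, and left unaddressed, in the paper's own estimates as well.
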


Next we present the global well-posedness of solutions to \eqref{nlscn} in $L^2(\R^N)$. 

\begin{thm}\label{miD}
Let $0<\gamma_2<\gamma_1<N$   and $0<\gamma_1 < \text{min} \{2, N\}.$ If $\psi_{0}\in L^{2}(\mathbb R^{N}),$ then \eqref{nlscn} has a unique global solution
$$\psi\in C(\mathbb R, L^{2}(\mathbb R^N))\cap L^{8/\gamma_1}_{loc}(\mathbb R, L^{4N/(2N-\gamma_1)} (\mathbb R^N)) \cap L^{8/\gamma_2}_{loc}(\mathbb R, L^{4N/(2N-\gamma_2)} (\mathbb R^N)).$$
In addition, there holds that conservation of mass, i.e. 
$$\|\psi(t)\|_{L^{2}}=\|\psi_{0}\|_{L^{2}}, \quad \forall \, t \in \mathbb R,$$
and for all  admissible pairs  $(p,q), \psi \in L_{loc}^{p}(\mathbb R, L^{q}(\mathbb R^N))$, where $(p, q)$ is admissible if $q\geq 2, r\geq 2$ and
$$\frac{2}{q} =  N \left( \frac{1}{2} - \frac{1}{r} \right), \quad (q,r,N)\neq(\infty,2,2).$$
\end{thm}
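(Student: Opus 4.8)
The plan is to solve the Duhamel formulation of \eqref{nlscn} by a contraction argument in a Strichartz space adapted to the two admissible pairs $(q_1,r_1)=(8/\gamma_1,\,4N/(2N-\gamma_1))$ and $(q_2,r_2)=(8/\gamma_2,\,4N/(2N-\gamma_2))$, and then to promote the local solution to a global one using conservation of mass, the point being that the hypothesis $\gamma_1<\min\{2,N\}$ (hence also $\gamma_2<2$) places \emph{both} Hartree nonlinearities in the mass-subcritical regime. First I would record that each $(q_i,r_i)$ is Schr\"odinger-admissible, since $\frac{2}{q_i}+\frac{N}{r_i}=\frac{\gamma_i}{4}+\frac{2N-\gamma_i}{4}=\frac N2$, so the homogeneous and inhomogeneous (dual) Strichartz estimates for the free propagator $U(t)$ apply. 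Writing
\[
\Phi(\psi)(t):=U(t)\psi_0-\mathrm{i}\int_0^t U(t-s)\Big[(|x|^{-\gamma_1}\ast|\psi|^2)\psi-(|x|^{-\gamma_2}\ast|\psi|^2)\psi\Big](s)\,ds,
\]
I would set up the contraction on a ball $X_T=\{\psi\in C([-T,T],L^2)\cap L^{q_1}_T L^{r_1}\cap L^{q_2}_T L^{r_2}:\|\psi\|_{X_T}\le M\}$ with $M\sim\|\psi_0\|_{L^2}$.

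The crux is the nonlinear estimate, carried out separately for each term in the dual space $L^{q_i'}_T L^{r_i'}$. In space, H\"older followed by the Hardy--Littlewood--Sobolev inequality, with the exponents arranged so that everything reduces to a single norm $\|\psi\|_{L^{r_i}}$, yields the pointwise-in-time bound $\|(|x|^{-\gamma_i}\ast|\psi|^2)\psi\|_{L^{r_i'}}\lesssim\|\psi\|_{L^{r_i}}^3$; the bookkeeping closes exactly because $\frac{1}{r_i'}=\frac{\gamma_i}{2N}+\frac{1}{r_i}$, which is what HLS demands. Integrating in time and applying H\"older on $[-T,T]$, the subcriticality $\gamma_i<2$ gives the strict inequality $3q_i'<q_i$ (equivalent to $32\gamma_i<64$), hence a genuine gain of a positive power of $T$:
\[
\big\|(|x|^{-\gamma_i}\ast|\psi|^2)\psi\big\|_{L^{q_i'}_T L^{r_i'}}\lesssim T^{\theta_i}\,\|\psi\|_{L^{q_i}_T L^{r_i}}^3,\qquad \theta_i>0.
\]
Combining this with the Strichartz estimates produces $\|\Phi(\psi)\|_{X_T}\le C\|\psi_0\|_{L^2}+C(T^{\theta_1}+T^{\theta_2})\|\psi\|_{X_T}^3$ together with the analogous Lipschitz bound for $\Phi(\psi)-\Phi(\phi)$; choosing $T$ small depending only on $\|\psi_0\|_{L^2}$ makes $\Phi$ a contraction, giving a unique local solution whose existence time depends only on $\|\psi_0\|_{L^2}$.

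Next I would establish conservation of mass: pairing the equation with $\bar\psi$ and taking the imaginary part, both Hartree terms contribute the real quantities $\int(|x|^{-\gamma_i}\ast|\psi|^2)|\psi|^2\,dx$ and drop out, so that $\frac{d}{dt}\|\psi(t)\|_{L^2}^2=0$. Since the local existence time depends only on the conserved quantity $\|\psi_0\|_{L^2}$, iterating the local theory on successive intervals extends the solution to all of $\R$, yielding global existence in both time directions. The claim for an arbitrary admissible pair $(p,q)$ then follows by persistence of regularity: one applies the inhomogeneous Strichartz estimate in the $L^p_{\mathrm{loc}}L^q$ norm to the Duhamel formula and bounds the nonlinearity using the $(q_1,r_1)$- and $(q_2,r_2)$-norms that are already under control.

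The main obstacle is the rigorous justification of the conservation laws at $L^2$ regularity: the formal identities above are immediate, but at this low regularity they require a stability/approximation argument, namely regularizing $\psi_0$, invoking the conservation laws for the smooth solutions, and passing to the limit via the uniqueness and continuous dependence furnished by the contraction. Energy conservation, in particular, presupposes finiteness of $\int|\nabla\psi|^2$, which is not available for generic $L^2$ data; it should therefore be understood for data additionally of finite energy and recovered from the $H^1$ theory by density. All remaining steps — verifying admissibility, the HLS/H\"older exponent arithmetic, and the Lipschitz estimate — are routine once the $T^{\theta_i}$ gain is in hand.
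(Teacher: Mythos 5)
Your proposal is correct and follows essentially the same route as the paper: a contraction argument in the mixed Strichartz spaces built on the admissible pairs $(8/\gamma_i,\,4N/(2N-\gamma_i))$, with the nonlinearity estimated by H\"older and Hardy--Littlewood--Sobolev to produce the gain $T^{1-\gamma_i/2}$ (your $\theta_i$ equals $1-\gamma_i/2$, matching the paper's exponent), followed by globalization via mass conservation and a final Strichartz application for arbitrary admissible pairs. Your closing caveat about justifying the conservation laws at $L^2$ regularity by approximation is also consistent with the paper, which delegates this to the cited regularization arguments.
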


Finally, we give a blow-up theory for \eqref{nlscn} when $E(u_0)<0$, whose proof is inspired by Glassy \cite{RTG},  Du et al. \cite{DWZ} and Zheng \cite{JZ}.

\begin{thm}\label{mtbup}
Let $N\geq 3, 2 <\gamma_2<\gamma_1<\min\{N, 4\}$ and $u_0\in H^1(\R^N)$ with $E(u_0)<0.$ Let $\psi$ be a solution of \eqref{nlscn} on the maximal interval $(-T_{\min},  T_{\max}).$ Then there holds that
\begin{itemize}
\item [$(\textnormal{i})$] If $T_{\max} < \infty,$ then $\lim_{t\to T_{\max}} \| \psi(t)\|_{H^1}= \infty.$
\item  [$(\textnormal{ii})$] If  $T_{\max}= \infty,$ then there exists a sequence $t_n\to \infty$ such that
\[ \lim_{n\to \infty} \|\psi(t_n)\|_{H^1}= \infty.\]
\end{itemize}
\end{thm}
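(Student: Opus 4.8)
The plan is to reduce both assertions to a single statement: a \emph{global} solution (one with $T_{\max}=\infty$) cannot remain bounded in $H^1(\R^N)$. Granting this, part $(\textnormal{i})$ is the standard blow-up alternative. Since $\gamma_1<4$ the Hartree nonlinearity is energy-subcritical, so one has local well-posedness in $H^1(\R^N)$ with an existence time bounded below by a function of the $H^1$-norm of the datum; if $\liminf_{t\to T_{\max}}\|\psi(t)\|_{H^1}$ were finite one could restart the solution from a time close to $T_{\max}$ and continue it past $T_{\max}$, contradicting maximality, and this forces the genuine limit $\lim_{t\to T_{\max}}\|\psi(t)\|_{H^1}=\infty$. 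Part $(\textnormal{ii})$ is precisely the contrapositive of the reduced statement: if no sequence $t_n\to\infty$ with $\|\psi(t_n)\|_{H^1}\to\infty$ existed, then $\sup_{t\ge0}\|\psi(t)\|_{H^1}<\infty$ while $T_{\max}=\infty$, which the reduced statement excludes. Thus the whole content lies in ruling out global $H^1$-bounded solutions when $E(u_0)<0$.

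To this end I would use a localized virial (Glassey) identity, in the spirit of \cite{RTG, DWZ, JZ}. Abbreviate $D_\gamma(u):=\int_{\R^N}\int_{\R^N}|u(x)|^2|u(y)|^2|x-y|^{-\gamma}\,dxdy$, so that $E(u)=\tfrac12\|\nabla u\|_2^2+\tfrac14 D_{\gamma_2}(u)-\tfrac14 D_{\gamma_1}(u)$ and $Q(u)=\|\nabla u\|_2^2+\tfrac{\gamma_2}{4}D_{\gamma_2}(u)-\tfrac{\gamma_1}{4}D_{\gamma_1}(u)$. Fix a radial $\phi\in C^\infty(\R^N)$ with $\phi(x)=|x|^2$ for $|x|\le1$, with $\phi$ and its derivatives bounded and $\phi''\le 2$, and set $\phi_R(x):=R^2\phi(x/R)$ and $V_R(t):=\int_{\R^N}\phi_R\,|\psi(t)|^2\,dx$. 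Then $0\le V_R(t)\le CR^2\|\psi_0\|_2^2$, and $|V_R'(t)|=2\,|\mathrm{Im}\!\int_{\R^N}\nabla\phi_R\cdot\bar\psi\,\nabla\psi\,dx|\le CR\,\|\psi_0\|_2\,\|\nabla\psi(t)\|_2$ stays bounded for each fixed $R$ under an $H^1$ bound. A direct computation using the equation and the phase invariance of the variance gives
\[
V_R''(t)=8\,Q(\psi(t))+\mathcal E_R(t),
\]
where on $|x|\le R$ the identity $\phi_R(x)=|x|^2$ reproduces the exact virial $8Q$, and $\mathcal E_R(t)$ collects the contributions supported on $|x|>R$: a kinetic part controlled by $\int_{|x|>R}|\nabla\psi|^2$, a bi-Laplacian part bounded by $CR^{-2}\|\psi_0\|_2^2$, and the nonlocal Hartree terms coming from the interaction of $\{|x|>R\}$ and $\{|y|>R\}$.

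The decisive step, where the competing structure enters, is that negative energy makes $8Q(\psi(t))$ uniformly negative. Conservation of energy gives $16E(u_0)=8\|\nabla\psi\|_2^2+4D_{\gamma_2}(\psi)-4D_{\gamma_1}(\psi)$ for every $t$, whence
\[
8Q(\psi(t))-16E(u_0)=2(\gamma_2-2)D_{\gamma_2}(\psi)-2(\gamma_1-2)D_{\gamma_1}(\psi).
\]
Since $E(u_0)<0$ one has $D_{\gamma_1}(\psi)>2\|\nabla\psi\|_2^2+D_{\gamma_2}(\psi)\ge D_{\gamma_2}(\psi)$; as $2<\gamma_2<\gamma_1$, the coefficient $2(\gamma_2-2)$ is positive and $\gamma_2-2<\gamma_1-2$, so
\[
2(\gamma_2-2)D_{\gamma_2}(\psi)-2(\gamma_1-2)D_{\gamma_1}(\psi)<2(\gamma_2-\gamma_1)D_{\gamma_1}(\psi)<0.
\]
Hence $8Q(\psi(t))\le 16E(u_0)-2(\gamma_1-\gamma_2)D_{\gamma_1}(\psi(t))<16E(u_0)<0$ for all $t$, i.e. $Q(\psi(t))\le 2E(u_0)$ is bounded away from zero uniformly in time. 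This is exactly where $2<\gamma_2<\gamma_1$ is needed: the defocusing term $D_{\gamma_2}$ has the unfavourable sign, and only its domination by the focusing term $D_{\gamma_1}$ — guaranteed by $E(u_0)<0$ — saves the argument.

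It remains to absorb the error. Assuming for contradiction that $T_{\max}=\infty$ and $\sup_{t\ge0}\|\psi(t)\|_{H^1}=:M<\infty$, the aim is to estimate $\mathcal E_R(t)$ uniformly in $t$: the bi-Laplacian term is $O(R^{-2})$, while the kinetic and nonlocal Hartree tails should be handled through the cutoff-derivative bounds together with the Hardy--Littlewood--Sobolev inequality and the uniform bound $M$, so as to achieve $\sup_{t\ge0}|\mathcal E_R(t)|<8|E(u_0)|$ for $R$ large. For such $R$ one would obtain $V_R''(t)\le 8E(u_0)<0$ for all $t\ge0$, hence $V_R'(t)\le V_R'(0)+8E(u_0)\,t\to-\infty$, contradicting the boundedness of $V_R'$; this excludes global $H^1$-bounded solutions and finishes the proof. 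I expect the genuine obstacle to be precisely the control of the nonlocal Hartree error $\mathcal E_R$ for non-radial data, where the interacting regions $\{|x|>R\}$ and $\{|y|>R\}$ must be estimated without any pointwise-in-time tightness of $\psi(t)$. This is the technical heart adapted from \cite{DWZ, JZ}, and it is also the reason the conclusion is phrased as blow-up-or-grow-up along a sequence rather than as genuine finite-time blow-up: absent finite variance or symmetry, one recovers the uniform negativity of $V_R''$ only after the above tail control, which one can at best arrange along the contradiction hypothesis $\sup_t\|\psi\|_{H^1}<\infty$.
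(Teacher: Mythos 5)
Your overall architecture (reduce to ruling out global $H^1$-bounded solutions, localized virial identity, and the observation that $E(u_0)<0$ together with $2<\gamma_2<\gamma_1$ forces the virial quantity to be uniformly negative) matches the paper's proof, and your computation showing $8Q(\psi(t))\le 16E(u_0)<0$ is correct and plays exactly the role of the paper's inequality $K(u(t))\le E(u(t))<0$. The gap is in the step you yourself flag as ``the technical heart'' and then leave unresolved: you assert that, under the contradiction hypothesis $\sup_{t\ge0}\|\psi(t)\|_{H^1}=M<\infty$, one can choose $R$ large so that $\sup_{t\ge 0}|\mathcal E_R(t)|<8|E(u_0)|$, i.e.\ the localization error is small \emph{uniformly in time}. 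This cannot be extracted from the $H^1$ bound alone: the error terms are controlled by the mass (and kinetic energy) in $\{|x|\ge R\}$, and a globally bounded solution may spread so that, for every fixed $R$, $\int_{|x|\ge R}|\psi(t)|^2\,dx$ eventually approaches the full mass. Consequently your contradiction mechanism --- $V_R''(t)\le 8E(u_0)$ for all $t\ge0$, hence $V_R'(t)\to-\infty$ against the bound $|V_R'(t)|\le CRm_0M$ --- does not close. Even if one repairs the error control on a finite window, a single integration of $V_R''$ is quantitatively insufficient: the drift one gains is of order $|E(u_0)|\,\eta_0 R$, which need not exceed the a priori bound $CRm_0M$ on $V_R'$ when $\eta_0$ is small.

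The paper closes this gap with two ingredients you are missing. First, a finite-speed-of-mass-transfer estimate (its Lemma 7.7): for any $\eta_0>0$ and all $t\le T:=\eta_0R/(2m_0C_0)$, one has $\int_{|x|\ge R}|\psi(t)|^2\,dx\le\eta_0+o_R(1)$; this controls the exterior mass only on a time window whose length grows linearly in $R$, which is all that is available, and via its Lemma 7.8 turns the localization error into $C_1(\eta_0^{2-\gamma_1/2}+\eta_0^{2-\gamma_2/2}+o_R(1))$ on $[0,T]$. Second, the contradiction is drawn from the \emph{second} antiderivative rather than the first: integrating $V_a''\le 16E(u_0)+\tfrac12|E(u_0)|$ twice over $[0,T]$ and using $V_a(0)=o_R(1)R^2$, $V_a'(0)=o_R(1)R$ (which follow from splitting the integrals defining $V_a(0),V_a'(0)$ at radius $\sqrt R$) gives $V_a(T)\le o_R(1)R^2+\alpha_0R^2$ with $\alpha_0=E(u_0)\eta_0^2/(4m_0C_0)^2<0$; since the negative term is quadratic in $T\sim R$ it dominates for large $R$, contradicting $V_a(T)\ge0$. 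Without the propagation lemma and the passage to $V_a(T)<0$, your argument does not yield a contradiction, so as written the proof is incomplete.
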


\begin{rem} 
It would be interesting to point out that the existence of normalized solutions to \eqref{equ} can be extended to the following nonlinear Schr\"odinger equation with general competing Hartree-type nonlinearities,
\begin{align*} 
-\Delta u + \lambda u=\left(|x|^{-\gamma_1} \ast |u|^{p}\right) |u|^{p-2} u - \left(|x|^{-\gamma_2} \ast |u|^p\right) |u|^{p-2} u\quad \mbox{in} \,\, \R^N,
\end{align*}
where $N \geq 1$, $0<\gamma_2 < \gamma_1 <N$, $\frac{2N-\gamma_2}{N}<p<\frac{2N-\gamma_1}{(N-2)^+}$ and $\lambda \in \R$ appearing as Lagrange multiplier is unknown.
\end{rem}

This paper is organized as follows. In Section \ref{section2}, we present some preliminary results used to establish our main results. In Section \ref{section3}, we consider the mass subcritical case and give the proofs of Theorems \ref{thmsub1} and \ref{thmsub2}. In Section \ref{section4}, we investigate the mass supercritical case and prove Theorem \ref{thm1}. Later, in Section \ref{section5}, we study the mass critical case and present the proofs of Theorems \ref{nonexistence1} and \ref{thmcritical}. Section \ref{section6} is devoted to the proofs of Propositions \ref{asym} and \ref{l2}. Finally, in Section \ref{section7}, we concern the well-posedness and dynamical behaviors of solutions to \eqref{nlscn} and prove Theorems \ref{LW}-\ref{mtbup}.
\medskip

{\noindent{\bf Acknowledgements.} T. Gou was supported by the National Natural Science Foundation of China (No.12101483) and the Postdoctoral Science Foundation of China (No.2021M702620). The authors would like to thank warmly the knowledgeable referees for the careful reading of our manuscript and for giving constructive comments and suggestions.}

\section{Preliminaries} \label{section2}

In this section, we are going to present some preliminary results used to prove our main theorems. Let us first show the well-known Hardy-Littlewood-Sobolev inequality in \cite{LL} and Gagliardo-Nirenberg inequality in \cite{Wei}.
\begin{lem} 
Let $p, r>1$, $0<\gamma<N$ and $\frac 1 p+\frac \gamma N + \frac 1 r=2$. Then there exists a constant $C_{N,p,\gamma}>0$ such that
\begin{align} \label{HLS}
\left|\int_{\R^N} \int_{\R^N} \frac{f(x)h(x)}{|x-y|^{\gamma}} \, dxdy \right| \leq C_{N, p, \gamma} \|f\|_p \|h\|_r.
\end{align}
\end{lem}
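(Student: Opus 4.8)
The plan is to deduce \eqref{HLS} from a mapping property of the Riesz potential together with the Hardy--Littlewood maximal theorem, via Hedberg's pointwise inequality. (I read the left-hand side with the standard kernel $f(x)h(y)$; the bilinear estimate is equivalent by duality to an $L^p$--$L^q$ bound for the associated convolution operator.) First I would pass to the linear operator
$$
T_\gamma h(x):=\int_{\R^N}\frac{h(y)}{|x-y|^{\gamma}}\,dy ,
$$
and observe that, by the duality between $L^p$ and $L^{p'}$, the claimed inequality is equivalent to the strong-type bound $\|T_\gamma h\|_{p'}\lesssim \|h\|_r$, where $p'$ is the conjugate exponent of $p$ and the scaling relation $\frac1{p'}=\frac1r-\frac{N-\gamma}{N}$ follows from $\frac1p+\frac\gamma N+\frac1r=2$. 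The constraints $p,r>1$ then translate into $1<r<p'<\infty$ together with $r<\frac{N}{N-\gamma}$, which is exactly the admissible range for this fractional integration estimate; I would record these elementary consequences of the hypotheses at the outset.

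Second, for a fixed $R>0$ I would split the kernel into its near and far parts and estimate each separately. On $\{|x-y|\le R\}$ a dyadic decomposition over the annuli $\{2^{-j-1}R<|x-y|\le 2^{-j}R\}$, together with the elementary bound $\int_{|x-y|\le s}|h|\,dy\lesssim s^N\,Mh(x)$ where $M$ denotes the Hardy--Littlewood maximal operator, yields
$$
\left|\int_{|x-y|\le R}\frac{h(y)}{|x-y|^{\gamma}}\,dy\right|\lesssim R^{N-\gamma}\,Mh(x),
$$
the geometric series converging precisely because $\gamma<N$. On $\{|x-y|>R\}$ H\"older's inequality with exponent $r'$ gives
$$
\left|\int_{|x-y|>R}\frac{h(y)}{|x-y|^{\gamma}}\,dy\right|\lesssim \|h\|_r\,\Big(\int_{|z|>R}|z|^{-\gamma r'}\,dz\Big)^{1/r'}\lesssim R^{\frac{N}{r'}-\gamma}\,\|h\|_r ,
$$
the integral converging because $r<\frac{N}{N-\gamma}$ is equivalent to $\gamma r'>N$.

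Third, I would optimize the splitting radius. Balancing the two contributions, i.e.\ choosing $R\sim\big(\|h\|_r/Mh(x)\big)^{r/N}$, produces Hedberg's pointwise inequality
$$
|T_\gamma h(x)|\lesssim \|h\|_r^{\,1-\frac{r}{p'}}\,\big(Mh(x)\big)^{\frac{r}{p'}},
$$
where the exponents are forced by the scaling relation above and $0<\frac{r}{p'}<1$ since $r<p'$. Raising to the power $p'$ and integrating then gives
$$
\|T_\gamma h\|_{p'}^{p'}\lesssim \|h\|_r^{\,p'-r}\int_{\R^N}\big(Mh\big)^r\,dx\lesssim \|h\|_r^{\,p'-r}\,\|h\|_r^{\,r}=\|h\|_r^{\,p'},
$$
the last step being the Hardy--Littlewood maximal theorem $\|Mh\|_r\lesssim\|h\|_r$, valid exactly because $r>1$. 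Undoing the duality yields \eqref{HLS} with a constant $C_{N,p,\gamma}$ depending only on the stated parameters.

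The main obstacle is not any single estimate but the bookkeeping that ties the hypotheses to the admissible range: one must check that $p,r>1$ together with $\frac1p+\frac\gamma N+\frac1r=2$ force $1<r<p'<\infty$ and $\gamma r'>N$, since these strict inequalities are precisely what make the near estimate, the far estimate, and the final maximal theorem simultaneously available. An alternative route through the Marcinkiewicz interpolation theorem (interpolating the weak-type bound obtained from the same kernel splitting between two admissible pairs) reaches the same conclusion, while Lieb's rearrangement argument would additionally produce the sharp constant at the cost of considerably more work; since for the present purpose only the existence of \emph{some} $C_{N,p,\gamma}$ is needed, I would follow the maximal-function route above.
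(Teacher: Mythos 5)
The paper offers no proof of this lemma: it is quoted in the Preliminaries as the classical Hardy--Littlewood--Sobolev inequality, so there is nothing internal to compare your argument against. On its own terms your proposal is a correct and complete route to the result, namely the standard Hedberg argument: reduce by H\"older/duality to the strong-type bound $\|T_\gamma h\|_{p'}\lesssim\|h\|_r$ with $\tfrac{1}{p'}=\tfrac1r-\tfrac{N-\gamma}{N}$, split the kernel at radius $R$, control the near part by $R^{N-\gamma}Mh(x)$ via dyadic annuli (using $\gamma<N$), control the far part by $R^{N/r'-\gamma}\|h\|_r$ via H\"older (using $\gamma r'>N$, which you correctly identify as equivalent to $r<\tfrac{N}{N-\gamma}$ and hence to $p>1$ under the scaling relation), optimize in $R$ to get the pointwise bound $|T_\gamma h(x)|\lesssim\|h\|_r^{1-r/p'}(Mh(x))^{r/p'}$, and finish with the maximal theorem, which is where $r>1$ enters. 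The exponent bookkeeping checks out. Two small points worth recording explicitly: the balancing step should dispose of the degenerate cases $Mh(x)=0$ (then $h\equiv0$) and $Mh(x)=\infty$ (then the pointwise bound is vacuous), and you rightly read the numerator as $f(x)h(y)$ --- the $f(x)h(x)$ in the paper's display is a typo. Your closing remark is also accurate: Lieb's rearrangement proof or Marcinkiewicz interpolation would do equally well, but since only the existence of some constant is used in the paper, the maximal-function route is the most economical.
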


\begin{lem}
Let $2<p< \infty$ if $N=1,2$ and $2<p<2^*$ if $N \geq 3$. Then there exists a constant $C_{N,p}>0$ such that
\begin{align} \label{GN}
\int_{\R^N} |u|^p \,dx \leq C_{N, p} \left(\int_{\R^N} |\nabla u|^2 \, dx \right)^{\frac{N(p-2)}{4}}\left(\int_{\R^N}|u|^2 \,dx \right)^{\frac p 2 -\frac{N(p-2)}{4}},
\end{align}
where
\begin{align} \label{const1}
C_{N, p}:=\left(\frac{2p-(p-2)N}{(p-2)N}\right)^{\frac{N(p-2)}{4}} \frac{2p}{\left(2p-(p-2)N\right)\|R\|_2^{p-2}},
\end{align}
where $R$ is the ground state of the equation
\begin{align} \label{equ1}
-\Delta R + R=|R|^{p-2} R \quad \mbox{in} \,\, \R^N.
\end{align}
\end{lem}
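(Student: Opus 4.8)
The plan is to follow the classical variational characterization of the sharp Gagliardo--Nirenberg constant due to Weinstein. I introduce the scale-invariant quotient
\[
W(u):=\frac{\left(\int_{\R^N}|\nabla u|^2\,dx\right)^{\frac{N(p-2)}{4}}\left(\int_{\R^N}|u|^2\,dx\right)^{\frac p2-\frac{N(p-2)}{4}}}{\int_{\R^N}|u|^p\,dx}
\]
on $H^1(\R^N)\setminus\{0\}$ and set $\beta:=\inf W$. Because the three homogeneity exponents balance, $W$ is invariant under the two-parameter rescaling $u\mapsto a\,u(b\,\cdot)$, and the asserted inequality \eqref{GN} is precisely the statement $\int_{\R^N}|u|^p\,dx\le \beta^{-1}(\cdots)$. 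Thus it suffices to show that $\beta>0$, that it is attained, and that $\beta^{-1}$ equals the constant $C_{N,p}$ in \eqref{const1}. Positivity of $\beta$ already follows from the non-sharp Gagliardo--Nirenberg inequality obtained by interpolation together with the Sobolev embedding, so the inequality itself holds; the real content is the identification of the sharp constant.

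First I would show the infimum is attained. Replacing each term of a minimizing sequence by its Schwarz symmetrization does not increase $W$, since $\|\nabla u^\ast\|_2\le\|\nabla u\|_2$ by the P\'olya--Szeg\H{o} inequality while $\|u^\ast\|_2=\|u\|_2$ and $\|u^\ast\|_p=\|u\|_p$; hence the minimizing sequence may be taken radial and nonincreasing. Using scale invariance I normalize so that $\int_{\R^N}|\nabla u_n|^2\,dx=\int_{\R^N}|u_n|^2\,dx=1$, which bounds $\{u_n\}$ in $H^1(\R^N)$. The compact embedding $H^1_{\mathrm{rad}}(\R^N)\hookrightarrow L^p(\R^N)$ for $2<p<2^*$ gives strong convergence of the denominator along a subsequence, while weak lower semicontinuity of the Dirichlet and $L^2$ norms controls the numerator, producing a radial minimizer $Q$.

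Next I would derive the Euler--Lagrange equation. Setting $\frac{d}{d\varepsilon}W(Q+\varepsilon\varphi)\big|_{\varepsilon=0}=0$ and collecting the three homogeneous terms gives an equation of the form $-a\Delta Q+bQ=c\,|Q|^{p-2}Q$ with $a,b,c>0$ (here $c>0$ uses $p<2^*$). A rescaling $Q(x)=\kappa R(\sigma x)$ with suitable $\kappa,\sigma>0$ reduces this to the canonical equation $-\Delta R+R=|R|^{p-2}R$; by the uniqueness of the positive radial solution (Kwong) this $R$ is exactly the ground state in the statement, and scale invariance yields $W(Q)=W(R)=\beta$. Finally I compute $\beta=W(R)$: multiplying the canonical equation by $R$ and integrating gives the Nehari identity $\|\nabla R\|_2^2+\|R\|_2^2=\|R\|_p^p$, and the Pohozaev identity gives $\tfrac{N-2}{2}\|\nabla R\|_2^2+\tfrac N2\|R\|_2^2=\tfrac Np\|R\|_p^p$. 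Solving this linear system yields $\|\nabla R\|_2^2=\tfrac{N(p-2)}{2p}\|R\|_p^p$ and $\|R\|_2^2=\tfrac{2p-N(p-2)}{2p}\|R\|_p^p$; substituting into $W(R)$ and inverting produces precisely the constant $C_{N,p}$ of \eqref{const1}.

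The main obstacle is the compactness step. Since $W$ is invariant under both translations and the two-parameter dilation group, generic minimizing sequences may translate off to infinity, spread out, or concentrate, so no a priori compactness is available. Symmetrization (to remove translation and restore radial structure) combined with the radial compact Sobolev embedding is exactly what rescues the argument; the subsequent Euler--Lagrange computation and the Nehari/Pohozaev evaluation of $W(R)$ are then routine.
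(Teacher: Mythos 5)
The paper does not prove this lemma at all: it is stated in the Preliminaries as a known result (it is Weinstein's classical sharp Gagliardo--Nirenberg inequality), so there is no in-paper argument to compare against. Your proposal is the standard and correct variational proof of that classical result: minimize the scale-invariant Weinstein quotient, restore compactness via Schwarz symmetrization and the compact radial embedding, rescale the Euler--Lagrange equation to $-\Delta R+R=|R|^{p-2}R$, and evaluate the quotient at $R$ using the Nehari and Pohozaev identities. I checked the final computation: with $\|\nabla R\|_2^2=\tfrac{N(p-2)}{2p}\|R\|_p^p$ and $\|R\|_2^2=\tfrac{2p-N(p-2)}{2p}\|R\|_p^p$ one indeed gets $W(R)^{-1}=C_{N,p}$ as in \eqref{const1}. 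Two small points you should make explicit. First, the statement allows $N=1$ with $2<p<\infty$, and the Strauss-type compact embedding of radial $H^1$ functions is usually stated for $N\ge 2$; for $N=1$ you instead use that a nonincreasing rearrangement satisfies $|u(r)|\le r^{-1/2}\|u\|_2$, which still yields the needed $L^p$ compactness for $p>2$. Second, since the final constant involves $\|R\|_2^{p-2}$ and different solutions of the canonical equation have different $L^2$ norms, you must identify the rescaled minimizer with the ground state; your appeal to positivity of the (symmetrized) minimizer together with Kwong's uniqueness theorem closes this, but it is a genuinely needed step, not a cosmetic one.
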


\begin{lem}  (\cite{Ye}) Let $0<\gamma<\min\{N, 4\}$. Then there exists a constant $C_{N, \gamma}>0$ such that
\begin{align} \label{inequ}
\hspace{-1cm}\int_{\R^N} \int_{\R^N}\frac{|u(x)|^2|u(y)|^2}{|x-y|^{\gamma}} \, dxdy \leq C_{N,\gamma} \left(\int_{\R^N} |\nabla u|^2 \, dx\right)^{\frac{\gamma}{2}} \left(\int_{\R^N} |u|^2 \, dx\right)^{\frac{4-\gamma}{2}},
\end{align}
where
\begin{align} \label{const2}
C_{N, \gamma}:=\left(\frac{4-\gamma}{\gamma}\right)^{\frac {\gamma}{2}}\frac{4}{(4-\gamma)\|Q_{\gamma}\|_2^2},
\end{align} 
where $Q_{\gamma}$ is a ground state of the equation
\begin{align} \label{equ2}
-\Delta Q_{\gamma} + Q_{\gamma} =\left(|x|^{-\gamma} \ast |Q_{\gamma}|^2\right) Q_{\gamma} \quad \mbox{in} \,\, \R^N.
\end{align}
\end{lem}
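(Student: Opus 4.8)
The plan is to identify $C_{N,\gamma}$ as the reciprocal of the minimum of the scale-invariant Weinstein-type quotient
$$
J(u):=\frac{\left(\int_{\R^N}|\nabla u|^2\,dx\right)^{\gamma/2}\left(\int_{\R^N}|u|^2\,dx\right)^{(4-\gamma)/2}}{\displaystyle\int_{\R^N}\int_{\R^N}\frac{|u(x)|^2|u(y)|^2}{|x-y|^{\gamma}}\,dxdy},\qquad u\in H^1(\R^N)\setminus\{0\}.
$$
First I would show that $J$ is bounded away from zero, so that $J_{\min}:=\inf J>0$ and the optimal constant in \eqref{inequ} is precisely $1/J_{\min}$. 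This follows by combining the Hardy--Littlewood--Sobolev inequality \eqref{HLS} with $f=h=|u|^2$ and $p=r=\frac{2N}{2N-\gamma}$, which bounds the Hartree term by $\|u\|_{q}^4$ with $q=\frac{4N}{2N-\gamma}$, and then the Gagliardo--Nirenberg inequality \eqref{GN}. The hypothesis $0<\gamma<\min\{N,4\}$ is exactly what guarantees $2<q<2^*$, so \eqref{GN} applies and produces the stated exponents $\gamma/2$ and $(4-\gamma)/2$. A direct check shows that $J$ is invariant under the two-parameter family $u\mapsto \mu\,u(\lambda\,\cdot)$, reflecting the scaling \eqref{scaling}.

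Next I would prove that $J_{\min}$ is attained. The cleanest route is symmetric decreasing rearrangement: since the Riesz kernel $|x|^{-\gamma}$ is strictly symmetric decreasing, the Riesz rearrangement inequality increases the denominator of $J$, while the numerator does not increase (the $L^2$-norm is preserved and the Dirichlet energy does not increase under rearrangement). Hence a minimizing sequence may be taken radial and nonincreasing. Using the two scaling invariances to normalize $\|u_n\|_2=\|\nabla u_n\|_2=1$, the sequence is bounded in $H^1_{\mathrm{rad}}(\R^N)$, and the compact embedding $H^1_{\mathrm{rad}}(\R^N)\hookrightarrow L^{q}(\R^N)$ for $2<q<2^*$ lets me pass to the limit in the (continuous) Hartree functional, yielding a nonnegative radial minimizer $u_*$. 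Alternatively one runs Lions' concentration-compactness on the normalized sequence, where the scale invariance of $J$ rules out vanishing and a strict-subadditivity check rules out dichotomy.

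With a minimizer in hand, I would record its Euler--Lagrange equation: stationarity of $J$ gives, with $A=\int|\nabla u_*|^2$, $B=\int|u_*|^2$ and $D$ the Hartree term,
$$
-\frac{\gamma}{A}\Delta u_* + \frac{4-\gamma}{B}\,u_* = \frac{4}{D}\left(|x|^{-\gamma}\ast|u_*|^2\right)u_*.
$$
A suitable rescaling $Q_\gamma(x):=\mu\,u_*(\lambda x)$ normalizes the three coefficients to $1$, so $Q_\gamma$ solves \eqref{equ2} and realizes $J_{\min}$.

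Finally, the explicit value comes from two identities for $Q_\gamma$. Testing \eqref{equ2} against $Q_\gamma$ yields the Nehari relation $A+B=D$, and the Pohozaev identity obtained from $\frac{d}{dt}I\big(Q_\gamma(\cdot/t)\big)\big|_{t=1}=0$ (with $I$ the action associated to \eqref{equ2}) yields $\frac{N-2}{2}A+\frac{N}{2}B=\frac{2N-\gamma}{4}D$. Eliminating gives $A=\frac{\gamma}{4-\gamma}B$ and $D=\frac{4}{4-\gamma}B$; substituting into $J(Q_\gamma)=A^{\gamma/2}B^{(4-\gamma)/2}/D$ and inverting produces exactly \eqref{const2}, since $B=\|Q_\gamma\|_2^2$. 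The main obstacle is the attainment step, where the joint translation-and-dilation invariance destroys compactness; the rearrangement plus radial-compactness argument (or a careful concentration-compactness analysis) is what overcomes it.
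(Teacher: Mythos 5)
The paper does not actually prove this lemma: it is quoted in Section \ref{section2} as a known preliminary (the sharp Gagliardo--Nirenberg--type inequality for the Hartree term), alongside \eqref{HLS} and \eqref{GN}, with no argument supplied. So there is no in-paper proof to compare against; your proposal supplies the standard Weinstein variational argument, and it is essentially correct. The boundedness of the quotient via \eqref{HLS} with $p=r=\tfrac{2N}{2N-\gamma}$ followed by \eqref{GN} at $q=\tfrac{4N}{2N-\gamma}$ is right, and the hypothesis $0<\gamma<\min\{N,4\}$ is indeed exactly what makes $2<q<2^*$; the Euler--Lagrange computation, the rescaling to \eqref{equ2}, and the Nehari--Pohozaev elimination $A=\tfrac{\gamma}{4-\gamma}B$, $D=\tfrac{4}{4-\gamma}B$ reproduce \eqref{const2} exactly.

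Three points deserve tightening. First, the compact embedding $H^1_{\mathrm{rad}}(\R^N)\hookrightarrow L^q(\R^N)$ you invoke requires $N\geq 2$; since the lemma is used for $N\geq 1$, for $N=1$ you should instead exploit that your minimizing sequence is radially \emph{nonincreasing}, so the pointwise bound $|u_n(x)|\leq C|x|^{-N/2}\|u_n\|_2$ still yields uniform smallness of $\|u_n\|_{L^q(|x|>R)}$ and hence compactness on that cone. Second, the Pohozaev identity for \eqref{equ2} obtained by formally differentiating $t\mapsto I(Q_\gamma(\cdot/t))$ needs justification, since that curve need not be differentiable in $H^1$; the cutoff argument the paper uses to prove Lemma \ref{ph} applies verbatim to \eqref{equ2} and closes this. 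Third, you should say why your rescaled minimizer may be called \emph{a ground state} and why $\|Q_\gamma\|_2^2$ in \eqref{const2} is unambiguous: since every nontrivial solution $Q$ of \eqref{equ2} satisfies the same Nehari and Pohozaev relations, one gets $J(Q)=\left(\tfrac{\gamma}{4-\gamma}\right)^{\gamma/2}\tfrac{4-\gamma}{4}\|Q\|_2^2$ for every solution, so minimizing $J$ among solutions, minimizing $\|Q\|_2^2$, and minimizing the action are all equivalent; hence your $Q_\gamma$ is a ground state and all ground states share the same $L^2$ norm. With these remarks the proof is complete.
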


\begin{lem}\label{ph}
Let $0<\gamma_2<\gamma_1<\min\{N, 4\}$ and let $u \in H^1(\R^N)$ be a solution to \eqref{equ}-\eqref{mass}. Then $Q(u)=0$.
\end{lem}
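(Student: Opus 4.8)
The plan is to derive $Q(u)=0$ by combining two integral identities obtained from testing \eqref{equ} against $u$ itself and against the dilation field $x\cdot\nabla u$, and then eliminating the unknown multiplier $\lambda$ algebraically. Throughout I write $D_\gamma(u):=\int_{\R^N}\int_{\R^N}\frac{|u(x)|^2|u(y)|^2}{|x-y|^\gamma}\,dx\,dy$, which is finite for $0<\gamma<\min\{N,4\}$ by \eqref{inequ}.

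First I would record the regularity that makes the manipulations rigorous. Since $u\in H^1(\R^N)$, the Hardy--Littlewood--Sobolev inequality \eqref{HLS} together with the Sobolev embedding shows that the convolution potentials $W_i:=|x|^{-\gamma_i}\ast|u|^2$ lie in suitable Lebesgue spaces; feeding the terms $W_i u$ back into \eqref{equ} and running the standard elliptic bootstrap then yields $u\in W^{2,q}_{\mathrm{loc}}(\R^N)$ for an appropriate $q$, together with enough decay of $u$ and $\nabla u$ to legitimize the integrations by parts below and the vanishing of boundary terms at infinity. Concretely, I would first carry out the identities on balls $B_R$ with a cutoff and then let $R\to\infty$.

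Next, testing \eqref{equ} against $u$ (taking real parts) gives the Nehari-type identity
\[
\|\nabla u\|_2^2 + \lambda\|u\|_2^2 = D_{\gamma_1}(u) - D_{\gamma_2}(u).
\]
Multiplying \eqref{equ} by $x\cdot\nabla u$ and integrating, using $\int_{\R^N}(-\Delta u)(x\cdot\nabla u)\,dx=\frac{N-2}{2}\|\nabla u\|_2^2$, $\int_{\R^N}\lambda u\,(x\cdot\nabla u)\,dx=-\frac{N\lambda}{2}\|u\|_2^2$, and the standard Choquard-type computation $\int_{\R^N}(|x|^{-\gamma}\ast|u|^2)u\,(x\cdot\nabla u)\,dx=-\frac{2N-\gamma}{4}D_\gamma(u)$, I obtain the Pohozaev identity
\[
\frac{N-2}{2}\|\nabla u\|_2^2 + \frac{N\lambda}{2}\|u\|_2^2 = \frac{2N-\gamma_1}{4}D_{\gamma_1}(u) - \frac{2N-\gamma_2}{4}D_{\gamma_2}(u).
\]
Forming the second identity minus $\frac{N}{2}$ times the first cancels the $\lambda$ terms, reduces the gradient coefficient to $-1$, and collapses the right-hand side to $\frac{\gamma_2}{4}D_{\gamma_2}(u)-\frac{\gamma_1}{4}D_{\gamma_1}(u)$; multiplying through by $-1$ gives exactly $\|\nabla u\|_2^2+\frac{\gamma_2}{4}D_{\gamma_2}(u)-\frac{\gamma_1}{4}D_{\gamma_1}(u)=0$, that is, $Q(u)=0$.

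A conceptually cleaner route, which dispenses with $\lambda$ automatically, exploits the mass-preserving scaling $u_t$: the scalar identity \eqref{scaling} gives $\frac{d}{dt}E(u_t)\big|_{t=1}=Q(u)$ unconditionally, while the critical-point relation $E'(u)=-\lambda u$ combined with $\langle u,\tfrac{N}{2}u+x\cdot\nabla u\rangle=0$ (an integration by parts) forces this derivative to vanish. I expect the main obstacle to be not the algebra but the justification of the Pohozaev identity itself, namely the regularity and decay bootstrap and the control of the boundary terms at infinity; this is precisely where the hypothesis $\gamma_i<\min\{N,4\}$ enters, keeping all the integrals and the bootstrap exponents under control.
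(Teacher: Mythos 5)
Your proposal is correct and follows essentially the same route as the paper: test the equation against $u$ and against the dilation field $x\cdot\nabla u$ (regularized by a cutoff to justify the integrations by parts), obtain the Nehari and Pohozaev identities, and eliminate $\lambda$ by taking the Pohozaev identity minus $\tfrac{N}{2}$ times the Nehari identity --- the paper does exactly this with the test function $\psi(\sigma x)\,(x\cdot\nabla u)$ and the limit $\sigma\to 0$. One small slip: the auxiliary formula should read $\int_{\R^N}(-\Delta u)(x\cdot\nabla u)\,dx=-\frac{N-2}{2}\|\nabla u\|_2^2$ (your sign is flipped), but the Pohozaev identity you then display is the correct one, so the conclusion is unaffected.
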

\begin{proof}
Let $\psi \in C_0^{\infty}(\R^N, [0, 1])$ be a cut-off function such that $\psi(x)=1$ for $x \in B_1(0)$, $\psi(x)=0$ for $x \notin B_2(0)$ and $|\nabla \psi(x)| \leq 2$ for any $x \in \R^N$. Define $\psi_{\sigma}(x):=\psi(\sigma x) \left(x \cdot \nabla u(x)\right)$ for $x \in \R^N$.
Multiplying \eqref{equ} by $\psi_{\sigma}$ and integrating on $\R^N$, we have that
\begin{align} \label{ph1}
\begin{split}
\hspace{-1cm}-\int_{\R^N} \Delta u \psi_{\sigma} \, dx + \lambda \int_{\R^N} u \psi_{\sigma} \, dx  \, dx =\int_{\R^N}\left(|x|^{-\gamma_1} \ast |u|^2\right) u \psi_{\sigma} \, dx- \int_{\R^N}\left(|x|^{-\gamma_2} \ast |u|^2\right) u \psi_{\sigma} \, dx.
\end{split}
\end{align}
Let us first compute the terms in the left side hand of \eqref{ph1}. By simple calculations, it is simple to see that
\begin{align*}
-\int_{\R^N} \Delta u \psi_{\sigma} \, dx&= \int_{\R^N} \nabla u \cdot \nabla \left(\psi(\sigma x) \left(x \cdot \nabla u(x)\right)\right) \, dx = \int_{\R^N} \psi(\sigma x) \left(|\nabla u|^2+ x \cdot \nabla \left(\frac{|\nabla u|^2}{2}\right)\right) \, dx\\
&=-\frac 12 \int_{\R^N}|\nabla u|^2 \left((N-2) \psi(\sigma x) + \sigma x \cdot \nabla \psi(\sigma x)\right) \, dx
\end{align*}
and
$$
\lambda \int_{\R^N} u \psi_{\sigma} \, dx =\frac {\lambda}{2} \int_{\R^N} \psi(\sigma x)\left(x \cdot \nabla \left(|u|^2\right) \right) \, dx =-\frac {\lambda}{2}  \int_{\R^N} |u|^2 \left(N \psi(\sigma x) + \sigma x \cdot \nabla \psi(\sigma x)\right) \,dx.
$$
Let us now compute the terms in the right side hand of \eqref{ph1}. Observe that
\begin{align*}
&\int_{\R^N}\left(|x|^{-\gamma_i} \ast |u|^2\right) u \psi_{\sigma} \, dx = \frac 12 \int_{\R^N} \int_{\R^N} \frac{|u(y)|^2}{|x-y|^{\gamma_i}} \left(\psi(\sigma x) x \cdot \nabla \left(|u|^2\right)\right) \, dx dy \\
&= \frac 14 \int_{\R^N} \int_{\R^N} \frac{|u(y)|^2}{|x-y|^{\gamma_i}} \left(\psi(\sigma x) x \cdot \nabla \left(|u|^2\right)\right) \, dx dy+\frac 14 \int_{\R^N} \int_{\R^N} \frac{|u(x)|^2}{|x-y|^{\gamma_i}} \left(\psi(\sigma y) y \cdot \nabla \left(|u|^2\right) \right) \, dx dy \\ 
&=-\frac 12 \int_{\R^N} \int_{\R^N} \frac{|u(x)|^2 |u(y)|^2}{|x-y|^{\gamma_i}} \left(N \psi(\sigma x) + \sigma x \cdot \nabla \psi(\sigma x) -\frac{\gamma_i (x-y) \cdot(x \psi(\sigma x)-y \psi(\sigma y)}{2|x-y|^2} \right)\, dxdy,
\end{align*}
where $i=1,2$. Taking a limit as $\sigma \to 0$ in \eqref{ph1}, we then conclude that
\begin{align} \label{ph2}
\begin{split}
\frac{N-2}{2} \int_{\R^N} |\nabla u|^2 \, dx + \frac N 2 \int_{\R^N} |u|^2 \, dx =&\left(\frac N 2 - \frac{\gamma_1}{4} \right) \int_{\R^N} \int_{\R^N} \frac{|u(x)|^2 |u(y)|^2}{|x-y|^{\gamma_1}} \, dxdy \\
& \quad - \left(\frac N 2-\frac{\gamma_2}{4} \right) \int_{\R^N} \int_{\R^N} \frac{|u(x)|^2 |u(y)|^2}{|x-y|^{\gamma_2}} \, dxdy.
\end{split}
\end{align}
On the other hand, multiplying \eqref{equ} by $u$ and integrating on $\R^N$, we get that
$$
\frac{N}{2} \int_{\R^N} |\nabla u|^2 \, dx + \frac N 2 \int_{\R^N} |u|^2 \, dx = \frac N 2\int_{\R^N} \int_{\R^N} \frac{|u(x)|^2 |u(y)|^2}{|x-y|^{\gamma_1}} \, dxdy - \frac N 2\int_{\R^N} \int_{\R^N} \frac{|u(x)|^2 |u(y)|^2}{|x-y|^{\gamma_2}} \, dxdy. \\
$$
This along with \eqref{ph2} gives rise to the result of the lemma. Thus the proof  is completed.
\end{proof}

\section{Mass subcritical case}\label{section3}

In this section, we consider the mass subcritical case $0<\gamma_2<\gamma_1<\min\{N, 2\}$. The aim is to establish Proposition \ref{prop}, Theorems \ref{thmsub1} and \ref{thmsub2}. To begin with, for any $u \in S(c)$ and $\theta>0$, we define
\begin{align} \label{scaling1}
u^{\theta}(x):=\theta^\frac{1+\beta N}{2}u(\theta^{\beta} x) \quad x \in \R^N.
\end{align}
It is clear to see that $u^{\theta} \in S(\theta c)$ and
\begin{align} \label{scaling2}
\begin{split}
E(u^{\theta})&=\frac{\theta^{1+2\beta}}{2}\int_{\R^N} |\nabla u|^2 \, dx + \frac {\theta^{2+ \beta \gamma_2}}{4}\int_{\R^N} \int_{\R^N}\frac{|u(x)|^2|u(y)|^2}{|x-y|^{\gamma_2}} \, dxdy\\
& \quad - \frac {\theta^{2+ \beta \gamma_1}}{4}\int_{\R^N} \int_{\R^N}\frac{|u(x)|^2|u(y)|^2}{|x-y|^{\gamma_1}} \, dxdy.
\end{split}
\end{align}


\begin{proof}[Proof of Proposition \ref{prop}]
Let us first prove the assertion $(\textnormal{i})$. In light of \eqref{inequ}, we have that
\begin{align} \label{belimit}
\begin{split}
E(u) &\geq \frac 12 \int_{\R^N}|\nabla u|^2 \,dx + \frac {1}{4}\int_{\R^N} \int_{\R^N}\frac{|u(x)|^2|u(y)|^2}{|x-y|^{\gamma_2}} \, dxdy \\
& \quad -C_{N, \gamma_1} \left(\int_{\R^N} |\nabla u|^2 \, dx\right)^{\frac{\gamma_1}{2}} \left(\int_{\R^N} |u|^2 \, dx\right)^{\frac{4-\gamma_1}{2}},
\end{split}
\end{align}
from which we can derive that $m(c)>-\infty$ for any $c>0$, because of $0<\gamma_1<2$. It yields from \eqref{scaling} that $E(u_t) \to 0$ as $t \to 0^+$. This then indicates that $m(c) \leq 0$. We next verify the assertion $(\textnormal{ii})$. For any $c>0$, let $\{c_n\} \subset \R^+$ be such that $c_n \to c$ as $n \to \infty$ and $\{u_n\} \subset S(c_n)$ be such that
\begin{align} \label{eun}
E(u_n) \leq m(c_n) + \frac 1 n.
\end{align}
Define 
$$
v_n:=\frac{u_n}{\|u_n\|_2} c^{\frac 12} \in S(c).
$$
Since $m(c_n)<0$, by \eqref{eun}, then $E(u_n) \leq C$ for some $C>0$. Thanks to $\gamma_1<2$, from \eqref{belimit}, we have that $\{u_n\}$ is bounded in $H^1(\R^N)$. As a result, we see that
\begin{align*}
m(c) \leq E(v_n) &=\frac{c}{2\|u_n\|_2^2} \int_{\R^N} |\nabla u_n|^2 \,dx +\frac{c^2}{4\|u_n\|_2^4}\int_{\R^N} \int_{\R^N}\frac{|u_n(x)|^2|u_n(y)|^2}{|x-y|^{\gamma_2}} \, dxdy \\
&\quad -\frac{c^2}{4\|u_n\|_2^4}\int_{\R^N} \int_{\R^N}\frac{|u_n(x)|^2|u_n(y)|^2}{|x-y|^{\gamma_1}} \, dxdy\\
&=E(u_n) +o_n(1)  \leq m(c_n) + o_n(1).
\end{align*}
On the other hand, from the definition of $m(c)$, we know that there exists $\widetilde{u}_n \in S(c)$ such that
\begin{align} \label{eun1}
E(\widetilde{u}_n) \leq m(c) + \frac 1 n.
\end{align}
Define 
$$
\widetilde{v}_n:=\frac{\widetilde{u}_n}{\|\widetilde{u}_n\|_2} c_n^{\frac 12} \in S(c_n).
$$
It follows from \eqref{eun1} that $E(\widetilde{u}_n) \leq C$ for some $C>0$. Due to $\gamma_1<2$, from \eqref{belimit}, we also have that $\{\widetilde{u}_n\}$ is bounded in $H^1(\R^N)$. Therefore, we obtain that
\begin{align*}
m(c_n) \leq E(\widetilde{v}_n) &=\frac{c_n}{2\|\widetilde{u}_n\|_2^2} \int_{\R^N} |\nabla \widetilde{u}_n|^2 \,dx +\frac{c^2_n}{4\|\widetilde{u}_n\|_2^4}\int_{\R^N} \int_{\R^N}\frac{|\widetilde{u}_n(x)|^2|\widetilde{u}_n(y)|^2}{|x-y|^{\gamma_2}} \, dxdy \\
&\quad -\frac{c_n^2}{4\|\widetilde{u}_n\|_2^4}\int_{\R^N} \int_{\R^N}\frac{|\widetilde{u}_n(x)|^2|\widetilde{u}_n(y)|^2}{|x-y|^{\gamma_1}} \, dxdy\\
&=E(\widetilde{u}_n) +o_n(1)  \leq m(c) + o_n(1).
\end{align*}
Therefore, we obtain that $m(c_n)=m(c) + o_n(1)$ and the assertion $(\textnormal{ii})$ follows. We now deduce the assertion $(\textnormal{iii})$. From the definition of $m(c)$, we know that, for any $\eps>0$, there exist $u_1 \in S(c_1)$ and $u_2 \in S(c_2)$ such that
$$
E(u_1) \leq m(c_1) + \frac{\eps}{4}, \quad E(u_2) \leq m(c_2) + \frac{\eps}{4}.
$$
Since $C^{\infty}_0(\R^N)$ is dense in $H^1(\R^N)$, then there exist $\widetilde{u}_{1,n}, \widetilde{u}_{2,n} \in C^{\infty}_0(\R^N)$ such that $\|\widetilde{u}_{1,n}-u_1\| =o_n(1)$ and $\|\widetilde{u}_{1,n}-u_2\| =o_n(1)$. In particular, there holds that $\|\widetilde{u}_{1,n}-u_1\|_2 =o_n(1)$ and $\|\widetilde{u}_{2,n}-u_2\|_2 =o_n(1)$. Define
$$
\widetilde{v}_{1,n}:=\frac{\widetilde{u}_{1,n}}{\|\widetilde{u}_{1,n}\|_2} c_1^{\frac 12}\in S(c_1), \quad \widetilde{v}_{2,n}:=\frac{\widetilde{u}_{2,n}}{\|\widetilde{u}_{1,n}\|_2} c_2^{\frac 12} \in S(c_2).
$$
It then follows that $\|\widetilde{v}_{1,n}-u_1\| =o_n(1)$ and $\|\widetilde{v}_{1,n}-u_2\| =o_n(1)$. This necessarily leads to
$$
E(\widetilde{v}_{1,n}) \leq m(c_1) + \frac{\eps}{4} +o_n(1), \quad E(\widetilde{v}_{2,n}) \leq m(c_2) + \frac{\eps}{4}+o_n(1).
$$
Therefore, we are able to derive that there exist $v_1 \in C^{\infty}_0(\R^N) \cap S(c_1)$ and $v_2 \in C^{\infty}_0(\R^N) \cap S(c_2)$ such that
$$
E(v_1) \leq m(c_1) + \frac{\eps}{2}, \quad E(v_2) \leq m(c_2) + \frac{\eps}{2}.
$$
Since $E$ is invariant under any translation in $\R^N$, then, for any $h_1, h_2 \in \R^N$,
$$
E(v_1(\cdot-h_1)) \leq m(c_1) + \frac{\eps}{2}, \quad E(v_2(\cdot-h_2)) \leq m(c_2) + \frac{\eps}{2}.
$$
Consequently, without restriction, we may assume that, for any $\eps>0$, there exist $u_1 \in C^{\infty}_0(\R^N) \cap S(c_1)$ and $u_2 \in C^{\infty}_0(\R^N) \cap S(c_2)$ satisfying $\textnormal{dist}(\textnormal{supp} \,u_1, \, \textnormal{supp} \,u_2)>n$ such that
$$
E(u_1) \leq m(c_1) + \frac{\eps}{2}, \quad E(u_2) \leq m(c_2) + \frac{\eps}{2}.
$$ 
Then $u=u_1+u_2 \in S(c)$ and
$$
\int_{\R^N} |\nabla u|^2 \, dx =\int_{\R^N} |\nabla u_1|^2 \, dx + \int_{\R^N} |\nabla u_2|^2 \,dx.
$$
Observe that
\begin{align*}
|u_1(x)+u_2(x)|^2|u_1(y)+u_2(y)|^2&=|u_1(x)|^2|u_1(y)|^2+|u_1(x)|^2|u_2(y)|^2+2|u_1(x)|^2 \left(u_1(y)u_2(y)\right)\\
& \quad + |u_2(x)|^2|u_1(y)|^2+|u_2(x)|^2|u_2(y)|^2+2|u_2(x)|^2\left(u_1(y)u_2(y)\right)\\
& \quad +2|u_1(y)|^2\left(u_1(x)u_2(x)\right) +2|u_2(y)|^2 \left(u_1(x)u_2(x)\right) \\
& \quad + 4 \left(u_1(x)u_2(x)u_1(y)u_2(y)\right),
\end{align*}
In addition, we see that
\begin{align*}
\int_{\R^N} \int_{\R^N}\frac{|u_1(x)|^2|u_2(y)|^2}{|x-y|^{\gamma_i}} \, dxdy &=\int_{\textnormal{supp} \, u_1} \int_{\textnormal{supp} \, u_2} \frac{|u_1(x)|^2|u_2(y)|^2}{|x-y|^{\gamma_i}} \, dxdy \leq \frac{c_1 c_2}{n^{\gamma_i}},
\end{align*}
\begin{align*}
\int_{\R^N} \int_{\R^N}\frac{|u_2(x)|^2 |u_1(y)|^2}{|x-y|^{\gamma_i}}\, dxdy=\int_{\textnormal{supp} \, u_2} \int_{\textnormal{supp} \, u_1} \frac{|u_2(x)|^2|u_1(y)|^2}{|x-y|^{\gamma_i}} \, dxdy\leq \frac{c_1 c_2}{n^{\gamma_i}},
\end{align*}
and
\begin{align*}
\int_{\R^N} \int_{\R^N}\frac{|u_1(x)|^2|u_2(y)||u_1(y)|}{|x-y|^{\gamma_i}} \, dxdy=\int_{\textnormal{supp} \, u_1} \int_{\textnormal{supp} \, u_2}\frac{|u_1(x)|^2|u_1(y)||u_2(y)|}{|x-y|^{\gamma_i}} \, dxdy \leq \frac{c_1(c_1+c_2)}{2n^{\gamma_i}},
\end{align*}
\begin{align*}
\int_{\R^N} \int_{\R^N}\frac{|u_1(x)||u_2(x)||u_1(y)|^2}{|x-y|^{\gamma_i}} \, dxdy &=\int_{\textnormal{supp} \, u_2} \int_{\textnormal{supp} \, u_1}\frac{|u_1(x)||u_2(x)||u_1(y)|^2}{|x-y|^{\gamma_i}} \, dxdy \leq \frac{c_1(c_1+c_2)}{2n^{\gamma_i}}.
\end{align*}
Similarly, we can derive that
$$
\int_{\R^N} \int_{\R^N}\frac{|u_2(x)|^2|u_1(y)||u_2(y)|}{|x-y|^{\gamma_i}} \, dxdy \leq \frac{c_2(c_1+c_2)}{2n^{\gamma_i}},
$$
$$
\int_{\R^N} \int_{\R^N}\frac{|u_2(x)||u_1(x)||u_2(y)|^2}{|x-y|^{\gamma_i}} \, dxdy \leq \frac{c_2(c_1+c_2)}{2n^{\gamma_i}}
$$
and
$$
\int_{\R^N} \int_{\R^N}\frac{|u_1(x)||u_2(x)||u_1(y)||u_2(y)|}{|x-y|^{\gamma_i}} \, dxdy  \leq \frac{(c_1+c_2)^2}{4n^{\gamma_i}}.
$$
Accordingly, we arrive at
\begin{align*}
\int_{\R^N} \int_{\R^N}\frac{|u(x)|^2|u(y)|^2}{|x-y|^{\gamma_i}} \, dxdy &= \int_{\R^N} \int_{\R^N}\frac{|u_1(x)|^2|u_1(y)|^2}{|x-y|^{\gamma_i}} \, dxdy + \int_{\R^N} \int_{\R^N}\frac{|u_2(x)|^2|u_2(y)|^2}{|x-y|^{\gamma_i}} \, dxdy +o_n(1),
\end{align*}
where $i=1,2$. Therefore, we get that
\begin{align*}
m(c) \leq E(u) = E(u_1) + E(u_2)  +o_n(1) \leq m(c_1) +m(c_2) +\eps+o_n(1).
\end{align*}
This then leads to $m(c) \leq m(c_1) + m(c_2)$ for $c=c_1+c_2$. We finally demonstrate the assertion $(\textnormal{iv})$. For any $u_c \in S(c)$ with $E(u_c) \leq 0$, from \eqref{belimit}, we get that
$$
C_{N, \gamma_1} c^{\frac{4-\gamma_1}{2}}  \left(\int_{\R^N} |\nabla u_c|^2 \, dx\right)^{\frac{\gamma_1}{2}} \geq \frac 12 \int_{\R^N}|\nabla u_c|^2 \,dx + \frac {1}{4}\int_{\R^N} \int_{\R^N}\frac{|u_c(x)|^2|u_c(y)|^2}{|x-y|^{\gamma_2}} \, dxdy \geq \frac 12 \int_{\R^N}|\nabla u_c|^2 \,dx.
$$
Since $\gamma_1<2$, then 
$$
\int_{\R^N} |\nabla u_c|^2 \, dx \leq \left(2C_{N, \gamma_1} c^{\frac{4-\gamma_1}{2}} \right)^{\frac{2}{2-\gamma_1}}\to 0^+ \quad \mbox{as} \,\, c \to 0^+.
$$
Making use of \eqref{inequ}, we then get that $E(u_c) \to 0$ as $c \to 0^+$. It follows that $m(c) \to 0$ as $c \to 0^+$. Let $u \in S(1)$ and $\beta:=\frac{1}{2-\gamma_2}>0$, by \eqref{scaling2}, we derive that
$$
E(u^{\theta})=\frac{\theta^{1+2\beta}}{2}\int_{\R^N} |\nabla u|^2 \, dx + \frac {\theta^{1+ 2\beta}}{4}\int_{\R^N} \int_{\R^N}\frac{|u(x)|^2|u(y)|^2}{|x-y|^{\gamma_2}} \, dxdy- \frac {\theta^{2+\beta \gamma_1}}{4}\int_{\R^N} \int_{\R^N}\frac{|u(x)|^2|u(y)|^2}{|x-y|^{\gamma_1}} \, dxdy.
$$
Thanks to $\gamma_2<\gamma_1$, then $E(u^{\theta}) \to -\infty$ as $\theta \to \infty$. Note that $u^{\theta} \in S(\theta)$, then $m(c) \to -\infty$ as $c \to \infty$. Thus the proof is completed.
\end{proof}

\begin{lem} \label{estimate}
Let $0<\gamma_2<\gamma_1<\min\{N, 2\}$. Then there exists a constant $C>0$ depending only on $\gamma_1, \gamma_2$ and $N$ such that
\begin{align} \label{inequ3}
\begin{split}
\hspace{-1cm}\int_{\R^N}\int_{\R^N}\frac{|u(x)|^2|u(y)|^2}{|x-y|^{\gamma_1}} \, dxdy &\leq C\left(\int_{\R^N}\int_{\R^N}\frac{|u(x)|^2|u(y)|^2}{|x-y|^{\gamma_2}} \, dxdy\right)^{\theta} \|\nabla u\|_2^{\gamma(1-\theta)}\|u\|_2^{(4-\gamma)(1-\theta)},
\end{split}
\end{align}
where $\gamma>0$ is a constant satisfying $0<\gamma_2<\gamma_1<\gamma<\min\{N, 2\}$ and $\gamma_1=\theta \gamma_2+(1-\theta) \gamma$ for $0<\theta<1$. If $N \geq 3$, then \eqref{inequ3} holds true for $\gamma=2$.
\end{lem}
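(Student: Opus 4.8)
The plan is to view the left-hand side of \eqref{inequ3} as a Riesz-type energy of the measure $d\mu(x,y)=|u(x)|^2|u(y)|^2\,dx\,dy$ and to interpolate the kernel $|x-y|^{-\gamma_1}$ between $|x-y|^{-\gamma_2}$ and $|x-y|^{-\gamma}$ via H\"older's inequality, after which the $\gamma$-energy is controlled by the inequality \eqref{inequ}. No compactness or variational input is needed; the statement is a pure pointwise-kernel interpolation followed by a Gagliardo--Nirenberg estimate.

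First I would fix the interpolation parameters. Given $\gamma_1$, choose $\gamma$ with $\gamma_1<\gamma<\min\{N,2\}$ (and observe that $\gamma=2$ is admissible once $N\geq 3$, since then $2<\min\{N,4\}$ so that \eqref{inequ} still applies). Setting $\theta:=\frac{\gamma-\gamma_1}{\gamma-\gamma_2}$ gives $\theta\in(0,1)$ because $\gamma_2<\gamma_1<\gamma$, and by construction $\gamma_1=\theta\gamma_2+(1-\theta)\gamma$. Consequently, for $x\neq y$ one has the pointwise factorization $|x-y|^{-\gamma_1}=|x-y|^{-\theta\gamma_2}\,|x-y|^{-(1-\theta)\gamma}$.

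The main step is H\"older's inequality with conjugate exponents $\frac1\theta$ and $\frac1{1-\theta}$ applied to the double integral against $d\mu$:
\begin{align*}
\int_{\R^N}\int_{\R^N}\frac{|u(x)|^2|u(y)|^2}{|x-y|^{\gamma_1}}\,dxdy
&\leq\left(\int_{\R^N}\int_{\R^N}\frac{|u(x)|^2|u(y)|^2}{|x-y|^{\gamma_2}}\,dxdy\right)^{\theta}\\
&\quad\times\left(\int_{\R^N}\int_{\R^N}\frac{|u(x)|^2|u(y)|^2}{|x-y|^{\gamma}}\,dxdy\right)^{1-\theta},
\end{align*}
where the exponents are chosen so that $\theta\gamma_2\cdot\frac1\theta=\gamma_2$ and $(1-\theta)\gamma\cdot\frac1{1-\theta}=\gamma$. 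I would then estimate the last factor by \eqref{inequ}, valid since $0<\gamma<\min\{N,4\}$, obtaining the bound $C_{N,\gamma}\|\nabla u\|_2^{\gamma}\|u\|_2^{4-\gamma}$; raising this to the power $1-\theta$ and absorbing $C_{N,\gamma}^{1-\theta}$ into the constant $C$ yields precisely \eqref{inequ3}.

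The only genuine point to verify is that the chosen $\gamma$ lies in the range where \eqref{inequ} is valid: this is immediate whenever $\gamma<\min\{N,2\}$, and for the endpoint value $\gamma=2$ one uses that $N\geq 3$ forces $\min\{N,4\}>2$, so \eqref{inequ} applies at $\gamma=2$ as well. The remaining work — the exponent bookkeeping in the H\"older step and the algebra of the powers of $\theta$ — is routine, so I expect no substantial obstacle beyond keeping the admissible range of $\gamma$ straight.
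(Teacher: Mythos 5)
Your proposal is correct and follows essentially the same route as the paper: the same pointwise factorization of the kernel and H\"older interpolation with exponents $1/\theta$ and $1/(1-\theta)$, followed by a Gagliardo--Nirenberg-type bound on the $\gamma$-energy. The only cosmetic difference is that you invoke \eqref{inequ} directly for the second factor, whereas the paper reaches the same bound in two steps via \eqref{HLS} and \eqref{GN}.
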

\begin{proof}
In light of H\"older's inequality, \eqref{HLS} and \eqref{GN}, we are able to derive that
\begin{align*}
\int_{\R^N}\int_{\R^N}\frac{|u(x)|^2|u(y)|^2}{|x-y|^{\gamma_1}} \, dxdy &\leq \left(\int_{\R^N}\int_{\R^N}\frac{|u(x)|^2|u(y)|^2}{|x-y|^{\gamma_2}}\, dxdy\right)^{\theta}\left(\int_{\R^N}\int_{\R^N}\frac{|u(x)|^2|u(y)|^2}{|x-y|^{\gamma}} \, dxdy\right)^{1-\theta} \\
& \leq C\left(\int_{\R^N}\int_{\R^N}\frac{|u(x)|^2|u(y)|^2}{|x-y|^{\gamma_2}}\, dxdy\right)^{\theta}
\|u\|_{2p}^{2(1-\theta)}\|u\|_{2r}^{2(1-\theta)} \\
& \leq C\left(\int_{\R^N}\frac{|u(x)|^2|u(y)|^2}{|x-y|^{\gamma_2}}\, dxdy\right)^{\theta} \|\nabla u\|_2^{\gamma(1-\theta)} \|u\|_2^{(4-\gamma)(1-\theta)}.
\end{align*}
If $N \geq 3$, from estimate above, we can take $\gamma=2$ to deduce that \eqref{inequ3} remains valid. Thus we have completed the proof.
\end{proof}

\begin{proof} [Proof of Theorem \ref{thmsub1}]
From Lemma \ref{estimate} with $N \geq 3$, we have that, for any $u \in S(c)$,
\begin{align*}
E(u) & \geq \frac 12 \int_{\R^N}|\nabla u|^2 \,dx + \frac {1}{4}\int_{\R^N} \int_{\R^N}\frac{|u(x)|^2|u(y)|^2}{|x-y|^{\gamma_2}} \, dxdy\\
& \quad - C_{N, \gamma_1, \gamma_2} \left(\int_{\R^N}\frac{|u(x)|^2|u(y)|^2}{|x-y|^{\gamma_2}} \, dxdy\right)^{\theta} \left(\int_{\R^N} |\nabla u|^2 \, dx\right)^{1-\theta} c^{1-\theta},
\end{align*}
where $0<\theta<1$ and $\gamma_1=\theta \gamma_2 + 2 (1-\theta)$. Using Young's inequality, we then get that
\begin{align*}
E(u) & \geq \left(\frac 12 -\theta C_{N, \gamma_1, \gamma_2}c^{1-\theta} \right) \int_{\R^N}|\nabla u|^2 \,dx + \left(\frac 14 -(1-\theta) C_{N, \gamma_1, \gamma_2}c^{1-\theta} \right) \int_{\R^N} \int_{\R^N}\frac{|u(x)|^2|u(y)|^2}{|x-y|^{\gamma_2}} \, dxdy.
\end{align*}
Therefore, there exists a constant $\hat{c}_0>0$ such that, for any $0<c< \hat{c}_0$,
\begin{align} \label{nonexistence}
E(u) \geq \frac 3 8 \int_{\R^N}|\nabla u|^2 \,dx + \frac 1 8 \int_{\R^N} \int_{\R^N}\frac{|u(x)|^2|u(y)|^2}{|x-y|^{\gamma_2}} \, dxdy > 0.
\end{align} 
As a consequence, we then get that $m(c) \geq 0$ for any $0<c<\hat{c_0}$. In view of the assertion $(\textnormal{i})$ of Proposition \ref{prop}, we then obtain that $m(c)=0$ for any $0<c<\hat{c}_0$. We now prove that, for any $0<c<\hat{c}_0$, there exists no minimizers to \eqref{gmin}. If there were a minimizer $u \in S(c)$ to \eqref{gmin} for some $0<c<\hat{c}_0$, then $E(u)=0$. This clearly contradicts \eqref{nonexistence}. Thus we have completed the proof.
\end{proof}

\begin{proof}[Proof of Theorem \ref{thmsub2}]
For any $u \in S(c)$ and $t >0$, by applying \eqref{scaling}, we first see that
\begin{align*}
E(u_t)&=\frac {t^{\gamma_2}}{4} \left( 2 t^{2-\gamma_2}\int_{\R^N}|\nabla u|^2 \,dx + \int_{\R^N} \int_{\R^N}\frac{|u(x)|^2|u(y)|^2}{|x-y|^{\gamma_2}} \, dxdy-t^{\gamma_1-\gamma_2} \int_{\R^N} \int_{\R^N}\frac{|u(x)|^2|u(y)|^2}{|x-y|^{\gamma_1}} \, dxdy\right) \\
&=:\frac {t^{\gamma_2}}{4} F_u(t),
\end{align*}
where
$$
F_u(t):=2 t^{2-\gamma_2}\int_{\R^N}|\nabla u|^2 \,dx + \int_{\R^N} \int_{\R^N}\frac{|u(x)|^2|u(y)|^2}{|x-y|^{\gamma_2}} \, dxdy-t^{\gamma_1-\gamma_2} \int_{\R^N} \int_{\R^N}\frac{|u(x)|^2|u(y)|^2}{|x-y|^{\gamma_1}} \, dxdy.
$$
Direct calculations yield that $F_u$ has a unique minimizer point $t_0>0$ defined by
$$
t_0:=\left(\frac{(\gamma_1-\gamma_2)\int_{\R^N} \int_{\R^N}\frac{|u(x)|^2|u(y)|^2}{|x-y|^{\gamma_1}} \, dxdy}{2(2-\gamma_2)\int_{\R^N}|\nabla u|^2 \,dx}\right)^{\frac{1}{2-\gamma_1}}
$$
and
\begin{align*}
F_u(t_0)&=\int_{\R^N} \int_{\R^N}\frac{|u(x)|^2|u(y)|^2}{|x-y|^{\gamma_2}} \, dxdy-C_{\gamma_1, \gamma_2}\frac{\left(\int_{\R^N} \int_{\R^N}\frac{|u(x)|^2|u(y)|^2}{|x-y|^{\gamma_1}} \, dxdy\right)^{\frac{2-\gamma_2}{2-\gamma_1}}}{\left(\int_{\R^N}|\nabla u|^2 \,dx\right)^{\frac{\gamma_1-\gamma_2}{2-\gamma_1}}},
\end{align*}
where
$$
C_{\gamma_1, \gamma_2}:=\frac{2(2-\gamma_1)}{\gamma_1-\gamma_2} \left(\frac{\gamma_1-\gamma_2}{2(2-\gamma_2)}\right)^{\frac{2-\gamma_2}{\gamma_1-\gamma_2}}.
$$
Let us now choose $u \in S(c)$ as
$$
u=\frac{Q_{\gamma_1}}{\|Q_{\gamma_1}\|_2}c^{\frac 12} \in S(c),
$$
where $Q_{\gamma_1} \in H^1(\R^N)$ is such that the equality in \eqref{inequ} holds for $\gamma=\gamma_1$. By using \eqref{inequ}, we then have that
\begin{align*}
F_u(t_0)&=\frac{c^2}{\|Q_{\gamma_1}\|_2^4}\int_{\R^N} \int_{\R^N}\frac{|Q_{\gamma_1}(x)|^2|Q_{\gamma_1}(y)|^2}{|x-y|^{\gamma_2}} \, dxdy -C_{\gamma_1, \gamma_2}C_{N, \gamma_1}^{\frac {2-\gamma_2}{2-\gamma_1}} \frac{c^{1+\frac{2-\gamma_2}{2-\gamma_1}}}{\|Q_{\gamma_1}\|_2^{\gamma_2}}\left(\int_{\R^N} |\nabla Q_{\gamma_1}|^2 \, dx\right)^{\frac{\gamma_1}{2}} \\
& \leq C_{N, \gamma_2}\frac{c^2}{\|Q_{\gamma_1}\|_2^{\gamma_2}}\left(\int_{\R^N} |\nabla Q_{\gamma_1}|^2 \, dx\right)^{\frac{\gamma_2}{2}}-C_{\gamma_1, \gamma_2}C_{N, \gamma_1}^{\frac {2-\gamma_2}{2-\gamma_1}} \frac{c^{1+\frac{2-\gamma_2}{2-\gamma_1}}}{\|Q_{\gamma_1}\|_2^{\gamma_2}}\left(\int_{\R^N} |\nabla Q_{\gamma_1}|^2 \, dx\right)^{\frac{\gamma_1}{2}}\\
& = \frac{c^2}{\|Q_{\gamma_1}\|_2^{\gamma_2}} \left(\int_{\R^N} |\nabla Q_{\gamma_1}|^2 \, dx\right)^{\frac{\gamma_2}{2}} \left(C_{N, \gamma_2}-c^{\frac{\gamma_1-\gamma_2}{2-\gamma_1}}C_{\gamma_1, \gamma_2}C_{N, \gamma_1}^{\frac {2-\gamma_2}{2-\gamma_1}} \right).
\end{align*}
This infers that there exists a constant $\tilde{c}_0>0$ depending only on $\gamma_1, \gamma_1$ and $N$ such that, for any $c>\tilde{c}_0$, $F_u(t_0)<0$. As a consequence, we get that $m(c)<0$ for any $c>\tilde{c}_0$. 

Next we prove that \eqref{ssub} holds true.  To do this, it suffices to assert that $m(\theta c) <\theta m(c)$ for any $\theta>1$. Since $m(c)<0$ for any $c>\tilde{c}_0$, then, for any $\eps>0$ small enough, there exists $u \in S(c)$ such that $E(u) \leq m(c) + \eps <0$. Let $\beta:=\frac{1}{2-\gamma_1}>0$, it then follows from \eqref{scaling2} that
$$
E(u^{\theta})=\frac{\theta^{1+2\beta}}{2}\int_{\R^N} |\nabla u|^2 \, dx + \frac {\theta^{2+ \beta \gamma_2}}{4}\int_{\R^N} \int_{\R^N}\frac{|u(x)|^2|u(y)|^2}{|x-y|^{\gamma_2}} \, dxdy- \frac {\theta^{1+ 2\beta}}{4}\int_{\R^N} \int_{\R^N}\frac{|u(x)|^2|u(y)|^2}{|x-y|^{\gamma_1}} \, dxdy.
$$
Define $f_u(\theta):=E(u^{\theta}) - \theta E(u)$ for any $\theta>1$, then
\begin{align*}
f_u(\theta)&= \frac{\theta^{1+2\beta}-\theta}{2}\int_{\R^N} |\nabla u|^2 \, dx+\frac {\theta^{2+ \beta \gamma_2}-\theta}{4}\int_{\R^N} \int_{\R^N}\frac{|u(x)|^2|u(y)|^2}{|x-y|^{\gamma_2}} \, dxdy \\
& \quad -\frac {\theta^{1+ 2\beta}-\theta}{4}\int_{\R^N} \int_{\R^N}\frac{|u(x)|^2|u(y)|^2}{|x-y|^{\gamma_1}} \, dxdy.
\end{align*}
By simple calculations, we obtain that
\begin{align*}
\frac{d}{d \theta} f_u(\theta)&=\left(\left(1+2 \beta\right) \theta^{2\beta} -1\right) \left(\frac 12 \int_{\R^N} |\nabla u|^2 \, dx - \frac 14 \int_{\R^N} \int_{\R^N}\frac{|u(x)|^2|u(y)|^2}{|x-y|^{\gamma_1}} \, dxdy\right) \\
& \quad +\frac {\left(2+ \beta \gamma_2\right) \theta^{1+\beta \gamma_2}-1}{4}\int_{\R^N} \int_{\R^N}\frac{|u(x)|^2|u(y)|^2}{|x-y|^{\gamma_2}} \, dxdy.
\end{align*}
Since $E(u)<0$, $\gamma_1>\gamma_2$ and $\theta>1$, then $\frac{d}{d \theta} f_u(\theta)<0$. Note that $u^{\theta} \in S(\theta c)$, then $m(\theta c) \leq E(u_\theta) < \theta E(u) \leq \theta m(c) + \theta \eps$. This readily infers that $m(\theta c) < \theta m(c)$ for any $\theta >1$. Therefore, we derive that \eqref{ssub} holds true. Utilizing the Lions concentration compactness principle, see \cite{Li1, Li2}, we then get the desired result. Thus the proof is completed.
\end{proof}

\section{Mass supercritical case} \label{section4}

In this section, we study the mass supercritical case $0<\gamma_2<\gamma_1<\min\{N, 4\}$, $\gamma_1>2$ and $N \geq 3$ and Theorem \ref{thm1} is proved. 

\begin{lem} \label{unique}
Let $0<\gamma_2<\gamma_1<\min\{N, 4\}$, $\gamma_1>2$ and $N \geq 3$. Then for any $u \in S(c)$, there exists a unique $t_u >0$ such that $u_{t_u} \in P(c)$ and $E(u_{t_u})=\max_{t>0}E(u_t)$. In addition, $t_u \leq 1$ if $Q(u) \leq 0$ and the function $t \mapsto E(u_t)$ is concave on $[t_u, +\infty)$. 
\end{lem}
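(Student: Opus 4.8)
The plan is to reduce the whole statement to the one-variable fiber map $g(t):=E(u_t)$, which by \eqref{scaling} has the form $g(t)=\tfrac{A}{2}t^2+\tfrac{B}{4}t^{\gamma_2}-\tfrac{C}{4}t^{\gamma_1}$, where $A:=\int_{\R^N}|\nabla u|^2\,dx$, $B:=\int_{\R^N}\int_{\R^N}\frac{|u(x)|^2|u(y)|^2}{|x-y|^{\gamma_2}}\,dxdy$, and $C$ is the analogous $\gamma_1$-term; since $u\neq 0$ all three are strictly positive, and here $\gamma_2<2<\gamma_1$. The first thing I would record is that the scaling composes as $(u_t)_s=u_{ts}$, so that $Q(u_t)=\frac{\partial}{\partial s}E(u_{ts})\big|_{s=1}=t\,g'(t)$; in particular $u_t\in P(c)\Leftrightarrow g'(t)=0$ and $Q(u)=g'(1)$. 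Thus the lemma is purely a statement about the critical points, the maximum, and the concavity of $g$ on $(0,\infty)$.

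For existence and uniqueness of $t_u$ I would factor $g'(t)=t^{\gamma_1-1}\big(\psi(t)-\tfrac{\gamma_1}{4}C\big)$ with $\psi(t):=A\,t^{2-\gamma_1}+\tfrac{\gamma_2}{4}B\,t^{\gamma_2-\gamma_1}$. Because $\gamma_2<2<\gamma_1$, both exponents $2-\gamma_1$ and $\gamma_2-\gamma_1$ are negative, so $\psi$ is strictly decreasing from $+\infty$ to $0$; hence $\psi(t)=\tfrac{\gamma_1}{4}C$ has a unique root $t_u>0$, with $g'>0$ on $(0,t_u)$ and $g'<0$ on $(t_u,\infty)$. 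Since $g(0^+)=0$, this makes $t_u$ the unique critical point and the unique global maximizer of $g$, giving $u_{t_u}\in P(c)$ and $E(u_{t_u})=\max_{t>0}E(u_t)$. Moreover, as $Q(u)=g'(1)$ and $g'$ changes sign from $+$ to $-$ exactly at $t_u$, the hypothesis $Q(u)\le 0$ forces $1\ge t_u$.

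The main work, and the only genuinely delicate point, is concavity on $[t_u,+\infty)$: a direct inspection of $g''$ is unpleasant, because when $\gamma_2<1$ its middle term carries the sign $\gamma_2-1<0$ and $g''$ can in fact become positive on a bounded interval, so $g$ need not be concave on all of $(0,\infty)$. The device I would use is to rescale about the critical point. Writing $t=st_u$ and using $g'(t_u)=0$ to replace $\tfrac{\gamma_1}{4}C\,t_u^{\gamma_1-2}$ by $P+R$, one finds $g'(st_u)=t_u\,q(s)$ with $q(s):=Ps+Rs^{\gamma_2-1}-(P+R)s^{\gamma_1-1}$, where $P:=A>0$, $R:=\tfrac{\gamma_2}{4}B\,t_u^{\gamma_2-2}>0$ and $q(1)=0$. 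Differentiating this identity gives the clean relation $g''(st_u)=q'(s)$, so that concavity on $[t_u,+\infty)$ is exactly equivalent to $q'(s)\le 0$ for all $s\ge 1$, where $q'(s)=P+(\gamma_2-1)R\,s^{\gamma_2-2}-(\gamma_1-1)(P+R)\,s^{\gamma_1-2}$.

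For $s\ge 1$ this sign is then elementary, which is the whole point of the rescaling. Since $\gamma_1-2>0$ we have $s^{\gamma_1-2}\ge 1$, so $-(\gamma_1-1)(P+R)\,s^{\gamma_1-2}\le-(\gamma_1-1)(P+R)$. If $\gamma_2\le 1$, the middle term $(\gamma_2-1)R\,s^{\gamma_2-2}$ is nonpositive and may be discarded, giving $q'(s)\le P-(\gamma_1-1)(P+R)=P(2-\gamma_1)-(\gamma_1-1)R<0$ as $\gamma_1>2$. If $1<\gamma_2<2$, then $s^{\gamma_2-2}\le 1$ yields $(\gamma_2-1)R\,s^{\gamma_2-2}\le(\gamma_2-1)R$, whence $q'(s)\le P(2-\gamma_1)+R(\gamma_2-\gamma_1)<0$ since $\gamma_1>2$ and $\gamma_2<\gamma_1$. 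In both cases $q'<0$ on $[1,\infty)$, which establishes the (strict) concavity of $t\mapsto E(u_t)$ on $[t_u,+\infty)$ and completes the proof. The crux is thus the rescaling identity $g''(st_u)=q'(s)$, which trades the awkward direct estimate on $g''$ for the trivial monotonicity bound above.
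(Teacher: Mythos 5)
Your reduction to the fiber map $g(t)=E(u_t)$ is essentially the paper's own argument. The factorization $g'(t)=t^{\gamma_1-1}\bigl(\psi(t)-\tfrac{\gamma_1}{4}C\bigr)$ with $\psi$ strictly decreasing is a slightly cleaner route to uniqueness than the paper's (which factors out $t^{\gamma_2-1}$ and needs a moment's extra thought about monotonicity), and your concavity step is a repackaging of the same inequality: the paper bounds $g''(t)$ for $t>t_u$ directly using $g'(t)<0$ there, while you use $g'(t_u)=0$ after the rescaling $t=st_u$; both collapse to the combination $(2-\gamma_1)A+(\gamma_2-\gamma_1)\tfrac{\gamma_2}{4}B\,(\cdots)<0$. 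The identities $Q(u_t)=t\,g'(t)$, $g''(st_u)=q'(s)$, the sign change of $g'$ at $t_u$, and the deduction $t_u\le 1$ from $Q(u)\le 0$ all check out.

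There is one unjustified restriction: you assert early on that ``here $\gamma_2<2<\gamma_1$,'' but the hypotheses only give $0<\gamma_2<\gamma_1<\min\{N,4\}$ and $\gamma_1>2$, so $\gamma_2\in[2,\gamma_1)$ is permitted (e.g.\ $N\ge 5$, $\gamma_2=3$, $\gamma_1=3.5$), and your two cases for $q'(s)$ ($\gamma_2\le 1$ and $1<\gamma_2<2$) do not cover it. The omission is harmless but must be closed: for $s\ge 1$ and $2\le\gamma_2<\gamma_1$ use $s^{\gamma_2-2}\le s^{\gamma_1-2}$ and $\gamma_2-1>0$ to get
\[
q'(s)\le P+\bigl[(\gamma_2-\gamma_1)R-(\gamma_1-1)P\bigr]s^{\gamma_1-2}\le (2-\gamma_1)P+(\gamma_2-\gamma_1)R<0 .
\]
(The paper likewise writes the details only for $\gamma_2\le 2$ and dismisses $\gamma_2>2$ with ``by a similar way,'' but your write-up should either include this case or not assert $\gamma_2<2$.) With that one line added, the proof is complete and matches the paper's in substance.
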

\begin{proof}
In virtue of \eqref{scaling}, we know that, for any $u \in S(c)$,
\begin{align*}
\frac{\partial}{\partial t}E(u_t)&= t \int_{\R^N}|\nabla u|^2 \,dx + \frac {\gamma_2t^{\gamma_2-1}}{4}\int_{\R^N} \int_{\R^N}\frac{|u(x)|^2|u(y)|^2}{|x-y|^{\gamma_2}} \, dxdy \\
&\quad -\frac {\gamma_1t^{\gamma_1-1}}{4} \int_{\R^N} \int_{\R^N}\frac{|u(x)|^2|u(y)|^2}{|x-y|^{\gamma_1}} \, dxdy=\frac{Q(u_t)}{t}.
\end{align*}
Thanks to $\gamma_1>2$, then $\frac{\partial}{\partial t}E(u_t)>0$ for any $t>0$ small enough and $\frac{\partial}{\partial t}E(u_t) \to -\infty$ as $t \to \infty$. If $0<\gamma_2 \leq 2$, we write
\begin{align*}
\frac{\partial}{\partial t}E(u_t)&= t^{\gamma_2-1} \left(t^{2-\gamma_2}\int_{\R^N}|\nabla u|^2 \,dx + \frac {\gamma_2}{4}\int_{\R^N} \int_{\R^N}\frac{|u(x)|^2|u(y)|^2}{|x-y|^{\gamma_2}} \, dxdy \right. \\
&\qquad \qquad \quad \left.-\frac {\gamma_1t^{\gamma_1-\gamma_2}}{4} \int_{\R^N} \int_{\R^N}\frac{|u(x)|^2|u(y)|^2}{|x-y|^{\gamma_1}} \, dxdy \right).
\end{align*}
This clearly gives that there exists a unique $t_u >0$ such that $\frac{\partial}{\partial t}E(u_t)_{\mid_{t=t_u}}=0$. Furthermore, we see that $\frac{\partial}{\partial t}E(u_t)>0$ for any $0<t<t_u$ and  $\frac{\partial}{\partial t}E(u_t)<0$ for any $t>t_u$. Therefore, we have that $E(u_{t_u})=\max_{t>0}E(u_t)$ and $t_u \leq 1$ if $Q(u) \leq 0$. 
Observe that
\begin{align*}
\frac{\partial^2}{\partial t^2}E(u_t)&=\int_{\R^N}|\nabla u|^2 \,dx + \frac {\gamma_2(\gamma_2-1)t^{\gamma_2-2}}{4}\int_{\R^N} \int_{\R^N}\frac{|u(x)|^2|u(y)|^2}{|x-y|^{\gamma_2}} \, dxdy \\
&\quad - \frac {\gamma_1(\gamma_1-1)t^{\gamma_1-2}}{4}\int_{\R^N} \int_{\R^N}\frac{|u(x)|^2|u(y)|^2}{|x-y|^{\gamma_1}} \, dxdy \\
&=t^{\gamma_2-2} \left(t^{2-\gamma_2}\int_{\R^N}|\nabla u|^2 \,dx + \frac {\gamma_2(\gamma_2-1)}{4}\int_{\R^N} \int_{\R^N}\frac{|u(x)|^2|u(y)|^2}{|x-y|^{\gamma_2}} \, dxdy \right. \\
&\qquad \qquad \quad \left.-\frac {\gamma_1(\gamma_1-1)t^{\gamma_1-\gamma_2}}{4} \int_{\R^N} \int_{\R^N}\frac{|u(x)|^2|u(y)|^2}{|x-y|^{\gamma_1}} \, dxdy \right).
\end{align*}
Since $0<\gamma_2 \leq 2$, $\gamma_1>2$ and $\frac{\partial}{\partial t}E(u_t)<0$ for any $t>t_u$, then $\frac{\partial^2}{\partial t^2}E(u_t)<0$ for any $t>t_u$. This means that the function $t \mapsto E(u_t)$ is concave on $[t_u, +\infty)$. By a similar way, we can also verify the same results for the case $\gamma_2>2$. Thus the proof is completed.
\end{proof}

\begin{lem} \label{below}
Let $0<\gamma_2<\gamma_1<\min\{N, 4\}$, $\gamma_1>2$ and $N \geq 3$,  then $\Gamma(c)>0$ and $E$ restricted on $P(c)$ is coercive for any $c>0$.
\end{lem}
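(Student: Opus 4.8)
The plan is to exploit the Pohozaev constraint $Q(u)=0$ to eliminate the sign-indefinite $\gamma_1$-term from the energy and thereby express $E$ on $P(c)$ as a combination of manifestly nonnegative quantities with \emph{positive} coefficients. Write $A:=\int_{\R^N}|\nabla u|^2\,dx$, $B:=\int_{\R^N}\int_{\R^N}\frac{|u(x)|^2|u(y)|^2}{|x-y|^{\gamma_2}}\,dxdy$ and $D:=\int_{\R^N}\int_{\R^N}\frac{|u(x)|^2|u(y)|^2}{|x-y|^{\gamma_1}}\,dxdy$. The condition $Q(u)=0$ reads $\frac{\gamma_1}{4}D=A+\frac{\gamma_2}{4}B$, and substituting $\frac14 D=\frac{1}{\gamma_1}\big(A+\frac{\gamma_2}{4}B\big)$ into $E(u)=\frac12 A+\frac14 B-\frac14 D$ gives, for every $u\in P(c)$,
\[
E(u)=\frac{\gamma_1-2}{2\gamma_1}\,A+\frac{\gamma_1-\gamma_2}{4\gamma_1}\,B.
\]
Since $\gamma_1>2$ and $\gamma_1>\gamma_2$, both coefficients are strictly positive, and $A,B\ge 0$; in particular $E(u)\ge\frac{\gamma_1-2}{2\gamma_1}\|\nabla u\|_2^2\ge0$ on $P(c)$, so $\Gamma(c)\ge0$.

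Coercivity is then immediate from this identity. On $S(c)$ the mass $\|u\|_2^2=c$ is fixed, so along any sequence in $P(c)$ one has $\|u\|_{H^1(\R^N)}\to\infty$ if and only if $\|\nabla u\|_2\to\infty$; the bound $E(u)\ge\frac{\gamma_1-2}{2\gamma_1}\|\nabla u\|_2^2$ then forces $E(u)\to\infty$, which is exactly the coercivity of $E$ restricted to $P(c)$.

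It remains to upgrade $\Gamma(c)\ge0$ to $\Gamma(c)>0$, and this is the only point requiring a genuine estimate. The idea is to show that $\|\nabla u\|_2$ is bounded away from zero uniformly on $P(c)$. Applying the Gagliardo--Nirenberg type inequality \eqref{inequ} to the $\gamma_1$-term, $D\le C_{N,\gamma_1}A^{\gamma_1/2}c^{(4-\gamma_1)/2}$, and combining with $A\le A+\frac{\gamma_2}{4}B=\frac{\gamma_1}{4}D$ yields
\[
A\le \frac{\gamma_1}{4}\,C_{N,\gamma_1}\,c^{(4-\gamma_1)/2}\,A^{\gamma_1/2}.
\]
Because $\gamma_1>2$, the exponent $\gamma_1/2>1$, so dividing by $A^{\gamma_1/2}$ (legitimate since $u\not\equiv0$ forces $A>0$) produces a \emph{lower} bound $A\ge\delta(c)$ with $\delta(c)=\big(\frac{4}{\gamma_1 C_{N,\gamma_1}}\big)^{2/(\gamma_1-2)}c^{-(4-\gamma_1)/(\gamma_1-2)}>0$, valid for all $u\in P(c)$. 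Feeding this into the displayed identity for $E$ gives $E(u)\ge\frac{\gamma_1-2}{2\gamma_1}\delta(c)$ for every $u\in P(c)$, whence $\Gamma(c)\ge\frac{\gamma_1-2}{2\gamma_1}\delta(c)>0$.

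I expect no serious obstacle here: the whole argument hinges on the observation that $Q(u)=0$ turns $E$ into a positive-definite combination, the sign conditions $\gamma_1>2$ and $\gamma_1>\gamma_2$ doing all the work. The only slightly delicate step is the uniform gradient lower bound, where one must use $\gamma_1/2>1$ so that the Gagliardo--Nirenberg estimate inverts into a lower bound on $A$ rather than an upper bound; this is precisely where the mass supercriticality $\gamma_1>2$ enters in an essential way.
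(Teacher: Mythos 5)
Your proposal is correct and follows essentially the same route as the paper: the identity $E(u)=E(u)-\tfrac{1}{\gamma_1}Q(u)=\tfrac{\gamma_1-2}{2\gamma_1}\|\nabla u\|_2^2+\tfrac{\gamma_1-\gamma_2}{4\gamma_1}\int\!\int\frac{|u(x)|^2|u(y)|^2}{|x-y|^{\gamma_2}}\,dxdy$ is exactly the paper's \eqref{b1}, and your uniform lower bound $\|\nabla u\|_2^2\geq\delta(c)$ obtained by inverting the Gagliardo--Nirenberg estimate on the Pohozaev constraint is exactly the paper's \eqref{largenable1}--\eqref{largenable}. No gaps.
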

\begin{proof}
We first prove that $\Gamma(c)>0$ for any $c>0$. For any $u \in P(c)$, we find that
\begin{align}\label{b1}
\hspace{-1cm}E(u)=E(u)-\frac{1}{\gamma_1}Q(u)=\frac{\gamma_1-2}{2\gamma_1} \int_{\R^N}|\nabla u|^2 \, dx + \frac{\gamma_1-\gamma_2}{4\gamma_1}  \int_{\R^N} \int_{\R^N}\frac{|u(x)|^2|u(y)|^2}{|x-y|^{\gamma_2}} \, dxdy.
\end{align}
Moreover, by applying \eqref{inequ}, we know that
\begin{align} \label{largenable1}
\begin{split}
\int_{\R^N}|\nabla u|^2 \,dx + \frac {\gamma_2}{4}\int_{\R^N} \int_{\R^N}\frac{|u(x)|^2|u(y)|^2}{|x-y|^{\gamma_2}} \, dxdy&=\frac{\gamma_1}{4} \int_{\R^N} \int_{\R^N}\frac{|u(x)|^2|u(y)|^2}{|x-y|^{\gamma_1}} \, dxdy \\
& \leq \frac{\gamma_1 C_{N,\gamma_1}}{4} \left(\int_{\R^N} |\nabla u|^2 \, dx \right)^{\frac{\gamma_1}{2}} c^{\frac{4-\gamma_1}{2}}.
\end{split}
\end{align}
This implies that
\begin{align} \label{largenable}
\int_{\R^N}|\nabla u_c|^2 \,dx \geq \left(\frac{4}{\gamma_1C_{N,\gamma_1}c^{\frac{4-\gamma_1}{2}}}\right)^{\frac{2}{\gamma_1-2}}.
\end{align}
By means of \eqref{b1}, we then have the desired conclusion. According to \eqref{b1}, we see that, for any sequence $\{u_n\} \subset P(c)$ with $\|u_n\| \to \infty$ as $n \to \infty$, there holds that $E(u_n) \to \infty$ as $n \to \infty$. This indicates that $E$ restricted on $P(c)$ is coercive for any $c>0$. Thus we have completed the proof.
\end{proof}

\begin{defi}\label{homotopy} \cite[Definition 3.1]{Gh}
Let $B$ be a closed subset of a set $Y \subset H^1(\R^N)$. We say that a class $\mathcal{G}$ of compact subsets of $Y$ is a homotopy stable family with the closed boundary $B$ provided that
\begin{enumerate}
\item [\textnormal{(i)}] every set in $\mathcal{G}$ contains $B$;
\item [\textnormal{(ii)}] for any $A \in \mathcal{G}$ and any function $\eta \in C([0, 1] \times Y, Y)$ satisfying $\eta(t, x)=x$ for all $(t, x) \in (\{0\} \times Y) \cup([0, 1] \times B)$, then $\eta(\{1\} \times A) \in \mathcal{G}$.
\end{enumerate}
\end{defi}

\begin{lem}\label{ps}
Let $0<\gamma_2<\gamma_1<\min\{N, 4\}$, $\gamma_1>2$ and $N \geq 3$. Let $\mathcal{G}$ be a homotopy stable family of compact subsets of $S(c)$ with closed boundary $B$ and set
\begin{align} \label{ming}
\Gamma_{\mathcal{G}}(c):=\inf_{A\in \mathcal{G}}\max_{u \in A} F(u),
\end{align}
where $F(u):=E(u_{t_u})=\max_{t>0}E(u_t)$. Suppose that $B$ is contained on a connected component of $P(c)$ and $\max\{\sup F(B), 0\}<\Gamma_{\mathcal{G}}(c)<\infty$. Then there exists a Palais-Smale sequence $\{u_n\} \subset P(c)$ for $E$ restricted on $S(c)$ at the level $\Gamma_{\mathcal{G}}(c)$.
\end{lem}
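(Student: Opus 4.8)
The plan is to run Ghoussoub's minimax machinery (\cite{Gh}) on the auxiliary functional $F$ rather than on $E$ itself, and then to transport the resulting Palais--Smale sequence onto the Pohozaev manifold $P(c)$ via the scaling $u\mapsto u_{t_u}$. Working with $F$ is essential here: by Lemma \ref{unique} one has $F(u)=E(u_{t_u})=\max_{t>0}E(u_t)\ge E(u)$, with equality exactly on $P(c)$, so the minimax value built from $F$ is precisely $\Gamma_{\mathcal G}(c)$, whereas the analogous value built from $E$ would in general be strictly smaller and would produce a sequence at the wrong level.

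The first step is to establish that $F\in C^1(S(c))$. By Lemma \ref{unique}, for every $u\in S(c)$ the map $t\mapsto E(u_t)$ has a unique critical point $t_u$, which is its strict global maximum, with $\frac{\partial}{\partial t}E(u_t)>0$ on $(0,t_u)$ and $\frac{\partial}{\partial t}E(u_t)<0$ on $(t_u,\infty)$; this sign change together with uniqueness already yields continuity of $u\mapsto t_u$. The strict concavity of $t\mapsto E(u_t)$ near $t_u$ supplied by Lemma \ref{unique} shows that $t_u$ is a nondegenerate zero of $\partial_t E(u_t)$, so the implicit function theorem upgrades continuity to $u\mapsto t_u\in C^1$, whence $F\in C^1(S(c))$. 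Because $t_u$ is a critical point of $t\mapsto E(u_t)$, differentiating $F(u)=E(u_{t_u})$ and using the envelope principle gives, for every $\phi\in T_uS(c)$,
$$dF(u)[\phi]=dE(u_{t_u})[\phi_{t_u}],$$
so that the differential of $F$ at $u$ is the differential of $E$ at $u_{t_u}$ composed with the scaling $\phi\mapsto\phi_{t_u}$.

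The second step is to apply the minimax principle. Since the hypothesis places $B$ in a connected component of $P(c)$, one has $t_u=1$ and hence $F(u)=E(u)$ for every $u\in B$; thus the separation assumption $\max\{\sup F(B),0\}<\Gamma_{\mathcal G}(c)<\infty$ is exactly the hypothesis required by \cite{Gh}. Applying the minimax theorem to the $C^1$ functional $F$ on the complete $C^1$ Hilbert manifold $S(c)$, with the homotopy stable family $\mathcal G$ of Definition \ref{homotopy}, produces a sequence $\{v_n\}\subset S(c)$ with $F(v_n)\to\Gamma_{\mathcal G}(c)$ and $\|dF|_{S(c)}(v_n)\|\to 0$.

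The final step is the rescaling $u_n:=(v_n)_{t_{v_n}}\in P(c)$, for which $E(u_n)=F(v_n)\to\Gamma_{\mathcal G}(c)$. It then remains to pass the Palais--Smale condition from $F$ to $E$: from the envelope identity above, $dF(v_n)$ equals $dE(u_n)$ precomposed with the scaling by $t_{v_n}$, and since this scaling is a linear isomorphism of the tangent spaces that is uniformly bi-Lipschitz on $H^1(\R^N)$ as long as $t_{v_n}$ ranges in a compact subset of $(0,\infty)$, one obtains $\|dE|_{S(c)}(u_n)\|\to 0$. The main obstacle, and the technical heart of the argument, is precisely this uniform control of the scaling parameter $t_{v_n}$. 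Here I would use the coercivity of $E$ on $P(c)$ (Lemma \ref{below}): since $E(u_n)=F(v_n)$ stays bounded, $\{u_n\}$ is bounded in $H^1(\R^N)$, while \eqref{largenable} keeps $\|\nabla u_n\|_2$ bounded away from $0$, so $\{u_n\}$ lies in a fixed annular region of $P(c)$; combining this with the localization of the Ghoussoub sequence near a minimizing family of sets then forces $t_{v_n}$ into a compact subset of $(0,\infty)$, closing the transfer and exhibiting $\{u_n\}\subset P(c)$ as the desired Palais--Smale sequence for $E$ restricted to $S(c)$ at the level $\Gamma_{\mathcal G}(c)$.
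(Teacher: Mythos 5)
Your overall strategy coincides with the paper's: run Ghoussoub's minimax theorem on the auxiliary functional $F$, use the envelope identity $dF(u)[\phi]=dE(u_{t_u})[\phi_{t_u}]$ to transfer the Palais--Smale property, and control the scaling parameters through the coercivity of $E$ on $P(c)$ (Lemma \ref{below}). The one place where your argument is not yet closed is exactly the step you flag as the technical heart: why $t_{v_n}$ stays in a compact subset of $(0,\infty)$. Coercivity on $P(c)$ gives two-sided bounds on $\|\nabla u_n\|_2$ for $u_n=(v_n)_{t_{v_n}}$, i.e.\ it controls the numerator in $t_{v_n}^2=\|\nabla u_n\|_2^2/\|\nabla v_n\|_2^2$; but you also need $\|\nabla v_n\|_2$ bounded above and bounded away from zero, and ``localization of the Ghoussoub sequence near a minimizing family of sets'' only delivers this if that minimizing family already lies in $P(c)$ (hence has $H^1$-norms bounded above and below, again by Lemma \ref{below} together with $\Gamma_{\mathcal G}(c)>0$ and \eqref{largenable}). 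The paper manufactures such a family explicitly: it uses the homotopy stability of $\mathcal G$ (Definition \ref{homotopy}) with the deformation $\eta(s,u)=u_{1-s+st_u}$ --- which fixes $B\subset P(c)$ because $t_u=1$ there --- to replace an arbitrary minimizing family $\{D_n\}$ by $A_n=\{u_{t_u}:u\in D_n\}\in\mathcal G$ with $A_n\subset P(c)$ and the same maximum of $F$, and only then invokes \cite[Theorem 3.2]{Gh} to obtain a Palais--Smale sequence $\{v_n\}$ for $F$ with $\textnormal{dist}_{H^1}(v_n,A_n)=o_n(1)$. This replacement via homotopy stability is the missing explicit ingredient in your sketch; once it is inserted, the rest of your argument (the $C^1$ regularity of $u\mapsto t_u$ via the implicit function theorem, which you in fact justify more carefully than the paper does, the envelope identity, and the uniformly bi-Lipschitz transfer of dual norms) goes through as written.
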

\begin{proof}
To prove this result, we are inspired by \cite{BS1, BS2}. Let us first define a mapping $\eta: [0,1] \times S(c) \to S(c)$ by $\eta(s, u)=u_{1-s+st_u}$. In view of Lemma \ref{unique}, we know that $t_u=1$ if $u \in P(c)$.  Since $B \subset P(c)$, we then have that $\eta(s ,u)=u$ for any $(s, u) \in (\{0\} \times S(c)) \cup([0, 1] \times B)$. In addition, it is not hard to find that $\eta$ is continuous on $[0,1] \times S(c)$. Suppose now that $\{D_n\} \subset \mathcal{G}$ is a minimizing sequence to \eqref{ming}. According to Definition \ref{homotopy}, we then get that
$$
A_n:=\eta(\{1\} \times D_n)=\{u_{t_u} : u \in D_n\} \in \mathcal{G}.
$$
Notice that $A_n \subset P(c)$, then 
$$
\max_{v \in A_n}F(v)=\max_{u \in D_n}F(u).
$$ 
This suggests that there exists another minimizing sequence $\{A_n\} \subset P(c)$ to \eqref{ming}. By appplying \cite[Theorem 3.2]{Gh}, we then deduce that there exists a Palais-Smale sequence $\{\tilde{u}_n\} \subset S(c)$ for $F$ at the level $\Gamma_{\mathcal{G}}(c)$ such that 
\begin{align} \label{dist}
\mbox{dist}_{H^1}(\tilde{u}_n, A_n)=o_n(1).
\end{align}
For simplicity, we write $t_n=t_{\tilde{u}_n}$ and $u_n=(\tilde{u}_n)_{t_n}$. We now claim that there exists a constant $C>0$ such that $1/C \leq t_n \leq C$. Indeed, observe first that
$$
t_n^2=\frac{\int_{\R^N} |\nabla u_n|^2 \, dx}{\int_{\R^N} |\nabla \tilde{u}_n|^2 \, dx}.
$$
Since $E(u_n)=F(\tilde{u}_n)=\Gamma_{\mathcal{G}}(c)+o_n(1)$ and $\{u_n\} \subset P(c)$, it then follows from Lemma \ref{below} that there exists a constant $C_1>0$ such that $1/C_1 \leq \|u_n\|\leq C_1$. On the other hand, since $\{A_n\} \subset P(c)$ is a minimizing sequence to \eqref{ming}, from Lemma \ref{below}, we then find that $\{A_n\}$ is bounded in $H^1(\R^N)$. Thanks to \eqref{dist}, then $\{\tilde{u}_n\}$ is bounded in $H^1(\R^N)$, i.e. there exists a constant $C_2>0$ such that $\|\tilde{u}_n\| \leq C_2$. In addition, note that $A_n$ is compact for any $n\in \N$, then there exists $v_n \in A_n$ such that 
$$
\mbox{dist}_{H^1}(\tilde{u}_n, A_n)=\|\tilde{u}_n-v_n\|=o_n(1).
$$
Utilizing the assumption $\Gamma_{\mathcal{G}}(c)>0$ for any $c>0$, we get that $\|v_n\| \geq 1/C_2$. This then gives that
$$
\|\tilde{u}_n\| \geq \|v_n\|-\|\tilde{u}_n-v_n\|\geq \frac{1}{C_2}+o_n(1).
$$
Therefore, the claim follows.

We next show that $\{u_n\} \subset P(c)$ is a Palais-Smale sequence for $E$ restricted on $S(c)$ at the level $\Gamma_{\mathcal{G}}(c)$. We now denote by $\|\cdot\|_{*}$ the dual norm of $(T_u S(c))^*$. Accordingly, it holds that
\begin{align*}
\|dE(u_n)\|_*=\sup_{\psi \in T_{u_n}S(c), \|\psi\|\leq 1}|dE(u_n)[\psi]|=\sup_{\psi \in T_{u_n}S(c), \|\psi\|\leq 1} |dE(u_n)[(\psi_{\frac{1}{t_n}})_{t_n}]|.
\end{align*}
From straightforward calculations, we are able to derive that the mapping $T_uS(c) \to T_{u_{t_u}}S(c)$ defined by $\psi \mapsto \psi_{t_u}$ is an isomorphism. Moreover, we can check that $dF(u)[\psi]=dE(u_{t_u})[\psi_{t_u}]$ for any $u \in S(c)$ and $\psi \in T_uS(c)$. As a consequence, we have that
$$
\|dE(u_n)\|_*=\sup_{\psi \in T_{u_n}S(c), \|\psi\| \leq 1}|dF(\tilde{u}_n)[\psi_{\frac{1}{t_n}}]|.
$$
Since $\{\tilde{u}_n\} \subset S(c)$ is a Palais-Smale sequence for $F$ at the level $\Gamma_{\mathcal{G}}(c)$, we then  apply the claim to deduce that $\{u_n\} \subset P(c)$ is a Palais-Smale sequence for $E$ restricted on $S(c)$ at the level $\Gamma_{\mathcal{G}}(c)$. Thus the proof is completed.
\end{proof}

\begin{lem} \label{pss}
Let $0<\gamma_2<\gamma_1<\min\{N, 4\}$, $\gamma_1>2$ and $N \geq 3$. Then there exists a Palais-Smale sequence $\{u_n\} \subset P(c)$ for $E$ restricted on $S(c)$ at the level $\Gamma(c)$.
\end{lem}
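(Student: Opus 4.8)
The plan is to deduce this from Lemma \ref{ps} by choosing the simplest possible homotopy stable family. Concretely, I would take $Y = S(c)$, the boundary $B = \emptyset$, and
$$
\mathcal{G} := \bigl\{ \{u\} : u \in S(c) \bigr\},
$$
the collection of all singletons of $S(c)$. The first task is to check that this is a homotopy stable family with closed boundary $B = \emptyset$ in the sense of Definition \ref{homotopy}. Condition (i) is vacuous since $B = \emptyset$. For condition (ii), given any $A = \{u\} \in \mathcal{G}$ and any $\eta \in C([0,1] \times S(c), S(c))$, the set $\eta(\{1\} \times A) = \{\eta(1, u)\}$ is again a singleton, hence belongs to $\mathcal{G}$; so $\mathcal{G}$ is stable under the required deformations.

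Next I would identify the minimax level $\Gamma_{\mathcal{G}}(c)$ with $\Gamma(c)$. By definition,
$$
\Gamma_{\mathcal{G}}(c) = \inf_{A \in \mathcal{G}} \max_{u \in A} F(u) = \inf_{u \in S(c)} F(u),
$$
where $F(u) = E(u_{t_u}) = \max_{t>0} E(u_t)$. On one hand, Lemma \ref{unique} guarantees that $u_{t_u} \in P(c)$ for every $u \in S(c)$, so $F(u) = E(u_{t_u}) \geq \inf_{v \in P(c)} E(v) = \Gamma(c)$, whence $\Gamma_{\mathcal{G}}(c) \geq \Gamma(c)$. On the other hand, if $v \in P(c)$ then $t_v = 1$ by the uniqueness in Lemma \ref{unique}, so $F(v) = E(v)$; taking the infimum over $v \in P(c)$ gives $\Gamma_{\mathcal{G}}(c) \leq \inf_{v \in P(c)} E(v) = \Gamma(c)$. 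Therefore $\Gamma_{\mathcal{G}}(c) = \Gamma(c)$.

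Finally I would verify the remaining hypotheses of Lemma \ref{ps}. Since $B = \emptyset$, it is trivially contained in a connected component of $P(c)$, and $\sup F(B) = -\infty$, so that $\max\{\sup F(B), 0\} = 0$. By Lemma \ref{below} we have $\Gamma(c) > 0$, and $\Gamma(c) < \infty$ because $P(c)$ is nonempty (again $u_{t_u} \in P(c)$ for any fixed $u \in S(c)$) while $E$ is finite on $H^1(\R^N)$. Hence $\max\{\sup F(B), 0\} = 0 < \Gamma_{\mathcal{G}}(c) < \infty$, all the assumptions of Lemma \ref{ps} are met, and it delivers directly a Palais-Smale sequence $\{u_n\} \subset P(c)$ for $E$ restricted on $S(c)$ at the level $\Gamma_{\mathcal{G}}(c) = \Gamma(c)$, as required.

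The argument is essentially a bookkeeping application of the abstract minimax result, so there is no serious analytic obstacle; the only point needing care is the identification $\Gamma_{\mathcal{G}}(c) = \Gamma(c)$, which rests on the fact from Lemma \ref{unique} that the map $u \mapsto u_{t_u}$ sends $S(c)$ into $P(c)$ and fixes $P(c)$ pointwise through $t_v = 1$, so that the infimum of $F$ over $S(c)$ and the infimum of $E$ over $P(c)$ coincide.
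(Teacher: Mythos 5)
Your proposal is correct and follows essentially the same route as the paper: take $B=\emptyset$, let $\mathcal{G}$ be the family of singletons of $S(c)$, identify $\Gamma_{\mathcal{G}}(c)=\inf_{u\in S(c)}\max_{t>0}E(u_t)$ with $\Gamma(c)$ via Lemma \ref{unique}, and invoke Lemma \ref{ps}. Your version is in fact slightly more careful than the paper's in explicitly checking the homotopy-stability of the singleton family and the hypothesis $\max\{\sup F(B),0\}<\Gamma_{\mathcal{G}}(c)<\infty$ using Lemma \ref{below}.
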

\begin{proof}
Let $B=\emptyset$ and let $\mathcal{G}$ be all singletons in $S(c)$, i.e. $\mathcal{G}=\{\{u\}: u \in S(c)\}$. Defining a mapping $\eta : [0, 1] \times S(c) \to S(c)$ by $\eta(s ,u)=u_{1-s+st_u}$, from Definition \ref{homotopy}, we then conclude that $\mathcal{G}$ is a homotopy stable family without boundary. In light of \eqref{ming}, we then have that
$$
\Gamma_{\mathcal{G}}(c)=\inf_{u \in S(c)} \sup_{t>0} E(u_t).
$$
We are now going to prove that $\Gamma_{\mathcal{G}}(c)=\Gamma(c)$. From Lemma \ref{unique}, we know that, for any $u \in S(c)$, there exists a unique $t_u>0$ such that $u_{t_u} \in P(c)$ and $E(u_{t_u})=\max_{t >0}E(u_t)$. This then implies that
$$
\inf_{u \in S(c)} \sup_{t>0} E(u_t)  \geq \inf_{u \in P(c)} E(u).
$$
On the other hand, for any $u \in P(c)$, by Lemma \ref{unique}, we have that $E(u)=\max_{t >0}E(u_t)$. This then gives that
$$
\inf_{u \in S(c)} \sup_{t>0} E(u_t)  \leq \inf_{u \in P(c)} E(u).
$$
Therefore, we have that $\Gamma_{\mathcal{G}}(c)=\Gamma(c)$. It then follows from Lemma \ref{ps} that the result of this lemma holds true and the proof is completed.
\end{proof}

\begin{lem} \label{sign}
Let $0<\gamma_2<\gamma_1<\min\{N, 4\}$, $\gamma_1>2$ and $N \geq 3$ and let $u_c \in S(c)$ be a solution to the equation
\begin{align} \label{equ11}
-\Delta u_c + \lambda_c u_c=\left(|x|^{-\gamma_1} \ast |u_c|^2\right) u_c - \left(|x|^{-\gamma_2} \ast |u_c|^2\right) u_c\quad \mbox{in} \,\, \R^N,
\end{align}
then there exists a constant $\hat{c}_1>0$ depending only on $N, \gamma_1, \gamma_2$ such that $\lambda_c>0$ for any $0<c<\hat{c}_1$.
\end{lem}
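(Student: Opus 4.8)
The plan is to read the sign of $\lambda_c$ off two scalar identities and then show that the \emph{defocusing} Hartree term is negligible against the kinetic energy when the mass $c$ is small. Since $u_c$ solves \eqref{equ11}, Lemma \ref{ph} gives the Pohozaev identity $Q(u_c)=0$, and testing \eqref{equ11} against $u_c$ gives the Nehari-type identity $\|\nabla u_c\|_2^2+\lambda_c c=B_1-B_2$, where $B_i:=\int_{\R^N}\int_{\R^N}|u_c(x)|^2|u_c(y)|^2|x-y|^{-\gamma_i}\,dxdy$ for $i=1,2$. Using $Q(u_c)=0$ to write $\gamma_1 B_1=4\|\nabla u_c\|_2^2+\gamma_2 B_2$ and substituting into the Nehari identity, these combine into
\begin{align*}
\gamma_1\lambda_c c=(4-\gamma_1)\|\nabla u_c\|_2^2-(\gamma_1-\gamma_2)B_2.
\end{align*}

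Because $\gamma_1<4$ and $\gamma_2<\gamma_1$, both coefficients are positive, so $\lambda_c>0$ is \emph{equivalent} to $(4-\gamma_1)\|\nabla u_c\|_2^2>(\gamma_1-\gamma_2)B_2$; that is, to the kinetic energy dominating the $\gamma_2$-Hartree energy. I would bound $B_2$ by the sharp inequality \eqref{inequ} with $\gamma=\gamma_2$, namely $B_2\le C_{N,\gamma_2}\|\nabla u_c\|_2^{\gamma_2}c^{(4-\gamma_2)/2}$, and combine it with the lower bound \eqref{largenable}, which is available because $Q(u_c)=0$ forces $u_c\in P(c)$ and gives $\|\nabla u_c\|_2^2\ge K\,c^{-(4-\gamma_1)/(\gamma_1-2)}$ for some $K=K(N,\gamma_1)>0$ diverging as $c\to0^+$. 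The quantity to control is then the ratio $\tfrac{(\gamma_1-\gamma_2)B_2}{(4-\gamma_1)\|\nabla u_c\|_2^2}\lesssim \|\nabla u_c\|_2^{\gamma_2-2}\,c^{(4-\gamma_2)/2}$.

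When $0<\gamma_2\le 2$ the exponent $\gamma_2-2\le0$, so the divergent lower bound on $\|\nabla u_c\|_2^2$ forces this ratio to tend to $0$ as $c\to0^+$, and there is $\hat c_1>0$, depending only on $N,\gamma_1,\gamma_2$, with $\lambda_c>0$ on $(0,\hat c_1)$. The main obstacle is the regime $\gamma_2>2$: there $\gamma_2-2>0$, so the lower bound pushes the ratio the \emph{wrong} way and one additionally needs an \emph{upper} bound $\|\nabla u_c\|_2^2\lesssim c^{-(4-\gamma_1)/(\gamma_1-2)}$. For the ground state this comes from the energy identity \eqref{b1}, $E(u_c)=\tfrac{\gamma_1-2}{2\gamma_1}\|\nabla u_c\|_2^2+\tfrac{\gamma_1-\gamma_2}{4\gamma_1}B_2=\Gamma(c)$, together with the matching scaling estimate $\Gamma(c)\lesssim c^{-(4-\gamma_1)/(\gamma_1-2)}$ obtained by evaluating $E$ along $w_t$ for a fixed profile $w\in S(c)$ and maximizing in $t$. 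This pins $\|\nabla u_c\|_2^2$ to the order $c^{-(4-\gamma_1)/(\gamma_1-2)}$, and then an exponent count (the surviving power of $c$ in the ratio being $\tfrac{\gamma_1-\gamma_2}{\gamma_1-2}>0$) shows the ratio still vanishes as $c\to0^+$. The delicate point throughout is precisely this two-sided control of $\|\nabla u_c\|_2^2$ by $c$: transparent for $\gamma_2\le2$, but for $\gamma_2>2$ dependent on identifying $u_c$ with the ground state so that $E(u_c)=\Gamma(c)$.
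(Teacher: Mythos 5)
Your algebraic reduction is correct and is equivalent to the paper's, but you eliminate the opposite term: writing $B_i$ for the two Hartree integrals, you keep the defocusing term $B_2$ and arrive at $\gamma_1\lambda_c c=(4-\gamma_1)\|\nabla u_c\|_2^2-(\gamma_1-\gamma_2)B_2$, whereas the paper keeps the focusing term $B_1$ and works with \eqref{ph13}, i.e. $\gamma_2\lambda_c c=(4-\gamma_2)\|\nabla u_c\|_2^2-(\gamma_1-\gamma_2)B_1$, estimating $B_1$ by \eqref{inequ} with $\gamma=\gamma_1$ and then invoking only the divergence of $\|\nabla u_c\|_2$ as $c\to 0^+$. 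Your version buys something real in the regime $0<\gamma_2\le 2$: the unwanted ratio scales like $\|\nabla u_c\|_2^{\gamma_2-2}c^{(4-\gamma_2)/2}$ with $\gamma_2-2\le 0$, so the lower bound \eqref{largenable} (which is available for \emph{any} solution, since Lemma \ref{ph} places $u_c$ in $P(c)$) already closes the argument for every solution, with no energy information. That sub-case of your proof is complete and, in my view, cleaner than the paper's, which does not case-split on $\gamma_2$ and estimates a term that scales like $\|\nabla u_c\|_2^{\gamma_1}$ with $\gamma_1>2$, so that it too implicitly needs more than the one-sided bound it quotes.

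The genuine gap is the sub-case $2<\gamma_2<\gamma_1$. There your argument requires the upper bound $\|\nabla u_c\|_2^2\lesssim c^{-(4-\gamma_1)/(\gamma_1-2)}$, which you extract from \eqref{b1} together with $E(u_c)=\Gamma(c)$. But $E(u_c)=\Gamma(c)$ is not a hypothesis of the lemma: the statement concerns an arbitrary solution $u_c\in S(c)$ of \eqref{equ11}, and in the proof of Theorem \ref{thm1} the lemma is invoked for the weak limit of a Palais--Smale sequence, a solution whose mass may a priori be strictly less than $c$ and whose identification as a minimizer on the Pohozaev manifold is only pinned down in the course of that same argument. As written, your proof therefore establishes a weaker statement in this regime (positivity of the multiplier for ground states of small mass), and to match the lemma you would need either to add the energy bound to the hypotheses and check it is available at the point of application, or to produce a mass-dependent upper bound on $\|\nabla u_c\|_2$ valid for all solutions, which your proposal does not supply. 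You flag this dependence yourself, which is to your credit, but it should be resolved rather than noted.
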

\begin{proof}
Multiplying \eqref{equ11} by $u_c$ and integrating on $\R^N$, we first get that
\begin{align} \label{ph11}
\hspace{-1cm}\int_{\R^N}|\nabla u_c|^2 \,dx + \lambda_c c + \int_{\R^N} \int_{\R^N}\frac{|u_c(x)|^2|u_c(y)|^2}{|x-y|^{\gamma_2}} \, dxdy=\int_{\R^N} \int_{\R^N}\frac{|u_c(x)|^2|u_c(y)|^2}{|x-y|^{\gamma_1}} \, dxdy.
\end{align}
In addition, from Lemma \ref{ph}, we have that
\begin{align} \label{ph12}
\hspace{-1cm}\int_{\R^N}|\nabla u_c|^2 \,dx + \frac {\gamma_2}{4}\int_{\R^N} \int_{\R^N}\frac{|u_c(x)|^2|u_c(y)|^2}{|x-y|^{\gamma_2}} \, dxdy=\frac{\gamma_1}{4} \int_{\R^N} \int_{\R^N}\frac{|u_c(x)|^2|u_c(y)|^2}{|x-y|^{\gamma_1}} \, dxdy.
\end{align}
Combining \eqref{ph11} and \eqref{ph12}, we then deduce that
\begin{align} \label{ph13}
\begin{split}
\lambda_c c&= \frac{4-\gamma_2}{\gamma_2} \int_{\R^N} |\nabla u_c|^2 \, dx + \frac{\gamma_2-\gamma_1}{\gamma_2}\int_{\R^N} \int_{\R^N}\frac{|u_c(x)|^2|u_c(y)|^2}{|x-y|^{\gamma_1}} \, dxdy \\
& \geq \frac{4-\gamma_2}{\gamma_2} \int_{\R^N} |\nabla u_c|^2 \, dx- \frac{C_{N, \gamma_1}\left(\gamma_1-\gamma_2\right)}{\gamma_2} \left( \int_{\R^N} |\nabla u_c|^2 \, dx \right)^{\frac{\gamma_1}{2}} c^{\frac{4-\gamma_1}{2}},
\end{split}
\end{align}
where we used \eqref{inequ} for the inequality, because of $\gamma_2<\gamma_1$. On the other hand, from \eqref{ph12} and \eqref{inequ}, we assert that
\begin{align*} 
\hspace{-1cm}\int_{\R^N}|\nabla u_c|^2 \,dx + \frac {\gamma_2}{4}\int_{\R^N} \int_{\R^N}\frac{|u_c(x)|^2|u_c(y)|^2}{|x-y|^{\gamma_2}} \, dxdy \leq \frac{C_{N, \gamma_1}\gamma_1}{4} \left( \int_{\R^N} |\nabla u_c|^2 \, dx \right)^{\frac{\gamma_1}{2}} c^{\frac{4-\gamma_1}{2}}.
\end{align*}
Since $\gamma_1>2$,  we then see that
$$
\int_{\R^N} |\nabla u_c|^2 \, dx  \geq \left(\frac{4}{C_{N, \gamma_1} c^{\frac{4-\gamma_1}{2}}}\right)^{\frac{2}{\gamma_1-2}}\to \infty \quad \mbox{as} \,\, c \to 0^+.
$$
Using \eqref{ph13}, we then have that $\lambda_c>0$ for any $c>0$ small enough, because of $2<\gamma_1<4$. Thus we have completed the proof.
\end{proof}

\begin{lem} \label{nonincreasing}
Let $0<\gamma_2<\gamma_1<\min\{N, 4\}$, $\gamma_1>2$ and $N \geq 3$. Then the function $c \mapsto \Gamma(c)$ is nonincreasing on $(0, \infty)$.
\end{lem}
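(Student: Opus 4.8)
The plan is to exploit the minimax characterization $\Gamma(c)=\inf_{u\in S(c)}\max_{t>0}E(u_t)$ established in the proof of Lemma \ref{pss}, and to prove the equivalent statement that $\Gamma(c_2)\le\Gamma(c_1)$ whenever $0<c_1<c_2$. Fixing $\eps>0$, I would first choose, using the density of $C_0^\infty(\R^N)$ in $H^1(\R^N)$ together with the continuity in $H^1(\R^N)$ of $F(u):=\max_{t>0}E(u_t)$ (which follows from the uniqueness of the maximizing dilation in Lemma \ref{unique}), a compactly supported $\tilde u\in S(c_1)$ with $\max_{t>0}E(\tilde u_t)<\Gamma(c_1)+\eps$.

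The central idea is to manufacture a competitor in $S(c_2)$ by adding the missing mass $c_2-c_1$ in a maximally spread-out bump placed far away. Concretely, I fix $\phi\in C_0^\infty(\R^N)$ with $\|\phi\|_2^2=c_2-c_1$ and set $\phi^L:=\phi_{1/L}$, that is $\phi^L(x)=L^{-N/2}\phi(x/L)$, which preserves the $L^2$-norm while obeying $\|\nabla\phi^L\|_2^2=L^{-2}\|\nabla\phi\|_2^2$ and, for $i=1,2$, $\int_{\R^N}\int_{\R^N}\frac{|\phi^L(x)|^2|\phi^L(y)|^2}{|x-y|^{\gamma_i}}\,dxdy=L^{-\gamma_i}\int_{\R^N}\int_{\R^N}\frac{|\phi(x)|^2|\phi(y)|^2}{|x-y|^{\gamma_i}}\,dxdy$. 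Translating $\phi^L$ by $Re_1$ with $R$ large enough that its support is disjoint from that of $\tilde u$, I set $w:=\tilde u+\phi^L(\cdot-Re_1)$; disjointness of supports gives $w\in S(c_2)$ exactly and makes the gradient cross term vanish.

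Then I would estimate $\max_{t>0}E(w_t)$ by tracking the three coefficients of $E(w_t)=\frac{t^2}{2}\|\nabla w\|_2^2+\frac{t^{\gamma_2}}{4}\int_{\R^N}\int_{\R^N}\frac{|w(x)|^2|w(y)|^2}{|x-y|^{\gamma_2}}\,dxdy-\frac{t^{\gamma_1}}{4}\int_{\R^N}\int_{\R^N}\frac{|w(x)|^2|w(y)|^2}{|x-y|^{\gamma_1}}\,dxdy$. Sending $L\to\infty$ makes the self-energies $\|\nabla\phi^L\|_2^2$ and both Hartree integrals of $\phi^L$ tend to $0$, and subsequently sending $R\to\infty$ kills the positive Hartree cross terms, which are bounded by $C\,\mathrm{dist}(\mathrm{supp}\,\tilde u,\mathrm{supp}\,\phi^L(\cdot-Re_1))^{-\gamma_i}$. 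Hence the gradient, the $\gamma_2$- and the $\gamma_1$-coefficients of $E(w_t)$ converge to those of $E(\tilde u_t)$. Since the limiting $\gamma_1$-coefficient $\frac14\int_{\R^N}\int_{\R^N}\frac{|\tilde u(x)|^2|\tilde u(y)|^2}{|x-y|^{\gamma_1}}\,dxdy$ is strictly positive and $\gamma_1>2$, the maximizers $t_w$ of $E(w_t)$ remain in a bounded interval uniformly in $L,R$; inserting this bound into the uniform convergence of coefficients yields $\max_{t>0}E(w_t)\le\max_{t>0}E(\tilde u_t)+o(1)$ as $L,R\to\infty$.

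Combining the three steps, $\Gamma(c_2)\le\max_{t>0}E(w_t)\le\max_{t>0}E(\tilde u_t)+o(1)<\Gamma(c_1)+\eps+o(1)$, and letting first $L,R\to\infty$ and then $\eps\to0$ gives $\Gamma(c_2)\le\Gamma(c_1)$, as desired. The main obstacle, and the reason a naive amplitude or dilation rescaling of a near-minimizer fails, is that the defocusing $\gamma_2$-term strictly grows under any mass-increasing rescaling, so that $\max_{t>0}E(\cdot_t)$ may strictly increase; the spreading-at-infinity construction circumvents this because, at the dilation scale where $\tilde u$ attains its maximum the spread-out bump contributes vanishing energy, while at the large scales where the bump would realize its own positive mountain-pass value $\Gamma(c_2-c_1)$ the supercritical focusing term $-\frac{t^{\gamma_1}}{4}\int_{\R^N}\int_{\R^N}\frac{|\tilde u(x)|^2|\tilde u(y)|^2}{|x-y|^{\gamma_1}}\,dxdy$ already forces $E(w_t)\to-\infty$. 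Establishing this uniform control on the maximizers, rather than crudely bounding by the sum of the individual maxima, is the key technical point.
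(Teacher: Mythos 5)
Your proposal is correct and follows essentially the same route as the paper: both arguments take a compactly supported near-optimizer of the fibering functional $F(u)=\max_{t>0}E(u_t)$ at the smaller mass and append the missing mass as a spread-out bump supported far away (the paper's $(\tilde v^{\delta})_t$ with $t\to 0^+$ plays exactly the role of your $\phi^L(\cdot-Re_1)$), then estimate the fibering maximum of the sum using the disjointness of supports and the decay of the Hartree cross terms. The one point where you are more careful is the control of the maximizing dilation: the paper's intermediate claim \eqref{non3}, namely $\max_{\lambda>0}E\bigl(((\tilde v^{\delta})_t)_{\lambda}\bigr)\le \eps/4$, is as literally written independent of $t$ by scale invariance of the fibering map (since $((\tilde v^{\delta})_t)_{\lambda}=(\tilde v^{\delta})_{t\lambda}$), and your observation that the maximizer of $t\mapsto E(w_t)$ stays in a bounded interval --- on which the bump's contribution is uniformly negligible because the focusing $\gamma_1$-term of $\tilde u$ already forces $E(w_t)\to-\infty$ before the bump can realize its own mountain-pass level --- is precisely the repair that makes this step rigorous.
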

\begin{proof}
To achieve this, we need to show that, if $0<c_2<c_1$, then $\Gamma(c_1) \leq \Gamma(c_2)$. By the definition of $\Gamma(c_2)$, we first have that, for any $\eps>0$, there exists $u \in P(c_2)$ such that
\begin{align} \label{non1}
E(u) \leq \Gamma(c_2) + \frac{\eps}{2}.
\end{align}
Let $\chi \in C_0^{\infty}(\R^N, [0,1])$ be a cut-off function such that $\chi(x) =1$ for $|x| \leq 1/2$, $\chi(x)=0$ for $|x|\geq 1$ and $|\nabla \chi(x)| \leq 4$ for any $x \in \R^N$. For $\delta>0$ small enough, we define $u^{\delta}(x):=u(x) \chi(\delta x)$ for $x \in \R^N$. One can check that $u^{\delta} \to u$ in $H^1(\R^N)$ as $\delta \to 0^+$. Since $Q(u)=0$, then $t_{u^{\delta}} \to 1$ as $\delta \to 0^+$, where $t_{u^{\delta}}>0$ is defined in Lemma \ref{unique} such that $Q((u^{\delta})_{t_{u^{\delta}}})=0$. As a result, we have that $(u^{\delta})_{t_{u^{\delta}}} \to u$ in $H^1(\R^N)$ as $\delta \to 0^+$. Therefore, there exists a constant $\delta>0$ small enough such that
\begin{align} \label{non2}
E((u^{\delta})_{t_{u_{\delta}}}) \leq E(u) + \frac{\eps}{4}.
\end{align}
Let $v \in C^{\infty}_0(\R^N)$ be such that $\mbox{supp}\,v \subset B(0, 1+2/\delta) \backslash B(0, 2/\delta)$ and set
$$
\tilde{v}^{\delta}:=\frac{\left(c_1-\|u^{\delta}\|_2^2\right)^{\frac 12}}{\|v\|_2} v.
$$ 
For $0<t<1$, we define $w^{\delta}_t:=u^{\delta} + (\tilde{v}^{\delta})_t$, where $ (\tilde{v}^{\delta})_t(x):= t^{N/2}\tilde{v}^{\delta}(tx)$ for $x\in \R^N$. Notice that 
$$
\textnormal{dist}\left(\mbox{supp}\, u^{\delta}, \, \mbox{supp}\, (\tilde{v}^{\delta})_t \right)=\frac{1}{\delta} \left(\frac 2 t-1\right)>\frac{1}{\delta},
$$ 
then $w^{\delta}_t \in S(c_1)$. Observe that 
$$
 \int_{\R^N}|\nabla(\tilde{v}^{\delta})_t|^2 \,dx ==o_t(1), \quad \int_{\R^N} \int_{\R^N}\frac{|(\tilde{v}^{\delta})_t(x)|^2|(\tilde{v}^{\delta})_t(y)|^2}{|x-y|^{\gamma_i}} \, dxdy=o_t(1),
$$
where $i=1,2$. Hence we are able to deduce that there exist $t>0$ small enough such that
\begin{align}\label{non3}
\max_{\lambda>0}E(((\tilde{v}^{\delta})_t)_{\lambda}) \leq \frac{\eps}{4}.
\end{align}
Consequently, with the aid of \eqref{non1}-\eqref{non3}, we have that
\begin{align*}
\Gamma(c_1) \leq \max_{\lambda>0} E((w^{\delta}_t)_{\lambda})&=\max_{\lambda>0}\left(E((u^{\delta})_{\lambda}) + E(((\tilde{v}^{\delta})_t)_{\lambda}) \right) +o_{\delta}(1) \\
&\leq E((u^{\delta})_{t_{u^{\delta}}})  + \frac{\eps}{4} +o_{\delta}(1)\leq E(u)+ \frac{\eps}{2} +o_{\delta}(1)\leq \Gamma(c_2) +\eps + o_{\delta}(1).
\end{align*}
This leads to $\Gamma(c_1) \leq \Gamma(c_2)$. Thus the proof is completed.
\end{proof}

\begin{lem}\label{decreasing}
Let $0<\gamma_2<\gamma_1<\min\{N, 4\}$, $\gamma_1>2$ and $N \geq 3$. If there exists $u \in S(c)$ with $E(u)=\Gamma(c)$ satisfying the equation
\begin{align} \label{fequ2}
-\Delta u+ \lambda u=\left(|x|^{-\gamma_1} \ast |u|^2\right) u - \left(|x|^{-\gamma_2} \ast |u|^2\right) u\quad \mbox{in} \,\, \R^N,
\end{align}
then $\lambda \geq 0$. If $\lambda>0$, then the function $c \mapsto \Gamma(c)$ is strictly decreasing in a right neighborhood of $c$.  
\end{lem}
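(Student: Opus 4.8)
The plan is to establish the two monotonicity statements first and then read off the sign of $\lambda$ from them, combined with Lemma \ref{nonincreasing}. Write $A:=\int_{\R^N}|\nabla u|^2\,dx$ and $B_i:=\int_{\R^N}\int_{\R^N}\frac{|u(x)|^2|u(y)|^2}{|x-y|^{\gamma_i}}\,dxdy$ for $i=1,2$. Testing \eqref{fequ2} against $u$ and integrating over $\R^N$ (exactly as in the derivation of \eqref{ph11}) produces the single identity through which the multiplier enters,
\[
A+\lambda c+B_2=B_1 .
\]

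Next I would construct competitors of nearby mass by the scalar scaling $v_s:=s\,u\in S(s^2c)$, $s>0$, composed with the dilation $t\mapsto (v_s)_t$ from \eqref{scaling}. Set $\Phi(s,t):=E\big((v_s)_t\big)=\frac{s^2t^2}{2}A+\frac{s^4t^{\gamma_2}}{4}B_2-\frac{s^4t^{\gamma_1}}{4}B_1$ and $g(s):=\max_{t>0}\Phi(s,t)$. By Lemma \ref{unique} the maximizing dilation $t(s)=t_{v_s}$ exists, is unique, and satisfies $(v_s)_{t(s)}\in P(s^2c)$; hence $\Gamma(s^2c)\le E\big((v_s)_{t(s)}\big)=g(s)$. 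Since $u\in P(c)$ gives $t(1)=1$, one has $g(1)=E(u)=\Gamma(c)$, so $g$ is an upper bound for $\Gamma(s^2c)$ that is tangent to $\Gamma$ at $s=1$.

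The key computation is $g'(1)$. Because the inner maximum is attained at the unique point $t(s)$ and $\Phi$ is smooth in $(s,t)$ with a nondegenerate maximum (Lemma \ref{unique}), the envelope/Danskin theorem applies, $g$ is differentiable at $s=1$, and
\[
g'(1)=\frac{\partial \Phi}{\partial s}(1,1)=A+B_2-B_1=-\lambda c .
\]
A one-sided expansion of $g$ at $s=1$ now yields both cases. If $\lambda>0$, then $g'(1)<0$, so $\Gamma(s^2c)\le g(s)<g(1)=\Gamma(c)$ for $s\in(1,1+\eps)$, i.e. $\Gamma(c')<\Gamma(c)$ for $c'=s^2c$ in a right neighborhood of $c$; if $\lambda<0$, then $g'(1)>0$ and the same inequality holds for $s\in(1-\eps,1)$, giving $\Gamma(c')<\Gamma(c)$ for $c'=s^2c<c$ near $c$, i.e. $\Gamma$ is strictly increasing to the left.

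Finally, I would obtain $\lambda\ge0$ not by a direct sign estimate (the analogue of \eqref{ph13} has indefinite sign for general $c$) but by contradiction: were $\lambda<0$, the previous step would make $c\mapsto\Gamma(c)$ strictly increasing in a left neighborhood of $c$, contradicting that $\Gamma$ is nonincreasing on $(0,\infty)$ by Lemma \ref{nonincreasing}. Hence $\lambda\ge0$. I expect the only delicate point to be the justification of the differentiability of $g$ at $s=1$, that is, the continuous dependence of the projection $t(s)$ on $s$; this rests on the uniqueness together with the strict monotonicity/concavity structure recorded in Lemma \ref{unique}, after which everything reduces to elementary scaling.
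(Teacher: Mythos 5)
Your proposal is correct and follows essentially the same route as the paper: both build a two-parameter family (a mass rescaling composed with the dilation $t\mapsto u_t$), identify the derivative in the mass direction at the identity with $A+B_2-B_1=-\lambda c$ via the weak formulation of \eqref{fequ2}, use Lemma \ref{unique} to project back onto $P(c')$ and the continuity of the projection $t(s)$ near $s=1$, and then deduce $\lambda\ge 0$ by contradiction with Lemma \ref{nonincreasing}. The only cosmetic difference is that you package the comparison through the envelope function $g(s)=\max_t\Phi(s,t)$ and Danskin's theorem, whereas the paper argues directly from the sign of $\partial_{t_1}\alpha(t_1,t_2)$ on a small rectangle around $(1,1)$, which sidesteps the differentiability issue you flag.
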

\begin{proof}
Let $u \in S(c)$ be such that $E(u)=\Gamma(c)$ and $u$ solves \eqref{fequ2} for some $\lambda \in \R$. We shall prove that if $\lambda>0$, then the function $c \mapsto \Gamma(c)$ is strictly decreasing in a right neighborhood of $c$.  For any $t_1, t_2>0$, we define $u_{t_1,t_2}(x):=t_1^{1/2}t_2^{N/2}u(t_2x)$ for $x \in \R^N$. By simple calculations, we see that $\|u_{t_1,t_2}\|_2^2 =t_1c$ and
\begin{align*}
\alpha_u(t_1,t_2):=E(u_{t_1,t_2})&=\frac {t_1^2t_2^2}{2} \int_{\R^N}|\nabla u|^2 \,dx + \frac {t_1^4 t_2^{\gamma_2}}{4}\int_{\R^N} \int_{\R^N}\frac{|u(x)|^2|u(y)|^2}{|x-y|^{\gamma_2}} \, dxdy \\
& \quad-\frac {t_1^4t_2^{\gamma_1}}{4} \int_{\R^N} \int_{\R^N}\frac{|u(x)|^2|u(y)|^2}{|x-y|^{\gamma_1}} \, dxdy,
\end{align*}
\begin{align*}
\beta_u(t_1,t_2):=Q(u_{t_1,t_2})&={t_1^2t_2^2}\int_{\R^N}|\nabla u|^2 \,dx + \frac {\gamma_2t_1^4 t_2^{\gamma_2}}{4}\int_{\R^N} \int_{\R^N}\frac{|u(x)|^2|u(y)|^2}{|x-y|^{\gamma_2}} \, dxdy \\
& \quad-\frac {\gamma_1t_1^4t_2^{\gamma_1}}{4} \int_{\R^N} \int_{\R^N}\frac{|u(x)|^2|u(y)|^2}{|x-y|^{\gamma_1}} \, dxdy.
\end{align*}
We then compute that
\begin{align*}
\frac{\partial \alpha_u} {\partial t_1}(1,1)=-\lambda c, \quad   \frac{\partial \alpha_u} {\partial t_2}(1,1)=0, \quad \frac{\partial^2\alpha_u} {\partial t_2^2}(1,1)<0.
\end{align*}
Therefore, there exist $\delta_1>0$ and $|\delta_2|$ small enough such that
$$
\alpha(1+\delta_1, 1+\delta_2)<\alpha(1, 1) \quad \mbox{if} \,\, \lambda>0,
$$
$$
\alpha(1-\delta_1, 1+\delta_2)<\alpha(1, 1) \quad \mbox{if} \,\, \lambda<0.
$$
Observe that $\beta_u(1,1)=0$ and $\frac{\partial \beta_u}{\partial t_2} (1,1)\neq 0$. Using the implicit function theorem, we then know that there exist $\eps>0$ small enough and $g \in C([1-\eps, 1+\eps], \R)$ such that $g(1)=1$ and $\beta_u(t, g(t))=0$ for any $t \in [1-\eps, 1+\eps]$. As a result, we get that
\begin{align} \label{dc111}
\Gamma((1+\eps)c) =\inf_{u \in P((1+\eps)c )}\leq E(u_{1+\eps, g(1+\eps)})<E(u)=\Gamma(c) \quad \mbox{if} \,\, \lambda>0,
\end{align}
\begin{align} \label{dc112}
\Gamma((1-\eps)c) =\inf_{u \in P((1-\eps)c )}\leq E(u_{1-\eps, g(1-\eps)})<E(u)=\Gamma(c) \quad \mbox{if} \,\, \lambda<0.
\end{align}
From \eqref{dc111} and Lemma \ref{nonincreasing}, then the function $c \mapsto \Gamma(c)$ is strictly decreasing in a  right neighborhood of $c$. If not, we then reach a contradiction from \eqref{dc111}. Making use of \eqref{dc112} and Lemma \ref{nonincreasing}, we then obtain that $\lambda \geq 0$. Thus the proof is completed.
\end{proof}

As a direct consequence of Lemmas \ref{sign} and \ref{decreasing}, we can derive the following result.

\begin{lem}\label{ladecreasing}
Let $0<\gamma_2<\gamma_1<\min\{N, 4\}$, $\gamma_1>2$ and $N \geq 3$.Then the function $c \mapsto \gamma(c)$ is strictly decreasing on $(0, \hat{c}_1)$, where the constant $\hat{c}_1>0$ is determined in Lemma \ref{sign}.
\end{lem}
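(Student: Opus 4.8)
The plan is to read off strict monotonicity by feeding the positive Lagrange multiplier of Lemma \ref{sign} into the local monotonicity dichotomy of Lemma \ref{decreasing}, and then to upgrade the resulting ``strictly decreasing to the right at every point'' statement into genuine strict decrease on all of $(0,\hat c_1)$ with the help of the global non-increase recorded in Lemma \ref{nonincreasing}. (Here $\gamma(c)$ in the statement is understood as $\Gamma(c)$.)

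First I would fix an arbitrary $c\in(0,\hat c_1)$ and use the existence of a ground state at this mass, namely some $u_c\in S(c)$ with $E(u_c)=\Gamma(c)$ solving \eqref{equ11} with a multiplier $\lambda_c$; this is precisely what the compactness of the Palais--Smale sequence of Lemma \ref{pss} (bounded by the coercivity of Lemma \ref{below}, with sign controlled by Lemma \ref{sign}) provides on $(0,\hat c_1)$. Since $0<c<\hat c_1$, Lemma \ref{sign} forces $\lambda_c>0$, so the hypothesis of Lemma \ref{decreasing} is met in its $\lambda>0$ branch, and I obtain $\delta_c>0$ with $\Gamma(c')<\Gamma(c)$ for every $c'\in(c,c+\delta_c)$.

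Because $c$ was arbitrary, this local statement holds at every point of the interval, and it only remains to globalize it: given $0<a<b<\hat c_1$, pick $\eps>0$ with $\eps<\min\{\delta_a,\,b-a\}$, so that $\Gamma(a+\eps)<\Gamma(a)$ by the local property at $a$, while $a+\eps<b$ and Lemma \ref{nonincreasing} give $\Gamma(b)\le\Gamma(a+\eps)$; chaining these yields $\Gamma(b)<\Gamma(a)$. This final chaining is immediate, so the substantive point --- and the step I would be most careful about --- is making sure the hypothesis of Lemma \ref{decreasing} is genuinely available for \emph{every} $c\in(0,\hat c_1)$, i.e. that a minimizer attaining $\Gamma(c)$ exists throughout the interval so that Lemma \ref{decreasing} can be invoked pointwise.
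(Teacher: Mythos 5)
Your overall route --- positive multiplier from Lemma \ref{sign}, local strict decrease from the $\lambda>0$ branch of Lemma \ref{decreasing}, globalization via the monotonicity of Lemma \ref{nonincreasing} --- is exactly what the paper intends: it offers no proof beyond declaring the lemma a direct consequence of Lemmas \ref{sign} and \ref{decreasing}, and your chaining argument is the correct way to pass from ``strictly decreasing in a right neighborhood of every point'' to strict decrease on all of $(0,\hat{c}_1)$.

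The step you yourself single out as delicate is, however, where the argument is incomplete as written. You justify the existence of a minimizer attaining $\Gamma(c)$ at every $c\in(0,\hat{c}_1)$ by saying that the compactness of the Palais-Smale sequence of Lemma \ref{pss} ``provides'' it, with boundedness from Lemma \ref{below} and sign control from Lemma \ref{sign}. But Lemma \ref{pss} only produces a Palais-Smale sequence; its compactness is precisely the content of the proof of Theorem \ref{thm1}, and that proof in turn cites Lemma \ref{ladecreasing} --- the statement you are proving --- to rule out a loss of mass in the weak limit. Taken at face value inside the paper's logical ordering, your argument is therefore circular. The circularity is repairable: in the proof of Theorem \ref{thm1} the weak limit $u$ has mass $c':=\|u\|_2^2\in(0,c]$, satisfies the equation with some multiplier, lies on $P(c')$, and the Brezis-Lieb splitting gives $E(u)=\Gamma(c')=\Gamma(c)$; thus $u$ is already a minimizer at mass $c'$, Lemma \ref{sign} yields $\lambda>0$ since $c'\leq c<\hat{c}_1$, and Lemma \ref{decreasing} applied at $c'$ (where a minimizer is now known to exist) together with Lemma \ref{nonincreasing} forces $c'=c$. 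Once Theorem \ref{thm1} is secured in this way, without any appeal to Lemma \ref{ladecreasing}, your pointwise-plus-chaining argument goes through verbatim; you should make this dependence on Theorem \ref{thm1} explicit rather than attributing the existence of minimizers to Lemma \ref{pss} alone.
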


\begin{proof}[Proof of Theorem \ref{thm1}]
From Lemma \ref{pss}, we know that there exists a Palais-Smale sequence $\{u_n\} \subset P(c)$ for $E$ restricted on $S(c)$ at the level $\Gamma(c)$. In view of Lemma \ref{below}, we have that $\{u_n\}$ is bounded in $H^1(\R^N)$. Reasoning as the proof of \cite[Lemma 3]{BL}, then we are able to deduce that $u_n$ satisfies the following equation,
\begin{align} \label{equ31}
-\Delta u_n+ \lambda_n u_n=\left(|x|^{-\gamma_1} \ast |u_n|^2\right) u_n - \left(|x|^{-\gamma_2} \ast |u_n|^2\right) u_n+o_n(1),
\end{align}
where
$$
\lambda_n:=\frac{1}{c} \left(\int_{\R^N} \int_{\R^N}\frac{|u_n(x)|^2|u_n(y)|^2}{|x-y|^{\gamma_1}} \, dxdy -\int_{\R^N} \int_{\R^N}\frac{|u_n(x)|^2|u_n(y)|^2}{|x-y|^{\gamma_2}} \, dxdy-\int_{\R^N} |\nabla u_n|^2\, dx\right).
$$
We now claim that $\{u_n\}$ is non-vanishing in $H^1(\R^N)$. Otherwise, taking into account \cite[Lemma I.1]{Li2}, we can derive that
$$
\int_{\R^N} \int_{\R^N}\frac{|u_n(x)|^2|u_n(y)|^2}{|x-y|^{\gamma_2}} \, dxdy =\int_{\R^N} \int_{\R^N}\frac{|u_n(x)|^2|u_n(y)|^2}{|x-y|^{\gamma_1}} \, dxdy=o_n(1).
$$
Since $Q(u_n)=0$, then $E(u_n)=o_n(1)$. This is impossible, because of $\Gamma(c)>0$, see Lemma \ref{below}. Therefore, we conclude that there exists a sequence $\{y_n\} \subset \R^N$ such that $u_n(\cdot+y_n) \wto u$  in $H^1(\R^N)$ as $n \to \infty$ and $u\neq 0$. Note that $\{u_n\}$ is bounded in $H^1(\R^N)$, then $\{\lambda_n\} \subset \R$ is bounded. This then shows that there exists a constant $\lambda\in\R$ such that $\lambda_n \to \lambda$ in $\R$ as $n \to \infty$. Consequently, from \eqref{equ31}, we get that $u \in H^1(\R^N)$ enjoys the equation
\begin{align} \label{fequ4}
-\Delta u+ \lambda u=\left(|x|^{-\gamma_1} \ast |u|^2\right) u - \left(|x|^{-\gamma_2} \ast |u|^2\right) u.
\end{align}
Using Lemma \ref{ph}, we then have that $Q(u)=0$. We now prove that $u \in S(c)$. To do this, we define $w_n=u_n-u(\cdot-y_n)$. Hence $w_n(\cdot+y_n) \wto 0$ in $H^1(\R^N)$ as $n \to \infty$. In addition, there holds that
\begin{align*}
\| \nabla u_n\|^2_2=\| \nabla w_n\|^2_2+\| \nabla  u\|^2_2+o_n(1)
\end{align*}\
and
$$
\int_{\R^N} \int_{\R^N}\frac{|u_n(x)|^2|u_n(y)|^2}{|x-y|^{\gamma_i}} \, dxdy=\int_{\R^N} \int_{\R^N}\frac{|w_n(x)|^2|w_n(y)|^2}{|x-y|^{\gamma_i}} \, dxdy +\int_{\R^N} \int_{\R^N}\frac{|u(x)|^2|u(y)|^2}{|x-y|^{\gamma_i}} \, dxdy+o_n(1),
$$
where $i=1,2$.
This then indicates that
\begin{align} \label{bl}
\hspace{-0.5cm}\Gamma(c)=E(u_n)+o_n(1)=E(w_n) +E(u)+o_n(1) \geq E(w_n) +\Gamma(\|u\|_2^2)+o_n(1)
\end{align}
and
\begin{align} \label{bl1}
Q(w_n)=Q(w_n)+Q(u)+o_n(1)=Q(u_n)+o_n(1)=o_n(1).
\end{align}
In view of \eqref{bl1} and $\gamma_1>2$, then
\begin{align}\label{bl11}
0 \leq E(w_n)-\frac{1}{\gamma_1}Q(w_n)=E(w_n) +o_n(1).
\end{align}
Since $0<\|u\|_2^2 \leq c$, by using \eqref{bl}, \eqref{bl11} and Lemma \ref{nonincreasing}, we then have that $\Gamma(\|u\|_2^2)=\Gamma(c)$. This together with Lemmas \ref{sign} and \ref{ladecreasing} infers the desired result. Thus the proof is completed.
\end{proof}

\section{Mass subcritical-critical case} \label{section5}

In this section, we investigate the mass subcritical-critical case $0<\gamma_2<\gamma_1=2$ and $N \geq 3$. And we shall prove Theorems \ref{nonexistence1} and \ref{thmcritical}.

\begin{proof} [Proof of Theorem \ref{nonexistence1}]
According to \eqref{inequ}, we first deduce that
\begin{align*}
E(u) \geq \frac 12 \left(1-\frac{C_{N,2} c}{2}\right) \int_{\R^N}|\nabla u|^2 \, dx + \int_{\R^N} \int_{\R^N}\frac{|u(x)|^2|u(y)|^2}{|x-y|^{\gamma_2}} \, dxdy,
\end{align*}
from which we have that $m(c) \geq 0$ for any $0<c\leq \tilde{c}_1:=2/C_{N,2}$. On the other hand, from \eqref{scaling}, we get that $E(u_t) \to 0$ as $t \to 0^+$. This infers that $m(c) \leq 0$ for any $c>0$. As a result, we have that $m(c)=0$ for any $0<c\leq \hat{c_1}$. We now show that $m(c)=-\infty$ for any $c>\tilde{c}_1$. To do this, we define
$$
w:=\frac{Q_2}{\|Q_2\|_2} c^{\frac 12} \in S(c),
$$
where $Q_2 \in H^1(\R^N)$ is a ground state to \eqref{equ2} with $\gamma=2$. By simple computations, then there holds that
\begin{align*}
E(w_t)=\frac{ct^2}{2\|Q_2\|_2^2} \left(1- \frac{C_{N, 2} c}{2}\right) \int_{\R^N} |\nabla u|^2\, dx + \frac{c^2t^{\gamma_2}}{4\|Q_2\|_2^4}\int_{\R^N} \int_{\R^N}\frac{|Q_2(x)|^2|Q_2(y)|^2}{|x-y|^{\gamma_2}} \, dxdy.
\end{align*}
Then we find that $E(w_t) \to -\infty$ as $t \to \infty$ for any $c>\tilde{c}_1$, because of $0<\gamma_2<2$. Hence we have the desired result. We now prove that there exists no solutions to \eqref{equ}-\eqref{mass} for any $0<c \leq \tilde{c}_1$. If there were a solution to \eqref{equ}-\eqref{mass} for some $0<c \leq \tilde{c}_1$, by Lemma \ref{ph}, then $u$ enjoys the following Pohozaev identity,
\begin{align*}
\int_{\R^N}|\nabla u|^2 \,dx + \frac {\gamma_2}{4}\int_{\R^N} \int_{\R^N}\frac{|u(x)|^2|u(y)|^2}{|x-y|^{\gamma_2}} \, dxdy&=\frac {1}{2}\int_{\R^N} \int_{\R^N}\frac{|u(x)|^2|u(y)|^2}{|x-y|^{2}} \, dxdy \\
& \leq \frac{C_{N,2} c}{2} \int_{\R^N}|\nabla u|^2 \,dx \leq \int_{\R^N}|\nabla u|^2 \,dx,
\end{align*}
where we used \eqref{inequ} in the first inequality. This is impossible. Thus the proof is completed.
\end{proof}

In order to seek for solutions to \eqref{equ}-\eqref{mass} for $c>\tilde{c}_1$, we introduce the following minimization problem,
$$
\Gamma(c):=\inf_{u \in \mathcal{P}(c)} E(u),
$$
where $\mathcal{P}(c):=P(c) \cap \mathcal{S}(c)$ and 
$$
\mathcal{S}(c):=\left\{ u \in S(c) : \int_{\R^N} |\nabla u|^2 \, dx <\frac 12\int_{\R^N} \int_{\R^N}\frac{|u(x)|^2|u(y)|^2}{|x-y|^{2}} \, dxdy \right\}.
$$
Note that $c>\tilde{c}_1$, then $\mathcal{S}(c) \neq \emptyset$. Indeed, one can check that
$$
w:=\frac{Q_2}{\|Q_2\|_2} c^{\frac 12} \in \mathcal{S}(c).
$$

\begin{lem} \label{below1}
Let $0<\gamma_2<\gamma_1=2$ and $N \geq 3$. Then $\Gamma(c)>0$ and $\Gamma(c)$ restricted on $\mathcal{P}(c)$ is coercive for any $c>\tilde{c}_1$.
\end{lem}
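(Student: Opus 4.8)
The plan is to follow the scheme of Lemma \ref{below}, but to compensate for the degeneracy that occurs at the critical value $\gamma_1=2$. First I would record the algebraic identity on $\mathcal{P}(c)$. For $u\in\mathcal{P}(c)$ one has $Q(u)=0$, and since $\gamma_1=2$,
\begin{align*}
E(u)=E(u)-\frac12 Q(u)=\frac{2-\gamma_2}{8}\int_{\R^N}\int_{\R^N}\frac{|u(x)|^2|u(y)|^2}{|x-y|^{\gamma_2}}\,dxdy.
\end{align*}
In contrast with the supercritical case, the gradient contribution $\frac{\gamma_1-2}{2\gamma_1}\|\nabla u\|_2^2$ now vanishes, so the whole lemma reduces to controlling the $\gamma_2$-Hartree term $D_2(u):=\int_{\R^N}\int_{\R^N}\frac{|u(x)|^2|u(y)|^2}{|x-y|^{\gamma_2}}\,dxdy$ from below, both to secure $\Gamma(c)>0$ and to obtain coercivity.

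Next I would establish the key interpolation bound. Since $N\geq3$ I can fix an auxiliary exponent $\gamma$ with $2<\gamma<\min\{N,4\}$. By the same interpolation argument as in Lemma \ref{estimate} (H\"older in the double integral, then \eqref{HLS} and \eqref{inequ} applied with exponent $\gamma$), and writing $2=\theta\gamma_2+(1-\theta)\gamma$ with $\theta=\frac{\gamma-2}{\gamma-\gamma_2}\in(0,1)$, I obtain
\begin{align*}
\int_{\R^N}\int_{\R^N}\frac{|u(x)|^2|u(y)|^2}{|x-y|^{2}}\,dxdy\leq C\,D_2(u)^{\theta}\,\|\nabla u\|_2^{\gamma(1-\theta)}\,c^{\frac{(4-\gamma)(1-\theta)}{2}}.
\end{align*}
On the other hand $Q(u)=0$ reads $\int_{\R^N}\int_{\R^N}\frac{|u(x)|^2|u(y)|^2}{|x-y|^{2}}\,dxdy=2\|\nabla u\|_2^2+\frac{\gamma_2}{2}D_2(u)\geq 2\|\nabla u\|_2^2$. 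Inserting this lower bound into the previous inequality and simplifying the exponents, which collapse neatly to $\gamma_2/2$, yields a constant $c_0>0$ depending only on $N,\gamma_1,\gamma_2,c$ with
\begin{align*}
D_2(u)\geq c_0\,\|\nabla u\|_2^{\gamma_2}\qquad\text{for all }u\in\mathcal{P}(c).
\end{align*}

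Coercivity is then immediate: on $\mathcal{P}(c)$ the mass $\|u\|_2^2=c$ is fixed, so $\|u\|_{H^1}\to\infty$ forces $\|\nabla u\|_2\to\infty$, whence $E(u)=\frac{2-\gamma_2}{8}D_2(u)\geq\frac{2-\gamma_2}{8}c_0\|\nabla u\|_2^{\gamma_2}\to\infty$. For the strict positivity I would bring in the mass-critical Gagliardo--Nirenberg inequality \eqref{inequ} with $\gamma=\gamma_1=2$, namely $\int_{\R^N}\int_{\R^N}\frac{|u(x)|^2|u(y)|^2}{|x-y|^2}\,dxdy\leq C_{N,2}\,c\,\|\nabla u\|_2^2$. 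Combining it with the identity $\int_{\R^N}\int_{\R^N}\frac{|u(x)|^2|u(y)|^2}{|x-y|^2}\,dxdy=2\|\nabla u\|_2^2+\frac{\gamma_2}{2}D_2(u)$ and the hypothesis $c>\tilde{c}_1=2/C_{N,2}$ gives the matching upper control $D_2(u)\leq\frac{2(C_{N,2}c-2)}{\gamma_2}\|\nabla u\|_2^2=:K\|\nabla u\|_2^2$, with $K>0$. The two bounds $c_0\|\nabla u\|_2^{\gamma_2}\leq D_2(u)\leq K\|\nabla u\|_2^2$ together with $\gamma_2<2$ force $\|\nabla u\|_2^2\geq t_*>0$ uniformly on $\mathcal{P}(c)$, hence $D_2(u)\geq c_0 t_*^{\gamma_2/2}>0$ and therefore $\Gamma(c)\geq\frac{2-\gamma_2}{8}c_0 t_*^{\gamma_2/2}>0$.

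The main obstacle is precisely the degeneracy in the first paragraph: because the gradient term drops out at $\gamma_1=2$, the lower bound on $\|\nabla u\|_2$ exploited in Lemma \ref{below} is no longer available, and the critical inequality $\int\int\frac{|u(x)|^2|u(y)|^2}{|x-y|^2}\leq C_{N,2}c\|\nabla u\|_2^2$ is scale-borderline and by itself yields no control. The crux is thus the interpolation estimate $D_2(u)\gtrsim\|\nabla u\|_2^{\gamma_2}$; the genuinely new ingredients are that it needs an auxiliary exponent strictly above $2$ (available exactly because $N\geq3$ permits $\gamma\in(2,\min\{N,4\})$), and that the restriction $c>\tilde{c}_1$ is what makes the competing upper bound $D_2(u)\leq K\|\nabla u\|_2^2$ have a positive constant and so close the positivity.
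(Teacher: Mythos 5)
Your proposal is correct, and it is built on the same two pillars as the paper's proof: the identity $E(u)=E(u)-\tfrac12 Q(u)=\tfrac{2-\gamma_2}{8}D_2(u)$ on $\mathcal{P}(c)$, and the interpolation inequality of Lemma \ref{estimate} bounding the critical Hartree term by $D_2(u)^{\theta}\|\nabla u\|_2^{\gamma(1-\theta)}c^{(4-\gamma)(1-\theta)/2}$ with an auxiliary $\gamma\in(2,\min\{N,4\})$. Where you diverge is in how that inequality is exploited. The paper applies Young's inequality to split the right-hand side as $C\,D_2(u)^{2/\gamma_2}c^{\cdots}+2\|\nabla u\|_2^2$; after inserting $Q(u)=0$ the gradient terms cancel and one reads off a \emph{uniform} positive lower bound $D_2(u)\geq \bigl(\tfrac{\gamma_2}{4Cc^{\cdots}}\bigr)^{\gamma_2/(2-\gamma_2)}$ on all of $\mathcal{P}(c)$, which gives $\Gamma(c)>0$ in one stroke and never invokes the critical Gagliardo--Nirenberg bound or the hypothesis $c>\tilde c_1$ for positivity; coercivity is then obtained by a contradiction argument. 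You instead drop the $D_2$ term in $Q(u)=0$ to get $D_2(u)\geq c_0\|\nabla u\|_2^{\gamma_2}$, which yields coercivity directly and quantitatively (arguably cleaner than the paper's contradiction), but by itself does not exclude $\|\nabla u\|_2\to 0$, so you must close the positivity with the matching upper bound $D_2(u)\leq \tfrac{2(C_{N,2}c-2)}{\gamma_2}\|\nabla u\|_2^2$ coming from \eqref{inequ} at $\gamma=2$ and the assumption $c>\tilde c_1$. Both routes are valid; the paper's is more economical for positivity, yours makes the coercivity estimate explicit and clarifies the (inessential, as the paper shows) role of $c>\tilde c_1$ in your chain.
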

\begin{proof}
We first prove that $\Gamma(c)>0$ for any $c>\tilde{c}_1$. For any $u \in \mathcal{P}(c)$, we see that
\begin{align}\label{b111}
E(u)=E(u)-\frac 12 Q(u)=\frac{2-\gamma_2}{8}  \int_{\R^N} \int_{\R^N}\frac{|u(x)|^2|u(y)|^2}{|x-y|^{\gamma_2}} \, dxdy.
\end{align}
Arguing as the proof of Lemma \ref{estimate}, we can similarly deduce that there exists a constant $C>0$ depending only on $\gamma_2$ and $N$ such that
\begin{align*}
\int_{\R^N}\int_{\R^N}\frac{|u(x)|^2|u(y)|^2}{|x-y|^2} \, dxdy &\leq C\left(\int_{\R^N}\int_{\R^N}\frac{|u(x)|^2|u(y)|^2}{|x-y|^{\gamma_2}} \, dxdy\right)^{\theta} \left(\int_{\R^N}|\nabla u|^2\, dx\right)^{\frac{\gamma(1-\theta)}{2}} c^\frac{(4-\gamma)(1-\theta)}{2},
\end{align*}
where $2<\gamma<N$ and $0<\theta<1$ such that $2=\theta \gamma_2+(1-\theta)\gamma$. Using Young's inequality, we then have that
\begin{align*}
\int_{\R^N}\int_{\R^N}\frac{|u(x)|^2|u(y)|^2}{|x-y|^2} \, dxdy &\leq C_{\gamma_2, N}\left(\int_{\R^N}\int_{\R^N}\frac{|u(x)|^2|u(y)|^2}{|x-y|^{\gamma_2}} \, dxdy\right)^{\frac{2}{\gamma_2}}  c^\frac{(4-\gamma)(1-\theta)}{\theta \gamma_2} +2\int_{\R^N}|\nabla u|^2 \, dx.
\end{align*}
This gives that, for any $u \in \mathcal{P}$,
\begin{align*}
\int_{\R^N}|\nabla u|^2 \,dx + \frac {\gamma_2}{4}\int_{\R^N} \int_{\R^N}\frac{|u(x)|^2|u(y)|^2}{|x-y|^{\gamma_2}} \, dxdy&\leq  C_{\gamma_2, N}\left(\int_{\R^N}\int_{\R^N}\frac{|u(x)|^2|u(y)|^2}{|x-y|^{\gamma_2}} \, dxdy\right)^{\frac{2}{\gamma_2}}  c^\frac{(4-\gamma)(1-\theta)}{\theta \gamma_2} \\
& \quad +\int_{\R^N}|\nabla u|^2 \, dx.
\end{align*}
Since $0<\gamma_2<2$, then
$$
\int_{\R^N}\int_{\R^N}\frac{|u(x)|^2|u(y)|^2}{|x-y|^{\gamma_2}} \, dxdy \geq \left(\frac{\gamma_2}{4C_{\gamma_2, N} c^{\frac{(4-\gamma)(1-\theta)}{\theta \gamma_2}}}\right)^{\frac{\gamma_2}{2-\gamma_2}}.
$$
It then follows from \eqref{b111} that the desired result is valid. We next show that $E$ restricted on $\mathcal{P}(c)$ is coercive for any $c>\tilde{c}_1$. Let $\{u_n\} \subset \mathcal{P}(c)$ be such that $\|\nabla u_n\|_2 \to \infty$ as $n \to \infty$ for $c>\tilde{c}_1$, then
\begin{align} \label{l1}
\int_{\R^N}\int_{\R^N}\frac{|u_n(x)|^2|u_n(y)|^2}{|x-y|^{\gamma_2}} \, dxdy \to \infty \quad \mbox{as} \,\, n \to \infty.
\end{align}
If \eqref{l1} is true, it then yields from \eqref{b111} that $E(u_n) \to \infty$ as $n \to \infty$ and the proof is completed. Otherwise, there exist a sequence $\{u_n\} \subset \mathcal{P}(c)$ with $\|\nabla u_n\|_2 \to \infty$ as $n \to \infty$ and a constant $C>0$ such that
\begin{align} \label{bdd}
\left|\int_{\R^N}\int_{\R^N}\frac{|u_n(x)|^2|u_n(y)|^2}{|x-y|^{\gamma_2}} \, dxdy \right| \leq C.
\end{align}
Reasoning as before, by using Young's inequality, we get that
$$
\int_{\R^N}\int_{\R^N}\frac{|u_n(x)|^2|u_n(y)|^2}{|x-y|^2} \, dxdy \leq C\left(\int_{\R^N}\int_{\R^N}\frac{|u_n(x)|^2|u_n(y)|^2}{|x-y|^{\gamma_2}} \, dxdy\right)^{\frac{2}{\gamma_2}}  c^\frac{(4-\gamma)(1-\theta)}{\theta \gamma_2} +\frac 12 \int_{\R^N}|\nabla u_n|^2 \, dx.
$$
Since $Q(u_n)=0$, by applying \eqref{bdd}, we then obtain $\|\nabla u_n\|_2 \leq C $. We then reach a contradiction. Therefore, we have the desired conclusion and the proof is completed.
\end{proof}

\begin{lem} \label{sign1}
Let $0<\gamma_2<\gamma_1=2$ and $N \geq 3$ and let $u_c \in S(c)$ be a solution to the equation
\begin{align} \label{equ111}
-\Delta u_c + \lambda_c u_c=\left(|x|^{-2} \ast |u_c|^2\right) u_c - \left(|x|^{-\gamma_2} \ast |u_c|^2\right) u_c\quad \mbox{in} \,\, \R^N.
\end{align}
Then there exists a constant $c_1^*>0$ depending only on $N $ and $\gamma_2$ such that $\lambda_c>0$ for any $\tilde{c}_1<c<c_1^*$.
\end{lem}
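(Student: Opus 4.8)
The plan is to mimic the argument of Lemma \ref{sign}, adapting the exponents to the critical value $\gamma_1=2$ and tracking the $c$-dependence carefully so as to locate an explicit threshold $c_1^*$. Write for brevity
$$
A:=\int_{\R^N}|\nabla u_c|^2\,dx,\quad B:=\int_{\R^N}\int_{\R^N}\frac{|u_c(x)|^2|u_c(y)|^2}{|x-y|^{\gamma_2}}\,dxdy,\quad D:=\int_{\R^N}\int_{\R^N}\frac{|u_c(x)|^2|u_c(y)|^2}{|x-y|^{2}}\,dxdy.
$$
First I would multiply \eqref{equ111} by $u_c$ and integrate over $\R^N$ to obtain the Nehari-type identity $A+\lambda_c c+B=D$. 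Then, invoking the Pohozaev identity $Q(u_c)=0$ from Lemma \ref{ph} with $\gamma_1=2$, I get $A+\frac{\gamma_2}{4}B=\frac12 D$, equivalently $D=2A+\frac{\gamma_2}{2}B$. Substituting this into the Nehari identity and simplifying yields the clean expression
$$
\lambda_c c = A-\frac{2-\gamma_2}{2}B.
$$
Since $u_c\in S(c)$ with $c>0$ is nontrivial, $A>0$; the whole point is therefore to control $B$ from above by a small multiple of $A$ when $c$ is close to $\tilde{c}_1$.

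Here is where the criticality $\gamma_1=2$ is exploited. Applying the Gagliardo--Nirenberg-type inequality \eqref{inequ} with $\gamma=2$, for which the exponents degenerate to $D\le C_{N,2}\,\|\nabla u_c\|_2^2\,\|u_c\|_2^2=C_{N,2}\,c\,A$, and comparing with the Pohozaev relation $D=2A+\frac{\gamma_2}{2}B$, I obtain $2A+\frac{\gamma_2}{2}B\le C_{N,2}cA$, hence
$$
B\le \frac{2(C_{N,2}c-2)}{\gamma_2}\,A=\frac{2C_{N,2}}{\gamma_2}(c-\tilde{c}_1)\,A,
$$
where I used $\tilde{c}_1=2/C_{N,2}$. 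Note that this bound is meaningful (its right-hand side is non-negative) precisely because $c>\tilde{c}_1$.

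Plugging this estimate into the formula for $\lambda_c c$ gives
$$
\lambda_c c \ge A\left(1-\frac{(2-\gamma_2)C_{N,2}}{\gamma_2}(c-\tilde{c}_1)\right),
$$
so defining
$$
c_1^*:=\tilde{c}_1+\frac{\gamma_2}{(2-\gamma_2)C_{N,2}},
$$
a quantity depending only on $N$ and $\gamma_2$, the bracket is strictly positive for every $\tilde{c}_1<c<c_1^*$; since $A>0$ and $c>0$ this forces $\lambda_c>0$, as claimed. The computation is essentially algebraic, so I do not expect a serious analytic obstacle; the only points requiring care are verifying that $A>0$ for a nontrivial $H^1(\R^N)$ solution (a gradient-free function of positive mass cannot lie in $L^2(\R^N)$) and, more importantly, applying the degenerate $\gamma=2$ version of \eqref{inequ} with the correct sign so that the single free parameter $c$ cleanly separates the sign of $\lambda_c$.
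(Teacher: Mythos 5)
Your proposal is correct and follows essentially the same route as the paper: multiply by $u_c$ to get the Nehari identity, combine with the Pohozaev identity $Q(u_c)=0$, and control the critical term via \eqref{inequ} with $\gamma=2$, which yields exactly the paper's threshold $c_1^*=\tfrac{4-\gamma_2}{(2-\gamma_2)C_{N,2}}=\tilde{c}_1+\tfrac{\gamma_2}{(2-\gamma_2)C_{N,2}}$. The only cosmetic difference is that you eliminate $D$ while the paper eliminates $B$ (and the paper appends an observation that $\|\nabla u_c\|_2\to\infty$ as $c\to\tilde{c}_1^+$, which is not needed for the sign conclusion since $\|\nabla u_c\|_2>0$ already suffices).
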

\begin{proof}
Multiplying \eqref{equ111} by $u_c$ and integrating on $\R^N$, we see that
\begin{align} \label{ph111}
\hspace{-1cm}\int_{\R^N}|\nabla u_c|^2 \,dx + \lambda_c c + \int_{\R^N} \int_{\R^N}\frac{|u_c(x)|^2|u_c(y)|^2}{|x-y|^{\gamma_2}} \, dxdy=\int_{\R^N} \int_{\R^N}\frac{|u_c(x)|^2|u_c(y)|^2}{|x-y|^2} \, dxdy.
\end{align}
Furthermore, via Lemma \ref{ph}, we know that
\begin{align} \label{ph121}
\hspace{-1cm}\int_{\R^N}|\nabla u_c|^2 \,dx + \frac {\gamma_2}{4}\int_{\R^N} \int_{\R^N}\frac{|u_c(x)|^2|u_c(y)|^2}{|x-y|^{\gamma_2}} \, dxdy=\frac{\gamma_1}{4} \int_{\R^N} \int_{\R^N}\frac{|u_c(x)|^2|u_c(y)|^2}{|x-y|^2} \, dxdy.
\end{align}
Hence we obtain that
\begin{align} \label{ph131}
\begin{split}
\lambda_c c&= \frac{4-\gamma_2}{\gamma_2} \int_{\R^N} |\nabla u_c|^2 \, dx + \frac{\gamma_2-2}{\gamma_2}\int_{\R^N} \int_{\R^N}\frac{|u_c(x)|^2|u_c(y)|^2}{|x-y|^2} \, dxdy \\
& \geq \frac{4-\gamma_2-C_{N, 2}(2-\gamma_2)c}{\gamma_2} \int_{\R^N} |\nabla u_c|^2 \, dx,
\end{split}
\end{align}
where the inequality benefits from \eqref{inequ}. On the other hand, from \eqref{ph121} and \eqref{inequ}, we can deduce that
\begin{align} \label{ph141}
\hspace{-1cm}\int_{\R^N}|\nabla u_c|^2 \,dx + \frac {\gamma_2}{4}\int_{\R^N} \int_{\R^N}\frac{|u_c(x)|^2|u_c(y)|^2}{|x-y|^{\gamma_2}} \, dxdy \leq \frac{C_{N,2} c}{2}\int_{\R^N} |\nabla u_c|^2 \, dx.
\end{align}
Note that $C_{N, 2} c \to 2$ as $c \to \tilde{c}_1^+$, by the definition of $C_{N,2}$. According to \eqref{ph141}, we then get that $\|\nabla u_c\|_2 \to \infty$ as $c \to \tilde{c}_1^+$. Otherwise, \eqref{ph141} leads to
$$
\int_{\R^N} \int_{\R^N}\frac{|u_c(x)|^2|u_c(y)|^2}{|x-y|^{\gamma_2}} \, dxdy  \to 0 \quad \mbox{as} \, \, c \to \tilde{c}_1^+.
$$
This then implies that $\Gamma(c) \to 0$ as $c \to \tilde{c}_1^+$. This is impossible. Taking into account \eqref{ph131}, we then conclude the result of lemma. Thus the proof is completed.
\end{proof}

\begin{proof} [Proof of Theorem \ref{thmcritical}]
Replacing the roles of $S(c)$ by $\mathcal{S}(c)$ and using the same strategies, we can also deduce that Lemmas \ref{unique}, \ref{ps}, \ref{pss}, \ref{nonincreasing}  and \ref{ladecreasing} remain valid for $c>\tilde{c}_1$. Then there exists a Palais-Smale sequence $\{u_n\} \subset \mathcal{P}(c)$ for $E$ restricted on $\mathcal{S}(c)$ at the level $\Gamma(c)$. Applying Lemmas \ref{below1} and \ref{sign1} and discussing as the proof of Theorem \ref{thm1}, we then derive the result of the theorem and the proof is completed.
\end{proof}

\section{Zero mass equations} \label{section6}

In this section, we are going to establish Propositions \ref{asym} and \ref{l2}. To do this, we need to study solutions to \eqref{equz}.

\begin{lem} \label{x}
Let $0<\gamma_2<\gamma_1<4$ and $N \geq 5$. Then there exists a constant $C>0$ depending only on $\gamma_1, \gamma_2$ and $N$ such that
\begin{align*} 
\int_{\R^N}\int_{\R^N}\frac{|u(x)|^2|u(y)|^2}{|x-y|^{\gamma_1}} \, dxdy &\leq C\left(\int_{\R^N}\int_{\R^N}\frac{|u(x)|^2|u(y)|^2}{|x-y|^{\gamma_2}} \, dxdy\right)^{\theta} \left(\int_{\R^N}|\nabla u|^2 \, dx \right)^{2(1-\theta)},
\end{align*}
where $0<\theta<1$ and $\gamma_1=\theta \gamma_2+4(1-\theta)$.
\end{lem}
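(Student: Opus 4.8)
The plan is to follow the strategy of Lemma \ref{estimate}, but with the endpoint exponent $\gamma=4$ in place of the earlier choice $\gamma=2$. First I would observe that, since $0<\gamma_2<\gamma_1<4$, the relation $\gamma_1=\theta\gamma_2+4(1-\theta)$ determines a unique $\theta=\frac{4-\gamma_1}{4-\gamma_2}\in(0,1)$. Writing
$$
\frac{1}{|x-y|^{\gamma_1}}=\frac{1}{|x-y|^{\theta\gamma_2}}\,\frac{1}{|x-y|^{4(1-\theta)}}
$$
and applying H\"older's inequality with conjugate exponents $1/\theta$ and $1/(1-\theta)$ against the measure $|u(x)|^2|u(y)|^2\,dxdy$, I would obtain
\begin{align*}
\int_{\R^N}\int_{\R^N}\frac{|u(x)|^2|u(y)|^2}{|x-y|^{\gamma_1}}\,dxdy
&\leq \left(\int_{\R^N}\int_{\R^N}\frac{|u(x)|^2|u(y)|^2}{|x-y|^{\gamma_2}}\,dxdy\right)^{\theta}
\left(\int_{\R^N}\int_{\R^N}\frac{|u(x)|^2|u(y)|^2}{|x-y|^{4}}\,dxdy\right)^{1-\theta}.
\end{align*}

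It then remains to control the $\gamma=4$ term by the gradient alone. Here I would apply the Hardy--Littlewood--Sobolev inequality \eqref{HLS} with $f=h=|u|^2$, $\gamma=4$ and the symmetric choice $p=r=\frac{N}{N-2}$, which satisfies $\frac1p+\frac4N+\frac1r=2$ and is admissible precisely because $4<N$, i.e. $N\geq 5$. This yields
$$
\int_{\R^N}\int_{\R^N}\frac{|u(x)|^2|u(y)|^2}{|x-y|^{4}}\,dxdy\leq C\,\big\||u|^2\big\|_{\frac{N}{N-2}}^2=C\,\|u\|_{2^*}^{4},
$$
where $2^*=\frac{2N}{N-2}$. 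The crucial feature of the endpoint $\gamma=4$ is that the Hardy--Littlewood--Sobolev exponent is exactly the critical Sobolev exponent $2^*$, so that the Sobolev embedding $\dot{H}^1(\R^N)\hookrightarrow L^{2^*}(\R^N)$ gives $\|u\|_{2^*}\leq C\|\nabla u\|_2$ with no $L^2$ factor. Combining the two displays produces
$$
\int_{\R^N}\int_{\R^N}\frac{|u(x)|^2|u(y)|^2}{|x-y|^{4}}\,dxdy\leq C\,\|\nabla u\|_2^{4}=C\left(\int_{\R^N}|\nabla u|^2\,dx\right)^{2},
$$
and substituting this back into the interpolation estimate gives the claimed inequality with gradient exponent $2(1-\theta)$.

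The main point to be careful about is why the $L^2$ norm disappears, which is what distinguishes this zero-mass estimate from Lemma \ref{estimate}. In the earlier estimate the relevant integrability exponent $2p$ stays strictly below $2^*$, forcing the use of Gagliardo--Nirenberg \eqref{GN} and hence a genuine $\|u\|_2$ factor; here the choice $\gamma=4$ pushes it exactly to the Sobolev endpoint, where the $L^2$ weight in \eqref{GN} would carry exponent zero (one checks $\frac p2-\frac{N(p-2)}4=0$ for $p=2^*$). Thus the pure scale-invariant Sobolev inequality is the correct tool, and the constraint $N\geq5$ is precisely what keeps $\gamma=4$ below the dimension in \eqref{HLS}; for $N=3,4$ the argument breaks down since $4\geq N$ and the kernel $|x-y|^{-4}$ is no longer of Hardy--Littlewood--Sobolev type.
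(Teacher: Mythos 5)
Your argument is correct and follows essentially the same route as the paper's proof: H\"older interpolation of the kernel between the exponents $\gamma_2$ and $4$, followed by the Hardy--Littlewood--Sobolev inequality at $\gamma=4$ with $p=r=\frac{N}{N-2}$ and the Sobolev embedding $\dot{H}^1(\R^N)\hookrightarrow L^{2^*}(\R^N)$, which is exactly where the hypothesis $N\geq 5$ enters. Your additional remarks explaining why the $L^2$ factor drops out at the endpoint are a correct elaboration of what the paper leaves implicit.
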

\begin{proof}
Taking into account H\"older's inequality, \eqref{HLS} and Sobolev's inequality, we deduce that
\begin{align*}
\int_{\R^N}\int_{\R^N}\frac{|u(x)|^2|u(y)|^2}{|x-y|^{\gamma_1}} \, dxdy &\leq \left(\int_{\R^N}\int_{\R^N}\frac{|u(x)|^2|u(y)|^2}{|x-y|^{\gamma_2}} \, dxdy\right)^{\theta}\left(\int_{\R^N}\int_{\R^N}\frac{|u(x)|^2|u(y)|^2}{|x-y|^4} \, dxdy\right)^{1-\theta} \\
& \leq C \left(\int_{\R^N}\int_{\R^N}\frac{|u(x)|^2|u(y)|^2}{|x-y|^{\gamma_2}} \, dxdy\right)^{\theta} \left( \int_{\R^N}|u|^{2^*}\, dx \right)^{\frac{4(1-\theta)}{2^*}} \\
& \leq C \left(\int_{\R^N}\int_{\R^N}\frac{|u(x)|^2|u(y)|^2}{|x-y|^{\gamma_2}} \, dxdy\right)^{\theta} \left( \int_{\R^N}|\nabla u|^2\, dx \right)^{2(1-\theta)} .
\end{align*}
This completes the proof.
\end{proof}

In order to establish the existence of solutions to \eqref{equz}, we introduce the following Sobolev space
$$
X:=\left\{ u\in D^{1,2}(\R^N) : \int_{\R^N}\frac{|u(x)|^2|u(y)|^2}{|x-y|^{\gamma_2}} \, dxdy<\infty \right\}
$$
equipped with the norm 
$$
\|u\|_X:=\left(\int_{\R^N} |\nabla u|^2 \, dx + \left(\int_{\R^N}\frac{|u(x)|^2|u(y)|^2}{|x-y|^{\gamma_2}} \, dxdy \right)^{\frac 12} \right)^{\frac 12}.
$$
The energy functional related to \eqref{equz} is given by
$$
E(u)=\frac 12 \int_{\R^N} |\nabla u|^2 \,dx + \frac 14 \int_{\R^N}\int_{\R^N}\frac{|u(x)|^2|u(y)|^2}{|x-y|^{\gamma_2}} \, dxdy-\frac 14 \int_{\R^N}\int_{\R^N}\frac{|u(x)|^2|u(y)|^2}{|x-y|^{\gamma_1}} \, dxdy.
$$
It follows from Lemma \ref{x} that $E$ is well-defined in $X$. Further, it is standard to check that $E$ is of class $C^1$ in $X$ and any critical point of $E$ is a solution to \eqref{equz}. We now define
\begin{align} \label{defm}
m:=\{ E(u) : u \in X \backslash\{0\}, Q(u)=0\}.
\end{align}

\begin{lem} \label{existencez}
Let $0<\gamma_2<\gamma_1<4$, $\gamma_1>2$ and $N \geq 5$, then the infimum $m>0$ is achieved and \eqref{equz} admits ground states in $X$.
\end{lem}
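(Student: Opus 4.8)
The plan is to realise $m$ as a minimum over the Pohozaev manifold $\cM:=\{u\in X\setminus\{0\}:Q(u)=0\}$, proceeding as in the mass supercritical case but now in the scale of $D^{1,2}$ rather than $H^1$. Abbreviate $a(u):=\int_{\R^N}|\nabla u|^2\,dx$ and let $b(u),d(u)$ denote the Hartree integrals with kernels $|x-y|^{-\gamma_2}$ and $|x-y|^{-\gamma_1}$, so that $Q(u)=a(u)+\tfrac{\gamma_2}{4}b(u)-\tfrac{\gamma_1}{4}d(u)$. Since the scaling $u_t(x)=t^{N/2}u(tx)$ preserves $X$ and gives $E(u_t)=\tfrac{t^2}{2}a(u)+\tfrac{t^{\gamma_2}}{4}b(u)-\tfrac{t^{\gamma_1}}{4}d(u)$ with $Q(u_t)=t\,\tfrac{\partial}{\partial t}E(u_t)$, and since $\gamma_1>2$ and $\gamma_1>\gamma_2$, the computation in Lemma~\ref{unique} carries over verbatim (it never uses the $L^2$-constraint): for each $u\in X\setminus\{0\}$ there is a unique $t_u>0$ with $u_{t_u}\in\cM$ and $E(u_{t_u})=\max_{t>0}E(u_t)$, where $t_u<1$ if $Q(u)<0$, $t_u>1$ if $Q(u)>0$, and $t\mapsto E(u_t)$ is strictly increasing on $(0,t_u)$. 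This gives the mountain-pass representation $m=\inf_{u\in X\setminus\{0\}}\max_{t>0}E(u_t)$.

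To prove $m>0$, I use the identity, valid on $\cM$, that $E(u)=E(u)-\tfrac1{\gamma_1}Q(u)=\tfrac{\gamma_1-2}{2\gamma_1}a(u)+\tfrac{\gamma_1-\gamma_2}{4\gamma_1}b(u)$, whose coefficients are positive; thus it suffices to bound $a(u)$ from below uniformly on $\cM$. On $\cM$ one has $a(u)+\tfrac{\gamma_2}{4}b(u)=\tfrac{\gamma_1}{4}d(u)$, and Lemma~\ref{x} bounds $d(u)\le C\,b(u)^{\theta}a(u)^{2(1-\theta)}$ with $\theta=\tfrac{4-\gamma_1}{4-\gamma_2}\in(0,1)$. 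Reading the resulting inequality through its two nonnegative left-hand pieces gives both $a(u)\le\tfrac{\gamma_1 C}{4}b(u)^{\theta}a(u)^{2(1-\theta)}$ and $b(u)\lesssim a(u)^2$; substituting the second into the first yields $a(u)^{-1}\lesssim1$, hence $a(u)\ge\delta_0>0$ on $\cM$ and $m\ge\tfrac{\gamma_1-2}{2\gamma_1}\delta_0>0$.

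For attainment, let $\{u_n\}\subset\cM$ be a minimizing sequence. The displayed identity shows $a(u_n)$ and $b(u_n)$ are bounded, so $\{u_n\}$ is bounded in $X$. The heart of the matter is compactness in this zero-mass setting, where $X\hookrightarrow D^{1,2}(\R^N)$ is not compact and the problem is translation invariant; I would run Lions' concentration-compactness. Vanishing is ruled out at once: if $\sup_{y\in\R^N}\int_{B(y,1)}|u_n|^{q}\,dx\to0$ with $q=\tfrac{4N}{2N-\gamma_2}$, a Lions-type lemma for sequences bounded in $D^{1,2}(\R^N)\cap L^{q}(\R^N)$ gives $u_n\to0$ in $L^{4N/(2N-\gamma_1)}(\R^N)$ (the exponent lying in the open interval $(q,2^*)$), whence $d(u_n)\to0$ by the Hardy--Littlewood--Sobolev inequality \eqref{HLS}; then $Q(u_n)=0$ forces $a(u_n)\le\tfrac{\gamma_1}{4}d(u_n)\to0$, contradicting $a(u_n)\ge\delta_0$. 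Hence, after a translation, $u_n\rightharpoonup u\neq0$ in $X$ with $u_n\to u$ a.e.

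To identify the limit I would invoke the nonlocal Brezis--Lieb splitting for the Riesz-potential terms, giving $E(u_n)=E(w_n)+E(u)+o(1)$ and $Q(u_n)=Q(w_n)+Q(u)+o(1)$ with $w_n=u_n-u$. Since $\Phi(v):=E(v)-\tfrac1{\gamma_1}Q(v)=\tfrac{\gamma_1-2}{2\gamma_1}a(v)+\tfrac{\gamma_1-\gamma_2}{4\gamma_1}b(v)\ge0$ for every $v$, the first identity yields $m\ge\Phi(u)>0$. A scaling argument then forces $Q(u)=0$: if $Q(u)<0$ then $t_u<1$, so $m\le E(u_{t_u})=\Phi(u_{t_u})<\Phi(u)\le m$, a contradiction; if $Q(u)>0$ then $Q(w_n)\to-Q(u)<0$, so $w_n$ scales into $\cM$ with $t_{w_n}<1$ and $\Phi(w_n)>m$, while $\Phi(w_n)=m-\Phi(u)+o(1)$, forcing $\Phi(u)\le0$, again impossible. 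With $Q(u)=0$ one has $\Phi(u)=E(u)$, so $m\ge E(u)\ge m$ and $E(u)=m$. Finally, a minimizer on $\cM$ is an unconstrained critical point of $E$: the Lagrange multiplier of the constraint vanishes because $\tfrac{\partial}{\partial t}Q(u_t)\mid_{t=1}\neq0$ along the scaling, so $u$ solves \eqref{equz} and is a ground state. The main obstacle throughout is exactly this compactness analysis --- securing nonvanishing in $D^{1,2}(\R^N)$ and justifying the nonlocal Brezis--Lieb decomposition --- after which the scaling monotonicity removes dichotomy cleanly.
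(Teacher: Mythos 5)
Your proposal reaches the correct conclusion, and its first half coincides with the paper's argument: the positivity of $m$ is obtained from the identity $E(u)=E(u)-\tfrac{1}{\gamma_1}Q(u)=\tfrac{\gamma_1-2}{2\gamma_1}a(u)+\tfrac{\gamma_1-\gamma_2}{4\gamma_1}b(u)$ together with Lemma \ref{x}; the paper absorbs the factor $b(u)^{\theta}$ by Young's inequality where you read the inequality through its two nonnegative left-hand pieces, and both give $\|\nabla u\|_2\ge\delta_0$ on the Pohozaev set. Where you genuinely diverge is the attainment step. The paper upgrades the minimizing sequence via Ekeland's variational principle to a Palais--Smale sequence $\{v_n\}$ with $Q(v_n)=0$, rules out vanishing by a Coulomb--Sobolev version of \cite[Lemma I.1]{Li2}, passes to a nontrivial weak limit $v$ with $E'(v)=0$, obtains $Q(v)=0$ from the Pohozaev identity, and concludes $E(v)\le m$ by weak lower semicontinuity of $\tfrac{\gamma_1-2}{2\gamma_1}a+\tfrac{\gamma_1-\gamma_2}{4\gamma_1}b$. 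You instead keep the raw minimizing sequence and dispose of dichotomy by the nonlocal Brezis--Lieb splitting combined with the strict monotonicity of $t\mapsto\Phi(u_t)$ along the fibers; this is self-contained and avoids Ekeland, at the price of justifying the Brezis--Lieb decomposition for the Riesz terms, whereas the paper gets $Q(v)=0$ for free from $E'(v)=0$.

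Two caveats. First, in your vanishing step the exponent $q=\tfrac{4N}{2N-\gamma_2}$ is not controlled by the $X$-norm: the embedding $X\hookrightarrow L^p(\R^N)$ of \cite{BGMMV} only covers $p\ge\tfrac{2(4+N-\gamma_2)}{2+N-\gamma_2}$, and a direct computation shows $\tfrac{4N}{2N-\gamma_2}$ lies strictly below this threshold, so the sequence is not known to be bounded in $L^{q}$ and the classical Lions lemma does not apply as stated. One must invoke the Lions-type lemma in the Coulomb--Sobolev scale (this is what the paper's terse citation of \cite{Li2} is standing in for), and then deduce $d(u_n)\to 0$ from the interpolation of Lemma \ref{x} rather than from an $L^{4N/(2N-\gamma_1)}$ bound, since that exponent may also fall below the embedding range. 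Second, removing the Lagrange multiplier because ``$\partial_t Q(u_t)\mid_{t=1}\neq 0$'' is only formal: $t\mapsto u_t$ need not be differentiable in $X$, so you cannot test $E'(u)=\mu Q'(u)$ against the scaling generator directly. The rigorous version is the paper's: the minimizer solves the $\mu$-perturbed equation, that equation carries its own Pohozaev identity (derived by the truncation argument of Lemma \ref{ph}), and combining it with $Q(u)=0$ forces $\mu=0$ --- which is exactly your nonvanishing derivative, written out as $(2-\gamma_1)a(u)+\tfrac{\gamma_2(\gamma_2-\gamma_1)}{4}b(u)<0$.
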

\begin{proof}
We first prove that $m>0$. If $Q(u)=0$, then
\begin{align} \label{belowm}
E(u)=E(u)-\frac {1}{\gamma_1} Q(u)=\frac{\gamma_1-2}{2\gamma_1} \int_{\R^N}|\nabla u|^2 \, dx + \frac{\gamma_1-\gamma_2}{4\gamma_1}  \int_{\R^N} \int_{\R^N}\frac{|u(x)|^2|u(y)|^2}{|x-y|^{\gamma_2}} \, dxdy.
\end{align}
On the other hand, if $Q(u)=0$, by Lemma \ref{x} and Young's inequality, then
\begin{align*}
\begin{split}
\int_{\R^N} |\nabla u|^2 \,dx + \frac{\gamma_2}{4}\int_{\R^N}\int_{\R^N}\frac{|u(x)|^2|u(y)|^2}{|x-y|^{\gamma_2}} \, dxdy&=\frac{\gamma_1}{4}\int_{\R^N}\int_{\R^N}\frac{|u(x)|^2|u(y)|^2}{|x-y|^{\gamma_1}} \, dxdy\\ 
& \leq \frac {\gamma_2}{8} \int_{\R^N}\int_{\R^N}\frac{|u(x)|^2|u(y)|^2}{|x-y|^{\gamma_2}} \, dxdy\\
& \quad +C \left(\int_{\R^N} |\nabla u|^2 \,dx\right)^2.
\end{split}
\end{align*}
This immediately yields that $\|\nabla u\|_2 \geq 1/C$.
Coming back to \eqref{belowm}, we then have that $m>0$, because of $0<\gamma_2<\gamma_1$ and $\gamma_1>2$. We next show that the infimum $m$ is achieved. Let $\{u_n\} \subset X$ be a minimizing sequence to \eqref{defm}, namely $Q(u_n)=0$ and $E(u_n)=m+o_n(1)$. Applying \eqref{belowm}, we obtain that $\{u_n\}$ is bounded in $X$. Moreover, for any $u \in X \backslash \{0\}$ and $Q(u)=0$, we find that 
$$
Q'(u)u=Q'(u)u-4Q(u)=-2\int_{\R^N} |\nabla u|^2 \,dx \neq 0.
$$
Therefore, by Ekeland variational principle, there exists a sequence $\{v_n\} \subset X$ with $\|v_n-u_n\|_X=o_n(1)$ and $Q(v_n)=0$ such that $E(v_n)=m+o_n(1)$ and $E'(v_n)=o_n(1)$. Then we have that $\{v_n\}$ is bounded in $X$.
Since $m>0$, by the same spirit of the proof of \cite[Lemma I.1]{Li2}, then there exists a sequence $\{y_n\} \subset \R^N$ and a nontrivial $v \in X$ such that $v(\cdot-y_n) \wto v$ in $X$ as $n \to \infty$. In addition, we see that $E'(v)=0$. Then we get that $Q(v)=0$. By the definition of $m$, then $E(v) \geq m$. Furthermore, we know that 
\begin{align*}
E(v)=E(v)-\frac {\gamma_1}{4}Q(v)&=\frac{\gamma_1-2}{2\gamma_1} \int_{\R^N}|\nabla v|^2 \, dx + \frac{\gamma_1-\gamma_2}{4\gamma_1}  \int_{\R^N} \int_{\R^N}\frac{|v(x)|^2|v(y)|^2}{|x-y|^{\gamma_2}} \, dxdy  \\
&\leq \liminf_{n \to \infty} 
\left(\frac{\gamma_1-2}{2\gamma_1} \int_{\R^N}|\nabla v_n|^2 \, dx + \frac{\gamma_1-\gamma_2}{4\gamma_1}  \int_{\R^N} \int_{\R^N}\frac{|v_n(x)|^2|v_n(y)|^2}{|x-y|^{\gamma_2}} \, dxdy\right) \\
&=\liminf_{n \to \infty} \left(E(v_n)-\frac {\gamma_1}{4}Q(v_n)\right)=\liminf_{n \to \infty}E(v_n)= m.
\end{align*}
As a consequence, we get that $E(v)=m$ and $m$ is achieved by $v \in X$. We now turn to assert that $v \in X$ is a solution to \eqref{equz}. Since $v \in X$ is a minimizer to \eqref{defm}, then there exists a constant $\mu \in \R$ such that
$$
-(1-2\mu)\Delta v +(1-\mu \gamma_2)\left(|x|^{-\gamma_1} \ast |v|^{2}\right) v =(1-\mu \gamma_1)\left(|x|^{-\gamma_2} \ast |v|^2\right) v\quad \mbox{in} \,\, \R^N.
$$
This leads to 
\begin{align} \label{phz}
\begin{split}
&(1-2\mu)\int_{\R^N} |\nabla v|^2 \,dx +  \frac{(1-\mu \gamma_2) \gamma_2}{4} \int_{\R^N}\int_{\R^N}\frac{|v(x)|^2|v(y)|^2}{|x-y|^{\gamma_2}} \, dxdy\\
& = \frac{(1-\mu \gamma_1) \gamma_1}{4} \int_{\R^N}\int_{\R^N}\frac{|v(x)|^2|v(y)|^2}{|x-y|^{\gamma_1}} \, dxdy.
\end{split}
\end{align}
If $\mu \neq 0$, it then follows from \eqref{phz} and $Q(v)=0$ that
\begin{align} \label{phz1}
\frac{\gamma_2}{2} \left(\frac{\gamma_2}{2}-1\right)\int_{\R^N}\int_{\R^N}\frac{|v(x)|^2|v(y)|^2}{|x-y|^{\gamma_2}} \, dxdy=
\frac{\gamma_1}{2} \left(\frac{\gamma_1}{2}-1\right)\int_{\R^N}\int_{\R^N}\frac{|v(x)|^2|v(y)|^2}{|x-y|^{\gamma_1}} \, dxdy.
\end{align}
If $0<\gamma_2<2$, we then reach a contradiction. This in turn implies that $\mu=0$. If $\gamma_2>2$, it then yields from \eqref{phz1} that
$$
\frac{\gamma_2}{4}\int_{\R^N}\int_{\R^N}\frac{|v(x)|^2|v(y)|^2}{|x-y|^{\gamma_2}} \, dxdy>\frac{\gamma_1}{4}\int_{\R^N} \int_{\R^N}\frac{|v(x)|^2|v(y)|^2}{|x-y|^{\gamma_1}} \, dxdy,
$$
where we used the assumption $\gamma_2<\gamma_1$. Using again the fact $Q(v)=0$, we then reach a contradiction. This readily means that $\mu=0$. Therefore, we know that $v \in X$ is a solution to \eqref{equz}. Thus the proof is completed.
\end{proof}

\begin{proof} [Proof of Proposition \ref{l2}]
Let us first prove that $u \in C^2(\R^N)$. Since $u \in X$, then $|x|^{-\gamma_1} \ast |u|^{2} \in \dot{H}^{s_1}(\R^N)$ and $|x|^{-\gamma_2} \ast |u|^{2} \in \dot{H}^{s_2}(\R^N)$, where $\dot{H}^{s_i}(\R^N)$ denote the homogeneous Sobolev space and $s_i:=(N-\gamma_i)/2$ for $i=1, 2$. Note that $\dot{H}^s(\R^N) \hookrightarrow L^{2^*_s}(\R^N)$, where $2^*_s:=2N/(N-2s)$. This means that  $|x|^{-\gamma_1} \ast |u|^{2} \in L^{2N/\gamma_1}(\R^N)$ and $|x|^{-\gamma_2} \ast |u|^{2} \in  L^{2N/\gamma_2}(\R^N)$. Since $0<\gamma_2<4$, from \cite[Theorem 1.1]{BGMMV}, we then have that $X \hookrightarrow L^p(\R^N)$ for any $p>0$ satisfying
\begin{align} \label{defp}
\frac{2(4+N-\gamma_2)}{2+N-\gamma_2}\leq p \leq 2^*.
\end{align} 
As a result, we obtain that $\left(|x|^{-\gamma_1} \ast |u|^{2}\right) u \in L^{q_1}(\R^N)$ and $\left(|x|^{-\gamma_2} \ast |u|^{2}\right) u \in L^{q_2}(\R^N)$ for any $q_1, q_2>0$ satisfying
$$
\frac{2N(4+N-\gamma_2)}{(N+2-\gamma_2)(N+2\gamma_1)} \leq q_1 \leq \frac{2N}{N-2+\gamma_1}, \quad 
\frac{2N(4+N-\gamma_2)}{(N+2-\gamma_2)(N+2\gamma_2)} \leq q_2 \leq \frac{2N}{N-2+\gamma_2}.
$$
We now write \eqref{equz} as
\begin{align}  \label{equz1}
-\Delta u +u =u+\left(|x|^{-\gamma_1} \ast |u|^{2}\right) u - \left(|x|^{-\gamma_2} \ast |u|^2\right) u\quad \mbox{in} \,\, \R^N.
\end{align}
Therefore, we have that 
$$
-\Delta u +u \in L^q(\R^N), \quad \frac{2(4+N-\gamma_2)}{2+N-\gamma_2} \leq q \leq \frac{2N}{N-2+\gamma_1}.
$$
This infers that $u \in W^{2, q}(\R^N)$. Since $N-6+\gamma_1 > 0$, 
it then follows that $u \in L^{p_0}(\R^N)$ for any $p_0>0$ satisfying  
$$
\frac{2(4+N-\gamma_2)}{2+N-\gamma_2}\leq p_0 \leq \frac{2N}{N-6+\gamma_1}.
$$
Then we further have that $\left(|x|^{-\gamma_1} \ast |u|^{2}\right) u \in L^{q_{1,0}}(\R^N)$ and $\left(|x|^{-\gamma_2} \ast |u|^{2}\right) u \in L^{q_{2,0}}(\R^N)$ for any $q_{1,0}, q_{2,0}>0$ satisfying
$$
\frac{2N(4+N-\gamma_2)}{(N+2-\gamma_2)(N+2\gamma_1)} \leq q_{1,0} \leq \frac{2N}{N-6+2\gamma_1}, \quad 
\frac{2N(4+N-\gamma_2)}{(N+2-\gamma_2)(N+2\gamma_2)} \leq q_{2,0} \leq \frac{2N}{N-6+\gamma_1+\gamma_2}.
$$
In light of \eqref{equz1}, we then get that 
$$
-\Delta u +u \in L^{q_0}(\R^N), \quad \frac{2(4+N-\gamma_2)}{2+N-\gamma_2} \leq q_0 \leq \frac{2N}{N-6+2\gamma_1}.
$$
This leads to $u \in W^{2, q_0}(\R^N)$. If $N-10+2\gamma_1\leq 0$, then $u \in L^{p_1}(\R^N)$ for any $p_1>0$ satisfying
$$
\frac{2(4+N-\gamma_2)}{2+N-\gamma_2} \leq p_1 <\infty.
$$
Otherwise, it yields that
$$
\frac{2(4+N-\gamma_2)}{2+N-\gamma_2} \leq p_1 \leq  \frac{2N}{N-10+2\gamma_1}.
$$
By induction, we then find that if $2q_{k-1}  \geq N$, then $u \in L^{p_k}(\R^N)$ for any $p_k>0$ satisfying
$$
\frac{2(4+N-\gamma_2)}{2+N-\gamma_2} \leq p_k <\infty.
$$
Otherwise, we conclude that
$$
\frac{2(4+N-\gamma_2)}{2+N-\gamma_2} \leq p_k \leq  \frac{Nq_{k-1}}{N-2q_{k-1}}.
$$
Observe that
$$
\frac{Nq_{k-1}}{N-2q_{k-1}}-p_{k-1}=\frac{2q^2_{k-1}}{N-2q_{k-1}}>0.
$$
We then derive that $u \in L^{p}(\R^N)$ for any $p>0$ satisying 
$$
\frac{2(4+N-\gamma_2)}{2+N-\gamma_2} \leq p <\infty.
$$
This in turn infers that $u \in C^2(\R^N)$. In addition, since $C_0^{\infty}(\R^N) \hookrightarrow W^{2, p}(\R^N)$ for $2p>N$, then $u(x) \to 0$ as $|x| \to \infty$.

Next we prove the desired decay properties of solutions. For this, let us first deduce that $|u(x)| \leq C |x|^{2-N}$ for $|x|>0$ large enough. To do so, we make use of some ideas from \cite{EW}.
By using H\"older's inequality, we first have that
\begin{align*}
\int_{\R^N} |x-y|^{-\gamma_i} |u(y)|^2 \,dy &= \int_{B_R(x)} |x-y|^{-\gamma_i} |u(y)|^2 \,dy + \int_{\R^N \backslash B_R(x)} |x-y|^{-\gamma_i} |u(y)|^2 \,dy\\
& \leq \sup_{y \in B_R(x)}  |u(y)|^2 \int_{B_R(x)} |x-y|^{-\gamma_i} \,dy \\
& \quad + \left(\int_{\R^N \backslash B_R(x)} |u(y)|^p \,dx \right)^{\frac 1 p} \left(\int_{\R^N \backslash B_R(x)} |x-y|^{-\gamma p'}\,dy \right)^{\frac {1} {p'}},
\end{align*}
where $p':=p/(p-1)$ and $p>1$ satisfies \eqref{defp}. Since $0<\gamma_i<N$ and $u(x) \to 0$ as $|x| \to \infty$, then
\begin{align*}
\lim_{|x| \to \infty}\int_{\R^N} |x-y|^{-\gamma_i} |u(y)|^2 \,dy =o_R(1),
\end{align*}
where $o_R(1) \to 0$ as $R \to \infty$. 
By applying \eqref{HLS} and H\"older's inequality, we can similarly obtain that
\begin{align} \label{small}
\sup_{x \in \R^N}\int_{\R^N\backslash B_R(0)} |x-y|^{2-N} (|x|^{-\gamma_i} \ast |u|^{2})(y) \,dy =o_R(1).
\end{align}
Note that $u \in X$ is a solution to \eqref{equz}, then 
$$
u(x)=\int_{\R^N} |x-y|^{2-N} \left((|x|^{-\gamma_1} \ast |u|^{2})(y)-(|x|^{-\gamma_2} \ast |u|^{2})(y)\right) u(y) \, dy.
$$
Let $h:=(|x|^{-\gamma_1} \ast |u|^{2})-(|x|^{-\gamma_2} \ast |u|^{2})$. Define
$$
u_0(x):=\int_{B_R(0)} |x-y|^{2-N} h(y) u(y)\, dy, \quad  u_k(x):=\int_{\R^N \backslash B_R(0)} |x-y|^{2-N} h(y) u_{k-1}(y) \, dy
$$
and
$$
B_0(x):=\int_{\R^N \backslash B_R(0)} |x-y|^{2-N} h(y)u(y)\, dy, \quad B_k(x):=\int_{\R^N \backslash B_R(0)} |x-y|^{2-N} h(y) B_{k-1}(y)\, dy, 
$$
where $k \in \N$ and $k \geq 0$. Observe that
\begin{align*}
B_0(x)-B_1(x)&=\int_{\R^N \backslash B_R(0)} |x-y|^{2-N}h(y)\left(u(y)-B_0(y)\right)\, dy\\
&=\int_{\R^N \backslash B_R(0)} |x-y|^{2-N}h(y)u_0(y)\, dy=u_1(x)
\end{align*}
and
\begin{align*}
B_1(x)-B_2(x)&=\int_{\R^N \backslash B_R(0)} |x-y|^{2-N}h(y)\left(B_0(y)-B_1(y)\right)\, dy \\
&=\int_{\R^N \backslash B_R(0)} |x-y|^{2-N}h(y)u_1(y)\, dy=u_2(x).
\end{align*}
By induction, we see that $B_k(x)-B_{k+1}(x)=u_{k+1}(x)$ for any $k \in \N$ and $k \geq 0$. Therefore, there holds that
$$
u=u_0+B_0= \sum_{k=0}^m u_k + B_m.
$$
Let $\beta_k:=\sup_{|x| \geq R} |B_k(x)|$. Form the definition, by H\"older's inequality, it is easy to obtain that $\beta_0<\infty$. Using \eqref{small} and taking $R>0$ large enough, we then have that $\beta_k \leq 2^{-N}\beta_{k-1} \leq 2^{-kN} \beta_0=o_k(1)$. This then leads to
\begin{align} \label{usum}
u=\sum_{k=0}^{\infty} u_k,
\end{align}
which holds uniformly in $\R^N \backslash B_R(0)$. Let $\mu_k:=\sup_{|x| \geq R} |x|^{N-2} |u_k(x)|$. Since, for any $|x| \geq 2R$, 
$$
|u_0(x)| \leq \int_{B_R(0)} |x-y|^{2-N} |h(y)u(y)|\, dy \leq C |x|^{2-N},
$$
then we infer that $\mu_0 <\infty$. Notice that, for any $|x| \geq R$,
\begin{align} \label{small1}
\begin{split}
|x|^{N-2}|u_k(x)| &\leq \mu_{k-1} |x|^{N-2} \int_{\R^N \backslash B_R(0)} |x-y|^{2-N} |y|^{2-N} h(y)\, dy \\
& \leq C \mu_{k-1} \int_{\R^N \backslash B_R(0)} |x-y|^{2-N} \left(1+|y|^{2-N}|x-y|^{N-2}\right) h(y)\, dy \\
& \leq \frac{\mu_{k-1}}{2},
\end{split}
\end{align}
where we used \eqref{small} and chose $R>0$ large enough. By utilizing \eqref{usum} and iterating \eqref{small1}, we then obtain that, for any $|x| \geq R$,
$$
|x|^{N-2}|u(x)| \leq \sum_{k=0}^{\infty} |x|^{2-N} u_k(x) \leq \sum_{k=0}^{\infty} \mu_k \leq  \mu_0\sum_{k=0}^{\infty} 2^{-k}<\infty.
$$
Hence we derive that $|u(x)| \leq C|x|^{2-N}$ for $|x|>0$ large enough. Since $N \geq 5$, then $u \in L^2(\R^N)$. It then follows from \cite[Lemma 6.2]{MVS} that $|x|^{-\gamma_i} \ast |u|^{2} \sim |x|^{-\gamma_i}$ as $|x| \to \infty$. Taking $R>0$ large enough, we then have that
$$
-\Delta u + C\frac{u}{|x|^{\gamma_2}} \geq 0, \quad \mbox{in} \,\,\, \R^N \backslash B_R(0).
$$
If $0<\gamma_2<2$, by \cite[Theorem 3.3]{Ag} and \cite[Proposition B.2]{MVS1}, then
$$
u(x) \sim |x|^{-\frac{N-1}{2}+\frac{\gamma_2}{4}} e^{-\frac{2C^{\frac 12}}{2-\gamma_2} |x|^{1-\frac{\gamma_2}{2}}} \quad \mbox{as}\,\,\, |x| \to \infty.
$$
If $\gamma_2=2$, by \cite[Theorem 3.4]{LLM} and the maximum principle, then, for any $|x| \geq R$,
$$
u(x) \geq C|x|^{2-N}.
$$
This implies that $u(x) \sim |x|^{2-N}$ as $|x| \to \infty$. If $2<\gamma_2<4$, then $u$ satisfies 
$$
-\Delta u + V(x)u \geq 0, \quad \mbox{in} \,\,\, \R^N \backslash B_R(0),
$$
where $V(x):=C|x|^{-\gamma_2} \leq C(1+|x|)^{-2-\delta}$ for any $|x| \geq R$ and some $\delta>0$. By comparing with an explicit subsolution $C|x|^{2-N}(1+|x|^{-\delta/2})$, we conclude that, for any $|x| \geq R$,
$$
u(x) \geq C(1+|x|)^{2-N}.
$$
Therefore, we derive that $u(x) \sim |x|^{2-N}$ as $|x| \to \infty$. Thus the proof is completed.
\end{proof}

\begin{proof} [Proof of Proposition \ref{asym}]
Let $u_c \in S(c)$ be a solution to \eqref{equ}-\eqref{mass} such that $E(u_c)=\Gamma(c)$, see Theorem \ref{thm1}. Via Lemma \ref{ph}, we know that $Q(u_c)=0$. In view of \eqref{largenable1} and \eqref{largenable}, we then deduce that $\|\nabla u_c\|_2 \to + \infty$ as $c \to 0^+$. It then follows from \eqref{b1} that $\lim_{c \to 0^+}\Gamma(c)=+ \infty$. From the definitions of $\Gamma(c)$ and $m$, we conclude that $\Gamma(c) \geq m$ for any $c >0$. Then we get that $\lim_{c \to \infty} \Gamma(c) \geq m$. Let $u \in X$ be a solution to \eqref{equz}. If $N \geq 5$, then $u \in L^2(\R^N)$, see Proposition \ref{l2}. Define $c_{\infty}:=\|u\|_2^2$, then $\Gamma(c_{\infty}) \leq E(u)=m$. Therefore, we derive that $\Gamma(c)=m$ for any $c \geq c_{\infty}$ by Lemma \ref{nonincreasing}. Thus the proof is completed.
\end{proof}

\section{Local and global well-posedness} \label{section7}


In this section, we consider local and global well-posedness of solutions to \eqref{nlscn}. We first show the local well-posedness of solutions to \eqref{nlscn} in $H^s(\R^N)$ for $s \geq \frac{\gamma_1}{2}$, whose proof is inspired by \cite{Miao}. 
In order to discuss well-posedness of solutions to \eqref{nlscn}, we need to present some useful elements in the following.

\begin{lem} \label{gL} (\cite{K1})
For any $s\geq 0,$ we have that
\[ \| D^s (uv)\|_{L^r} \lesssim \| D^su\|_{L^{r_1}} \|v\|_{L^{q_2}} + \|u\|_{L^{q_1}}\|D^sv\|_{L^{r_2}},\]
where $D^s=(-\Delta)^s$, $\frac{1}{r}= \frac{1}{r_1}+ \frac{1}{q_2}=\frac{1}{q_1}+ \frac{1}{r_2}$ $r_1, r_2 \in (1, \infty)$and $q_1, q_2 \in (1, \infty]$.
 \end{lem}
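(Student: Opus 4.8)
The plan is to establish this fractional Leibniz (Kato--Ponce) inequality through a Littlewood--Paley paraproduct decomposition. First I would fix a dyadic partition of unity in frequency, writing $P_j=\varphi(2^{-j}D)$ for the projection onto frequencies $|\xi|\sim 2^j$ and $S_j=\sum_{k\le j}P_k$ for the associated low-frequency cutoff, and record two standard facts: the Bernstein relation $\|D^sP_jf\|_{L^p}\sim 2^{js}\|P_jf\|_{L^p}$, and the Littlewood--Paley square-function characterization $\|f\|_{L^r}\sim\big\|(\sum_j|P_jf|^2)^{1/2}\big\|_{L^r}$, valid for $r\in(1,\infty)$ by the Mikhlin multiplier theorem. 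I would also use the pointwise bound $\sup_j|S_jf|\lesssim Mf$, where $M$ is the Hardy--Littlewood maximal operator, together with its $L^q$-boundedness for all $q\in(1,\infty]$ (with $\|Mf\|_{L^\infty}\le\|f\|_{L^\infty}$); this is what accommodates the endpoints $q_i=\infty$, where the square function is unavailable.

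Next I would apply Bony's decomposition $uv=T_uv+T_vu+R(u,v)$, with paraproducts $T_uv=\sum_jS_{j-2}u\,P_jv$ and $T_vu=\sum_jS_{j-2}v\,P_ju$ and resonant term $R(u,v)=\sum_{|j-k|\le1}P_ju\,P_kv$. In $T_vu$ the output frequency is $\sim 2^j$, so the square-function characterization controls $\|D^sT_vu\|_{L^r}$ by $\big\|(\sum_j2^{2js}|S_{j-2}v|^2|P_ju|^2)^{1/2}\big\|_{L^r}$; pulling out $\sup_j|S_{j-2}v|\lesssim Mv$ pointwise and applying H\"older with the exponent relation $\tfrac1r=\tfrac1{r_1}+\tfrac1{q_2}$ yields $\|D^sT_vu\|_{L^r}\lesssim\|Mv\|_{L^{q_2}}\|D^su\|_{L^{r_1}}\lesssim\|v\|_{L^{q_2}}\|D^su\|_{L^{r_1}}$. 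By symmetry $\|D^sT_uv\|_{L^r}\lesssim\|u\|_{L^{q_1}}\|D^sv\|_{L^{r_2}}$, so the two paraproducts already reproduce exactly the two terms on the right-hand side.

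The main obstacle is the resonant term, which I would rewrite as $R(u,v)=\sum_jP_ju\,\widetilde P_jv$ with $\widetilde P_j=P_{j-1}+P_j+P_{j+1}$; here both factors share the scale $2^j$, the product occupies every frequency $\lesssim 2^j$, and the derivative has no natural scale. For $s>0$ I would decompose in the output frequency, noting that $P_nD^sR$ receives contributions only from $j\gtrsim n$, transfer $D^s$ onto one factor at the cost $2^{js}$, and crucially use $s\ge0$ so that the output weight satisfies $2^{ns}\le 2^{js}$. Taking the $\ell^2_n$ square function and summing the geometric series $(\sum_{n\le j}2^{2ns})^{1/2}\sim 2^{js}$ (this is where $s>0$ enters) reassembles, via H\"older and Young's inequality in the frequency indices, a product of an $L^{r_1}(\ell^2)$ factor $\sim\|D^su\|_{L^{r_1}}$ and an $L^{q_2}$ factor $\lesssim\|v\|_{L^{q_2}}$, which is again of the required form. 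The borderline case $s=0$ is immediate, since $D^0(uv)=uv$ and H\"older gives $\|uv\|_{L^r}\le\|u\|_{L^{r_1}}\|v\|_{L^{q_2}}$. Summing the three contributions of the Bony decomposition completes the argument; as this is the classical Kato--Ponce estimate, one may alternatively invoke \cite{K1} directly.
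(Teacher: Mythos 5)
The paper gives no proof of this lemma at all: it is quoted verbatim from the reference \cite{K1} as a known fractional Leibniz (Kato--Ponce) estimate, so there is nothing internal to compare against. Your paraproduct argument is the standard proof of this inequality and is essentially correct: the Bony splitting $uv=T_uv+T_vu+R(u,v)$, the control of each paraproduct by pulling out $\sup_j|S_{j-2}v|\lesssim Mv$ and using the square-function characterization together with H\"older, and the treatment of the resonance term by transferring $D^s$ onto one factor and summing the geometric series in the output frequency (which is exactly where $s>0$ is needed, with $s=0$ reduced to H\"older) are all the right steps, and your use of the maximal function to absorb the endpoints $q_i=\infty$ is the correct device. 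The one hypothesis you use silently is that the output exponent satisfies $r\in(1,\infty)$, which is required for the Littlewood--Paley square-function equivalence $\|D^sf\|_{L^r}\sim\bigl\|(\sum_n 2^{2ns}|P_nf|^2)^{1/2}\bigr\|_{L^r}$; the stated relations $\frac1r=\frac1{r_1}+\frac1{q_2}$ with $r_1\in(1,\infty)$, $q_2\in(1,\infty]$ only force $r>\frac12$, and for $r\le 1$ the estimate is a genuinely harder theorem (Grafakos--Oh) that your argument does not reach. Since every application in the paper (e.g.\ the bound on $\||u|^2\|_{H^s_{2N/(2N-\gamma_i)}}$ in the proof of Theorem \ref{LW}) has $r>1$, this is a restriction worth recording rather than a defect; with that caveat your proof stands, and of course one may simply invoke \cite{K1} as the authors do.
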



\begin{lem} \label{mE} (\cite{K2})
For $0<\gamma < N,$ we have that
\[\|I_{N-\gamma}(|u|^2)\|_{L^{\infty}} \lesssim \| u\|^2_{ \dot{H}^{\frac{\gamma}{2}}}, \]
where $I_{\alpha}$ is the fractional integral operator $I_{\alpha}(v)(x)=\left(|\cdot|^{-(N-\alpha) }\ast v \right) (x).$
\end{lem}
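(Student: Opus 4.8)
The plan is to recognize the claim as the fractional Hardy inequality after a translation reduction. First I would unwind the definition of $I_{N-\gamma}$: taking $\alpha=N-\gamma$ in $I_\alpha(v)=|\cdot|^{-(N-\alpha)}\ast v$ shows that the kernel is $|\cdot|^{-\gamma}$, so
\[
I_{N-\gamma}(|u|^2)(x)=\int_{\R^N}\frac{|u(y)|^2}{|x-y|^{\gamma}}\,dy,
\qquad
\|I_{N-\gamma}(|u|^2)\|_{L^\infty}=\sup_{x\in\R^N}\int_{\R^N}\frac{|u(y)|^2}{|x-y|^{\gamma}}\,dy .
\]
The task is therefore to bound this integral by $\|u\|_{\dot H^{\gamma/2}}^2$ uniformly in $x$.

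The point I would exploit next is translation invariance. For fixed $x\in\R^N$ put $v:=u(\cdot+x)$; the change of variables $z=y-x$ gives $\int_{\R^N}|x-y|^{-\gamma}|u(y)|^2\,dy=\int_{\R^N}|z|^{-\gamma}|v(z)|^2\,dz$, while $\|v\|_{\dot H^{\gamma/2}}=\|u\|_{\dot H^{\gamma/2}}$ since the homogeneous Sobolev norm is translation invariant. Hence it suffices to establish the single estimate
\[
\int_{\R^N}\frac{|w(y)|^2}{|y|^{\gamma}}\,dy\;\le\;C_{N,\gamma}\,\|w\|_{\dot H^{\gamma/2}}^2
\]
for every $w$, and then apply it to $w=v$ and take the supremum over $x$; the constant being independent of $x$ is exactly what this reduction secures.

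The displayed inequality is the fractional Hardy (Kato--Herbst) inequality with smoothness index $s=\gamma/2$: in this notation it reads $\||y|^{-s}w\|_{L^2}^2\le C\,\||\nabla|^{s}w\|_{L^2}^2$, and its admissible range is precisely $0<s<N/2$, i.e. $0<\gamma<N$, which is the hypothesis of the lemma. I would either invoke it directly (this is the content of the cited reference \cite{K2}) or derive it from the Stein--Weiss inequality: writing $w=|\nabla|^{-s}g$ with $g:=|\nabla|^{s}w$ so that $\|g\|_{L^2}=\|w\|_{\dot H^{s}}$, the operator $|\nabla|^{-s}$ is convolution with a constant multiple of $|y|^{-(N-s)}$, and the Stein--Weiss bound $\||y|^{-s}(|\nabla|^{-s}g)\|_{L^2}\lesssim\|g\|_{L^2}$ holds exactly under the condition $s<N/2$.

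The main obstacle is conceptual rather than computational: there is no delicate estimate to grind out once one recognizes that, after the translation reduction, the assertion is the fractional Hardy inequality. The only genuine points requiring care are (i) that the bound must be uniform in the base point $x$---handled by the translation-invariance step---and (ii) that the range $0<\gamma<N$ is exactly the threshold for which the weight $|y|^{-\gamma}$ can be controlled by the $\dot H^{\gamma/2}$ norm, the borderline $\gamma=N$ being excluded precisely because $s=\gamma/2$ must stay below $N/2$.
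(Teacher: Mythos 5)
Your proof is correct. The paper does not prove this lemma at all --- it is stated as a quoted result with the citation \cite{K2} --- so there is no in-paper argument to compare against; what you have written is the standard derivation, and it is sound: the reduction $I_{N-\gamma}(|u|^2)(x)=\int|x-y|^{-\gamma}|u(y)|^2\,dy$ together with translation invariance of $\dot H^{\gamma/2}$ correctly reduces the uniform-in-$x$ bound to the single weighted estimate $\||y|^{-\gamma/2}w\|_{L^2}^2\lesssim\|w\|_{\dot H^{\gamma/2}}^2$, which is the fractional Hardy--Herbst inequality valid exactly for $0<\gamma/2<N/2$, and your Stein--Weiss justification of it (with $p=q=2$, weights $s$ and $0$, kernel exponent $N-s$, whose scaling condition is satisfied and whose constraint $s<N/2$ matches the hypothesis) checks out. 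The only point worth a sentence in a fully written-out version is the representation $w=|\nabla|^{-s}g$ with $g=|\nabla|^{s}w$: one should justify it on a dense subclass (say $w\in C_0^\infty$ or $\mathcal{S}$ with suitable Fourier support) and conclude by density in $\dot H^{s}$, which is routine since $s=\gamma/2<N/2$.
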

\begin{lem} \label{SobE}
\begin{itemize}
\item[$(\textnormal{i})$] Let $s< \frac{N}{2}$ and $\frac{1}{p}= \frac{1}{2}- \frac{s}{N}$. Then there exists $C=C(N,p)$ such that  $\|f\|_{L^p}\leq C \|f\|_{\dot{H}^s}$ for all $f\in \dot{H}^s(\R^N)$.
\item[$(\textnormal{ii})$] Let $s< \frac{N}{2}$ and $\frac{1}{2}- \frac{s}{N} \leq \frac{1}{p}  \leq \frac 12$. Then there exists $C=C(N,p)$ such that  $\|f\|_{L^p}\leq C \|f\|_{{H}^s}$ for all $f\in H^s(\R^N)$.
\end{itemize}
\end{lem}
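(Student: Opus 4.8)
The plan is to recognize both parts as the classical Sobolev embedding $\dot H^s(\R^N)\hookrightarrow L^p(\R^N)$ and to reduce it to the Hardy--Littlewood--Sobolev inequality \eqref{HLS} already recorded above. Throughout I take $0<s<\frac N2$ (the only meaningful range), and I read the exponent relation in (i) as the scale-invariant one $\frac1p=\frac12-\frac sN$; this is forced, since under the dilation $f\mapsto f(\lambda\,\cdot)$ one has $\|f(\lambda\cdot)\|_{L^p}=\lambda^{-N/p}\|f\|_{L^p}$ while $\|f(\lambda\cdot)\|_{\dot H^s}=\lambda^{s-N/2}\|f\|_{\dot H^s}$, so only $\frac1p=\frac12-\frac sN$ lets the asserted inequality hold for every $\lambda>0$.

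First I would prove (i). Writing $g:=(-\Delta)^{s/2}f$, Plancherel gives $\|g\|_{L^2}=\|f\|_{\dot H^s}$, while the multiplier identity $\widehat f(\xi)=|\xi|^{-s}\widehat g(\xi)$ expresses $f$ as the Riesz potential $f=I_s g=c_{N,s}\,|x|^{-(N-s)}\ast g$ (legitimate since $0<s<N$). The claim $\|f\|_{L^p}\lesssim\|g\|_{L^2}$ is then exactly the mapping property $I_s\colon L^2\to L^p$ with $\frac1p=\frac12-\frac sN$, which I would extract from the bilinear statement \eqref{HLS} by duality: for $h\in L^{p'}$,
\[
\Big|\int_{\R^N} (I_s g)\,h\,dx\Big|
=c_{N,s}\Big|\int_{\R^N}\!\!\int_{\R^N}\frac{g(y)\,h(x)}{|x-y|^{N-s}}\,dx\,dy\Big|
\lesssim \|g\|_{L^2}\,\|h\|_{L^{p'}},
\]
where \eqref{HLS} applies with $\ga=N-s$ because $\frac1{p'}+\frac{N-s}{N}+\frac12=2$ is equivalent to $\frac1p=\frac12-\frac sN$. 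Taking the supremum over $\|h\|_{L^{p'}}\le1$ yields $\|f\|_{L^p}\lesssim\|f\|_{\dot H^s}$.

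For (ii) the admissible exponents are $2\le p\le 2_s^*:=\frac{2N}{N-2s}$, equivalently $\frac12-\frac sN\le\frac1p\le\frac12$. Here I would interpolate: choosing $\theta\in[0,1]$ with $\frac1p=\frac{1-\theta}{2}+\frac{\theta}{2_s^*}$, Hölder gives $\|f\|_{L^p}\le\|f\|_{L^2}^{1-\theta}\|f\|_{L^{2_s^*}}^{\theta}$, and part (i) bounds the last factor by $\|f\|_{\dot H^s}^{\theta}$. Since $f\in H^s$ controls both $\|f\|_{L^2}$ and $\|f\|_{\dot H^s}$, this produces $\|f\|_{L^p}\lesssim\|f\|_{H^s}$; I would state the conclusion with the full $H^s$ norm on the right, the homogeneous norm alone being unable to dominate the $L^2$ factor once $p<2_s^*$.

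The argument is essentially bookkeeping, so there is no deep obstacle; the only point requiring care is the passage from the symmetric bilinear form \eqref{HLS} to the Riesz-potential bound $I_s\colon L^2\to L^p$, which is the duality step displayed above, together with matching the exponents correctly and reconciling the printed exponent in the statement with the scale-invariant relation $\frac1p=\frac12-\frac sN$.
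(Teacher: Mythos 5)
The paper states Lemma \ref{SobE} without any proof at all --- it is quoted as a standard Sobolev embedding used later in the well-posedness arguments --- so there is no in-paper argument to compare yours against. Your proof is correct and is the standard one: part (i) follows by writing $f=I_s\bigl((-\Delta)^{s/2}f\bigr)$ and dualizing the Hardy--Littlewood--Sobolev inequality \eqref{HLS} with $\gamma=N-s$ (the exponent check $\tfrac12+\tfrac{N-s}{N}+\tfrac1{p'}=2\iff\tfrac1p=\tfrac12-\tfrac sN$ is right, and $0<s<\tfrac N2$ guarantees $0<N-s<N$ and $p'>1$), and part (ii) follows by H\"older interpolation between $L^2$ and $L^{2^*_s}$. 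You are also right on the two points where the printed statement is defective: the exponent relation in (i) must be the scale-invariant $\tfrac1p=\tfrac12-\tfrac sN$ (your dilation argument shows nothing else can hold), and in (ii) the right-hand side must carry the full $H^s$ norm rather than $\dot H^s$ alone, since for $\tfrac1p>\tfrac12-\tfrac sN$ the same dilation $f(\lambda\cdot)$, $\lambda\to0$, violates a purely homogeneous bound; this reading is consistent with how the lemma is actually invoked in the proof of Theorem \ref{LW}, where the quantities are controlled by $\|u\|_{L^\infty_TH^s}$. The only step you gloss over is the identification $f=c_{N,s}|x|^{-(N-s)}\ast(-\Delta)^{s/2}f$ for general $f\in\dot H^s$, which requires a routine density argument, but that is harmless.
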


For convenience, we present a variant of \eqref{HLS} as follows.

\begin{lem} \label{hls} Let $0<\gamma< N$ and $1<p<q< \infty$ with
$$\frac{1}{p}+\frac{\gamma}{N}-1= \frac{1}{q}.$$
Then the map $f \mapsto |x|^{-\gamma}\ast f$ is bounded from $L^p(\mathbb R^N)$ to $L^q(\mathbb R^N):$
$$\|x|^{-\gamma}\ast f\|_{L^q} \lesssim \|f\|_{L^p}.$$
\end{lem}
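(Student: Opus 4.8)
The plan is to deduce this $L^p\to L^q$ mapping property from the bilinear Hardy--Littlewood--Sobolev inequality \eqref{HLS} by a standard duality argument. Since $1<q<\infty$, the $L^q$-norm admits the dual characterization $\|h\|_{q}=\sup\{|\int_{\R^N} h g\,dx| : g\in L^{q'}(\R^N),\ \|g\|_{q'}\leq 1\}$ with $q'=q/(q-1)$, so it suffices to control the pairing of $|x|^{-\gamma}\ast f$ against an arbitrary $g\in L^{q'}(\R^N)$. First I would write, via Fubini's theorem,
\begin{align*}
\int_{\R^N} \left(|x|^{-\gamma}\ast f\right)(x)\, g(x)\, dx = \int_{\R^N}\int_{\R^N} \frac{g(x)\,f(y)}{|x-y|^{\gamma}}\, dx\, dy.
\end{align*}

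Next I would apply \eqref{HLS} to this double integral, treating $g$ as the function in the $x$-variable and $f$ as the function in the $y$-variable, which gives
\begin{align*}
\left| \int_{\R^N}\int_{\R^N} \frac{g(x)\,f(y)}{|x-y|^{\gamma}}\, dx\, dy \right| \leq C_{N,q',\gamma}\, \|g\|_{q'}\, \|f\|_{p},
\end{align*}
\emph{provided} the exponent balance $\frac{1}{q'}+\frac{\gamma}{N}+\frac{1}{p}=2$ required in \eqref{HLS} holds. The key step is to verify that this balance is precisely the hypothesis of the lemma: since $\frac{1}{q}=\frac{1}{p}+\frac{\gamma}{N}-1$, we have $\frac{1}{q'}=1-\frac{1}{q}=2-\frac{1}{p}-\frac{\gamma}{N}$, which rearranges exactly to $\frac{1}{q'}+\frac{\gamma}{N}+\frac{1}{p}=2$. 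Moreover $q'>1$ since $q<\infty$, and $p>1$ by assumption, so both exponents lie in the admissible range of \eqref{HLS}.

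Finally, taking the supremum over all $g\in L^{q'}(\R^N)$ with $\|g\|_{q'}\leq 1$ and invoking the duality characterization of the $L^q$-norm yields $\||x|^{-\gamma}\ast f\|_{q}\leq C_{N,q',\gamma}\|f\|_{p}$, which is the asserted bound. The argument carries no real obstacle: the only technical point is justifying Fubini's theorem, but the absolute convergence needed follows by applying the same estimate \eqref{HLS} to $|f|$ and $|g|$, so the entire proof reduces to bookkeeping of conjugate exponents combined with $L^q$--$L^{q'}$ duality.
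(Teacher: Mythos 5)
Your argument is correct: the exponent bookkeeping $\tfrac{1}{q'}=2-\tfrac{1}{p}-\tfrac{\gamma}{N}$ does match the balance condition in \eqref{HLS}, and the duality characterization of the $L^q$-norm (applied first to $|f|$ and nonnegative $g$ to secure absolute convergence for Fubini) gives exactly the stated bound. The paper itself offers no proof of Lemma \ref{hls}, presenting it only as a known ``variant'' of \eqref{HLS}, so your derivation simply supplies the standard duality argument that the authors left implicit.
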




\begin{proof}[Proof of Theorem \ref{LW}] Let $\left( X_{T, \rho}^s, d \right)$ be a complete metric space with metric $d$ defined by
\[X^{s}_{T, \rho}:= \left\{u \in L^{\infty}_TH^s(\R^N) : \|u\|_{L^{\infty}_TH^s} \leq \rho \right\}, \quad d(u,v) := \|u-v\|_{L^{\infty}_TL^2}.\]
Let us define a mapping $J:u \mapsto J(u)$ on $X^s_{T, \rho}$ by
\begin{align}
J(u)(t):= U(t)u_0-\textnormal{i} \int_0^t U(t-t')F(u)(t') \, dt',
\end{align}
where $U(t):=e^{-\textnormal{i} \Delta t}$.
In what follows, we shall prove that the mapping $J$ is a contraction on $X^{s}_{T,  \rho}$. Using Lemma \ref{hls}, we first get that
\begin{align*}
 \|I_{N-\gamma_i}(|u|^2)\|_{{H}^{s}_{\frac{2N}{\gamma_i}}} &= \|(I-\Delta)^{s/2}\left(I_{N-\gamma_i}(|u|^2)\right)\|_{L^{\frac{2N}{\gamma_i}}}= \|I_{N-\gamma_i} \left((I-\Delta)^{s/2}(|u|^2)\right)\|_{L^{\frac{2N}{\gamma_i}}}\\
 & \lesssim \|(I-\Delta)^{s/2} (|u|^2)\|_{L^{\frac{2N}{2N-\gamma_i}}}= \| |u|^2\|_{\dot{H}_{\frac{2N}{2N-\gamma_i}}^s}.
\end{align*}
Note that
\[\frac{1}{2}=\frac{1}{2}+ \frac{1}{\infty}, \quad \frac{1}{2}= \frac{\gamma_i}{2N}+\frac{N-\gamma_i}{2N} \quad \text{and} \quad \frac{2N-\gamma_i}{2N}= \frac{1}{2}+ \frac{N-\gamma_i}{2N}.\]
As a consequence of Lemmas \ref{gL} and  \ref{mE} and Lemma \ref{SobE}, we then have that
\begin{align}\label{ue}
\|J(u)\|_{L^{\infty}_T H^s} 
& \leq \|u_0\|_{H^s} + T \sum_{i \in \{1,2\}} \left( \|I_{N-\gamma_i}(|u|^2)\|_{L^{\infty}_TL^{\infty}} \|u\|_{L^{\infty}_T H^s} +  \|I_{N-\gamma_i}(|u|^2)\|_{L^{\infty}_T H^{s}_{\frac{2N}{\gamma_i}}} \|u\|_{L^{\infty}_T L^{\frac{2N}{N-\gamma_i}}} \right) \nonumber \\
& \lesssim  \|u_0\|_{H^s} + T \sum_{i \in \{1,2\}} \left( \|u\|^2_{L^{\infty}_T \dot{H}^{\frac{\gamma_i}{2}}} \|u\|_{L^{\infty}_TH^s} + \left\| |u|^2 \right\|_{L^{\infty}_T H^{s}_{\frac{2N}{2N-\gamma_i}}}   \|u\|_{L^{\infty}_T L^{\frac{2N}{N-\gamma_i}}} \right) \nonumber\\
& \lesssim  \|u_0\|_{H^s} + T \sum_{i \in \{1,2\}} \left( \|u\|^2_{L^{\infty}_T \dot{H}^{\frac{\gamma_i}{2}}} \|u\|_{L^{\infty}_TH^s} + \|u\|_{L^{\infty}_T H^s}  \|u\|^2_{L^{\infty}_T L^{\frac{2N}{N-\gamma_i}}} \right) \nonumber\\
& \lesssim  \|u_0\|_{H^s} + T \sum_{i \in \{1,2\}}  \|u\|^2_{L^{\infty}_T \dot{H}^{\frac{\gamma_i}{2}}} \|u\|_{L^{\infty}_TH^s} \nonumber\\
 & \lesssim  \|u_0\|_{H^s} + T\rho^3.   
\end{align}
If we choose $T$ and $\rho$ such that
\[\|u_0\|_{H^s} \leq \frac{\rho}{2}, \quad CT\rho^3 \leq \frac{\rho}{2},\]
then $J$ maps $X^s_{T, \rho}$ to itself.  Let $u, v\in X^{s}_{T, \rho}$, by H\"older's inequality and Lemma \ref{mE}, we  are able to derive that
\begin{align*}
d(J(u), J(v)) &  \lesssim    T  \sum_{i\in \{1,2\}} \left( \| I_{N-\gamma_i}(|u|^2)u- I_{N-\gamma_i}(|v|^2)v\|_{L^{\infty}_{T}L^2} \right)\\
 & \lesssim   T  \sum_{i\in \{1,2\}} \left( \| I_{N-\gamma_i}(|u|^2)(u-v)\|_{L^{\infty}_T L^2} + \| I_{N-\gamma_i}(|u|^2-|v|^2)v\|_{L^{\infty}_{T}L^2} \right)\\
 & \lesssim   T  \sum_{i\in \{1,2\}} \left( \|u\|_{L^{\infty}_T H^{\frac{\gamma}{2}}}^2 d(u,v) + \| I_{N-\gamma_i}(|u|^2-|v|^2)\|_{L^{\infty}_T L^{\frac{2N}{\gamma_i}}} \|v\|_{L^{\infty}_{T}L^{\frac{2N}{N-\gamma_i}}} \right)\\
  & \lesssim   T  \sum_{i\in \{1,2\}} \left( \|u\|_{L^{\infty}_T H^{\frac{\gamma}{2}}}^2 d(u,v) + \| I_{N-\gamma_i}(|u|^2-|v|^2)\|_{L^{\infty}_T L^{\frac{2N}{\gamma_i}}} \|v\|_{L^{\infty}_{T}L^{\frac{2N}{N-\gamma_i}}} \right)\\
  & \lesssim   T  \sum_{i\in \{1,2\}} \left( \rho^2 d(u,v) +  \rho \| (|u|^2-|v|^2)\|_{L^{\infty}_T L^{\frac{2N}{2N-\gamma_i}}}  \right)\\
  & \lesssim   T d(u,v)  \sum_{i\in \{1,2\}} \left( \rho^2  +  \rho \| u+v\|_{L^{\infty}_T L^{\frac{2N}{N-\gamma_i}}}  \right)\\
  & \lesssim  T\rho^2 d(u,v).
\end{align*}
This readily implies that $J$ is a contraction on $X^s_{T,\rho}$ if $T$ is sufficiently small. Now the proof follows by the standard Banach fixed point argument. For the proof of conservation laws, methods can be applicable as in \cite{L} and \cite{TO}. This completes the proof.
\end{proof}




\begin{proof} [Proof of Theorem \ref{GLW}]
If 
$$
\frac{1}{4} \int_{\R^N} \left(|x|^{-\gamma_1} \ast |u|^2\right) |u|^2 dx - \frac{1}{4} \int_{\R^N} \left(|x|^{-\gamma_2} \ast |u|^2\right) |u|^2 dx \geq 0,
$$ 
by the conservation laws,  then
$$
\|u\|^2_{H^{\frac{\gamma}{2}}}\leq \|u\|^2_{H^{1}}  \lesssim  E(u) + \|u_0\|_{L^2}^2 = E(u_0) + \|u_0\|_{L^2}^2.
$$
If 
$$\frac{1}{4} \int_{\R^N} \left(|x|^{-\gamma_1} \ast |u|^2\right) |u|^2 dx - \frac{1}{4} \int_{\R^N} \left(|x|^{-\gamma_2} \ast |u|^2\right) |u|^2 dx < 0,$$
by Lemma \ref{mE},  the conservation of mass and H\"older's inequality,  then
\begin{align*}
 \left| \frac{1}{4} \int_{\R^N} \left(|x|^{-\gamma_1} \ast |u|^2\right) |u|^2 dx - \frac{1}{4} \int_{\R^N} \left(|x|^{-\gamma_2} \ast |u|^2\right) |u|^2 dx\right| 
& \lesssim   \sum_{i\in \{ 1,2 \}} \|I_{N-\gamma_i}(|u|^2)\|_{L^{\infty}} \|u\|_{L^2}\\
&  \lesssim   \sum_{i\in \{ 1,2 \}} \|u\|^2_{\dot{H}^{\frac{\gamma_i}{2}}} \|u_0\|_{L^2}\\
&  \lesssim   \sum_{i\in \{ 1,2 \}} \|u\|_{\dot{H}^{1}}^{\gamma_i} \|u\|_{L^2}^{2-\gamma_i}     \|u_0\|^2_{L^2}\\
&  \lesssim    \sum_{i\in \{ 1,2 \}} \|u\|_{\dot{H}^{1}}^{\gamma_i}    \|u_0\|^{4-\gamma_i}_{L^2}\\
&  \lesssim   \sum_{i\in \{ 1,2 \}} \epsilon \|u\|_{\dot{H}^1}^2 + C(\epsilon) \| u_0\|_{L^2}^{\frac{8-2\gamma_i}{2-\gamma_i}}.
\end{align*}
This together with the conservation of energy leads to
$$
\|u(t)\|^2_{H^{\frac{\gamma}{2}}}\leq \|u(t)\|^2_{H^{1}} \lesssim \sum_{i \in \{1,2\} } \left(  E (u_0) + \|u_0\|_{L^2}^2 + \|u_0\|_{L^2}^{\frac{8-2\gamma_i}{2-\gamma_i}} \right).
$$
Then we have the desired conclusion. This completes the proof.
\end{proof}

In this subsection,  our aim is to investigate well-posedness of solutions to \eqref{nlscn} for relatively rough data in $H^{s}(\R^N)$ for $s< \frac{\gamma_1}{2}$. To this end,  we mainly employ  Strichartz estimates to run the fixed point argument.

\begin{defi} \label{defap}
 A pair $(q,r)$ is  admissible if  $q\geq 2, r\geq 2$ and
$$\frac{2}{q} =  N \left( \frac{1}{2} - \frac{1}{r} \right), \quad (q,r,N)\neq(\infty,2,2).$$
\end{defi}

\begin{lem}\label{st} (\cite{keeltao})
Let $\phi \in L^2(\R^N).$
Denote
$$DF(t, x) =  U(t)\phi(x) +  \int_0^t U(t-s)F(s,x) ds.$$
 Then for any time interval $I\ni0$ and admissible pairs $(q_j,r_j)$, $j=1,2,$
there exists  a constant $C=C(r_1,r_2)$ such that
$$ \|D(F)\|_{L^{q_1}(I,L^{r_1})}  \leq  C \|\phi \|_{L^2}+   C  \|F\|_{L^{q'_2}(I,L^{r'_2})}, \quad \, \forall F \in L^{q_2'} (I, L^{r_2'}(\R^N))$$  where $q_j'$ and $ r_j'$ are H\"older conjugates of $q_j$ and $r_j$, respectively.
\end{lem}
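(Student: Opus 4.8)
The plan is to follow the now-classical $TT^*$ scheme of Keel and Tao, which reduces the entire family of estimates to two elementary bounds on the free propagator $U(t)$. First I would record the energy estimate $\|U(t)\phi\|_{L^2} = \|\phi\|_{L^2}$ (unitarity of $U(t)$ on $L^2$) together with the dispersive estimate $\|U(t)\phi\|_{L^\infty} \lesssim |t|^{-N/2}\|\phi\|_{L^1}$, which follows from the explicit oscillatory convolution kernel of the free Schr\"odinger group $U(t)$. Riesz--Thorin interpolation between these two endpoints then yields the decay bound
$$\|U(t)\phi\|_{L^r} \lesssim |t|^{-N\left(\frac 12 - \frac 1r\right)}\|\phi\|_{L^{r'}}, \qquad 2 \leq r \leq \infty,$$
for every $t \neq 0$, where $r'$ denotes the H\"older conjugate of $r$. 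This single decay estimate is the only analytic input the rest of the argument needs.

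Next, for the homogeneous piece $U(t)\phi$ I would set $T\phi := U(\cdot)\phi$ and observe that, by a standard duality computation, the bound $\|T\phi\|_{L^q_tL^r_x} \lesssim \|\phi\|_{L^2}$ is equivalent to boundedness of $TT^*$, where
$$TT^* F = \int_{\R} U(t-s) F(s)\,ds.$$
Away from the endpoint (that is, when $q>2$) the operator norm of $TT^*$ from $L^{q'}_tL^{r'}_x$ to $L^q_tL^r_x$ is controlled by inserting the interpolated decay bound into the time integral and applying the one-dimensional Hardy--Littlewood--Sobolev inequality in the $t$ variable; the admissibility relation $\frac 2q = N\left(\frac 12 - \frac 1r\right)$ is precisely what makes the exponents match. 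To recover the full operator $D$ in the statement, with its truncated integral $\int_0^t$ and its two independent admissible pairs $(q_1,r_1)$ and $(q_2,r_2)$, I would compose the homogeneous estimate with its dual and then pass from the non-retarded to the retarded integral by the Christ--Kiselev lemma, which is available exactly because $q_1 > q_2'$ in this non-endpoint regime.

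The main obstacle is the endpoint case $q=2$, $r = \frac{2N}{N-2}$ (relevant for $N \geq 3$), where the Hardy--Littlewood--Sobolev step degenerates to a borderline exponent and the Christ--Kiselev lemma fails. Here I would follow Keel and Tao's device of expanding the bilinear form $\langle TT^* F, G\rangle$ into a dyadic sum over the time separation $|t-s|\sim 2^j$, estimating each dyadic block by the decay bound above, and then recombining the blocks through a real bilinear interpolation argument that exploits a small amount of off-diagonal decay to defeat the logarithmic divergence. Since the sharp endpoint estimate and the explicit dependence $C=C(r_1,r_2)$ of the constant are exactly the content of \cite{keeltao}, I would invoke that reference for the endpoint and present the non-endpoint pairs by the elementary interpolation-plus-Christ--Kiselev argument sketched above.
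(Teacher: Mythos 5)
The paper offers no proof of this lemma beyond the citation to \cite{keeltao}, and your sketch is precisely the standard argument from that reference: unitarity plus the dispersive bound, Riesz--Thorin interpolation, the $TT^*$ reduction with Hardy--Littlewood--Sobolev and Christ--Kiselev off the endpoint, and the dyadic bilinear interpolation of Keel--Tao at the endpoint $(2,\tfrac{2N}{N-2})$. Your account is correct and consistent with what the paper relies on, so there is nothing to add.
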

For $0<\gamma_2< \gamma_1<N$ and $\max\{ \max \{0, \frac{\gamma_i}{2}\}: i=1,2\} \leq s < \frac{\gamma_2}{2},$ there is a admissible pairs $(q_i, r_i)$ defined by
\begin{align}\label{yu}
\frac{1}{q_i}= \frac{\gamma_i-2s}{6}, \quad \frac{1}{r_i}=\frac{1}{2}+ \frac{2s-\gamma_i}{3N}.
\end{align}

\begin{proof}[Proof of Theorem \ref{LW1}]
Let $\left( Y_{T, \rho}^s, d \right)$ be a complete metric space with metric $d$ defined by
$$Y^{s}_{T, \rho}:= \left\{u \in L^{\infty}_TH^s\cap L^{q_1}_{T}(H^s_{r_1})\cap L^{q_2}_{T}(H^s_{r_2}): \|u\|_{L^{\infty}_TH^s}+\|u\|_{L^{q_1}_TH_{r_1}^s}+\|u\|_{L^{q_2}_TH_{r_2}^s} \leq \rho  \right\}$$
$$d(u,v):= \|u-v\|_{L^{\infty}_T H^s \cap L^{q_1}_T H^s_{r_1} \cap L^{q_2}_T H^s_{r_2}}.$$
At this point, using Lemma \ref{mE}, Lemma \ref{st}, Hardy-Littlewood-Sobolev inequality and H\"older's inequality and following closely the proof of Theorem \ref{LW}, we are able to that $J$ maps $Y^s_{T, \rho}$ to itself and $J$ is contraction on $Y^s_{T, \rho}$. This completes the proof.
\end{proof}


\begin{proof}[Proof of Theorem \ref{sc}]
Let  $(Z^s_{\rho, R}, d)$ be a complete metric space with metric $d$ defined by
\[ Z^{s}_{\rho, R}:=\left \{ u \in  L^4(\R, H^s_{\frac{2N}{N-1}}): \|u\|_{L^4(\R, \dot{H}_{\frac{2N}{N-1}}^{s_{\gamma_1}})} \leq \rho,  \|u\|_{L^4(\R, \dot{H}_{\frac{2N}{N-1}}^{s})} \leq R \right\}, \]
\begin{eqnarray*}
d(u,v):=\|u-v\|_{L^4(L^{\frac{2N}{2N-1}})}.
\end{eqnarray*}
From Lemmas \ref{st} and \ref{mE},  we derive that
\begin{eqnarray*}
\|J(u)\|_{L^4(\R, \dot{H}_{\frac{2N}{N-1}}^{s_{\gamma_i}})}  & \lesssim &  \|u_0\|_{\dot{H}^{s_{\gamma_i}}}} + \|F(u)\|_{L^{\frac{4}{3}}(\dot{H}_{\frac{2N}{N+1}}^{s_{\gamma_i}})\\
& \lesssim &\|u_0\|_{\dot{H}^{s_{\gamma_i}}}} +  \sum_{i \in \{1,2\}} \|u\|^2_{L^4(L^{\frac{2N}{N-\gamma_i+1}})}    \|u\|_{L^{4}(\dot{H}_{\frac{2N}{N-1}}^{s_{\gamma_i}})\\
& \lesssim  &\|u_0\|_{\dot{H}^{s_{\gamma_i}}}} +   \sum_{i \in \{1,2\}}   \|u\|^3_{L^{4}(\dot{H}_{\frac{2N}{N-1}}^{s_{\gamma_i}})
\end{eqnarray*}
and
\begin{eqnarray*}
\|J(u)\|_{L^4(\R, \dot{H}_{\frac{2N}{N-1}}^{s})}  & \lesssim &  \|u_0\|_{\dot{H}^{s}}} + \sum_{i \in \{1,2\}}    \|u\|^2_{L^{4}(\dot{H}_{\frac{2N}{N-1}}^{s_{\gamma_i}})} \|u\|_{L^{4}(\dot{H}_{\frac{2N}{N-1}}^{s}).
\end{eqnarray*}
Similarly, we can show that
\begin{align*}
\|J(u)-J(v)\|_{L^4(\R, L^{\frac{2n}{n-1}})} & \lesssim \|F(u)-F(v)\|_{L^{\frac{4}{3}}(L^{\frac{2N}{N+1}})}\\
& \lesssim \sum_{i\in \{1,2 \}}\left( \|u\|^2_{L^4 (L^{\frac{2N}{N-\gamma_i+1}})}+\|v\|^2_{L^4 (L^{\frac{2N}{N-\gamma_i+1}})} \right) \|u-v\|_{L^4(L^{\frac{2N}{N-1}})}\\
& \lesssim \sum_{i\in \{1,2 \}}\left( \|u\|^2_{L^4(\dot{H}^{s_{\gamma_i}}_{\frac{2N}{N-1}})}+\|v\|^2_{L^4 (\dot{H}^{s_{\gamma_i}}_{\frac{2N}{N-1}})} \right) d(u,v).
\end{align*}
If we choose $R$ and $\rho$ sufficiently small $\rho$ such that
\[ C \|u_0\|_{H^{s_{\gamma_i}}}\leq \frac{\rho}{2},  \quad C \|u_0\|_{\dot{H}^s} \leq \frac{\rho}{2},  \quad 2C \rho^2 \leq \frac{1}{4}\]
then $J$ maps $Z^s_{\rho, R}$ to itself and is a contraction map.  By Lemma \ref{st}, it then follows that $u \in (C\cap L^{\infty})(\R, H^s(\R^N)).$ To prove the  scattering,  we define a function $u^+$ by
\[u^+= u_0-\textnormal{i} \int_0^{\infty} U(-s)F(u)(s) \, ds. \]
Since the solution $u\in Z^{s}_{\rho, R}, u^+ \in H^s(\R^N).$  Therefore,  we obtain that
\begin{align*}
\|u(t)-u^+(t)\|_{H^s} & \lesssim  \sum_{i \in \{1,2\}} \|u\|^2_{L^4(t, \infty); \dot{H}_{\frac{2N}{N-1}}^{s_{\gamma_i}})} \|u\|_{L^4(t, \infty; H^{s}_{\frac{2N}{N-1}})} \to 0 \quad  \text {as} \quad t\to \infty.
\end{align*}
This completes the proof.
\end{proof}

\begin{proof} [Proof of Theorem \ref{miD}]
Let us introduce the space
\begin{align*}
Y(T)  & = \left\{\phi \in C\left([0,T], L^2(\mathbb R^N) \right): \|\phi \|_{L^{\infty}([0, T], L^2)}  \leq 2 \|u_0\|_{L^2}, \|\phi\|_{L^{\frac{8}{\gamma_i}} ([0,T], L^{\frac{4N}{2N-\gamma_i}}  ) } \lesssim \|u_0\|_{L^2}, i=1,2\right\}
\end{align*}
and the distance
$$d(\phi_1, \phi_2)= \max\{ \|\phi_1 - \phi_2 \|_{L^{\frac{8}{\gamma_i} }\left( [0, T], L^{\frac{4d}{(2d- \gamma_i)}}\right)}: i=1,2 \}.$$ Then $(Y, d)$ is a complete metric space. Now we show that $\Phi$ takes $Y(T)$ to $Y(T)$ for some $T>0.$
We put
$$ \ q_i= \frac{8}{\gamma_i}, \  r_i= \frac{4N}{2N- \gamma_i}.$$
Note that $(q,r)$ is admissible and
$$ \frac{1}{q'_i}= \frac{4- \gamma_i}{4} + \frac{1}{q_i}, \quad  \frac{1}{r'_i}= \frac{\gamma_i}{2N} + \frac{1}{r_i}.$$
Let $(\bar{q}, \bar{r}) \in \{ (q_i,r_i), (\infty, 2) \}.$ By Theorem \ref{st} and  H\"older inequality, we have that
\begin{align*}
\|J(u)\|_{L_{t,x}^{\bar{q}, \bar{r}}} &  \lesssim   \|u_0\|_{L^2} + \sum_{i \in \{ 1,2 \}}  \|( |\cdot|^{-\gamma_i} \ast |u|^2)u \|_{L_{t,x}^{q'_i,r'_i}}\\
& \lesssim   \|u_0\|_{L^2} + \sum_{i \in \{ 1,2 \}} \| |\cdot|^{-\gamma_i} \ast |u|^2\|_{L_{t,x}^{\frac{4}{4-\gamma_i}, \frac{2N}{\gamma_i}}}
\|u\|_{L^{q_i,r_i}_{t,x}}.
\end{align*}
Since $0<\gamma_i< \min \{2, N \}$, by Theorem \ref{st},  then
\begin{align*}
\| |\cdot|^{-\gamma_i} \ast |u|^2\|_{L_{t,x}^{\frac{4}{4-\gamma_i}, \frac{2N}{\gamma_i}}}  & =   \left\|  \| |\cdot |^{-\gamma_i} \ast |u|^2\|_{L_x^{\frac{2N}{\gamma_i}}} \right\|_{L_t^{\frac{4}{4- \gamma_i}}}\\
& \lesssim    \left \| \||u|^2\|_{L_x^{\frac{2N}{2N- \gamma_i}}} \right\|_{L_t^{\frac{4}{4- \gamma_i}}} \lesssim\|u\|^2_{L_{t,x}^{\frac{8}{4- \gamma_i},r}}\lesssim  T^{1- \frac{\gamma_i}{2}} \|u\|^2_{L_{t,x}^{q_i,r_i}},
\end{align*}
where for the last inequality we have used inclusion relation for the $L^p$ spaces on finite measure spaces: $\|\cdot\|_{L^p(X)} \leq \mu(X)^{\frac{1}{p}-\frac{1}{q}} \|\cdot \|_{L^{q}(X)}$ if measure of $X=[0,T]$ is finite, and $ 0<p<q<\infty$.) Therefore, we conclude that
$$
\|J(u)\|_{L_{t,x}^{\bar{q}, \bar{r}}}   \lesssim   \|u_0\|_{L^2}+T^{1- \frac{\gamma_i}{2}} \|u\|^3_{L_{t,x}^{q_i,r_i}}.$$
This shows that $\Phi$ maps $Y(T)$ to $Y(T).$  Next, we show $J$
is a contraction. For this, as calculations performed  before, first we note that
\begin{align}\label{mi}
 \|( |\cdot |^{-\gamma_i} \ast |v|^{2})(v-w)\|_{L_{t,x}^{q',r'}} \lesssim  T^{1-\frac{\gamma}{2s_i}} \|v\|^2_{L_{t,x}^{q,r}} \|v-w\|_{L_{t,x}^{q,r}}.
\end{align}
Put $\delta_i = \frac{8}{4-\gamma_i}.$  Notice that $\frac{1}{q'_i}= \frac{1}{2}+ \frac{1}{\delta_i}, \frac{1}{2}= \frac{1}{\delta_i} + \frac{1}{q_i}$. In view of H\"older inequality, then
\begin{align}\label{mi1}
 \|( |\cdot|^{-\gamma_i} \ast (|v|^{2}- |w|^{2}))w\|_{L_{t,x}^{q'_i,r'_i}} & \lesssim  \| |\cdot |^{-\gamma_i} \ast \left( |v|^2-|w|^2\right)\|_{L_{t,x}^{2, \frac{2N}{\gamma_i}}} \|w\|_{L_{t,x}^{\delta_i, r_i}} \nonumber \\
 & \lesssim  ( \| |\cdot |^{-\gamma_i} \ast (v (\bar{v}- \bar{w})) \|_{L_{t,x}^{2, \frac{2N}{\gamma}}} +
\| |\cdot |^{-\gamma_i} \ast \bar{w}(v-w)) \|_{L_{t,x}^{2, \frac{2N}{\gamma_i}}} ) \|w\|_{L_{t,x}^{\delta_i,r_i}} \nonumber\\
 & \lesssim \left( \|v\|_{L_{t,x}^{\delta,r}} \|w\|_ {L_{t,x}^{\delta,r}} +\|w\|^2_ {L_{t,x}^{\delta_i,r_i}}\right) \|v-w\|_{L_{t,x}^{q_i,r_i}}\nonumber\\
 & \lesssim  T^{1-\frac{\gamma_i}{2}}  \left( \|v\|_{L_{t,x}^{q_i,r_i}} \|w\|_ {L_{t,x}^{q_i,r_i}} +\|w\|^2_ {L_{t,x}^{q_i,r_i}}\right)  \|v-w\|_{L_{t,x}^{q_i,r_i}}.
\end{align}
Using \eqref{mi}  \eqref{mi1} and the identity
$$(K\ast |v|^{2})v- (K\ast |w|^{2})w= (K\ast |v|^{2})(v-w) + (K \ast (|v|^{2}- |w|^{2})),$$
we get that
\begin{align*}
\|J(v)- J(w)\|_{L_{t,x}^{q,r}} & \lesssim   \|(K\ast |v|^{2})(v-w)\|_{L_{t,x}^{q',r'}} + \|(K \ast (|v|^{2}- |w|^{2}))w\|_{L_{t,x}^{q',r'}}\\
& \lesssim    T^{1-\frac{\gamma}{2s_i}}  \left( \|v\|^2_{L_{t,x}^{q,r}}  +\|v\|_{L_{t,x}^{q,r}} \|w\|_ {L_{t,x}^{q,r}} +\|w\|^2_ {L_{t,x}^{q,r}}\right) \|v-w\|_{L_{t,x}^{q,r}}.
\end{align*}
This gives that $J$ is a contraction form $Y(T)$ to $Y(T)$  provided that $T$ is sufficiently small. Then there exists a unique $u \in Y(T)$ solving \eqref{nlscn}. The global existence of the solution \eqref{nlscn} follows from the conservation of the $L^2$-norm of $u.$ The last property of the proposition then follows from the Strichartz estimates applied with an arbitrary $s_i-$fractional admissible pair on the left hand side and the same pairs as above on the right hand side. The conservation of mass can be attained in a standard way. This completes the proof.
\end{proof}

To prove Theorem \ref{mtbup}, we first introduce
\[ V_a(t)= \int_{\R^N} a(x) |u(t,x)|^2 dx.\]
For a solution $u$ satisfying
\[i\partial_t u + \Delta u = \mathcal{N}.\]
We note that
\[V_a'(t)= 2 \  \text{Im} \int_{\R^N} \nabla a(x)\cdot  \nabla u(t,x) \bar{u}(t,x) \, dx \]
and
\begin{align*}
V_a''(t) & = 4 \text{Re} \sum_{1\leq j, k \leq N} \int_{\R^N} \partial_{jk} a(x) \bar{u}_j(t,x) u_k(t,x) \,dx\\
&\quad +\int_{\R^N} (-\Delta \Delta) a(x) |u(t,x)|^2 dx + 2 \int_{\R^N} \{ \mathcal{N}, u \}_p (t,x) \nabla a(x) \,dx
\end{align*}
where $\{ f, g \}$ is the momentum bracket  defined as $\text{Re} (f \nabla \bar{g}- g \nabla \bar{f}).$
Taking $\mathcal{N}=\left(|x|^{-\gamma_1} \ast |u|^2\right) u - \left(|x|^{-\gamma_2} \ast |u|^2\right) u,$  we obtain the following result. 

\begin{lem} For any $a\in C^4(\R^N),$ we have that
\[V_a'(t)= 2 \  \text{Im} \int_{\R^N} \nabla a(x)\cdot  \nabla u(t,x) \bar{u}(t,x) dx, \]
and
\begin{align*}
V_a''(t) &=  4 \text{Re} \sum_{1\leq j, k \leq N} \int_{\R^N} \partial_{jk} a(x) \bar{u}_j(t,x) u_k(t,x) dx -\int_{\R^N} \Delta^2 a(x) |u(t,x)|^2 dx \\
& \quad + 2\gamma_1 \int_{\R^N} \int_{\R^N}(x-y)\cdot \nabla a(x) \frac{|u(t,x)|^2 |u(t,y)|^2}{|x-y|^{\gamma_1+2}} dx dy \\
&\quad - 2\gamma_2 \int_{\R^N} \int_{\R^N}(x-y)\cdot \nabla a(x) \frac{|u(t,x)|^2 |u(t,y)|^2}{|x-y|^{\gamma_2+2}} dx dy
\end{align*}
In particular,  if $a$ is radial,  then setting $r=|x|$, we obtain that
\begin{align}
V_a'(t)= 2 \text{Im}  \int_{\R^N} a' \frac{x\cdot \nabla u}{r} \bar{u} dx,
\end{align}
\begin{align}\label{dc}
\begin{split}
V_a''(t) & =   4\int_{\R^N} \frac{a'(r)}{r} |\nabla u|^2 dx +4 \int_{\R^N} \left( \frac{a''(r)}{r^2} - \frac{a'(r)}{r^3} \right) |x\cdot u|^2 dx- \int_{\R^N} \Delta^2 a |u|^2 dx  \\
&  \quad + \gamma_1 \int_{\R^N} \int_{\R^N}(x-y)\cdot [\nabla a(x)- \nabla a(y)] \frac{|u(t,x)|^2 |u(t,y)|^2}{|x-y|^{\gamma_1+2}} dx dy  \\
& \quad - \gamma_2 \int_{\R^N} \int_{\R^N}(x-y)\cdot [\nabla a(x)- \nabla a(y)] \frac{|u(t,x)|^2 |u(t,y)|^2}{|x-y|^{\gamma_2+2}} dx dy. 
\end{split}
\end{align}

\end{lem}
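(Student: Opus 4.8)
The plan is to derive both identities from the general virial formulas already recorded just above the statement, specializing the abstract nonlinearity $\mathcal{N}$ to the competing Hartree terms and then carrying out a symmetrization and the radial reduction. The only genuinely equation-specific input is the momentum bracket $\{\mathcal{N},u\}_p$, so I would organize the computation around it.

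First I would confirm the first-order identity. Writing $V_a(t)=\int_{\R^N} a\,|u|^2\,dx$ and differentiating under the integral gives $V_a'(t)=2\int_{\R^N} a\,\text{Re}(\bar u\,\partial_t u)\,dx$. Substituting $\partial_t u=-\textnormal{i}(\mathcal{N}-\Delta u)$ from $\textnormal{i}\partial_t u+\Delta u=\mathcal{N}$ turns $\text{Re}(\bar u\,\partial_t u)$ into $\text{Im}\bigl(\bar u(\mathcal{N}-\Delta u)\bigr)$. Since each potential $W_i:=|x|^{-\gamma_i}\ast|u|^2$ is real, $\bar u\,\mathcal{N}=(W_1-W_2)|u|^2$ is real and contributes nothing to the imaginary part; integrating $\int_{\R^N} a\,\text{Im}(\bar u\,\Delta u)\,dx$ by parts (the term $\int_{\R^N} a|\nabla u|^2$ being real) leaves exactly $V_a'(t)=2\,\text{Im}\int_{\R^N}\nabla a\cdot\nabla u\,\bar u\,dx$. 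For radial $a$ with $r=|x|$ and $\nabla a=a'(r)x/r$ this is immediately the radial version.

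The heart of the matter is the second-order identity. I would take the general form
\[
V_a''(t)=4\,\text{Re}\sum_{1\le j,k\le N}\int_{\R^N}\partial_{jk}a\,\bar u_j u_k\,dx-\int_{\R^N}\Delta^2 a\,|u|^2\,dx+2\int_{\R^N}\nabla a\cdot\{\mathcal{N},u\}_p\,dx
\]
as given, and compute the bracket. With $\mathcal{N}=W_1u-W_2u$, $W_i$ real, and $\{\mathcal{N}_i,u\}_p=\text{Re}(\mathcal{N}_i\nabla\bar u-u\nabla\overline{\mathcal{N}_i})$, the terms $W_iu\nabla\bar u$ cancel, leaving $\{W_iu,u\}_p=-|u|^2\nabla W_i$. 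Since $\nabla W_i(x)=-\gamma_i\int_{\R^N}(x-y)|x-y|^{-\gamma_i-2}|u(y)|^2\,dy$, this gives
\begin{align*}
2\int_{\R^N}\nabla a\cdot\{\mathcal{N},u\}_p\,dx&=2\gamma_1\int_{\R^N}\int_{\R^N}\frac{(x-y)\cdot\nabla a(x)}{|x-y|^{\gamma_1+2}}|u(x)|^2|u(y)|^2\,dxdy\\
&\quad-2\gamma_2\int_{\R^N}\int_{\R^N}\frac{(x-y)\cdot\nabla a(x)}{|x-y|^{\gamma_2+2}}|u(x)|^2|u(y)|^2\,dxdy,
\end{align*}
which is the non-radial formula. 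Exchanging $x\leftrightarrow y$ and averaging replaces $(x-y)\cdot\nabla a(x)$ by $\tfrac12(x-y)\cdot(\nabla a(x)-\nabla a(y))$, turning each prefactor $2\gamma_i$ into $\gamma_i$ and producing the antisymmetrized double integrals in the statement. For radial $a$ the Hessian $\partial_{jk}a=a''\,x_jx_k/r^2+a'(\delta_{jk}/r-x_jx_k/r^3)$ collapses $4\,\text{Re}\sum_{j,k}\int\partial_{jk}a\,\bar u_j u_k$ into $4\int_{\R^N}\tfrac{a'}{r}|\nabla u|^2+4\int_{\R^N}(\tfrac{a''}{r^2}-\tfrac{a'}{r^3})|x\cdot\nabla u|^2$, completing the radial identity.

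I expect the main obstacle to be not the algebra but its rigorous justification: the differentiations under the integral and the integrations by parts are transparent for smooth, well-localized solutions but must be extended to the $H^1$ solutions produced earlier. I would handle this by a standard regularization/approximation argument — establishing the identities for a dense class of smooth data, or equivalently for a smooth compactly supported truncation of the weight $a$ together with a limiting procedure — and passing to the limit using the a priori $H^1$ bounds together with the Hardy--Littlewood--Sobolev inequality \eqref{HLS} to control the Hartree terms uniformly.
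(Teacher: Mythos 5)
Your proposal is correct and follows exactly the route the paper intends: the lemma is obtained by substituting $\mathcal{N}=(|x|^{-\gamma_1}\ast|u|^2)u-(|x|^{-\gamma_2}\ast|u|^2)u$ into the general virial identities stated just before it, computing the momentum bracket $\{W_iu,u\}_p=-|u|^2\nabla W_i$, symmetrizing in $x\leftrightarrow y$, and reducing the Hessian term for radial $a$. The paper gives no further detail, so your computation (which checks out, including the signs and the factor $2\gamma_i\to\gamma_i$ under antisymmetrization) simply fills in what the authors leave implicit.
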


\begin{lem}\label{tl1} For any $\eta_0>0 $ and $t\leq \eta_0 R/ (2m_0C_0),$ we have that
\begin{align}
\int_{|x|\geq R} |u(t,x)|^2 dx \leq \eta_0 +o_R(1),
\end{align}
where $m_0=\|u_0\|_{L^2}$ and $o_R(1)\to 0$ as $R \to \infty.$
\end{lem}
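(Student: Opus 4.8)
The plan is to run the standard localized-mass (``no mass escapes to infinity too fast'') argument, controlling the first variation of a truncated mass functional by the conserved $L^2$-norm together with an a priori bound on the kinetic energy. Fix a radial cut-off $\phi\in C^{\infty}(\R^N,[0,1])$ with $\phi(x)=0$ for $|x|\leq 1/2$ and $\phi(x)=1$ for $|x|\geq 1$, normalized so that $\|\nabla\phi\|_{\infty}\leq 1$, and set $\phi_R(x):=\phi(x/R)$. Then $\mathbf{1}_{\{|x|\geq R\}}\leq \phi_R\leq \mathbf{1}_{\{|x|\geq R/2\}}$, $\nabla\phi_R$ is supported in the annulus $R/2\leq|x|\leq R$, and $|\nabla\phi_R|\leq 1/R$. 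I would then work with
\[
V_R(t):=\int_{\R^N}\phi_R(x)\,|u(t,x)|^2\,dx,\qquad \text{so that}\qquad \int_{|x|\geq R}|u(t,x)|^2\,dx\leq V_R(t).
\]

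The core step is to estimate $V_R'(t)$. Applying the first variational identity of the preceding lemma with $a=\phi_R$ gives
\[
V_R'(t)=2\,\textnormal{Im}\int_{\R^N}\nabla\phi_R(x)\cdot\nabla u(t,x)\,\bar u(t,x)\,dx.
\]
By Cauchy--Schwarz, the support and gradient bounds on $\phi_R$, and conservation of mass $\|u(s)\|_{L^2}=\|u_0\|_{L^2}=m_0$, I obtain
\[
|V_R'(s)|\leq \frac{2}{R}\,\|\nabla u(s)\|_{L^2}\,\|u(s)\|_{L^2}\leq \frac{2m_0}{R}\,\|\nabla u(s)\|_{L^2}\leq \frac{2m_0C_0}{R},
\]
where $C_0:=\sup_{s\in[0,t]}\|\nabla u(s)\|_{L^2}$, which is finite on any interval $[0,t]\subset[0,T_{\max})$ by the local theory of Theorem \ref{LW}. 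Integrating in time yields $V_R(t)\leq V_R(0)+\tfrac{2m_0C_0}{R}\,t$. Since $V_R(0)=\int\phi_R|u_0|^2\,dx\leq \int_{|x|\geq R/2}|u_0|^2\,dx=:o_R(1)\to 0$ as $R\to\infty$ by $u_0\in L^2(\R^N)$, and since the hypothesis $t\leq \eta_0R/(2m_0C_0)$ forces $\tfrac{2m_0C_0}{R}\,t\leq\eta_0$, I conclude
\[
\int_{|x|\geq R}|u(t,x)|^2\,dx\leq V_R(t)\leq \eta_0+o_R(1),
\]
which is the claim.

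The only genuine subtlety, and the point I would emphasize, is the uniform-in-time control of $V_R'$: the naive estimate carries a factor $\|\nabla u(s)\|_{L^2}$ which is not bounded a priori in a blow-up regime. This is exactly why the constant $C_0$ enters as $\sup_{[0,t]}\|\nabla u(s)\|_{L^2}$, so that the bound $|V_R'(s)|\leq 2m_0C_0/R$ is independent of $s$; the resulting restriction $t\leq\eta_0R/(2m_0C_0)$ on the admissible time scale is then precisely what the blow-up argument for Theorem \ref{mtbup} exploits when it assumes, for contradiction, that $\|\nabla u\|_{L^2}$ stays bounded. Everything else reduces to the routine manipulations of the virial identity and mass conservation recorded above.
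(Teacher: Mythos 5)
Your argument is the standard localized--mass estimate, and it is essentially the intended one: the paper states Lemma \ref{tl1} without proof (deferring implicitly to the cited blow-up references of Du--Wu--Zhang and Zheng), and your use of the first variation identity $V_a'(t)=2\,\textnormal{Im}\int\nabla a\cdot\nabla u\,\bar u\,dx$ with $a=\phi_R$, together with Cauchy--Schwarz, mass conservation and the a priori bound $C_0$ on $\|\nabla u(s)\|_{L^2}$, is exactly how this lemma is meant to be obtained and is consistent with how it is invoked in the proof of Theorem \ref{mtbup} (where $C_0$ is the assumed uniform $H^1$ bound).

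One small numerical slip: a smooth $\phi$ with $\phi=0$ on $\{|x|\le 1/2\}$ and $\phi=1$ on $\{|x|\ge 1\}$ cannot satisfy $\|\nabla\phi\|_{\infty}\le 1$, since by the mean value theorem the transition over a shell of width $1/2$ forces $\|\nabla\phi\|_{\infty}\ge 2$; hence $|\nabla\phi_R|\le 2/R$, your bound becomes $|V_R'(s)|\le 4m_0C_0/R$, and the stated time window $t\le \eta_0R/(2m_0C_0)$ yields $2\eta_0+o_R(1)$ rather than $\eta_0+o_R(1)$. This is harmless because $\eta_0>0$ is arbitrary (relabel $\eta_0\mapsto\eta_0/2$), or it can be repaired outright by widening the transition region of $\phi$ so that $\|\nabla\phi\|_{\infty}\le 1$ and adjusting the inner radius accordingly. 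Everything else in the proposal is correct.
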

Inspired by the virial identity
\[ \frac{d^2}{dt^2} \int_{\R^N} |x|^2 |u(t)|^2 dx= 16 K(u(t)),\]
where
\begin{eqnarray*}
K(u(t)) = \frac{1}{2} \int_{\R^N} |\nabla u|^2 dx + \frac{\gamma_1}{8} \int_{\R^N} \int_{\R^N} \frac{|u(t,x)|^2 |u(t,y)|^2}{|x-y|^{\gamma_1}} dx dy - \frac{\gamma_2}{8} \int_{\R^N} \int_{\R^N} \frac{|u(t,x)|^2 |u(t,y)|^2}{|x-y|^{\gamma_2}} dx dy.
\end{eqnarray*}
We can rewrite $V''_a(t)$ in \eqref{dc} as
\begin{align}
V''_a(t)=16 K(u(t)) + R_1 +R_2+R_3 + R_4,
\end{align}
where
\[R_1:= 4\int_{\R^N} \frac{a'(r)}{r} |\nabla u|^2 dx +4 \int_{\R^N} \left( \frac{a''(r)}{r^2} - \frac{a'(r)}{r^3} \right) |x\cdot u|^2 dx, \]
\[  R_2:=\gamma_1 \int_{\R^N} \int_{\R^N}[(x-y)\cdot (\nabla a(x)- \nabla a(y))- 2|x-y|^2 ] \frac{|u(t,x)|^2 |u(t,y)|^2}{|x-y|^{\gamma_1+2}} dx dy,\]
\[  R_3:=-\gamma_2 \int_{\R^N} \int_{\R^N}[(x-y)\cdot (\nabla a(x)- \nabla a(y))- 2|x-y|^2 ] \frac{|u(t,x)|^2 |u(t,y)|^2}{|x-y|^{\gamma_2+2}} dx dy,\]
\begin{align*}
R_4 := - \int_{\R^N} \Delta^2u |u|^2 dx= - \int_{\R^N} \left( a^{(4)} + \frac{2 (d-1)}{r} a^{(3)} + \frac{(d-1) (d-3)}{r^2} \left( a''- \frac{a'}{r} \right) \right) |u|^2 dx.
\end{align*}
We will show that $R_1, R_2$ and $R_3$ are error terms,  by making use of localization.  We take $a\in C^4(R^N)$ radial satisfying
$$a''(r)=  \begin{cases} 2, \quad 0 \leq r \leq 2R\\
0, \quad r \geq 4R,
\end{cases} $$
and $a(0)=a'(0)=0, a''\leq 2$ and $a^{(4)} \leq R^{-2}.$  Then we have the following result.

\begin{lem} \label{tl2} There exists a constant  $C_1=C_1(s, \gamma, N, m_0)$ such that
\begin{align}
V_a''(t) \leq  16 K(u(t)) + C_1 \|u\|^{2-\gamma_1/2}_{L^2(|x| \geq 2R)} + C_1 \|u\|^{2-\gamma_2/2}_{L^2(|x| \geq 2R)}.
\end{align}
\end{lem}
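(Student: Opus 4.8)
The plan is to prove the asserted inequality in the equivalent form $R_1+R_2+R_3+R_4\le C_1\|u\|_{L^2(|x|\ge 2R)}^{2-\gamma_1/2}+C_1\|u\|_{L^2(|x|\ge 2R)}^{2-\gamma_2/2}$, using the decomposition $V_a''(t)=16K(u(t))+R_1+R_2+R_3+R_4$ that rewrites \eqref{dc}. The one structural fact behind everything is that, because $a''(r)=2$ on $[0,2R]$ together with $a(0)=a'(0)=0$ forces $a(r)=r^2$ on $\{|x|\le 2R\}$, one has there $\nabla a(x)=2x$, $a'(r)/r=2$, $a''(r)/r^2-a'(r)/r^3=0$ and $\Delta^2 a=0$. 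Hence (writing $R_1$ in its genuine deviation form $R_1=4\int_{\R^N}\big(\tfrac{a'(r)}{r}-2\big)|\nabla u|^2\,dx+4\int_{\R^N}\big(\tfrac{a''(r)}{r^2}-\tfrac{a'(r)}{r^3}\big)|x\cdot\nabla u|^2\,dx$, which is the term that makes the decomposition exact) the integrands of $R_1$ and $R_4$ vanish on $\{|x|\le 2R\}$, and the bracket $(x-y)\cdot(\nabla a(x)-\nabla a(y))-2|x-y|^2$ in $R_2,R_3$ vanishes whenever both $x,y\in B_{2R}$. Thus every remainder is supported in the exterior region, and the problem reduces to exterior estimates.

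First I would dispose of $R_1$ and $R_4$. Choosing $a$, as is standard, to also satisfy $a''(r)\le a'(r)/r$ for all $r$ (compatible with $a''\le 2$ and $a=r^2$ near the origin), one gets $a'(r)/r\le 2$, so that both weights $a'(r)/r-2$ and $a''(r)/r^2-a'(r)/r^3$ are nonpositive; together with $|x\cdot\nabla u|^2\le|x|^2|\nabla u|^2$ this makes $R_1\le 0$, and it may simply be discarded. For $R_4$ I would use the explicit expression for $\Delta^2 a$ together with $a^{(4)}\le R^{-2}$ and the bounds $|a^{(3)}|\lesssim R^{-1}$ and $|a''(r)-a'(r)/r|\lesssim 1$ on $\{r\ge 2R\}$ to obtain $|\Delta^2 a|\lesssim R^{-2}$ there; hence $|R_4|\lesssim R^{-2}\|u\|_{L^2(|x|\ge 2R)}^2$. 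Since $\|u\|_{L^2}=m_0$, the elementary bound $\|u\|_{L^2(|x|\ge 2R)}^2\le m_0^{\gamma_i/2}\,\|u\|_{L^2(|x|\ge 2R)}^{2-\gamma_i/2}$ rewrites this as a contribution of the required shape.

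The heart of the matter is $R_2$ and $R_3$. The fundamental theorem of calculus along the segment joining $y$ to $x$, combined with the uniform Hessian bound $|\nabla^2 a|\lesssim 1$, yields $|(x-y)\cdot(\nabla a(x)-\nabla a(y))-2|x-y|^2|\lesssim|x-y|^2$, so that after using the $x\leftrightarrow y$ symmetry one is left with $|R_2|+|R_3|\lesssim\sum_{i=1,2}\int_{|x|\ge 2R}\int_{\R^N}\tfrac{|u(x)|^2|u(y)|^2}{|x-y|^{\gamma_i}}\,dy\,dx$. To each exterior Hartree integral I would apply \eqref{HLS} with the symmetric exponents $p=r=2N/(2N-\gamma_i)$, keeping the cut-off on one factor, and then estimate the resulting $L^{4N/(2N-\gamma_i)}$ norms by Gagliardo--Nirenberg and Sobolev interpolation: the unrestricted factor gives $\|u\|_{L^{4N/(2N-\gamma_i)}}^2\lesssim\|\nabla u\|_2^{\gamma_i/2}m_0^{2-\gamma_i/2}$, while the restricted factor, interpolated between $L^2(|x|\ge 2R)$ and the global $L^{2^*}$ norm (Sobolev), gives $\|u\|_{L^{4N/(2N-\gamma_i)}(|x|\ge 2R)}^2\lesssim\|\nabla u\|_2^{\gamma_i/2}\|u\|_{L^2(|x|\ge 2R)}^{2-\gamma_i/2}$. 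Multiplying produces $|R_2|+|R_3|\lesssim\sum_{i=1,2}m_0^{2-\gamma_i/2}\|\nabla u\|_2^{\gamma_i}\|u\|_{L^2(|x|\ge 2R)}^{2-\gamma_i/2}$.

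The main obstacle is precisely the kinetic factor $\|\nabla u\|_2^{\gamma_i}$ with $\gamma_i\in(2,4)$: it is genuinely present in the exterior Hartree estimate and, since $\gamma_i>2$, cannot be swallowed by the fixed coefficient of $\|\nabla u\|_2^2$ inside $16K(u(t))$. I would handle it by Young's inequality, separating off a small multiple $\varepsilon\|\nabla u\|_2^2$ and leaving the exterior mass to the positive power $2-\gamma_i/2$; the delicate point is that, in the contradiction framework in which Lemma \ref{tl2} is used (where the evolution is controlled over times $t\lesssim R$ via Lemma \ref{tl1} and $\|\nabla u\|_2$ enjoys the uniform a priori control available there), this separated kinetic piece is reincorporated into the main term, leaving exactly the two exterior-mass contributions with $C_1=C_1(s,\gamma,N,m_0)$. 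Carefully tracking these absorptions, and verifying the $\nabla^2 a$ and $\Delta^2 a$ bounds for the specific radial cut-off, is where the real work lies.
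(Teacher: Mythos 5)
Your argument is essentially the paper's own proof: the same decomposition $V_a''=16K+R_1+R_2+R_3+R_4$, the sign argument for $R_1$ (the paper handles the weight $a''/r^2-a'/r^3$ by two cases instead of imposing $a''\le a'/r$, but this is cosmetic), the support localization of the bracket to $\{|x|\ge 2R\}\cup\{|y|\ge 2R\}$ followed by Hardy--Littlewood--Sobolev and Gagliardo--Nirenberg/Sobolev interpolation for $R_2,R_3$, and the bound $|R_4|\lesssim R^{-2}\|u\|_{L^2(|x|\ge 2R)}^2$. The only substantive point you raise beyond the paper --- that the resulting factor $\|\nabla u\|_2^{\gamma_i}$ must be absorbed using the a priori $H^1$ bound from the contradiction framework --- is exactly what the paper does implicitly in its final ``$\lesssim\|u\|_{L^2(|x|\ge 2R)}^{2-\gamma_i/2}$'' step, so your proposal is correct and follows the same route.
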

\begin{proof}
We first prove that $R_1\leq 0.$ Indeed,  if $a''(r)/r^2-a'(r)/r^3 \leq 0,$ then $R_1 \leq 0$ since $|a'| \leq 2r.$ Also,  if $a''(r)/r^2-a'(r)/r^3 \geq 0,$ then
\[R_1 \leq 4 \int_{\R^N} (a''-2) |\nabla u|^2 dx  \leq 0. \]
Moreover,  we know that
\begin{eqnarray*}
\text{supp} \{ (x-y)\cdot (\nabla a(x)- \nabla a(y))-2 |x-y|^2 \} \subset  \cup_{z\in \{ x, y \}} \{ (x,y): |z| \geq 2R \}.
\end{eqnarray*}
In the region where $|x| \geq 2R,$
\[|(x-y)\cdot \left( \nabla a (x) - \nabla a(y) \right) | \lesssim |x-y|^2.  \]
By  H\"older's inequality, Young's inequality and Sobolev embedding theorem, we can control $R_2$ and $R_3$ from this region:
\begin{align*}
\int_{\R^N} \int_{|x| \geq 2R} \frac{|u(t,x)|^2 |u(t,y)|^2}{|x-y|^{\gamma_i}} dx dy & \lesssim \|u\|_{L^{2N/ (N-\gamma_i/2)} (|x|\geq 2 R)}^{2} \|u\|^2_{L^{\frac{2N}{N-\gamma_i/2}}}\\
& \lesssim  \|u\|_{L^2(|x|\geq 2R)}^{2- \gamma_i/2} \|u\|^{2- \gamma/2}_{L^2}  \|u\|^{\gamma_i/2}_{\dot{H}^1}\\
& \lesssim \|u\|^{2-\gamma_i/2}_{L^2(|x|\geq 2R)}
\end{align*}
Similarly,   we have the same control in the region where $|y| \geq 2R.$  Thus, we derive that
\[R_2 \lesssim \|u \|^{2-\gamma_1/2}_{L^2(|x|\geq 2R)}, \quad R_3 \lesssim \|u \|^{2-\gamma_2/2}_{L^2(|x|\geq 2R)}.\]
Furthermore,  we have that
\[R_4 \lesssim R^{-2} \|u\|^2_{L^2(|x|\geq 2R)}. \]
Hence the proof is completed.
\end{proof}
\begin{proof}[Proof of Theorem  \ref{mtbup}] The first part on finite blow-up is standard,  see e.g.  \cite{TC}.   We shall only prove second part on infinite time blow-up.  If assume that
\begin{align}
\sup_{t\in \R^+} \|u(t)\|_{H^1} \leq C_0< \infty.
\end{align}
In light of Lemmas \ref{tl1} and \ref{tl2},  then
\begin{align}
V''_a(t) \leq 16 K(u(t)) + C_1 \left(\eta_0^{2- \gamma_1/2} + \eta_0^{2- \gamma_2/2}  + o_R(1) \right)
\end{align}
for any $t\leq T= \eta_0 R/ (2m_0 C_0).$ Integrating the above inequality from 0 to $T$ twice and using the fact that
\[K(u(t)) \leq E(u(t))<0  \quad \text{for any } \ t \in \R,\]
we have that
\begin{align*}
V_a(T) &\leq  V_a(0) + V_a'(0)T + \int_0^t \int_0^t g (16 K(u(s)) + C_1(\eta_0^{2- \gamma_1/2} + \eta_0^{2- \gamma_2/2}  + o_R(1)) ) ds dt\\
& \leq  V_a(0) + V_a'(0)T + \left( 16 E(u_0) + C_1(\eta_0^{2- \gamma_1/2} + \eta_0^{2- \gamma_2/2}  + o_R(1))  \right) T^2.
\end{align*}
Taking $\eta_0$ such that $C_1(\eta_0^{2- \gamma_1/2} + \eta_0^{2- \gamma_2/2})= -\frac{1}{2} E(u_0),$ and $R$ large enough, one has that
\begin{align}\label{fc1}
V_a(T) \leq V_a(0) + V_a'(0) \frac{\eta_0 R}{2m_0 C_0} + \alpha_0 R^2,
\end{align}
where $\alpha_0= E(u_0) \eta_0^2/ (4m_0C_0)^2<0.$ Next we claim that
\begin{align}\label{fc2}
V_a(0)=o_R(1)R^2, \quad V_a'(0)=o_R(1)R.
\end{align}
In fact, note that
\begin{align*}
V_a(0) & \leq   \int_{|x| \leq \sqrt{R}} |x|^2|u_0(x)|^2 dx + \int_{\sqrt{R} \leq |x| \leq 2R} |x|^2 |u_0(x)|^2 dx + R^2 \int_{|x|\geq 2R} |u_0(x)|^2 dx\\
& \leq  Rm_0^2+ R^2 \int_{|x| \geq \sqrt{R}} |u_0(x)|^2 dx + R^2 \int_{|x|\geq 2R} |u_0(x)|^2 dx\\
& =  o_R(1) R^2.
\end{align*}
Similarly,  we  have that $V_a'(0)=o_R(1)R^2.$
Taking $R>0$ large enough,  by \eqref{fc1} and \eqref{fc2},  we infer that
\[V_a(T) \leq o_R(1)R^2 + \alpha_0R^2 \leq \frac{1}{2} R^2<0. \]
This contradicts $V_a(T)\geq 0.$  This completes the proof.
\end{proof}

\end{document}